\documentclass{jfl}

\usepackage[Symbol]{upgreek}

\newcommand{\bbold}{\mathbb}

\usepackage{tikz}
\usetikzlibrary{decorations.pathreplacing}

\def\R { {\bbold R} }
\def\Q { {\bbold Q} }

\def\N { {\bbold N} }

\renewcommand\epsilon{\varepsilon}

\def \<{\langle}
\def \>{\rangle}

\def\((  {(\!(}
\def\)) {)\!)}
\def \k {{{\boldsymbol{k}}}}

\def \RLE {\R \((  x^{-1}  \)) ^{\text {LE}}  }

\def \I{\operatorname{I}}

\def \i{{\boldsymbol{i}}}
\def \j{{\boldsymbol{j}}}

\def \Log{\operatorname{L}}

\def \upg{\upgamma}
\def \upl{\uplambda}
\def \Upl{\Uplambda}
\def \upo{\upomega}
\def \Upo{\Upomega}
\def \ome{\omega}

\theoremstyle{theorem}
\newtheorem{prop}[theorem]{Proposition}
\newtheorem{cor}[theorem]{Corollary}

\newtheorem*{conjecture*}{Conjecture}
\newtheorem*{Tconjecture}{$\T$-Conjecture}

\newtheorem*{refinedTconjecture}{Refined $\T$-Conjecture}
\newtheorem*{optimisticconjecture}{Optimistic Conjecture}

\theoremstyle{definition}

\theoremstyle{remark}
\newtheorem*{example*}{Example}

\newtheorem*{notation*}{Notation}

\newtheorem*{remark*}{Remark}

\newtheorem*{question*}{Question}

\numberwithin{equation}{section}

\newcommand{\abs}[1]{\lvert#1\rvert}


\def \T{\mathbb{T}}
\def \Tas{\mathbb{T}^{\operatorname{as}}}
\def \st{\operatorname{st}}

\def \Ca{\mathcal{C}}
\def \<{\langle}
\def \>{\rangle}
\def \csi{\operatorname{si}}

\renewcommand\leq{\leqslant}
\renewcommand\geq{\geqslant}
\renewcommand\preceq{\preccurlyeq}
\renewcommand\succeq{\succcurlyeq}
\renewcommand\le{\leq}
\renewcommand\ge{\geq}

\DeclareSymbolFont{imag@m}{OT1}{cmr}{m}{ui}
\DeclareMathSymbol{\imag}{\mathord}{imag@m}{105}

\DeclareMathSymbol{\ex}{\mathord}{imag@m}{101}

\DeclareFontFamily{OMS}{smallo}{}
\DeclareFontShape{OMS}{smallo}{m}{n}{<->s*[.65]cmsy10}{}
\DeclareSymbolFont{smallo@m}{OMS}{smallo}{m}{n}
\DeclareMathSymbol{\smallo}{\mathord}{smallo@m}{79}

\DeclareFontFamily{OMS}{largerdot}{}
\DeclareFontShape{OMS}{largerdot}{m}{n}{<->s*[.8]cmsy10}{}
\DeclareSymbolFont{largerdot@m}{OMS}{largerdot}{m}{n}
\DeclareMathSymbol{\largerdot}{\mathord}{largerdot@m}{15}

\DeclareFontFamily{U}{fsy}{}
\DeclareFontShape{U}{fsy}{m}{n}{<->s*[1.0]psyr}{}
\DeclareSymbolFont{der@m}{U}{fsy}{m}{n}
\DeclareMathSymbol{\der}{\mathord}{der@m}{182}

\received{}
\printed{}

\begin{document}
\title{Towards a Model Theory for Transseries}

\author{Matthias~Aschenbrenner}
\address{University of California, Los Angeles\\
Los Angeles, CA 90955\\
U.S.A.}
\email {matthias@math.ucla.edu}

\author{Lou~van~den~Dries}
\address{University of Illinois at Urbana-Champaign\\
Urbana, IL 61801\\
U.S.A.}
\email{vddries@math.uiuc.edu}

\author{Joris~van~der~Hoeven}
\address{\'Ecole Polytechnique\\
91128 Palaiseau Cedex\\
France}
\email{vdhoeven@lix.polytechnique.fr}

\dedicatory{For Anand Pillay, on his 60th birthday.}

\thanks{Aschenbrenner was partially supported by NSF Grants DMS-0556197 and DMS-0969642. 
The work of van der Hoeven has been partly supported by the French ANR-09-JCJC-0098-01 MaGiX project,
and by the Digiteo 2009-36HD grant of the R\'egion Ile-de-France.
All three authors thank the Fields Institute, Toronto, for its hospitality during the Thematic Program on o-minimal Structures and Real Analytic Geometry (2009). Some of the work reported on here was done on that occasion.}

\begin{abstract}
The differential field of transseries extends the field of real
Laurent series, and occurs in various contexts: asymptotic expansions,
analytic vector fields, o-minimal structures, to name a few. 
We give an overview of the algebraic and
model-theoretic aspects of this differential field, and report on our 
efforts to understand its elementary theory.
\end{abstract}

\keywords{Transseries; Hardy fields; differential fields; model completeness; NIP.}

\maketitle


\section*{Introduction}

\noindent
We shall describe a fascinating mathematical object, the differential
field $\T$ of transseries. It is an ordered field extension of $\R$
and is a kind of universal domain for asymptotic real differential algebra.
In the context of this paper, a {\em transseries\/} is what is called a 
{\em logarithmic-exponential series\/} or LE-series in~\cite{DMM}. 
Here is the main problem that we have been pursuing, intermittently, for more than 15 years.

\begin{conjecture*}
The theory of the ordered differential field $\T$ is model complete, 
and is the model companion of the theory of $H$-fields with
small derivation.
\end{conjecture*}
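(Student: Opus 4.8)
The plan is to pin down an explicit first-order theory $T^{\mathrm{nl}}$ in the language of ordered differential fields that axiomatizes a robust class of "transseries-like" $H$-fields, and then to establish: (i) every model of $T^{\mathrm{nl}}$ is an $H$-field with small derivation; (ii) every $H$-field with small derivation embeds into a model of $T^{\mathrm{nl}}$, and in fact into every $\abs{K}^+$-saturated model extending it; and (iii) $T^{\mathrm{nl}}$ is model complete. Items (i)--(iii) say precisely that $T^{\mathrm{nl}}$ is the model companion of the theory of $H$-fields with small derivation; since the model companion is unique when it exists and $\T$ will be shown to be a model of $T^{\mathrm{nl}}$, it follows that $\operatorname{Th}(\T)=T^{\mathrm{nl}}$, and in particular that $\operatorname{Th}(\T)$ is model complete. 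The candidate axioms for $T^{\mathrm{nl}}$ are: $H$-field with small derivation, real closed, Liouville closed (closed under integration and exponential integration), $\omega$-free, and newtonian.

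Before any of this can even be stated, one must build the requisite asymptotic-differential algebra. First I would develop the theory of valued differential fields, asymptotic couples and $H$-fields: the dichotomy of comparability classes, the structure of the value group under $\psi$, the existence and near-uniqueness of the Liouville closure of an $H$-field, and the basic extension theory (adjoining constants, integrals, exponential integrals, and real closures). The heart of the algebra is a fine analysis of a single differential polynomial $P(Y)$ over an $H$-field: its dominant part and Newton polynomial, its Newton degree, and the way these transform under additive translations $Y\mapsto Y+a$, scalings $Y\mapsto cY$, and "refinements" that shrink the asymptotic cut under consideration. This yields the notion of a \textbf{newtonian} $H$-field --- the differential analogue of a henselian valued field, in which every differential polynomial of Newton degree $1$ has a zero in the maximal ideal --- together with the trichotomy of $H$-fields into grounded, $\omega$-free, and those with a gap, and a Weierstrass-type preparation/factorization for linear differential operators over newtonian fields.

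With the framework in place the proof splits into two pillars. \textbf{Pillar 1} is that $T^{\mathrm{nl}}$ is a genuine first-order theory having $\T$ as a model: one shows that "$\omega$-free" is first-order, that over an $\omega$-free $H$-field "newtonian" becomes first-order (because $\omega$-freeness bounds the complexity one must check), and that $\T$ is real closed, Liouville closed, $\omega$-free, and newtonian --- the last amounting to van der Hoeven's theorem that algebraic differential equations over $\T$ are solvable by Newton's method, recast in the abstract setting. \textbf{Pillar 2} is the embedding/model-completeness statement: given an $\omega$-free newtonian Liouville closed $H$-field $K$ and an $\abs{K}^+$-saturated model $L$ of $T^{\mathrm{nl}}$ extending it, one runs a back-and-forth whose moves are adjoining a real closure, adjoining integrals and exponential integrals, adjoining elements of an ($\omega$-free) newtonization, and passing to the $\omega$-free hull. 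Each move rests on an embedding theorem showing that the relevant minimal extension of a given substructure embeds over it into $L$; saturation supplies the targets once the relevant types are shown to be determined.

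The main obstacle --- and what in fact required many years --- is making Pillar 2 work: one must prove that an $\omega$-free $H$-field admits a minimal $\omega$-free newtonian immediate extension, essentially unique over the base (the "newtonization"), and then that the full bundle of closure properties can be maintained simultaneously, i.e.\ that newtonizing does not destroy real- or Liouville-closedness, and conversely that real closure and the Liouville steps preserve $\omega$-freeness and newtonianity. Controlling this interaction is delicate precisely because it forces one through the two exceptional cases --- grounded $H$-fields and $H$-fields with a gap --- which are excluded from models of $T^{\mathrm{nl}}$ but arise transiently during the constructions; handling differential polynomials across those cases, and keeping track of how the Newton degree evolves along pseudo-Cauchy sequences (the "equalizer theorem" and the analysis of linear differential operators underlying it), is the technical core on which everything else rests.
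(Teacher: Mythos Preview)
The statement you are proposing to prove is not proved in the paper: it is explicitly labeled a \emph{Conjecture} (and later sharpened to the ``Refined $\T$-Conjecture'' and the ``Optimistic Conjecture''), and the paper is, in its own words, ``mainly expository and programmatic in nature, and occasionally speculative.'' There is therefore no proof in the paper to compare your proposal against.

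That said, your outline is very close in spirit to the programme the paper lays out. Your candidate theory $T^{\mathrm{nl}}$---$H$-field with small derivation, real closed, Liouville closed, $\upo$-free, newtonian---is essentially the paper's Optimistic Conjecture (you have dropped condition~(ii) on factoring linear differential operators, which the authors themselves suggest may be redundant). The ingredients you list for Pillar~1 (first-orderness of $\upo$-free via Theorem~\ref{tham}, newtonianity of $\T$ from \cite{vdH}) and for Pillar~2 (asymptotic couples, Liouville closures, the Newton polynomial, the Equalizer Theorem, the fluent/jammed dichotomy, and the trichotomy with gaps) are precisely the partial results surveyed in Sections~\ref{sec:H-fields} and~\ref{sec:new results}. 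Your identification of the main obstacle---building and controlling a newtonization while preserving $\upo$-freeness and Liouville closedness, and handling the transient appearance of gaps---matches what the paper flags as open.

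So your proposal is not a proof but a reasonable research plan, and it is the plan the paper itself advocates. Be aware, though, that at the level of detail you give, the hard steps (existence and near-uniqueness of a newtonization of an $\upo$-free $H$-field, and preservation of $\upo$-freeness under the various closure operations) are asserted rather than argued; the paper does not claim to know how to do these, and neither should you without substantially more work.
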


With slow progress during many years, 
our understanding of the situation has recently increased at a faster rate, 
and this is what we want to report on. In Section~\ref{sec:transseries} we give an informal
description of $\T$, in Section~\ref{sec:T-conjecture} we give some evidence for
the conjecture and indicate some plausible consequences. In Section~\ref{sec:H-fields}
we define $H$-fields, and explain their expected role in the story.
Section~\ref{sec:new results} describes our recent partial results towards the 
conjecture, obtained since the publication of the survey~\cite{AvdD3}. (A full account is in preparation, and of course we hope to 
finish it with a proof of the conjecture.) Section~\ref{sec:qf definability} proves 
quantifier-free versions of the conjectural induced structure on the constant field $\R$ of $\T$, of the asymptotic o-minimality of $\T$, and of $\T$
having NIP. In the last Section~\ref{sec:obstructions} we discuss 
what might be the right 
primitives to eliminate quantifiers for $\T$; this amounts to a 
strong form of the  above conjecture. 

\medskip\noindent
This paper is mainly expository and programmatic in nature, and occasionally
speculative. It is meant to be readable with only a 
rudimentary knowledge of model theory, valuations, and differential fields, and 
elaborates on talks by us on various recent 
occasions, in particular by the second-named author at the meeting in Ol\'eron.
For more background on the material in Sections~\ref{sec:transseries}--\ref{sec:H-fields} (for example, on Hardy fields) see~\cite{AvdD3}, which can serve as a companion to the present paper.

\subsection*{Conventions.} Throughout, $m$, $n$ range over $\N=\{0,1,2,\dots\}$.
For a field $K$ we let $K^{\times}=K\setminus \{0\}$ be its multiplicative 
group. By a {\em Hahn field\/} we mean a field
$\k((t^{\Gamma}))$ of generalized power series, with coefficients in the field $\k$ and
exponents in a non-trivial ordered abelian group $\Gamma$, and we view it as 
a {\em valued\/} field in the usual way\footnote{Strictly speaking, any valued field isomorphic to such a
generalized power series field is also considered as a Hahn field 
in this paper.}.  
By {\em differential field\/} we mean a field $K$ of characteristic zero
equipped with a derivation $\der\colon K \to K$. 
In our work the operation of taking the 
{\em logarithmic\/} derivative is just as basic as the derivation
itself, and so we introduce a special notation: $y^\dagger:=  y'/y$ denotes the
logarithmic derivative of a non-zero $y$ in a differential field. Thus
$(yz)^\dagger = y^\dagger + z^\dagger$ for non-zero~$y$,~$z$ in a differential field.
Given a differential field
$K$ and an element $a$ in a differential field extension of $K$ we
let $K\<a\>$ be the differential field generated by $a$ over $K$.
An {\em ordered 
differential field\/} is a differential field equipped with an ordering in 
the usual sense 
of ``ordered field.'' A {\em valued differential field\/} 
is a differential field equipped with a (Krull) valuation that is trivial 
on its prime subfield $\Q$. The term {\em pc-sequence\/} abbreviates {\em pseudo-cauchy sequence}. 

\section{Transseries}\label{sec:transseries}

\noindent
The ordered differential field $\mathbb T$ of transseries arises as a natural remedy for certain shortcomings of the ordered differential field of formal Laurent series.

\subsection{Laurent series.}   
Recall that the field $\R \(( x^{-1}  \)) $ of formal
Laurent series in powers of $x^{-1}$ over $\R$ consists of all series of the form
$$f(x)\ =\ \underbrace{ a_nx^n + a_{n-1}x^{n-1} + \cdots + a_1x}_{\text
{infinite part of $f$}} + a_0 +  \underbrace{a_{-1}x^{-1} +
a_{-2}x^{-2}+ \cdots}_{\text{infinitesimal part of $f$}}$$
with real coefficients $a_n, a_{n-1},\dots$
We order $\R \(( x^{-1}  \)) $ by requiring $x > \R$, and make it a differential
field by requiring $x' = 1$ and differentiating termwise. 

\medskip
\noindent
The ordered differential field $\R \(( x^{-1}  \)) $ is too small for many purposes:
\begin{itemize}
\item  $x^{-1}$ has no antiderivative $\log x$  in $ \R\(( x^{-1} \)) $;
\item  there is no reasonable exponentiation $f\mapsto \exp(f)$. 
\end{itemize}
Here ``reasonable'' means that it extends real
exponentiation and preserves its key properties: the map $f\mapsto \exp(f)$ 
should be an isomorphism from the ordered additive group of $\R\(( x^{-1} \)) $ 
onto its ordered multiplicative group of positive elements, and $\exp(x)> x^n$
for all $n$ in view of $x>\R$.  
Note that exponentiation does make sense for 
the {\em finite\/}  elements of $\R \(( x^{-1} \)) $: 
\begin{align*}  &\exp (a_0 + a_{-1}x^{-1} +
a_{-2}x^{-2}+ \cdots) \\&= \ex^{a_0}\sum_{n=0}^{\infty}\frac{1}{n!}
(a_{-1}x^{-1} + a_{-2}x^{-2} + \cdots)^n\\
&= \ex^{a_0}(1 + b_1x^{-1} +b_2x^{-2} + \cdots) \quad\text{for suitable $b_1,b_2,\ldots\in\R$.} \end{align*}
The main {\em model-theoretic\/} defect of $\R \(( x^{-1}  \)) $ 
as a differential field is that it defines the 
subset $\mathbb Z$; see \cite[proof of Proposition~3.3,~(i)]{DL}.  Thus it has no ``tame'' 
model-theoretic features. (In contrast, $\R \(( x^{-1}  \)) $  viewed as just 
a field is decidable by the work of Ax and Kochen~\cite{AK66}.)

\subsection{Transseries.}\label{subsec:transseries}
To remove these defects we extend  $ \R \((
x^{-1} \)) $ to an ordered differential field $\T$ of 
{\em transseries\/}:
series of  {\em transmonomials\/}  (or logarithmic-exponential monomials)  arranged
from left to right in decreasing order, each multiplied by a real
coefficient, for example 
$$ \ex^{\ex^x} - 3\ex^{x^2} + 5x^{\sqrt{2}} - (\log x)^{\pi} + 1 +
x^{-1} + x^{-2} + x^{-3} + \cdots + \ex^{-x} + x^{-1}\ex^{-x} \text{  .}$$ 
The reversed order type of the set of transmonomials that occur in a
given trans\-series can be any countable ordinal. (For the series displayed it
is $\omega + 2$.)
As with $ \R \(( x^{-1} \)) $, the natural derivation of $\T$ is given by 
termwise differentiation of such series, and in the natural ordering on $\T$, a non-zero transseries is positive iff its leading (``leftmost'') 
coefficient is positive. 

\medskip
\noindent
Transseries occur in solving implicit equations of the
form $P(x,y,\ex^x,\ex^y)=0$ for~$y$ as $x \to +\infty$, where $P$ is a
polynomial in four variables over $\R$. 
More generally, transseries occur as asymptotic expansions of functions definable in o-minimal expansions of the real field; see \cite{AvdD3} for more on this.
Transseries also arise as formal solutions
to algebraic differential equations and in many other ways.  
For example, the Stirling expansion for the Gamma function is a 
(very simple) transseries.

\medskip
\noindent
The terminology ``transseries'' is due 
to  \'Ecalle who introduced $\T$ in his
solution of Dulac's Problem: a polynomial vector field in the plane 
can only have finitely many limit cycles; see~\cite{E}. (This is related to 
Hilbert's 16th Problem.)
Independently, $\T$ was also defined by Dahn and G\"{o}ring in \cite{DG}, 
in connection with Tarski's problem on the real exponential field, and studied
as such in \cite{DMM}, in the aftermath of Wilkie's famous theorem \cite{Wilkie}.   
(Discussions of the history of transseries are in \cite{vdH,Ressayre2}.)

\medskip
\noindent
Transseries are added and multiplied in the usual way and form a ring
$\T$, and this ring comes equipped with several other natural operations.
Here are a few, each accompanied by simple examples and relevant facts 
about $\T$:

\subsubsection*{Taking the multiplicative inverse.}
Each non-zero $f\in\T$ has a multiplicative inverse in $\T$: for example,
\begin{align*} \frac{1}{x - x^2\ex^{-x}}\ =\ \frac{1}{x(1-x\ex^{-x})}\  &=\
x^{-1}(1+x\ex^{-x}+x^2\ex^{-2x} + \cdots )\\ 
 &=\ x^{-1}+ \ex^{-x}+x\ex^{-2x} + \cdots  \end{align*} 
As an ordered field, $\T$ is a real closed extension of $\R$. In particular, an algebraic closure of $\T$ is given by $\T[\imag]$ where $\imag^2=-1$.

\subsubsection*{Formal differentiation.} 
Each $f\in\T$ can be differentiated term by term, giving
a derivation $f\mapsto f'$ on the field $\T$. For example: 
$$(\ex^{-x}+\ex^{-x^2}+\ex^{-x^3}+\cdots)'\  =\ 
-(\ex^{-x}+2x\ex^{-x^2}+3x^2\ex^{-x^3}+\cdots).$$
The field of constants of this derivation is
$\{f\in \T: \  f'=0\}=\R$. 

\subsubsection*{Formal integration.} For each $f\in\T$ there is some $F\in\T$ (unique up to addition of a constant from $\R$) with $F'=f$: for example,
$$\int {{\ex^x}\over x}\,dx\  =\ \text{constant} + \sum_{n=0}^{\infty} n!x^{-1-n}\ex^x \quad \text{(diverges).}$$

\subsubsection*{Formal composition.} Given $f,g\in\T$ with $g>\R$, we can ``substitute $g$ for $x$ in $f$'' to obtain a transseries $f\circ g\in\T$. For example,
let $f(x)\ =\ x + \log x$ and $g(x) = x\log x$; writing $f(g(x))$ for 
$f\circ g$, we have
\begin{align*}  f(g(x))\ &=\ x\log x + \log (x\log x)\
            =\ x\log x + \log x + \log(\log x), \\
g(f(x))\ &=\ (x + \log x)\log(x+\log x) \\
           &=\ x\log x + (\log x)^2 + (x+\log x)\sum_{n=1}^{\infty}
\frac{(-1)^{n+1}}{n}\left( \frac{\log x}{x}\right)^n\\
          &=\ x\log x + (\log x)^2 + \log x + \sum_{n=1}^{\infty}
\frac{(-1)^{n+1}}{n(n+1)} \frac{(\log x)^{n+1}}{x^n}.
\end{align*}  
The Chain Rule holds: 
$$(f\circ g)'\ =\ (f'\circ g)\cdot g'\qquad\text{ for all $f,g\in\T$, $g>\R$.}$$

\subsubsection*{Compositional inversion.}
The set $\T^{>\R}:=\{f\in\T:f>\R\}$ of positive infinite transseries is closed under the composition operation $(f,g)\mapsto f\circ g$, and forms a group with identity element $x$. For example,
the transseries $g(x) = x\log x$ has a compositional inverse of the form
$$  \frac{x}{\log x}\cdot F\left(\frac{\log \log x}{\log x},
\frac{1}{\log x}\right)$$
where $F(X,Y)$ is an ordinary convergent power series in the two variables
$X$ and~$Y$ over $\R$ with constant term $1$. (This fact plays a certain role in the solution, using transseries, of a   problem of Hardy dating from 1911, obtained independently in \cite{DMM1} and \cite{vdH:PhD}; see~\cite{Ressayre2}.)

\subsubsection*{Exponentiation.} We have a canonical isomorphism 
$f\mapsto\exp(f)$, with inverse $g\mapsto\log(g)$, between the ordered additive group of $\T$ and the ordered multiplicative group~$\T^{>0}$; it extends the exponentiation of finite Laurent series described above. With $\sinh:=\frac{1}{2}\ex^x - \frac{1}{2}\ex^{-x}\in\T^{>0}$ (sinus hyperbolicus),
\begin{align*}
\exp(\sinh)\ 	&=\ \exp\left(\textstyle\frac{1}{2}\ex^x\right)\cdot\exp\left(-\textstyle\frac{1}{2}\ex^{-x}\right) \\
			&=\ \ex^{\frac{1}{2}\ex^x} \cdot \sum_{n=0}^\infty \frac{1}{n!}\left(-\textstyle\frac{1}{2}\ex^{-x}\right)^n\ 
			 =\ \sum_{n=0}^\infty \frac{(-1)^n}{n!2^n}\ex^{\frac{1}{2}\ex^x-nx},\\
\log(\sinh)\	&=\ 
\log\left(\textstyle\frac{\ex^x}{2}\left(1-\ex^{-2x}\right)\right)\ 
			 =\  x-\log 2 - \sum_{n=1}^\infty \frac{1}{n}\ex^{-2nx}.
\end{align*}
As an exponential ordered field, $\T$ is an elementary extension of the real 
exponential field~\cite{DMM1}, and thus model complete 
and o-minimal~\cite{Wilkie}.
The iterated exponentials 
$$\ex_0:=x,\  \ex_1:=\exp x,\  \ex_2:=\exp(\exp(x)),\ \dots$$ 
form an increasing cofinal sequence in the ordering of $\T$. Likewise, their formal 
compositional inverses  
$$\ell_0:=x,\ \ell_1:=\log x,\ \ell_2:=\log(\log(x)),\ \dots$$ 
form a decreasing coinitial sequence in  $\T^{>\R}$.

\medskip
\noindent
A precise construction of $\T$ is in \cite{DMM}, 
where it is denoted by $\RLE$. See also \cite{E}, \cite{Edgar} and \cite{vdH}
for other accounts. The {\em purely logarithmic transseries\/} are those which, 
informally speaking, do not involve exponentiation, and they make up an
intriguing differential subfield $\T_{\log}$ of $\T$ that has a very 
explicit definition:  
First, setting
$\ell_0:=x$ and $\ell_{m+1}=\log\ell_{m}$ yields the sequence $(\ell_m)$ 
of iterated logarithms of $x$. Next, 
let $\mathfrak L_m$ be the formal multiplicative group 
$$\ell_0^{\R}\cdots \ell_m^{\R}=\big\{\ell_0^{r_0}\cdots \ell_m^{r_m}:r_0,\dots,r_m\in\R\big\},$$ 
made into an ordered group
such that $\ell_0^{r_0}\cdots \ell_m^{r_m}>1$ 
iff the exponents $r_0, \dots, r_m$ are not all zero, and $r_i>0$ for the least $i$ with 
$r_i\ne 0$. Of course, if $m\le n$, then $\mathfrak L_m$ is naturally an ordered subgroup of $\mathfrak L_n$, and so we have a natural inclusion of Hahn fields 
$\R \(( \mathfrak L_m \))  \subseteq \R \(( \mathfrak L_n \)) $. We now have
$$ \T_{\log} = \bigcup_{n=0}^\infty \R\(( \mathfrak L_n \))   \qquad\text{(increasing union of differential subfields).}$$
It is straightforward to define $\log f\in \T_{\log}$ for $f\in \T^{>0}_{\log}$.

The inductive construction of $\T$ is more complicated, but
also yields $\T$ as a directed union of Hahn subfields, each of which is also
closed under the derivation.
Hahn fields themselves (as opposed to suitable directed unions of Hahn fields)
cannot be equipped with a reasonable exponential map; see 
\cite{KKS}.

Note that $\T_{\log}$ is a proper subfield of $\R\(( \mathfrak L \)) $, where $\mathfrak L:=\bigcup_{n=0}^\infty \mathfrak L_n$ (directed union of ordered multiplicative subgroups): for example, the series
$$\frac{1}{\ell_0^2} + \frac{1}{(\ell_0\ell_1)^2} + \cdots + \frac{1}{(\ell_0 \ell_1\cdots \ell_n)^2} + \cdots $$
lies in $\R\(( \mathfrak L \)) $, but not in $\T_{\log}$, and in fact, not even in $\T$. (This series will be important in Section~\ref{sec:new results} below; see also Theorem~\ref{thm:2ndorder}.)

\subsection{Analytic counterparts of $\T$.} 

Convergent series in $ \R \((
x^{-1} \)) $ define
germs of real analytic functions at infinity. This yields
an isomorphism of ordered differential fields
between the subfield of convergent series
in $ \R \(( x^{-1} \)) $ and a Hardy field. It would be desirable to
extend this to isomorphisms between larger differential subfields $T$ of 
$\T$ and Hardy fields $H$ which preserves as much structure as possible: 
the ordering, differentiation, and even integration and 
composition, whenever defined. 

However, if $T$ is sufficiently 
closed under integration (or solutions of other simple differential
equations), then it will contain 
divergent power series in~$x^{-1}$,
as well as more general divergent transseries. A major difficulty
is to give an analytic meaning to such transseries. In simple cases,
Borel summation provides a systematic device for doing this.
Borel's theory has been greatly extended by \'Ecalle, who introduced
a big subfield $\Tas$ of $\T$. The elements of $\Tas$ are
called {\em accelero-summable transseries\/}, and $\Tas$
is real closed, 
stable under differentiation, integration, composition, etc.
The analytic counterparts of accelero-summable transseries
are called {\em analysable functions\/},
and they appear naturally in \'Ecalle's proof of the Dulac Conjecture.
As a prelude to the $\T$-Conjecture in the next section, here are some sweeping statements\footnote{The English translation given here is ours; the original sentences are on p.~148. We
also used our notations $\T$ and $\Tas$ instead of \'Ecalle's $\R[[[x]]]$ and 
$\R\{\!\{\!\{x\}\!\}\!\}$.}  
from  \'Ecalle's book \cite{E}  on these notions, indicating that~$\T$ and its cousin $\Tas$ might be viewed as {\it universal domains}\/ for
asymptotic analysis. 

\medskip\noindent
{\em It seems \textup{[ \dots]} \textup{(}but I have not yet verified this in all generality\textup{)} that  $\Tas$ is closed under resolution of differential equations, or, more exactly, that if a differential equation has formal solutions in $\T$, then these solutions are automatically in $\Tas$.}

\medskip\noindent
{\em It seems \textup{[ \dots]} that the algebra $\Tas$ of accelero-summable transseries is 
truly the algebra-from-which-one-can-never-exit and that it marks an almost  
impassable horizon for ``ordered analysis.'' \textup{(}This  sector of analysis is in some sense ``orthogonal''  to harmonic analysis.\textup{)}}

\medskip\noindent
{\em This notion of analysable function represents probably the ultimate 
extension of the notion of \textup{(}real\textup{)} analytic function, and it seems inclusive 
and stable to a degree unheard of.}

\medskip\noindent
Accelero-summation requires a big machinery.
If we just try to construct isomorphisms
$T \rightarrow H$ which do not necessarily preserve composition but
do preserve the ordering and differentiation, then simpler arguments
with a more model-theoretic flavor can be used to prove
the following, from~\cite{vdH:hfsol}:

\begin{theorem}
Let $\T^{\operatorname{da}}\subseteq \T$ be the field of transseries 
that are differentially algebraic over $\R$. Then there is
an isomorphism of ordered differential fields between $\T^{\operatorname{da}}$
and some Hardy field. 
\end{theorem}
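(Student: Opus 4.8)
The plan is to build the Hardy field and the isomorphism simultaneously by a transfinite back-and-forth along a well-ordered exhaustion of $\T^{\operatorname{da}}$, extending one element at a time. Concretely, I would fix an enumeration $(a_\xi)_{\xi<\lambda}$ of (a generating set for) $\T^{\operatorname{da}}$ and construct an increasing chain of ordered differential subfields $T_\xi\subseteq \T^{\operatorname{da}}$ together with ordered-differential-field embeddings $\iota_\xi\colon T_\xi\to \mathcal{G}$ into the field of germs at $+\infty$ of sufficiently smooth real functions (the ambient ``universe'' of Hardy fields), such that each $\iota_\xi(T_\xi)$ is a Hardy field; at limit stages one takes unions, and the union $\bigcup_\xi \iota_\xi(T_\xi)$ is the desired Hardy field isomorphic to $\T^{\operatorname{da}}$. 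The base case is the classical isomorphism between the convergent Laurent series and a Hardy field mentioned in the excerpt, or even just $\R(x)$.

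The successor step is the crux. Given a Hardy field $H=\iota_\xi(T_\xi)$ and a transseries $a=a_\xi$ differentially algebraic over $T_\xi$, one must realize $a$ over $H$ inside a Hardy field extension, respecting order and derivation. Here I would distinguish cases according to how $a$ sits over $T_\xi$: (i) if $a$ is differentially transcendental over $T_\xi$ — which cannot happen for $a\in\T^{\operatorname{da}}$ relative to $\R$, but can happen relative to a small $T_\xi$ — one adjoins a ``generic'' Hardy-field element; (ii) if $a$ satisfies a minimal annihilating differential polynomial $P$ over $T_\xi$, one must produce a germ $h$ in a Hardy field extension of $H$ with $P(h)=0$ realizing the same cut over $T_\xi$ as $a$ does over $T_\xi$, and with the same order relations with the derivatives of $a$. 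The analytic input for (ii) is the existence of Hardy-field solutions to algebraic differential equations: roughly, asymptotic-analysis / quasianalyticity arguments (in the style of van der Hoeven's work on transseries solutions of differential equations, and the classical theory of Hardy fields being closed under adjunction of solutions of first-order equations, together with a reduction of higher-order equations via the Newton-polygon/refinement method) guarantee that a formal transseries solution lifts to a genuine germ generating a Hardy field. One must also check \emph{uniqueness of the cut}: that the formal solution $a$ and the analytic solution $h$ induce the same ordered-differential-field type over $T_\xi$, so that $\iota_\xi$ extends to an order- and derivation-preserving isomorphism $T_\xi\langle a\rangle\to H\langle h\rangle$. This uses that over a differential field a pseudo-cauchy / Newton-degree analysis pins down the relevant behaviour, and that Hardy fields, like $\T$, have the property that the sign of an element is determined by its asymptotics.

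The main obstacle, as expected, is precisely step (ii): transferring \emph{formal} solvability of algebraic differential equations in $\T$ to \emph{analytic} solvability in a Hardy field, while keeping track of the order structure. Two sub-difficulties deserve emphasis. First, divergence: as the excerpt notes, once $T$ is closed under integration it contains divergent series, so one cannot sum termwise; one needs an argument producing a genuine germ (via accelero-summation in the hard cases, but here one wants the lighter machinery of \cite{vdH:hfsol}, using Hardy-field closure results rather than full resummation). Second, matching the cut: a given differential polynomial may have several Hardy-field solutions with different asymptotics, and one must select the one realizing $a$'s type — this is handled by tracking the Newton polygon of $P$ and successive approximations to $a$, showing the approximation process has a unique ``analytic shadow.'' A secondary, more bookkeeping-type obstacle is ensuring that the chain $(\iota_\xi)$ is coherent through limit ordinals and that the final union is genuinely a Hardy field (closure under the field operations and derivation, and the germs all defined and suitably differentiable on a common interval $(c,+\infty)$); this is routine but must be stated carefully.
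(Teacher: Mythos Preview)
The paper does not prove this theorem; it simply records that the result follows from the general isomorphism-extension theorems in \cite{vdH:hfsol}. Your proposal is in fact a reasonable outline of the architecture of that reference: one builds the Hardy field as the image of a transfinite chain of ordered-differential-field embeddings, extending one differentially algebraic element at a time, with the hard work concentrated in the successor step where a formal solution in $\T$ must be matched by a germ in a Hardy field extension realizing the same cut. So at the level of strategy you are aligned with the source the paper defers to.

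Two corrections are in order. First, your case (i) is vacuous: since every $T_\xi$ contains $\R$ and every $a\in\T^{\operatorname{da}}$ is differentially algebraic over $\R$, each $a_\xi$ is automatically differentially algebraic over $T_\xi$; there is no ``small $T_\xi$'' over which it becomes transcendental. Second, calling the construction a ``back-and-forth'' is misleading. You are not alternating between two fixed structures; you are building the target Hardy field as you go, so the argument is a one-directional transfinite embedding. No ``forth'' step from the Hardy side is needed (or even meaningful, since the Hardy field is not given in advance).

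The genuine content you correctly flag but do not supply is the successor step: producing, for a minimal $P\in T_\xi\{Y\}$ with $P(a)=0$, a Hardy-field germ $h$ over $H=\iota_\xi(T_\xi)$ with $P^{\iota_\xi}(h)=0$ and the same asymptotic behavior as $a$. This is precisely what the extension theorems of \cite{vdH:hfsol} provide, and it is substantial; your gestures toward Newton polygons and quasianalyticity point in the right direction but would need to be cashed out as actual Hardy-field existence results. Since the paper itself regards this as a black box from \cite{vdH:hfsol}, your sketch is at the appropriate level for this survey, modulo the two corrections above.
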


In~\cite{vdH:hfsol}, this follows from general theorems
about extending isomorphisms between suitable
differential subfields of $\T$ and Hardy fields. 

\section{The $\T$-Conjecture} \label{sec:T-conjecture}

\noindent
As explained above, the elementary theory of $\T$ as an
exponential field is understood, but $\T$ is far more interesting when 
viewed as a {\em differential\/} field.

\bigskip\noindent
\begin{center}
{\bf From now on we consider $\T$ as an ordered valued differential field.}
\end{center}

\medskip\noindent
\begin{Tconjecture}
$\T$ is model complete.
\end{Tconjecture}

Model completeness is fairly robust as to which first-order language is used,
but to be precise, we consider $\T$ here as an $\mathcal L$-structure, where
$\mathcal L$ is the language of ordered valued differential rings given by
$$\mathcal L\ :=\ \{0,\ 1,\  +,\  -,\ \cdot\ ,\ \der,\ \le,\ \preceq\}$$ 
where the unary operation symbol $\der$ names the derivation, and the binary
relation symbol $\preceq$ names the valuation 
divisibility on the field $\T$ given by 
$$f\preceq g\ \Longleftrightarrow\ |f|\le c|g| \text{ for some $c\in \R^{>0}$.}$$
For the $\T$-Conjecture, it doesn't really matter whether or not 
we include~$\le$ and~$\preceq$, since the ordering and the valuation 
divisibility are existentially definable in terms of the other
primitives: for $\preceq$, use that $\R$ is the field of constants 
for the derivation. (See also \cite[Section~14]{AvdD2}.)
A purely differential-algebraic formulation of the $\T$-Conjecture 
reads as follows: 

\medskip
\noindent
{\it For any differential polynomial $P$ over $\Q$
in $m+n$ variables there exists a differential polynomial $Q$ over $\Q$ in $m+p$ variables, for some~$p$ depending on~$P$, such
that for all $a\in \T^m$ the following equivalence holds:}
$$ \text{$P(a,b) =0$  for {\it some $b\in \T^n$}} \quad\Longleftrightarrow\quad 
   \text{$Q(a,c)\ne 0$ for {\it all $c\in \T^p$.}}$$
In logical terms: every existential formula in the language of
differential rings is equivalent 
in $\T$ to a universal formula in that language.  

Sections~\ref{sec:qf definability} and ~\ref{sec:obstructions} suggest that
a strong form of the $\T$-Conjecture (elimination of quantifiers in a 
reasonable language) will imply the following attractive
and intrinsic model-theoretic properties of $\T$:

\begin{itemize} 

\item  If $X\subseteq \T^n$ is definable, then $X\cap \R^n$ is semialgebraic.

\item  $\T$ is {\it asymptotically o-minimal\/}: for each definable $X\subseteq \T$ there is a~$b\in \T$ such that either $(b,+\infty)\subseteq X$ or
$(b,+\infty)\subseteq \T\setminus X$.

\item  $\T$ has NIP. (What this means is explained in Section~\ref{sec:qf definability}.)
\end{itemize}

\subsection{Positive evidence.} In Section~\ref{sec:qf definability} we establish
quantifier-free versions of the last three statements. 
Over the years, evidence for the $\T$-Conjecture has accumulated. 
For example, the value group of $\T$ equipped with a certain function 
induced by the derivation of $\T$ (the ``asymptotic couple'' of $\T$ as defined
in Section~\ref{sec:asymptotic couples} below) is model complete; see \cite{AvdD}.
The best evidence for the $\T$-Conjecture to date is 
the analysis by van der Hoeven in \cite{vdH}
of the set of zeros in $\T$ of any given differential polynomial in one variable over $\T$. Among other things, he proved the following Intermediate Value Theorem:

\begin{theorem}  
Given any differential polynomial $P(Y)\in \T\{Y\}$ and $f,h\in \T$ with $P(f)<0<P(h)$, 
there is $g\in \T$ with $f < g < h$ and  $P(g)=0$.
\end{theorem}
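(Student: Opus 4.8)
The plan is to reduce the Intermediate Value Theorem for $\T$ to two ingredients: a fine analysis of the possible sign changes of $P$ along a cut in $\T$, and a mechanism for actually producing a zero once a sign change has been localized. I would first recall that $\T$ is a \emph{valued} differential field whose value group is the asymptotic couple studied in \cite{AvdD}, and that every element of $\T$ has a ``dominant monomial'' with a real coefficient; the sign of $P(g)$ for $g$ in a small neighbourhood of a given $f$ is governed, to first order, by the Newton polygon / Newton degree of the differential polynomial $P_{+f}(Y):=P(f+Y)$ with respect to the valuation. So the first real step is to develop, or invoke from \cite{vdH}, the theory of \emph{differential Newton polygons}: to each $P\in\T\{Y\}$ and each ``width'' we associate a finite combinatorial datum that controls which monomials $\m$ can occur as dominant monomials of zeros of $P$, and whether $P$ changes sign as $Y$ sweeps through a bundle of transseries with that dominant behaviour.

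Given that machinery, the heart of the argument is an induction that simultaneously shrinks the interval $(f,h)$ and the Newton degree of the relevant $P_{+f}$. Starting from $P(f)<0<P(h)$, I would locate the largest transmonomial $\m$ such that $f$ and $h$ already agree up to terms $\succ\m$ (intuitively, the first place where the two cuts part ways), replace $Y$ by $f+c\m+Y$ with the real coefficient $c$ chosen so that the ``first-order part'' of the shifted polynomial vanishes, and argue that the shifted and renormalized polynomial $\widetilde P$ again takes a negative value on the left and a positive value on the right of a correspondingly shifted interval, but now has strictly smaller Newton degree (or smaller complexity in whatever well-founded measure the Newton-polygon calculus provides). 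Iterating builds a (possibly transfinite) sequence of partial sums whose ``limit'' is the desired $g$; to make sense of that limit I would use that the partial sums form a pseudo-Cauchy sequence in $\T$ and that $\T$, being a Hahn-type field (a directed union of Hahn fields closed under the derivation, as described in Section~\ref{subsec:transseries}), is pseudo-complete in the appropriate sense, so the sequence has a pseudo-limit $g\in\T$; continuity of $P$ with respect to the valuation topology then forces $P(g)=0$, and the fact that we never crossed the endpoints keeps $f<g<h$.

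There are several auxiliary points to nail down along the way: one must check that the sign conditions $P(f)<0<P(h)$ are genuinely inherited by the shifted polynomials (this uses that $\T$ is an ordered field and that the discarded higher-order terms are too small to affect the sign, i.e. an exactness/flatness property of the valuation relative to the ordering); one must verify that the real coefficient $c$ killing the first-order part exists in $\R$ (here the Intermediate Value Theorem over $\R$, equivalently real-closedness of $\T$ noted in Section~\ref{subsec:transseries}, is what lets us solve the relevant real polynomial equation); and one must ensure the process terminates or converges rather than spiralling forever at the same complexity — this is exactly where the Newton-degree drop, and a subsidiary induction on the number of iterations within a fixed Newton degree, are essential.

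The main obstacle, and the part I expect to be genuinely hard rather than bookkeeping, is the convergence/termination argument: controlling the transfinite iteration so that the pseudo-Cauchy sequence of approximants actually has a pseudo-limit \emph{in} $\T$ (and not merely in some larger Hahn field such as $\R\(( \mathfrak L\)) $), and simultaneously guaranteeing that the Newton complexity is strictly decreasing at each ``non-trivial'' stage. One has to rule out the pathology in which the approximants, while staying between $f$ and $h$, accumulate towards something like the lacunary series $\sum_n (\ell_0\cdots\ell_n)^{-2}$ mentioned at the end of Section~\ref{subsec:transseries}, which lies outside $\T$. Handling this requires a careful analysis of the support of the approximants, using that $P$, $f$, $h$ live in one of the Hahn subfields in the directed union and that the Newton-polygon data bound the ``width'' of the supports that can arise; once the relevant supports are confined to a fixed well-ordered set, standard properties of Hahn fields deliver the pseudo-limit. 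In short, the proof is an orchestration of (i) differential Newton polygons à la \cite{vdH}, (ii) real-closedness of $\T$ to solve the first-order equations, and (iii) pseudo-completeness of $\T$ to pass to the limit, with the delicate step being the uniform control needed to make (iii) applicable inside $\T$ itself.
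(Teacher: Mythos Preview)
The paper does not give its own proof of this theorem; it attributes the result to van der Hoeven~\cite{vdH} and only remarks that the proofs there ``make full use of the formal structure of $\T$ as an increasing union of Hahn fields'' via ``analytic techniques (fixed point theorems, compact-like operators, etc.).'' Your Newton-polygon-plus-transfinite-refinement outline is indeed the skeleton of the argument in~\cite{vdH}, so in broad strokes you are on the right track and there is nothing in the paper itself to compare against more closely.

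There is, however, one genuine misstatement in your sketch that matters. You assert that $\T$ ``is pseudo-complete in the appropriate sense,'' and although you immediately flag convergence as the main obstacle, the blunt fact is that $\T$ is \emph{not} spherically complete: the paper emphasizes in Section~\ref{sec:new results} that no Liouville closed $H$-field can be maximal, and Section~\ref{sec:important pc-sequence} exhibits the explicit pc-sequences $(\upl_n)$ and $(\upo_n)$ in $\T$ with no pseudolimit in $\T$. So ``standard properties of Hahn fields deliver the pseudo-limit'' cannot be invoked for $\T$ as a whole. Your fallback---confining the approximants to a single Hahn subfield by controlling supports---is the correct repair, but this is precisely where the substantive work lies, and the paper notes (Section~\ref{sec:T-conjecture}) that the proof in~\cite{vdH} is in fact carried out for the \emph{gridbased} variant of $\T$, where supports sit in finitely generated monomial groups and such control comes essentially for free; the extension to the full $\T$ considered here is asserted but not argued in this paper. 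The ``fixed point theorems, compact-like operators'' the paper alludes to (see also~\cite{vdH:noeth}) are exactly the devices~\cite{vdH} uses to secure the limiting zero inside the relevant Hahn field, in place of a bare appeal to pseudo-completeness.
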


Here and later $K\{Y\}= K[Y, Y', Y'',\dots]$  is the ring of differential 
polynomials in the indeterminate $Y$ over a differential field $K$. The
proofs in \cite{vdH} make full use of the formal structure of $\T$ 
as an increasing union of Hahn fields. This makes it possible to apply
analytic techniques (fixed point theorems, compact-like operators, etc.) for solving algebraic differential equations; see also ~\cite{vdH:noeth}. 
Much of our work consists of recovering 
significant parts of \cite{vdH} under weak first-order assumptions 
on valued differential fields.

\subsection{The different flavors of $\T$.} In any precise inductive
construction of 
$\T$ we can impose various conditions on the so-called
{\em support\/} of a transseries, which is the ordered set of transmonomials
occurring in it with a non-zero coefficient. This leads to variants of the 
differential field $\T$; see for example the 
discussion in \cite{E} and \cite{vdH}. For the sake of definiteness, we take
here $\T$ to be the field $\RLE$ of logarithmic-exponential power series 
from \cite{DMM}, where supports are only required to be {\em anti-wellordered}; 
this is basically the weakest condition that can be imposed.

In \cite{vdH}, however, each transseries has a {\em gridbased\/} 
support contained in a finitely generated 
subgroup of the multiplicative group of transmonomials. This leads to a rather small differential 
subfield of our $\T$, but results such as the Intermediate Value Theorem 
in \cite{vdH} proved there for the gridbased version of $\T$ are known to
hold also for the $\T$ we consider here. Of course, we expect these variants 
of $\T$ all to be elementarily equivalent, and this is part of the
motivation for our $\T$-Conjecture. For this expectation to hold we would 
need also an explicit first-order axiomatization of the theory of $\T$, and
show that the various flavors of $\T$ all satisfy these axioms. 
At the end of Section~\ref{sec:new results} we conjecture such an 
axiomatization as part of a more explicit version of the $\T$-Conjecture.

Likewise, we expect \'{E}calle's differential field $\T^{\text{as}}$ of
accelero-summable transseries to be an elementary submodel of $\T$. (By the way,
$\T^{\text{as}}$ comes in similar variants as $\T$ itself.) Also 
$\T^{\operatorname{da}}$, whose elements are the differentially algebraic 
transseries, is a natural 
candidate for an elementary submodel of $\T$.

\subsection{Linear differential operators over $\T$.}  
The Intermediate Value Property for differential polynomials over $\T$ resembles
the behavior of ordinary one-variable polynomials over $\R$. 
There is another analogy in~\cite{vdH} between $\T$ and~$\R$ 
which is much easier to establish: factoring
linear differential operators over $\T$ is similar to factoring
one-variable polynomials over $\R$. By a linear differential operator over 
$\T$ we mean an operator
$A=a_0 + a_1\der + \dots + a_n\der^n$ on $\T\ $  
($\der=\text{the derivation}$,  all $a_i\in \T$);   it
 defines the same function on $\T$ as the differential polynomial 
$a_0 Y + a_1Y' + \dots + a_nY^{(n)}$.
The linear differential operators over $\T$ form a {\em non-commutative\/} 
ring $\T[\der]$ under composition.

\begin{theorem}  Every linear differential operator over $\T$  of positive 
order is surjective as a map $\T \to \T$, and
 is a product \textup{(}composition\textup{)}  of operators 
$a+b\der$ of order $1$ in $\T[\imag][\der]$. Every 
such operator is a product of order $1$ and order $2$ operators in 
$\T[\partial]$.  
\end{theorem}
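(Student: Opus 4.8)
The plan is to separate the statement into a \emph{factorization} part and a \emph{surjectivity} part, resting everything on three features of $\T$ recorded in Section~\ref{sec:transseries}: $\T$ is closed under integration (so $\der\T=\T$), $\T$ is closed under exponentiation, and $\T$ is real closed with $\T[\imag]$ algebraically closed. The last two already give that a first-order operator $a+b\der$ over $\T$ with $b\ne 0$ is surjective as a map $\T\to\T$: given $f\in\T$, choose $G\in\T$ with $G'=a/b$, put $E:=\exp(G)\in\T^{>0}$ so that $E^\dagger=a/b$, choose $H\in\T$ with $H'=Ef/b$, and then $y:=H/E\in\T$ solves $(a+b\der)y=f$. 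The same integrating-factor computation works over $\T[\imag]$ as soon as the relevant exponential lies in $\T[\imag]$; in the remaining cases one instead invokes the solvability in $\T$, hence in $\T[\imag]$, of \emph{quasi-linear} differential equations from~\cite{vdH} --- together with the observation that the transmonomials occurring in an element of $\T[\imag]$ are all real ones (coming from $\T$), so that no resonance can arise --- to conclude that $\der-g$ is surjective on $\T[\imag]$ for \emph{every} $g\in\T[\imag]$.

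The factorization of a monic $L\in\T[\imag][\der]$ of order $n\ge 1$ into $n$ first-order operators is proved by induction on $n$. The inductive step is to split off one right factor $\der-g$ with $g\in\T[\imag]$; since $\der-g$ right-divides $L$ precisely when $g$ is a zero of the associated Riccati equation $\widetilde L(g)=0$, an \emph{algebraic} differential equation of order $n-1$ over $\T[\imag]$, everything comes down to the assertion that \emph{every linear differential operator of positive order over $\T[\imag]$ has a zero of its Riccati operator in $\T[\imag]$}. Equivalently: $\T[\imag]$ is ``linearly surjective'', i.e.\ every such operator admits a nonzero formal exponential solution whose logarithmic derivative lies in $\T[\imag]$. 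This is the substantive content, and it is exactly what van der Hoeven's differential Newton polygon method for algebraic differential equations over $\T$ supplies~\cite{vdH}; that method relies in an essential way on the presentation of $\T$ as an increasing union of Hahn fields. Granting it, write $L=L_1\,(\der-g)$ with $L_1$ monic of order $n-1$ and iterate.

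To pass from $\T[\imag]$ to $\T$, use the conjugation automorphism $\sigma$ of $\T[\imag]$ over $\T$, which fixes $\der$ and so extends to $\T[\imag][\der]$. Let $L\in\T[\der]$ have order $n\ge 1$; by the previous paragraph it has a right factor $\der-g$ over $\T[\imag]$. If $g\in\T$, then $L=L'(\der-g)$ with $L'\in\T[\der]$ of order $n-1$, since right division in the Ore ring $\T[\der]$ by an operator over $\T$ keeps the quotient over $\T$; recurse. If $g\notin\T$, then applying $\sigma$ to $L=L_1(\der-g)$ and using $\sigma(L)=L$ shows that $\der-\bar g$ is also a right factor of $L$; as $g\ne\bar g$, the least common left multiple $N$ of $\der-g$ and $\der-\bar g$ is monic of order $2$, right-divides $L$, and satisfies $\sigma(N)=N$ (it is the unique monic order-$2$ operator right-divisible by $\der-g$ and by $\der-\bar g$, a property preserved by $\sigma$), so $N\in\T[\der]$; then $L=L''N$ with $L''\in\T[\der]$ of order $n-2$, and we recurse. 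Since the order drops by $1$ or $2$ at each step, this terminates with the asserted factorization of $L$ over $\T[\der]$ into operators of orders $1$ and $2$. Surjectivity on $\T$ now follows: by the factorization over $\T[\imag]$ into first-order operators, each surjective on $\T[\imag]$, any $L\in\T[\der]$ of positive order is surjective as a map $\T[\imag]\to\T[\imag]$; and given $f\in\T$, the real part of a solution $z\in\T[\imag]$ of $Lz=f$ lies in $\T$ and, $L$ having coefficients in $\T$, solves $Ly=f$.

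The main obstacle is the ``linearly surjective'' input of the second paragraph --- the existence of Riccati zeros in $\T[\imag]$ --- which does not follow from the elementary closure properties of $\T$ and genuinely requires van der Hoeven's Newton-polygon analysis of algebraic differential equations over $\T$; relatedly, the solvability of the quasi-linear equations used in the first paragraph for surjectivity of first-order operators over $\T[\imag]$ draws on the same circle of ideas. Everything else is bookkeeping in the non-commutative ring $\T[\der]$ together with the soft facts that $\T$ is closed under integration and exponentiation and that $\T[\imag]$ is algebraically closed.
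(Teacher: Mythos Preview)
The paper does not prove this theorem; it is stated in Section~\ref{sec:transseries} and attributed to~\cite{vdH}, so there is no in-paper proof to compare against. Your outline is the standard route and is essentially how the argument runs in~\cite{vdH}: obtain a right factor $\der-g$ over $\T[\imag]$ by solving the associated Riccati equation (the substantive Newton-polygon input), iterate, then descend to $\T[\der]$ via the conjugation automorphism and least common left multiples; surjectivity follows by composing surjective first-order factors over $\T[\imag]$ and taking real parts. The descent step and the real-part trick are handled correctly.

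Two points of clarification. First, your use of ``linearly surjective'' is nonstandard: in the literature that phrase means every nonzero linear operator is surjective, not that Riccati equations have zeros. These are related but genuinely distinct --- for $\der-\imag$ the Riccati zero is trivially $\imag$, while surjectivity of $\der-\imag$ on $\T[\imag]$ is a separate fact (its kernel there is zero, since $u''+u=0$ has no nonzero solution in $\T$). Second, your justification for surjectivity of $\der-g$ on $\T[\imag]$ when the integrating factor $\exp(\int g)$ fails to lie in $\T[\imag]$ (e.g., $g=\imag$, where one is led to $\der^2+1$ on $\T$) is underspecified; the appeal to ``quasi-linear equations'' and ``no resonance'' gestures at the right circle of ideas in~\cite{vdH} but is not a self-contained argument. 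Since you already concede both the Riccati-zero input and the first-order-surjectivity input to~\cite{vdH}, it would be cleaner simply to cite them together as the two black boxes rather than sketch a partial argument for the latter.
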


Thus coming to grips with linear differential operators over $\T$ 
reduces to some extent to understanding those of order $1$ and order $2$. 
Studying operators of order $1$ is largely a matter of solving equations 
$y'=a$ and $z^\dagger =b$. Modulo solving such equations, 
order $2$ operators can be reduced to those
of the form $4\der^2+f$, where the next theorem is relevant.

\begin{theorem}\label{thm:2ndorder} Let $f\in \T$. Then the following are equivalent: 
\begin{enumerate}
\item[(1)] the equation $4y''+fy\ =\ 0$ has a non-zero solution in $\T$;
\item[(2)] $f\ <\  \frac{1}{(\ell_0)^2}+\frac{1}{(\ell_0\ell_1)^2}+
\frac{1}{(\ell_0\ell_1\ell_2)^2} +\cdots+
\frac{1}{(\ell_0\ell_1\cdots\ell_n)^2}$
for some $n$;
\item[(3)] $f\neq 2(u^\dagger{}^\dagger)'-(u^\dagger{}^\dagger)^2 +(u^\dagger)^2$
for all $u>\R$ in $\T$.
\end{enumerate}
\end{theorem}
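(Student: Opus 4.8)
The plan is to analyze the second-order equation $4y'' + fy = 0$ by the classical substitution that converts it to a Riccati equation, and then to recognize the resulting condition in terms of iterated logarithmic derivatives. First I would observe that a non-zero solution $y\in\T$ of $4y''+fy=0$ may, after replacing $y$ by $-y$ if necessary, be taken with $y>0$ (using that $\T$ is an ordered field and that the zero set of a solution of a linear ODE over a Hardy-field-like structure is ``thin''—more carefully, one replaces $y$ by a suitable translate; but the cleanest route is to work in $\T[\imag]$ and then come back). Set $u := $ the element with $u^\dagger = y^\dagger$, i.e. work directly with $v := y^\dagger = y'/y$. Then $y'' / y = v' + v^2$, so $4y''+fy=0$ becomes the Riccati equation $f = -4(v' + v^2)$. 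So (1) is equivalent to: there is $v\in\T$ with $f = -4v' - 4v^2$. The content of the theorem is that this solvability is governed by the explicit bound in (2), and that the negation of (3) is exactly a reparametrization of the Riccati condition.

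The second step is to reconcile the Riccati form $f=-4v'-4v^2$ with the shape in (3), $f = 2(u^{\dagger\dagger})' - (u^{\dagger\dagger})^2 + (u^\dagger)^2$ with $u>\R$. Writing $w := u^\dagger$ (so $u>\R$ forces $w$ to be, asymptotically, $\ell_0^\dagger = 1/x$ or smaller/positive in the appropriate sense) and $w^\dagger = u^{\dagger\dagger}$, the expression becomes $f = 2(w^\dagger)' - (w^\dagger)^2 + w^2$. A direct computation identifies this with $-4v'-4v^2$ after the substitution $v = -\tfrac12 w^\dagger + (\text{something in }w)$, or more transparently $2v = -w^\dagger$ together with a correction; I would carry out this bookkeeping to show the two parametrized families of possible $f$'s coincide, so that $\neg(3)\Leftrightarrow(1)$. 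The role of the constraint $u>\R$ (equivalently a sign/size constraint on $w=u^\dagger$) is to pick out exactly the solutions $v$ that are logarithmic derivatives of positive transseries, which is what a genuine solution $y$ produces.

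The third step, and the heart of the matter, is the equivalence $(1)\Leftrightarrow(2)$, i.e. solvability of $f=-4(v'+v^2)$ in $\T$ iff $f$ lies below $\sum_{i\le n}(\ell_0\cdots\ell_i)^{-2}$ for some $n$. Here I would exploit the explicit iterated-Hahn-field structure of $\T_{\log}\subseteq\T$: the partial sums $s_n := \sum_{i=0}^{n} (\ell_0\ell_1\cdots\ell_i)^{-2}$ are precisely the values $-4(v_n' + v_n^2)$ for the ``iterated-logarithm Riccati solutions'' $v_n$ with $v_n^\dagger$ built from $\ell_0^\dagger,\dots,\ell_n^\dagger$ — this is the standard fact that $4y''+ (s_n) y = 0$ is solved by $y = (\ell_0\ell_1\cdots\ell_n)^{1/2}$ (one checks $y^\dagger = \tfrac12\sum_{i\le n}(\ell_0\cdots\ell_i)^{-1}$ after noting $\ell_i^\dagger = 1/(\ell_0\cdots\ell_i)$, then $f = -4(y^\dagger{}' + (y^\dagger)^2) = s_n$ by a short induction). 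The forward direction $(1)\Rightarrow(2)$ then says: if the Riccati equation is solvable at all for a given $f$, monotonicity forces $f\le s_n$ for some $n$; this is where one uses that the obstruction to solving $y'=a$, $z^\dagger=b$ in $\T$ (cf. the theorem on order-$1$ operators quoted above) produces exactly the ``critical'' series $\sum(\ell_0\cdots\ell_i)^{-2}$, which by the remark in Section~\ref{sec:transseries} does \emph{not} lie in $\T$. For $(2)\Rightarrow(1)$ one takes $f<s_n$ and perturbs the known solution $(\ell_0\cdots\ell_n)^{1/2}$, solving the resulting inhomogeneous Riccati equation by a fixed-point / successive-approximation argument valid in the Hahn-field pieces of $\T$ — exactly the kind of analytic-in-$\T$ technique from \cite{vdH}.

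The main obstacle I anticipate is precisely this last fixed-point argument for $(2)\Rightarrow(1)$: one must show that the strict inequality $f<s_n$ leaves enough ``room'' to correct the approximate solution $(\ell_0\cdots\ell_n)^{1/2}$ to an exact one within $\T$ (not merely within $\R\(( \mathfrak L\)) $), and to rule out that a smaller $f$ could still fail to be solvable because the correction drifts outside $\T$. Controlling the support of the iterates and proving convergence in the Hahn-field topology — using that each partial construction stays in some $\R\(( G\)) $ closed under $\der$ — is the delicate part; this is where the proof genuinely needs the fine structure of $\T$ rather than soft first-order reasoning. The equivalences $(1)\Leftrightarrow(3)$ and the identification of $s_n$ as a solvable value are, by contrast, essentially computations once the Riccati substitution is in place.
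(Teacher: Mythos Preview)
First, a framing remark: the paper does not actually give a proof of this theorem. It states the result, observes that the equivalence of (1) and (2) is the transseries analogue of a Boshernitzan--Rosenlicht theorem for Hardy fields, and records that (1)$\Leftrightarrow$(3) has been known to the authors since 2002, with the Schwarzian-derivative identity displayed afterwards as a hint. So there is no in-text argument to compare against; what I can do is evaluate whether your proposal would succeed.

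Your Riccati reduction is correct and is indeed the right starting point: writing $v=y^\dagger$ for a nonzero solution $y$ turns $4y''+fy=0$ into $f=-4v'-4v^2=\ome(2v)$, and since $\T$ is Liouville closed every $v\in\T$ is a logarithmic derivative, so (1) is equivalent to $f\in\ome(\T)$. Your computation that $y=(\ell_0\cdots\ell_n)^{1/2}$ solves the equation with $f=\upo_n$ is also right and is exactly how one sees that the partial sums in (2) arise.

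The genuine gap is in your treatment of (1)$\Leftrightarrow$(3): you have the direction backwards. You argue that the family $\{-4v'-4v^2:v\in\T\}$ should \emph{coincide} with $\{2(u^{\dagger\dagger})'-(u^{\dagger\dagger})^2+(u^\dagger)^2:u>\R\}$, concluding $\neg(3)\Leftrightarrow(1)$. But the theorem asserts $(3)\Leftrightarrow(1)$: the equation is solvable precisely when $f$ \emph{avoids} all values of that expression. In other words, the expression in (3) parametrizes the \emph{unsolvable} $f$'s, i.e.\ the complement $\T\setminus\ome(\T)$, not the solvable ones. One can already see this cannot be a mere reparametrization of the Riccati family: with $w=u^\dagger$ the expression equals $\ome(-w^\dagger)+w^2$, and since $u>\R$ forces $w\ne 0$, the term $w^2$ is a genuine positive shift above an element of $\ome(\T)$. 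The whole model-theoretic point of the equivalence, as the paper stresses, is that the \emph{existential} condition (1) matches the \emph{universal} condition (3); your reading would make (3) existential as well, which defeats the purpose. So the ``bookkeeping'' you propose cannot work: what is actually needed is to show that $\{2(u^{\dagger\dagger})'-(u^{\dagger\dagger})^2+(u^\dagger)^2:u>\R\}$ is exactly the upward-closed set $\{f\in\T:f\ge\upo_n\text{ for all }n\}$, and this requires a different argument (in the spirit of the later Sections~\ref{sec:important pc-sequence} and~\ref{sec:obstructions}) rather than an algebraic substitution.
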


The equivalence of (1) and (2) is analogous to a theorem of Boshernitzan~\cite{Boshernitzan} and Rosenlicht~\cite{Rosenlicht95} in the realm of Hardy fields. (See the remarks following Theorem~1.12 in \cite{AvdD3} for a correction of \cite{Rosenlicht95}.) The equivalence of (1) and (3) has been known to us since
2002. Its model-theoretic significance is that
the existential condition (1) on $f$ is equivalent to a universal condition on $f$, 
namely (3), in accordance with the $\T$-Conjecture.

We note here that for a non-constant element $u$ of a differential field,
\begin{align*} 2(u^\dagger{}^\dagger)'-(u^\dagger{}^\dagger)^2+(u^\dagger)^2\  &=\ 2S(u),\  \text{ where}\\
S(u)\ :=\ (u^{\prime\dagger})' - \frac{1}{2} (u^{\prime\dagger})^2\ &=\
\frac{u'''}{u'} - \frac{3}{2}\left(\frac{u''}{u'}\right)^2
\end{align*}
is known as the Schwarzian derivative of $u$, which plays a role in the analytic theory of linear differential equations; see \cite[Chapter~10]{Hille}.

\section{$H$-Fields}\label{sec:H-fields}

\noindent
Abraham Robinson taught us to think about model completeness and quantifier
elimination in an abstract algebraic way. This approach as refined by Shoenfield
and Blum suggests that the $\T$-Conjecture follows from an
adequate extension theory for those ordered differential fields 
that share certain basic (universal) properties with $\T$. This involves
a critical choice of the
``right'' class of ordered differential fields. Our choice: 
$H$-fields\footnote{The prefix~$H$ honors the pioneers Hahn, 
Hardy, and Hausdorff. Arguably, Borel's work~\cite{Borel} 
in this vein is even more significant, but his name doesn't start with H. 
One could go still further back, to du Bois-Reymond's paper~\cite{duBoisReymond}, a source of inspiration for Hardy~\cite{hardy}.} 
as defined below. 
 Then the challenge becomes to show that 
the  ``existentially closed'' $H$-fields are exactly the
$H$-fields that share certain deeper first-order properties with~$\T$. 
If we can achieve this, then $\T$ will be model complete. 

In practice this
often amounts to the following: come up with the ``right'' extra primitives
(these should be existentially as well as universally definable in~$\T$); guess
the ``right'' axioms characterizing existentially closed $H$-fields;
and prove suitable embedding theorems for $H$-fields enriched with
these primitives. 
If this works, one has a proof of  a strong form of the
$\T$-Conjecture, namely an elimination of
quantifiers in the language $\mathcal L$ augmented by symbols for the extra primitives.
Such an approach to understanding definability in a given mathematical 
structure often yields further payoffs, for example, 
a useful dimension theory for definable sets.

\medskip\noindent
Let $K$ be an ordered differential field, and put\footnote{The notations $\mathcal{O}$ and $\smallo$ are reminders of Landau's big O and small o.} 
\begin{align*}  C&=\{a\in K: a'=0\} && \text{(constant field of $K$)}\\
\mathcal{O} & =  \{a\in K: \abs{a}\le c \text{ for some $c\in C^{>0}$}\} &&    \text{(convex hull of $C$ in $K$)}\\
              \smallo  &= \{a\in K: \abs{a} < c \text{ for all $c\in C^{>0}$}\}  && \text{(maximal ideal of $\mathcal{O}$).}
\end{align*} 
We call $K$  an {\bf $H$-field} if the following conditions are satisfied: 
\begin{enumerate} 
\item[(H1)]   $\mathcal{O}\  =\  C+\smallo$,
\item[(H2)]   $a>C\ \Longrightarrow\ a'>0$.
\end{enumerate}
Examples of $H$-fields include any Hardy field containing $\R$, such as $\R(x, \ex^x)$; the ordered differential field  $ \R \(( x^{-1}  \)) $
of Laurent series; and~$\T$. All these satisfy 
an extra axiom: 
\begin{enumerate} 
\item[(H3)]   $a\in \smallo\ \Longrightarrow\ a' \in \smallo$,
\end{enumerate}
which is also expressed by saying that the derivation is 
{\bf small.}

\medskip\noindent
An $H$-field $K$ comes with a definable (Krull) valuation 
$v$ whose valuation ring is the convex hull $\mathcal{O}$ of $C$.
It will be useful to fix some 
notation for any valued differential field $K$, not necessarily an
$H$-field: $C$ is the constant field, $\mathcal{O}$ is the valuation ring,
$\smallo$ is the maximal ideal of $\mathcal{O}$, and 
$v\colon K^\times \to \Gamma$ with $\Gamma=v(K^{\times})$ is the valuation. If we need to indicate 
the dependence on $K$ we use subscripts, so 
$C=C_K$, $\mathcal{O}=\mathcal{O}_K$, and so on. The 
valuation divisibility on~$K$ corresponding to its valuation is 
the binary relation $\preceq$ on $K$ given by
$$ f\preceq g\ \Longleftrightarrow\ vf \ge vg.$$
Note that if $K$ is an $H$-field, then for all $f,g\in K$,  
$$f\preceq g \Longleftrightarrow\ \text{$|f|\le c|g|$ for some $c\in C^{>0}$.}$$
We also write $g\succeq f$ instead of $f\preceq g$, and we define 
$$f\asymp g\ \Longleftrightarrow\ \text{$f\preceq g$ and $g\preceq f$,} 
\qquad f\sim g\  \Longleftrightarrow\ f-g\prec f.$$
Further, we introduce the binary relations $\prec$ and $\succ$ on $K$:  
$$ f\prec g\ \Longleftrightarrow\ f\preceq g \text{ and }f\not\asymp g\Longleftrightarrow\ vf > vg, \qquad f\succ g\ \Longleftrightarrow\ g\prec f.$$
If $K$ is an $H$-field, then for $f,g\in K$ this means: 
$$f\prec g\ \Longleftrightarrow\ \text{$|f|< c|g|$ for all $c\in C^{>0}$.}$$
Rosenlicht gave a nice valuation-theoretic formulation of 
l'H\^opital's rule: if~$K$ is a Hardy field, then  
\begin{equation}\tag{$\ast$}\label{eq:lH}
\text{ for all $f, g\in K$ with $f,g\prec 1$:\  $\ f\prec g\ \Longleftrightarrow\ f'\prec g'$.}
\end{equation}
This rule \eqref{eq:lH} goes through for $H$-fields. 
The ordering of an $H$-field determines its valuation, but plays 
otherwise a 
secondary role. Moreover, it is often useful to pass to algebraic
closures like $\T[\imag]$, with the valuation extending uniquely, 
still obeying (H1) and \eqref{eq:lH}, but without
ordering. Thus much of our work is in the 
setting of {\bf asymptotic differential fields}: these are
the valued differential 
fields
satisfying~\eqref{eq:lH}. We use ``asymptotic field'' as abbreviation for ``asymptotic differential field''. Section~\ref{sec:new results} will show the benefits of
coarsening the valuation of an $H$-field; the resulting object might not be an 
$H$-field anymore, but remains an asymptotic field.
It is a useful and
non-trivial fact that any algebraic extension of an asymptotic  
field is also an asymptotic field.

\medskip\noindent
An $H$-field $K$ is {\bf existentially closed}  
if every finite system of algebraic differential equations over $K$ 
in several unknowns with a solution in an $H$-field extension of $K$ 
has a solution in $K$. Including in these systems also differential 
inequalities (using~$\le $ and~$<$) and asymptotic conditions (involving~$\preceq$ and~$\prec$) makes no
difference. (See \cite[Section~14]{AvdD2}.) A more detailed version of the $\T$-Conjecture now says:

\begin{refinedTconjecture}
$\T$ is an existentially closed $H$-field, and there exists a set
$\Sigma$ of $\mathcal L$-sentences such that the existentially closed
$H$-fields with small derivation are exactly the $H$-fields
satisfying $\Sigma$. $($In more model-theoretic jargon: 
the theory of $H$-fields with 
small derivation has a model companion, and $\T$ is a model of
this model companion.$)$  
\end{refinedTconjecture}

A comment on axiom (H1) for $H$-fields: it expresses that the
{\em constant\/} field for the derivation is also in a natural way the 
{\em residue\/}
field for the valuation. However, (H1) cannot be expressed by a 
universal sentence in the language $\mathcal L$ of
ordered valued differential rings. We define a pre-$H$-field to be an
ordered valued differential subfield of an $H$-field. There are
pre-$H$-fields that are not $H$-fields, and the valuation of a pre-$H$-field
is not always determined by its ordering, as is the case in $H$-fields. 
Fortunately, any pre-$H$-field $K$ has an $H$-field extension $H(K)$, its
{\bf $H$-field closure},
that embeds uniquely over $K$ into any $H$-field extension of $K$; 
see \cite{AvdD1}. (Here and below, ``extension'' and ``embedding'' are
meant in the sense of $\mathcal L$-structures.)    

\medskip
\noindent
Figure~\ref{fig:ordered differential fields} indicates the inclusions among
the various classes of ordered valued differential fields defined in this section, except that asymptotic fields are not necessarily ordered. The right half
represents the case of {\em small derivation}.

\begin{figure}
\begin{picture}(300,200)

\put(0,0){\line(0,1){200}}
\put(0,0){\line(1,0){300}}

\put(20,20){\line(0,1){160}}
\put(20,20){\line(1,0){260}}

\put(40,40){\line(0,1){120}}
\put(40,40){\line(1,0){220}}

\put(60,60){\line(0,1){80}}
\put(60,60){\line(1,0){180}}

\put(0,200){\line(1,0){300}}
\put(20,180){\line(1,0){260}}
\put(40,160){\line(1,0){220}}
\put(60,140){\line(1,0){180}}

\put(300,0){\line(0,1){200}}
\put(280,20){\line(0,1){160}}
\put(260,40){\line(0,1){120}}
\put(240,60){\line(0,1){80}}

\put(80,80){\line(0,1){40}}
\put(80,80){\line(1,0){140}}
\put(80,120){\line(1,0){140}}
\put(220,80){\line(0,1){40}}

\multiput(150,95)(0,4){26}{\line(0,1){2.5}}
\multiput(150,0)(0,4){16}{\line(0,1){2.5}}
\multiput(150,75)(0,4){2}{\line(0,1){2.5}}


\put(10,5){asymptotic fields}
\put(30,25){pre-$H$-fields}
\put(50,45){$H$-fields}
\put(70,65){Liouville closed $H$-fields}
\put(90,85){existentially closed $H$-fields}
\put(170,187){with small derivation}

\end{picture}
\caption{}
\label{fig:ordered differential fields}
\end{figure}

\subsection{Liouville closed $H$-fields.} The real closure of an $H$-field is 
again an $H$-field; see \cite{AvdD1}. 
Going beyond algebraic adjunctions, we consider adjoining solutions to
first-order linear differential equations $y'+ay=b$. 

\medskip\noindent
Call an $H$-field $K$ {\bf Liouville closed} if it is real closed and for all $a,b \in K$ there are $y,z\in K$ such that $y'=a$  and
$z\ne 0$, $z^\dagger =b$; equivalently, $K$ is real closed, and
any equation $y'+ay=b$ with $a,b\in K$ has a {\em non-zero}\footnote{This non-zero requirement was inadvertently dropped on p.~580 of~\cite{AvdD1}.} solution $y\in K$.  
For example, $\T$ is Liouville closed. Each existentially closed $H$-field is
Liouville closed as a consequence of the next theorem. 
A {\bf Liouville closure} of an $H$-field $K$ is a minimal Liouville closed $H$-field extension of $K$.
We can now state the main result from \cite{AvdD1}:

\begin{theorem} Let $K$ be an $H$-field. Then $K$ has exactly one 
Liouville closure, or exactly two 
Liouville closures \textup{(}up to isomorphism over $K$\textup{)}.
\end{theorem}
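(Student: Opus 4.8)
The plan is to build a Liouville closure of an $H$-field $K$ step by step, showing at each step that there are essentially no more than two choices, and that these choices can only be made once. First I would set up the basic extension machinery: given an $H$-field $K$ and an equation $y'+ay=b$ with $a,b\in K$ that has no solution in $K$, I want to adjoin such a solution inside an $H$-field extension, and understand how many non-isomorphic ways this can be done. The key algebraic input is a classification of one-step extensions. There are two flavors: \emph{transcendental} extensions $K\langle y\rangle$ where $y$ is transcendental over $K$, and these come with a well-defined ``cut'' that $y$ fills in $\Gamma$ or in $K$ itself; and extensions forced by the $H$-field axioms. The heart of the matter is that for integration (solving $y'=a$) and for exponential integration (solving $z^\dagger=b$, $z\ne 0$), the only genuine ambiguity is a \emph{sign}: when we adjoin an antiderivative or an exponential of an integral, the new element may be taken ``just above'' or ``just below'' the cut it is supposed to fill, and axiom (H2) together with \eqref{eq:lH} pins down everything else. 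So each problematic equation contributes at most a binary choice.

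Next I would show that these binary choices are not independent but are all \emph{the same} choice, or rather that at most one of them is ever free. Concretely: once we have fixed how the first ``asymptotic integration'' is resolved — i.e., once we decide on which side of its cut the first adjoined element sits — every subsequent integration and exponentiation is \emph{determined} over the field built so far. The mechanism is that in an $H$-field the value group together with the induced ``$\psi$-map'' (the asymptotic couple, mentioned in Section~\ref{sec:asymptotic couples}) rigidifies the situation: after the first free choice, the valuation-theoretic data leaves no room, because the relevant cuts become either realized or of a type that admits a unique $H$-field completion. This is where the dichotomy ``exactly one or exactly two'' comes from: either $K$ already has the property that \emph{no} free choice is ever encountered (for instance when $K$ is what one calls ``grounded'' versus having ``asymptotic integration'' of a certain kind), giving a unique Liouville closure; or exactly one free binary choice occurs, and then after resolving it everything is forced, giving exactly two Liouville closures up to isomorphism over $K$.

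To turn this into a proof I would (i) prove the one-step extension lemmas, including uniqueness-up-to-sign and the fact that the resulting extension is again an $H$-field (real closedness is handled by the already-cited result that the real closure of an $H$-field is an $H$-field); (ii) iterate transfinitely, taking unions at limit stages — unions of chains of $H$-fields are $H$-fields, which must be checked but is routine — to produce \emph{some} Liouville closed extension, and observe that minimality can be arranged by only adjoining what is forced or what resolves an unavoidable cut; (iii) prove the rigidity statement: any two Liouville closures of $K$ are isomorphic over $K$ after the first free choice is matched up, by a back-and-forth / embedding argument that uses the one-step uniqueness lemmas to extend a partial isomorphism; and (iv) analyze exactly when a free choice is available, identifying the structural invariant of $K$ that distinguishes the ``one closure'' case from the ``two closures'' case.

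The main obstacle I expect is step (iii) combined with step (iv): proving that \emph{after} one controlled choice the closure is rigid, and simultaneously pinning down the precise dividing line between the one-closure and two-closure cases. The difficulty is that the back-and-forth must be carried out in the expanded language with the valuation, and one must show that the partial isomorphisms one builds always extend — which forces a careful bookkeeping of which cuts in $\Gamma$ and in the field are ``of the same type'' on both sides. Getting the invariant right (the property of $K$ that governs whether the ambiguous sign-choice ever arises) is delicate because it is a genuinely new phenomenon with no analogue in the algebraic or the valued-field-only settings; it is essentially a statement about the asymptotic couple of $K$ and whether $K$ admits an element whose logarithmic derivative lies in a certain boundary position. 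Everything else — the one-step lemmas, closure under unions, existence — is, while laborious, fairly mechanical given the $H$-field axioms and \eqref{eq:lH}.
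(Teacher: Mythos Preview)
The paper does not prove this theorem: it is quoted as the main result of \cite{AvdD1} and no argument is given here. What the paper does supply is the structural picture behind the proof, namely the trichotomy in Section~\ref{sec:trichotomy} (gap / $\Psi$ has a maximum / asymptotic integration), together with the explicit description of the two incompatible ways to remove a gap. So strictly speaking there is nothing to compare your argument to within this paper.

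That said, your outline has a genuine misidentification of the branching mechanism. You locate the ambiguity in a ``sign'' choice---which side of a cut a newly adjoined antiderivative or exponential sits on. That is not where the two Liouville closures come from. The actual bifurcation, as the paper spells out just after the theorem, occurs when $K$ has a \emph{gap} $\gamma$: taking $a\in K$ with $v(a)=\gamma$, one can form an $H$-field extension $K(y_1)$ with $y_1\prec 1$ and $y_1'=a$, or an $H$-field extension $K(y_2)$ with $0\ne y_2\prec 1$ and $y_2^{\dagger}=a$, and these two extensions cannot be embedded into a common $H$-field extension of $K$. So the choice is between \emph{integrating} the gap element and \emph{exponentially integrating} it, not between two signs for the same kind of adjunction. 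Your one-step lemmas in (i) would presumably show that individual integrations and exponential integrations are essentially unique; the branching happens because at a gap both operations are available and lead in incompatible directions.

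A second issue is your dichotomy ``either no free choice, or exactly one''. The paper is explicit that Case~2 ($\Psi$ has a maximum) gives a unique Liouville closure and Case~1 (gap) gives two, but Case~3 (asymptotic integration) ``requires finer distinctions for a definite answer''. In Case~3 a gap may or may not appear along the way to the Liouville closure, and detecting this is exactly the delicate invariant you allude to at the end; it is not purely a statement about the asymptotic couple of $K$ but involves the differential-field structure more seriously. Your step (iv) is therefore harder than you suggest, and the rigidity argument in (iii) has to be organised so that one tracks whether a gap is ever created during the tower, not merely whether one is present in $K$ itself.
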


Whether $K$ has one or two Liouville closures is related to a 
trichotomy in the class of $H$-fields which
pervades our work. In fact, it is a trichotomy that can be
detected on the level of the value group; see below.

\subsection{Trichotomy for $H$-fields.} \label{sec:trichotomy}

Let $K$ be an asymptotic field with valuation~$v$ and value group
$\Gamma= v(K^\times)$. We set 
$$\Gamma^{\ne}:= \Gamma\setminus \{0\}, \quad \Gamma^{<}:= \{\gamma\in \Gamma:\ \gamma<0\}, \quad \Gamma^{>}:=\{\gamma\in \Gamma:\ \gamma>0\}.$$ It follows from 
the l'H\^{o}pital-Rosenlicht rule \eqref{eq:lH} that
the derivation and the logarithmic derivative of $K$ induce functions
on $\Gamma^{\ne}$:  
\begin{align*}v(a)= \gamma   &\mapsto v(a')=\gamma'\   :\  \Gamma^{\ne} \to   \Gamma, \\
       v(a)=\gamma &\mapsto v(a^\dagger)=\gamma^\dagger:= \gamma'-\gamma \   :\  \Gamma^{\ne} \to   \Gamma,
\end{align*}  
where $a\in K^\times$, $v(a)\ne 0$. 
The function $\gamma\mapsto\gamma'\colon \Gamma^{\neq}\to\Gamma$ is strictly increasing and the function $\gamma\mapsto\gamma^\dagger\colon\Gamma^{\ne}\to\Gamma$ is symmetric:
$(-\gamma)^\dagger=\gamma^\dagger$ for all $\gamma\in\Gamma^{\neq}$. If $K$ is an $H$-field, then $\gamma\mapsto \gamma^{\dagger}: \Gamma^{>}\to \Gamma$ is decreasing.
Figure~\ref{psi-pic} shows the qualitative behavior 
of the functions $\gamma\mapsto\gamma'$ and
$\gamma\mapsto\gamma^\dagger$ in the case of an $H$-field. Some features are a little hard to indicate in 
such a picture, for example the fact that $\gamma^{\dagger}$ is 
constant on each archimedean class of $\Gamma^{\ne}$.  

\begin{figure}
\begin{center}
\setlength{\unitlength}{0.240000pt}
\ifx\plotpoint\undefined\newsavebox{\plotpoint}\fi
\sbox{\plotpoint}{\rule[-0.175pt]{0.350pt}{0.350pt}}%
\begin{picture}(1400,800)(150,100)
\put(264,472){\rule[-0.175pt]{282.335pt}{0.350pt}}
\put(850,158){\rule[-0.175pt]{0.350pt}{151.526pt}}
\put(749,787){\makebox(0,0)[l]{\shortstack{$\Gamma\,\uparrow$}}}
\put(1400,518){\makebox(0,0){$\rightarrow\ \Gamma$}}
\put(850,556){\makebox(0,0){$\circ$}}
\put(1163,822){\makebox(0,0)[l]{$\gamma'$}}
\put(1241,403){\makebox(0,0)[l]{$\gamma^\dagger$}}
\put(264,445){\usebox{\plotpoint}}
\put(264,445){\rule[-0.175pt]{31.317pt}{0.350pt}}
\put(394,446){\rule[-0.175pt]{8.672pt}{0.350pt}}
\put(430,447){\rule[-0.175pt]{5.541pt}{0.350pt}}
\put(453,448){\rule[-0.175pt]{2.891pt}{0.350pt}}
\put(465,449){\rule[-0.175pt]{2.891pt}{0.350pt}}
\put(477,450){\rule[-0.175pt]{2.891pt}{0.350pt}}
\put(489,451){\rule[-0.175pt]{2.891pt}{0.350pt}}
\put(501,452){\rule[-0.175pt]{1.445pt}{0.350pt}}
\put(507,453){\rule[-0.175pt]{1.445pt}{0.350pt}}
\put(513,454){\rule[-0.175pt]{1.325pt}{0.350pt}}
\put(518,455){\rule[-0.175pt]{1.325pt}{0.350pt}}
\put(524,456){\rule[-0.175pt]{1.445pt}{0.350pt}}
\put(530,457){\rule[-0.175pt]{1.445pt}{0.350pt}}
\put(536,458){\rule[-0.175pt]{1.445pt}{0.350pt}}
\put(542,459){\rule[-0.175pt]{1.445pt}{0.350pt}}
\put(548,460){\rule[-0.175pt]{1.445pt}{0.350pt}}
\put(554,461){\rule[-0.175pt]{1.445pt}{0.350pt}}
\put(560,462){\rule[-0.175pt]{0.964pt}{0.350pt}}
\put(564,463){\rule[-0.175pt]{0.964pt}{0.350pt}}
\put(568,464){\rule[-0.175pt]{0.964pt}{0.350pt}}
\put(572,465){\rule[-0.175pt]{0.964pt}{0.350pt}}
\put(576,466){\rule[-0.175pt]{0.964pt}{0.350pt}}
\put(580,467){\rule[-0.175pt]{0.964pt}{0.350pt}}
\put(584,468){\rule[-0.175pt]{0.662pt}{0.350pt}}
\put(586,469){\rule[-0.175pt]{0.662pt}{0.350pt}}
\put(589,470){\rule[-0.175pt]{0.662pt}{0.350pt}}
\put(592,471){\rule[-0.175pt]{0.662pt}{0.350pt}}
\put(595,472){\rule[-0.175pt]{0.964pt}{0.350pt}}
\put(599,473){\rule[-0.175pt]{0.964pt}{0.350pt}}
\put(603,474){\rule[-0.175pt]{0.964pt}{0.350pt}}
\put(607,475){\rule[-0.175pt]{0.723pt}{0.350pt}}
\put(610,476){\rule[-0.175pt]{0.723pt}{0.350pt}}
\put(613,477){\rule[-0.175pt]{0.723pt}{0.350pt}}
\put(616,478){\rule[-0.175pt]{0.723pt}{0.350pt}}
\put(619,479){\rule[-0.175pt]{0.578pt}{0.350pt}}
\put(621,480){\rule[-0.175pt]{0.578pt}{0.350pt}}
\put(623,481){\rule[-0.175pt]{0.578pt}{0.350pt}}
\put(626,482){\rule[-0.175pt]{0.578pt}{0.350pt}}
\put(628,483){\rule[-0.175pt]{0.578pt}{0.350pt}}
\put(631,484){\rule[-0.175pt]{0.723pt}{0.350pt}}
\put(634,485){\rule[-0.175pt]{0.723pt}{0.350pt}}
\put(637,486){\rule[-0.175pt]{0.723pt}{0.350pt}}
\put(640,487){\rule[-0.175pt]{0.723pt}{0.350pt}}
\put(643,488){\rule[-0.175pt]{0.578pt}{0.350pt}}
\put(645,489){\rule[-0.175pt]{0.578pt}{0.350pt}}
\put(647,490){\rule[-0.175pt]{0.578pt}{0.350pt}}
\put(650,491){\rule[-0.175pt]{0.578pt}{0.350pt}}
\put(652,492){\rule[-0.175pt]{0.578pt}{0.350pt}}
\put(655,493){\rule[-0.175pt]{0.578pt}{0.350pt}}
\put(657,494){\rule[-0.175pt]{0.578pt}{0.350pt}}
\put(659,495){\rule[-0.175pt]{0.578pt}{0.350pt}}
\put(662,496){\rule[-0.175pt]{0.578pt}{0.350pt}}
\put(664,497){\rule[-0.175pt]{0.578pt}{0.350pt}}
\put(667,498){\rule[-0.175pt]{0.530pt}{0.350pt}}
\put(669,499){\rule[-0.175pt]{0.530pt}{0.350pt}}
\put(671,500){\rule[-0.175pt]{0.530pt}{0.350pt}}
\put(673,501){\rule[-0.175pt]{0.530pt}{0.350pt}}
\put(675,502){\rule[-0.175pt]{0.530pt}{0.350pt}}
\put(678,503){\rule[-0.175pt]{0.482pt}{0.350pt}}
\put(680,504){\rule[-0.175pt]{0.482pt}{0.350pt}}
\put(682,505){\rule[-0.175pt]{0.482pt}{0.350pt}}
\put(684,506){\rule[-0.175pt]{0.482pt}{0.350pt}}
\put(686,507){\rule[-0.175pt]{0.482pt}{0.350pt}}
\put(688,508){\rule[-0.175pt]{0.482pt}{0.350pt}}
\put(690,509){\rule[-0.175pt]{0.578pt}{0.350pt}}
\put(692,510){\rule[-0.175pt]{0.578pt}{0.350pt}}
\put(694,511){\rule[-0.175pt]{0.578pt}{0.350pt}}
\put(697,512){\rule[-0.175pt]{0.578pt}{0.350pt}}
\put(699,513){\rule[-0.175pt]{0.578pt}{0.350pt}}
\put(702,514){\rule[-0.175pt]{0.578pt}{0.350pt}}
\put(704,515){\rule[-0.175pt]{0.578pt}{0.350pt}}
\put(706,516){\rule[-0.175pt]{0.578pt}{0.350pt}}
\put(709,517){\rule[-0.175pt]{0.578pt}{0.350pt}}
\put(711,518){\rule[-0.175pt]{0.578pt}{0.350pt}}
\put(714,519){\rule[-0.175pt]{0.578pt}{0.350pt}}
\put(716,520){\rule[-0.175pt]{0.578pt}{0.350pt}}
\put(718,521){\rule[-0.175pt]{0.578pt}{0.350pt}}
\put(721,522){\rule[-0.175pt]{0.578pt}{0.350pt}}
\put(723,523){\rule[-0.175pt]{0.578pt}{0.350pt}}
\put(726,524){\rule[-0.175pt]{0.578pt}{0.350pt}}
\put(728,525){\rule[-0.175pt]{0.578pt}{0.350pt}}
\put(730,526){\rule[-0.175pt]{0.578pt}{0.350pt}}
\put(733,527){\rule[-0.175pt]{0.578pt}{0.350pt}}
\put(735,528){\rule[-0.175pt]{0.578pt}{0.350pt}}
\put(738,529){\rule[-0.175pt]{0.530pt}{0.350pt}}
\put(740,530){\rule[-0.175pt]{0.530pt}{0.350pt}}
\put(742,531){\rule[-0.175pt]{0.530pt}{0.350pt}}
\put(744,532){\rule[-0.175pt]{0.530pt}{0.350pt}}
\put(746,533){\rule[-0.175pt]{0.530pt}{0.350pt}}
\put(749,534){\rule[-0.175pt]{0.578pt}{0.350pt}}
\put(751,535){\rule[-0.175pt]{0.578pt}{0.350pt}}
\put(753,536){\rule[-0.175pt]{0.578pt}{0.350pt}}
\put(756,537){\rule[-0.175pt]{0.578pt}{0.350pt}}
\put(758,538){\rule[-0.175pt]{0.578pt}{0.350pt}}
\put(761,539){\rule[-0.175pt]{0.723pt}{0.350pt}}
\put(764,540){\rule[-0.175pt]{0.723pt}{0.350pt}}
\put(767,541){\rule[-0.175pt]{0.723pt}{0.350pt}}
\put(770,542){\rule[-0.175pt]{0.723pt}{0.350pt}}
\put(773,543){\rule[-0.175pt]{0.964pt}{0.350pt}}
\put(777,544){\rule[-0.175pt]{0.964pt}{0.350pt}}
\put(781,545){\rule[-0.175pt]{0.964pt}{0.350pt}}
\put(785,546){\rule[-0.175pt]{0.723pt}{0.350pt}}
\put(788,547){\rule[-0.175pt]{0.723pt}{0.350pt}}
\put(791,548){\rule[-0.175pt]{0.723pt}{0.350pt}}
\put(794,549){\rule[-0.175pt]{0.723pt}{0.350pt}}
\put(797,550){\rule[-0.175pt]{1.445pt}{0.350pt}}
\put(803,551){\rule[-0.175pt]{1.445pt}{0.350pt}}
\put(809,552){\rule[-0.175pt]{1.325pt}{0.350pt}}
\put(814,553){\rule[-0.175pt]{1.325pt}{0.350pt}}
\put(820,554){\rule[-0.175pt]{1.445pt}{0.350pt}}
\put(826,555){\rule[-0.175pt]{1.445pt}{0.350pt}}
\put(832,556){\rule[-0.175pt]{2.500pt}{0.350pt}}
\put(856,556){\rule[-0.175pt]{4.300pt}{0.350pt}}
\put(874,555){\rule[-0.175pt]{1.445pt}{0.350pt}}
\put(880,554){\rule[-0.175pt]{1.325pt}{0.350pt}}
\put(885,553){\rule[-0.175pt]{1.325pt}{0.350pt}}
\put(891,552){\rule[-0.175pt]{1.445pt}{0.350pt}}
\put(897,551){\rule[-0.175pt]{1.445pt}{0.350pt}}
\put(903,550){\rule[-0.175pt]{0.723pt}{0.350pt}}
\put(906,549){\rule[-0.175pt]{0.723pt}{0.350pt}}
\put(909,548){\rule[-0.175pt]{0.723pt}{0.350pt}}
\put(912,547){\rule[-0.175pt]{0.723pt}{0.350pt}}
\put(915,546){\rule[-0.175pt]{0.964pt}{0.350pt}}
\put(919,545){\rule[-0.175pt]{0.964pt}{0.350pt}}
\put(923,544){\rule[-0.175pt]{0.964pt}{0.350pt}}
\put(927,543){\rule[-0.175pt]{0.723pt}{0.350pt}}
\put(930,542){\rule[-0.175pt]{0.723pt}{0.350pt}}
\put(933,541){\rule[-0.175pt]{0.723pt}{0.350pt}}
\put(936,540){\rule[-0.175pt]{0.723pt}{0.350pt}}
\put(939,539){\rule[-0.175pt]{0.578pt}{0.350pt}}
\put(941,538){\rule[-0.175pt]{0.578pt}{0.350pt}}
\put(943,537){\rule[-0.175pt]{0.578pt}{0.350pt}}
\put(946,536){\rule[-0.175pt]{0.578pt}{0.350pt}}
\put(948,535){\rule[-0.175pt]{0.578pt}{0.350pt}}
\put(951,534){\rule[-0.175pt]{0.530pt}{0.350pt}}
\put(953,533){\rule[-0.175pt]{0.530pt}{0.350pt}}
\put(955,532){\rule[-0.175pt]{0.530pt}{0.350pt}}
\put(957,531){\rule[-0.175pt]{0.530pt}{0.350pt}}
\put(959,530){\rule[-0.175pt]{0.530pt}{0.350pt}}
\put(962,529){\rule[-0.175pt]{0.578pt}{0.350pt}}
\put(964,528){\rule[-0.175pt]{0.578pt}{0.350pt}}
\put(966,527){\rule[-0.175pt]{0.578pt}{0.350pt}}
\put(969,526){\rule[-0.175pt]{0.578pt}{0.350pt}}
\put(971,525){\rule[-0.175pt]{0.578pt}{0.350pt}}
\put(974,524){\rule[-0.175pt]{0.578pt}{0.350pt}}
\put(976,523){\rule[-0.175pt]{0.578pt}{0.350pt}}
\put(978,522){\rule[-0.175pt]{0.578pt}{0.350pt}}
\put(981,521){\rule[-0.175pt]{0.578pt}{0.350pt}}
\put(983,520){\rule[-0.175pt]{0.578pt}{0.350pt}}
\put(986,519){\rule[-0.175pt]{0.578pt}{0.350pt}}
\put(988,518){\rule[-0.175pt]{0.578pt}{0.350pt}}
\put(990,517){\rule[-0.175pt]{0.578pt}{0.350pt}}
\put(993,516){\rule[-0.175pt]{0.578pt}{0.350pt}}
\put(995,515){\rule[-0.175pt]{0.578pt}{0.350pt}}
\put(998,514){\rule[-0.175pt]{0.578pt}{0.350pt}}
\put(1000,513){\rule[-0.175pt]{0.578pt}{0.350pt}}
\put(1002,512){\rule[-0.175pt]{0.578pt}{0.350pt}}
\put(1005,511){\rule[-0.175pt]{0.578pt}{0.350pt}}
\put(1007,510){\rule[-0.175pt]{0.578pt}{0.350pt}}
\put(1010,509){\rule[-0.175pt]{0.482pt}{0.350pt}}
\put(1012,508){\rule[-0.175pt]{0.482pt}{0.350pt}}
\put(1014,507){\rule[-0.175pt]{0.482pt}{0.350pt}}
\put(1016,506){\rule[-0.175pt]{0.482pt}{0.350pt}}
\put(1018,505){\rule[-0.175pt]{0.482pt}{0.350pt}}
\put(1020,504){\rule[-0.175pt]{0.482pt}{0.350pt}}
\put(1022,503){\rule[-0.175pt]{0.530pt}{0.350pt}}
\put(1024,502){\rule[-0.175pt]{0.530pt}{0.350pt}}
\put(1026,501){\rule[-0.175pt]{0.530pt}{0.350pt}}
\put(1028,500){\rule[-0.175pt]{0.530pt}{0.350pt}}
\put(1030,499){\rule[-0.175pt]{0.530pt}{0.350pt}}
\put(1032,498){\rule[-0.175pt]{0.578pt}{0.350pt}}
\put(1035,497){\rule[-0.175pt]{0.578pt}{0.350pt}}
\put(1037,496){\rule[-0.175pt]{0.578pt}{0.350pt}}
\put(1040,495){\rule[-0.175pt]{0.578pt}{0.350pt}}
\put(1042,494){\rule[-0.175pt]{0.578pt}{0.350pt}}
\put(1045,493){\rule[-0.175pt]{0.578pt}{0.350pt}}
\put(1047,492){\rule[-0.175pt]{0.578pt}{0.350pt}}
\put(1049,491){\rule[-0.175pt]{0.578pt}{0.350pt}}
\put(1052,490){\rule[-0.175pt]{0.578pt}{0.350pt}}
\put(1054,489){\rule[-0.175pt]{0.578pt}{0.350pt}}
\put(1057,488){\rule[-0.175pt]{0.723pt}{0.350pt}}
\put(1060,487){\rule[-0.175pt]{0.723pt}{0.350pt}}
\put(1063,486){\rule[-0.175pt]{0.723pt}{0.350pt}}
\put(1066,485){\rule[-0.175pt]{0.723pt}{0.350pt}}
\put(1069,484){\rule[-0.175pt]{0.578pt}{0.350pt}}
\put(1071,483){\rule[-0.175pt]{0.578pt}{0.350pt}}
\put(1073,482){\rule[-0.175pt]{0.578pt}{0.350pt}}
\put(1076,481){\rule[-0.175pt]{0.578pt}{0.350pt}}
\put(1078,480){\rule[-0.175pt]{0.578pt}{0.350pt}}
\put(1081,479){\rule[-0.175pt]{0.723pt}{0.350pt}}
\put(1084,478){\rule[-0.175pt]{0.723pt}{0.350pt}}
\put(1087,477){\rule[-0.175pt]{0.723pt}{0.350pt}}
\put(1090,476){\rule[-0.175pt]{0.723pt}{0.350pt}}
\put(1093,475){\rule[-0.175pt]{0.964pt}{0.350pt}}
\put(1097,474){\rule[-0.175pt]{0.964pt}{0.350pt}}
\put(1101,473){\rule[-0.175pt]{0.964pt}{0.350pt}}
\put(1105,472){\rule[-0.175pt]{0.662pt}{0.350pt}}
\put(1107,471){\rule[-0.175pt]{0.662pt}{0.350pt}}
\put(1110,470){\rule[-0.175pt]{0.662pt}{0.350pt}}
\put(1113,469){\rule[-0.175pt]{0.662pt}{0.350pt}}
\put(1116,468){\rule[-0.175pt]{0.964pt}{0.350pt}}
\put(1120,467){\rule[-0.175pt]{0.964pt}{0.350pt}}
\put(1124,466){\rule[-0.175pt]{0.964pt}{0.350pt}}
\put(1128,465){\rule[-0.175pt]{0.964pt}{0.350pt}}
\put(1132,464){\rule[-0.175pt]{0.964pt}{0.350pt}}
\put(1136,463){\rule[-0.175pt]{0.964pt}{0.350pt}}
\put(1140,462){\rule[-0.175pt]{1.445pt}{0.350pt}}
\put(1146,461){\rule[-0.175pt]{1.445pt}{0.350pt}}
\put(1152,460){\rule[-0.175pt]{1.445pt}{0.350pt}}
\put(1158,459){\rule[-0.175pt]{1.445pt}{0.350pt}}
\put(1164,458){\rule[-0.175pt]{1.445pt}{0.350pt}}
\put(1170,457){\rule[-0.175pt]{1.445pt}{0.350pt}}
\put(1176,456){\rule[-0.175pt]{1.325pt}{0.350pt}}
\put(1181,455){\rule[-0.175pt]{1.325pt}{0.350pt}}
\put(1187,454){\rule[-0.175pt]{1.445pt}{0.350pt}}
\put(1193,453){\rule[-0.175pt]{1.445pt}{0.350pt}}
\put(1199,452){\rule[-0.175pt]{2.891pt}{0.350pt}}
\put(1211,451){\rule[-0.175pt]{2.891pt}{0.350pt}}
\put(1223,450){\rule[-0.175pt]{2.891pt}{0.350pt}}
\put(1235,449){\rule[-0.175pt]{2.891pt}{0.350pt}}
\put(1247,448){\rule[-0.175pt]{2.650pt}{0.350pt}}
\put(1258,447){\rule[-0.175pt]{5.782pt}{0.350pt}}
\put(1282,446){\rule[-0.175pt]{8.672pt}{0.350pt}}
\put(1318,445){\rule[-0.175pt]{28.426pt}{0.350pt}}
\put(518,158){\usebox{\plotpoint}}
\put(519,159){\usebox{\plotpoint}}
\put(520,160){\usebox{\plotpoint}}
\put(521,161){\usebox{\plotpoint}}
\put(522,162){\usebox{\plotpoint}}
\put(523,163){\usebox{\plotpoint}}
\put(524,164){\usebox{\plotpoint}}
\put(525,165){\usebox{\plotpoint}}
\put(526,166){\usebox{\plotpoint}}
\put(527,167){\usebox{\plotpoint}}
\put(528,168){\usebox{\plotpoint}}
\put(529,169){\usebox{\plotpoint}}
\put(530,170){\usebox{\plotpoint}}
\put(531,171){\usebox{\plotpoint}}
\put(532,172){\usebox{\plotpoint}}
\put(533,173){\usebox{\plotpoint}}
\put(534,174){\usebox{\plotpoint}}
\put(535,175){\usebox{\plotpoint}}
\put(536,176){\usebox{\plotpoint}}
\put(537,178){\usebox{\plotpoint}}
\put(538,179){\usebox{\plotpoint}}
\put(539,180){\usebox{\plotpoint}}
\put(540,181){\usebox{\plotpoint}}
\put(541,182){\usebox{\plotpoint}}
\put(542,183){\usebox{\plotpoint}}
\put(543,184){\usebox{\plotpoint}}
\put(544,185){\usebox{\plotpoint}}
\put(545,186){\usebox{\plotpoint}}
\put(546,187){\usebox{\plotpoint}}
\put(547,188){\usebox{\plotpoint}}
\put(548,189){\usebox{\plotpoint}}
\put(549,191){\usebox{\plotpoint}}
\put(550,192){\usebox{\plotpoint}}
\put(551,193){\usebox{\plotpoint}}
\put(552,194){\usebox{\plotpoint}}
\put(553,195){\usebox{\plotpoint}}
\put(554,196){\usebox{\plotpoint}}
\put(555,197){\usebox{\plotpoint}}
\put(556,198){\usebox{\plotpoint}}
\put(557,199){\usebox{\plotpoint}}
\put(558,200){\usebox{\plotpoint}}
\put(559,201){\usebox{\plotpoint}}
\put(560,202){\usebox{\plotpoint}}
\put(561,204){\usebox{\plotpoint}}
\put(562,205){\usebox{\plotpoint}}
\put(563,206){\usebox{\plotpoint}}
\put(564,207){\usebox{\plotpoint}}
\put(565,208){\usebox{\plotpoint}}
\put(566,209){\usebox{\plotpoint}}
\put(567,210){\usebox{\plotpoint}}
\put(568,211){\usebox{\plotpoint}}
\put(569,212){\usebox{\plotpoint}}
\put(570,213){\usebox{\plotpoint}}
\put(571,214){\usebox{\plotpoint}}
\put(572,215){\usebox{\plotpoint}}
\put(573,217){\usebox{\plotpoint}}
\put(574,218){\usebox{\plotpoint}}
\put(575,219){\usebox{\plotpoint}}
\put(576,220){\usebox{\plotpoint}}
\put(577,221){\usebox{\plotpoint}}
\put(578,223){\usebox{\plotpoint}}
\put(579,224){\usebox{\plotpoint}}
\put(580,225){\usebox{\plotpoint}}
\put(581,226){\usebox{\plotpoint}}
\put(582,227){\usebox{\plotpoint}}
\put(583,228){\usebox{\plotpoint}}
\put(584,230){\usebox{\plotpoint}}
\put(585,231){\usebox{\plotpoint}}
\put(586,232){\usebox{\plotpoint}}
\put(587,233){\usebox{\plotpoint}}
\put(588,235){\usebox{\plotpoint}}
\put(589,236){\usebox{\plotpoint}}
\put(590,237){\usebox{\plotpoint}}
\put(591,238){\usebox{\plotpoint}}
\put(592,240){\usebox{\plotpoint}}
\put(593,241){\usebox{\plotpoint}}
\put(594,242){\usebox{\plotpoint}}
\put(595,243){\usebox{\plotpoint}}
\put(596,245){\usebox{\plotpoint}}
\put(597,246){\usebox{\plotpoint}}
\put(598,247){\usebox{\plotpoint}}
\put(599,248){\usebox{\plotpoint}}
\put(600,249){\usebox{\plotpoint}}
\put(601,251){\usebox{\plotpoint}}
\put(602,252){\usebox{\plotpoint}}
\put(603,253){\usebox{\plotpoint}}
\put(604,254){\usebox{\plotpoint}}
\put(605,255){\usebox{\plotpoint}}
\put(606,256){\usebox{\plotpoint}}
\put(607,258){\usebox{\plotpoint}}
\put(608,259){\usebox{\plotpoint}}
\put(609,260){\usebox{\plotpoint}}
\put(610,261){\usebox{\plotpoint}}
\put(611,263){\usebox{\plotpoint}}
\put(612,264){\usebox{\plotpoint}}
\put(613,265){\usebox{\plotpoint}}
\put(614,266){\usebox{\plotpoint}}
\put(615,268){\usebox{\plotpoint}}
\put(616,269){\usebox{\plotpoint}}
\put(617,270){\usebox{\plotpoint}}
\put(618,271){\usebox{\plotpoint}}
\put(619,273){\usebox{\plotpoint}}
\put(620,274){\usebox{\plotpoint}}
\put(621,275){\usebox{\plotpoint}}
\put(622,276){\usebox{\plotpoint}}
\put(623,278){\usebox{\plotpoint}}
\put(624,279){\usebox{\plotpoint}}
\put(625,280){\usebox{\plotpoint}}
\put(626,281){\usebox{\plotpoint}}
\put(627,283){\usebox{\plotpoint}}
\put(628,284){\usebox{\plotpoint}}
\put(629,285){\usebox{\plotpoint}}
\put(630,286){\usebox{\plotpoint}}
\put(631,288){\usebox{\plotpoint}}
\put(632,289){\usebox{\plotpoint}}
\put(633,290){\usebox{\plotpoint}}
\put(634,291){\usebox{\plotpoint}}
\put(635,293){\usebox{\plotpoint}}
\put(636,294){\usebox{\plotpoint}}
\put(637,295){\usebox{\plotpoint}}
\put(638,296){\usebox{\plotpoint}}
\put(639,298){\usebox{\plotpoint}}
\put(640,299){\usebox{\plotpoint}}
\put(641,300){\usebox{\plotpoint}}
\put(642,301){\usebox{\plotpoint}}
\put(643,303){\usebox{\plotpoint}}
\put(644,304){\usebox{\plotpoint}}
\put(645,305){\usebox{\plotpoint}}
\put(646,306){\usebox{\plotpoint}}
\put(647,308){\usebox{\plotpoint}}
\put(648,309){\usebox{\plotpoint}}
\put(649,310){\usebox{\plotpoint}}
\put(650,311){\usebox{\plotpoint}}
\put(651,313){\usebox{\plotpoint}}
\put(652,314){\usebox{\plotpoint}}
\put(653,315){\usebox{\plotpoint}}
\put(654,316){\usebox{\plotpoint}}
\put(655,318){\usebox{\plotpoint}}
\put(656,319){\usebox{\plotpoint}}
\put(657,320){\usebox{\plotpoint}}
\put(658,322){\usebox{\plotpoint}}
\put(659,323){\usebox{\plotpoint}}
\put(660,324){\usebox{\plotpoint}}
\put(661,326){\usebox{\plotpoint}}
\put(662,327){\usebox{\plotpoint}}
\put(663,328){\usebox{\plotpoint}}
\put(664,330){\usebox{\plotpoint}}
\put(665,331){\usebox{\plotpoint}}
\put(666,332){\usebox{\plotpoint}}
\put(667,334){\rule[-0.175pt]{0.350pt}{0.350pt}}
\put(668,335){\rule[-0.175pt]{0.350pt}{0.350pt}}
\put(669,336){\rule[-0.175pt]{0.350pt}{0.350pt}}
\put(670,338){\rule[-0.175pt]{0.350pt}{0.350pt}}
\put(671,339){\rule[-0.175pt]{0.350pt}{0.350pt}}
\put(672,341){\rule[-0.175pt]{0.350pt}{0.350pt}}
\put(673,342){\rule[-0.175pt]{0.350pt}{0.350pt}}
\put(674,344){\rule[-0.175pt]{0.350pt}{0.350pt}}
\put(675,345){\rule[-0.175pt]{0.350pt}{0.350pt}}
\put(676,347){\rule[-0.175pt]{0.350pt}{0.350pt}}
\put(677,348){\rule[-0.175pt]{0.350pt}{0.350pt}}
\put(678,350){\usebox{\plotpoint}}
\put(679,351){\usebox{\plotpoint}}
\put(680,352){\usebox{\plotpoint}}
\put(681,354){\usebox{\plotpoint}}
\put(682,355){\usebox{\plotpoint}}
\put(683,356){\usebox{\plotpoint}}
\put(684,358){\usebox{\plotpoint}}
\put(685,359){\usebox{\plotpoint}}
\put(686,360){\usebox{\plotpoint}}
\put(687,362){\usebox{\plotpoint}}
\put(688,363){\usebox{\plotpoint}}
\put(689,364){\usebox{\plotpoint}}
\put(690,366){\usebox{\plotpoint}}
\put(691,367){\usebox{\plotpoint}}
\put(692,368){\usebox{\plotpoint}}
\put(693,369){\usebox{\plotpoint}}
\put(694,371){\usebox{\plotpoint}}
\put(695,372){\usebox{\plotpoint}}
\put(696,373){\usebox{\plotpoint}}
\put(697,374){\usebox{\plotpoint}}
\put(698,376){\usebox{\plotpoint}}
\put(699,377){\usebox{\plotpoint}}
\put(700,378){\usebox{\plotpoint}}
\put(701,379){\usebox{\plotpoint}}
\put(702,381){\usebox{\plotpoint}}
\put(703,382){\usebox{\plotpoint}}
\put(704,383){\usebox{\plotpoint}}
\put(705,385){\usebox{\plotpoint}}
\put(706,386){\usebox{\plotpoint}}
\put(707,387){\usebox{\plotpoint}}
\put(708,389){\usebox{\plotpoint}}
\put(709,390){\usebox{\plotpoint}}
\put(710,391){\usebox{\plotpoint}}
\put(711,393){\usebox{\plotpoint}}
\put(712,394){\usebox{\plotpoint}}
\put(713,395){\usebox{\plotpoint}}
\put(714,397){\usebox{\plotpoint}}
\put(715,398){\usebox{\plotpoint}}
\put(716,399){\usebox{\plotpoint}}
\put(717,401){\usebox{\plotpoint}}
\put(718,402){\usebox{\plotpoint}}
\put(719,403){\usebox{\plotpoint}}
\put(720,405){\usebox{\plotpoint}}
\put(721,406){\usebox{\plotpoint}}
\put(722,407){\usebox{\plotpoint}}
\put(723,409){\usebox{\plotpoint}}
\put(724,410){\usebox{\plotpoint}}
\put(725,411){\usebox{\plotpoint}}
\put(726,413){\usebox{\plotpoint}}
\put(727,414){\usebox{\plotpoint}}
\put(728,415){\usebox{\plotpoint}}
\put(729,417){\usebox{\plotpoint}}
\put(730,418){\usebox{\plotpoint}}
\put(731,419){\usebox{\plotpoint}}
\put(732,421){\usebox{\plotpoint}}
\put(733,422){\usebox{\plotpoint}}
\put(734,423){\usebox{\plotpoint}}
\put(735,425){\usebox{\plotpoint}}
\put(736,426){\usebox{\plotpoint}}
\put(737,427){\usebox{\plotpoint}}
\put(738,429){\usebox{\plotpoint}}
\put(739,430){\usebox{\plotpoint}}
\put(740,431){\usebox{\plotpoint}}
\put(741,433){\usebox{\plotpoint}}
\put(742,434){\usebox{\plotpoint}}
\put(743,435){\usebox{\plotpoint}}
\put(744,437){\usebox{\plotpoint}}
\put(745,438){\usebox{\plotpoint}}
\put(746,439){\usebox{\plotpoint}}
\put(747,441){\usebox{\plotpoint}}
\put(748,442){\usebox{\plotpoint}}
\put(749,444){\usebox{\plotpoint}}
\put(750,445){\usebox{\plotpoint}}
\put(751,446){\usebox{\plotpoint}}
\put(752,447){\usebox{\plotpoint}}
\put(753,449){\usebox{\plotpoint}}
\put(754,450){\usebox{\plotpoint}}
\put(755,451){\usebox{\plotpoint}}
\put(756,452){\usebox{\plotpoint}}
\put(757,454){\usebox{\plotpoint}}
\put(758,455){\usebox{\plotpoint}}
\put(759,456){\usebox{\plotpoint}}
\put(760,457){\usebox{\plotpoint}}
\put(761,459){\usebox{\plotpoint}}
\put(762,460){\usebox{\plotpoint}}
\put(763,461){\usebox{\plotpoint}}
\put(764,462){\usebox{\plotpoint}}
\put(765,464){\usebox{\plotpoint}}
\put(766,465){\usebox{\plotpoint}}
\put(767,466){\usebox{\plotpoint}}
\put(768,467){\usebox{\plotpoint}}
\put(769,469){\usebox{\plotpoint}}
\put(770,470){\usebox{\plotpoint}}
\put(771,471){\usebox{\plotpoint}}
\put(772,472){\usebox{\plotpoint}}
\put(773,474){\usebox{\plotpoint}}
\put(774,475){\usebox{\plotpoint}}
\put(775,476){\usebox{\plotpoint}}
\put(776,477){\usebox{\plotpoint}}
\put(777,478){\usebox{\plotpoint}}
\put(778,479){\usebox{\plotpoint}}
\put(779,480){\usebox{\plotpoint}}
\put(780,482){\usebox{\plotpoint}}
\put(781,483){\usebox{\plotpoint}}
\put(782,484){\usebox{\plotpoint}}
\put(783,485){\usebox{\plotpoint}}
\put(784,486){\usebox{\plotpoint}}
\put(785,487){\usebox{\plotpoint}}
\put(786,489){\usebox{\plotpoint}}
\put(787,490){\usebox{\plotpoint}}
\put(788,491){\usebox{\plotpoint}}
\put(789,492){\usebox{\plotpoint}}
\put(790,493){\usebox{\plotpoint}}
\put(791,494){\usebox{\plotpoint}}
\put(792,496){\usebox{\plotpoint}}
\put(793,497){\usebox{\plotpoint}}
\put(794,498){\usebox{\plotpoint}}
\put(795,499){\usebox{\plotpoint}}
\put(796,500){\usebox{\plotpoint}}
\put(797,501){\usebox{\plotpoint}}
\put(798,503){\usebox{\plotpoint}}
\put(799,504){\usebox{\plotpoint}}
\put(800,505){\usebox{\plotpoint}}
\put(801,506){\usebox{\plotpoint}}
\put(802,507){\usebox{\plotpoint}}
\put(803,508){\usebox{\plotpoint}}
\put(804,509){\usebox{\plotpoint}}
\put(805,510){\usebox{\plotpoint}}
\put(806,511){\usebox{\plotpoint}}
\put(807,512){\usebox{\plotpoint}}
\put(808,513){\usebox{\plotpoint}}
\put(809,515){\usebox{\plotpoint}}
\put(810,516){\usebox{\plotpoint}}
\put(811,517){\usebox{\plotpoint}}
\put(812,518){\usebox{\plotpoint}}
\put(813,519){\usebox{\plotpoint}}
\put(814,520){\usebox{\plotpoint}}
\put(815,522){\usebox{\plotpoint}}
\put(816,523){\usebox{\plotpoint}}
\put(817,524){\usebox{\plotpoint}}
\put(818,525){\usebox{\plotpoint}}
\put(819,526){\usebox{\plotpoint}}
\put(820,528){\usebox{\plotpoint}}
\put(821,529){\usebox{\plotpoint}}
\put(822,530){\usebox{\plotpoint}}
\put(823,531){\usebox{\plotpoint}}
\put(824,532){\usebox{\plotpoint}}
\put(825,533){\usebox{\plotpoint}}
\put(826,534){\usebox{\plotpoint}}
\put(827,535){\usebox{\plotpoint}}
\put(828,536){\usebox{\plotpoint}}
\put(829,537){\usebox{\plotpoint}}
\put(830,538){\usebox{\plotpoint}}
\put(831,539){\usebox{\plotpoint}}
\put(832,540){\usebox{\plotpoint}}
\put(833,541){\usebox{\plotpoint}}
\put(834,542){\usebox{\plotpoint}}
\put(835,543){\usebox{\plotpoint}}
\put(836,544){\usebox{\plotpoint}}
\put(837,545){\usebox{\plotpoint}}
\put(838,546){\usebox{\plotpoint}}
\put(839,547){\usebox{\plotpoint}}
\put(840,548){\usebox{\plotpoint}}
\put(841,549){\usebox{\plotpoint}}
\put(842,550){\usebox{\plotpoint}}
\put(843,551){\usebox{\plotpoint}}
\put(845,552){\usebox{\plotpoint}}
\put(854,561){\usebox{\plotpoint}}
\put(855,562){\usebox{\plotpoint}}
\put(857,563){\usebox{\plotpoint}}
\put(858,564){\usebox{\plotpoint}}
\put(859,565){\usebox{\plotpoint}}
\put(861,566){\usebox{\plotpoint}}
\put(862,567){\usebox{\plotpoint}}
\put(863,568){\usebox{\plotpoint}}
\put(865,569){\usebox{\plotpoint}}
\put(866,570){\usebox{\plotpoint}}
\put(867,571){\usebox{\plotpoint}}
\put(869,572){\usebox{\plotpoint}}
\put(870,573){\usebox{\plotpoint}}
\put(871,574){\usebox{\plotpoint}}
\put(872,575){\usebox{\plotpoint}}
\put(874,576){\usebox{\plotpoint}}
\put(875,577){\usebox{\plotpoint}}
\put(876,578){\usebox{\plotpoint}}
\put(877,579){\usebox{\plotpoint}}
\put(878,580){\usebox{\plotpoint}}
\put(880,581){\usebox{\plotpoint}}
\put(881,582){\usebox{\plotpoint}}
\put(882,583){\usebox{\plotpoint}}
\put(884,584){\usebox{\plotpoint}}
\put(885,585){\usebox{\plotpoint}}
\put(886,586){\usebox{\plotpoint}}
\put(888,587){\usebox{\plotpoint}}
\put(889,588){\usebox{\plotpoint}}
\put(891,589){\rule[-0.175pt]{0.361pt}{0.350pt}}
\put(892,590){\rule[-0.175pt]{0.361pt}{0.350pt}}
\put(894,591){\rule[-0.175pt]{0.361pt}{0.350pt}}
\put(895,592){\rule[-0.175pt]{0.361pt}{0.350pt}}
\put(897,593){\rule[-0.175pt]{0.361pt}{0.350pt}}
\put(898,594){\rule[-0.175pt]{0.361pt}{0.350pt}}
\put(900,595){\rule[-0.175pt]{0.361pt}{0.350pt}}
\put(901,596){\rule[-0.175pt]{0.361pt}{0.350pt}}
\put(903,597){\rule[-0.175pt]{0.361pt}{0.350pt}}
\put(904,598){\rule[-0.175pt]{0.361pt}{0.350pt}}
\put(906,599){\rule[-0.175pt]{0.361pt}{0.350pt}}
\put(907,600){\rule[-0.175pt]{0.361pt}{0.350pt}}
\put(909,601){\rule[-0.175pt]{0.361pt}{0.350pt}}
\put(910,602){\rule[-0.175pt]{0.361pt}{0.350pt}}
\put(912,603){\rule[-0.175pt]{0.361pt}{0.350pt}}
\put(913,604){\rule[-0.175pt]{0.361pt}{0.350pt}}
\put(915,605){\rule[-0.175pt]{0.413pt}{0.350pt}}
\put(916,606){\rule[-0.175pt]{0.413pt}{0.350pt}}
\put(918,607){\rule[-0.175pt]{0.413pt}{0.350pt}}
\put(920,608){\rule[-0.175pt]{0.413pt}{0.350pt}}
\put(921,609){\rule[-0.175pt]{0.413pt}{0.350pt}}
\put(923,610){\rule[-0.175pt]{0.413pt}{0.350pt}}
\put(925,611){\rule[-0.175pt]{0.413pt}{0.350pt}}
\put(927,612){\rule[-0.175pt]{0.482pt}{0.350pt}}
\put(929,613){\rule[-0.175pt]{0.482pt}{0.350pt}}
\put(931,614){\rule[-0.175pt]{0.482pt}{0.350pt}}
\put(933,615){\rule[-0.175pt]{0.482pt}{0.350pt}}
\put(935,616){\rule[-0.175pt]{0.482pt}{0.350pt}}
\put(937,617){\rule[-0.175pt]{0.482pt}{0.350pt}}
\put(939,618){\rule[-0.175pt]{0.482pt}{0.350pt}}
\put(941,619){\rule[-0.175pt]{0.482pt}{0.350pt}}
\put(943,620){\rule[-0.175pt]{0.482pt}{0.350pt}}
\put(945,621){\rule[-0.175pt]{0.482pt}{0.350pt}}
\put(947,622){\rule[-0.175pt]{0.482pt}{0.350pt}}
\put(949,623){\rule[-0.175pt]{0.482pt}{0.350pt}}
\put(951,624){\rule[-0.175pt]{0.442pt}{0.350pt}}
\put(952,625){\rule[-0.175pt]{0.442pt}{0.350pt}}
\put(954,626){\rule[-0.175pt]{0.442pt}{0.350pt}}
\put(956,627){\rule[-0.175pt]{0.442pt}{0.350pt}}
\put(958,628){\rule[-0.175pt]{0.442pt}{0.350pt}}
\put(960,629){\rule[-0.175pt]{0.442pt}{0.350pt}}
\put(961,630){\rule[-0.175pt]{0.578pt}{0.350pt}}
\put(964,631){\rule[-0.175pt]{0.578pt}{0.350pt}}
\put(966,632){\rule[-0.175pt]{0.578pt}{0.350pt}}
\put(969,633){\rule[-0.175pt]{0.578pt}{0.350pt}}
\put(971,634){\rule[-0.175pt]{0.578pt}{0.350pt}}
\put(974,635){\rule[-0.175pt]{0.482pt}{0.350pt}}
\put(976,636){\rule[-0.175pt]{0.482pt}{0.350pt}}
\put(978,637){\rule[-0.175pt]{0.482pt}{0.350pt}}
\put(980,638){\rule[-0.175pt]{0.482pt}{0.350pt}}
\put(982,639){\rule[-0.175pt]{0.482pt}{0.350pt}}
\put(984,640){\rule[-0.175pt]{0.482pt}{0.350pt}}
\put(986,641){\rule[-0.175pt]{0.578pt}{0.350pt}}
\put(988,642){\rule[-0.175pt]{0.578pt}{0.350pt}}
\put(990,643){\rule[-0.175pt]{0.578pt}{0.350pt}}
\put(993,644){\rule[-0.175pt]{0.578pt}{0.350pt}}
\put(995,645){\rule[-0.175pt]{0.578pt}{0.350pt}}
\put(998,646){\rule[-0.175pt]{0.578pt}{0.350pt}}
\put(1000,647){\rule[-0.175pt]{0.578pt}{0.350pt}}
\put(1002,648){\rule[-0.175pt]{0.578pt}{0.350pt}}
\put(1005,649){\rule[-0.175pt]{0.578pt}{0.350pt}}
\put(1007,650){\rule[-0.175pt]{0.578pt}{0.350pt}}
\put(1010,651){\rule[-0.175pt]{0.482pt}{0.350pt}}
\put(1012,652){\rule[-0.175pt]{0.482pt}{0.350pt}}
\put(1014,653){\rule[-0.175pt]{0.482pt}{0.350pt}}
\put(1016,654){\rule[-0.175pt]{0.482pt}{0.350pt}}
\put(1018,655){\rule[-0.175pt]{0.482pt}{0.350pt}}
\put(1020,656){\rule[-0.175pt]{0.482pt}{0.350pt}}
\put(1022,657){\rule[-0.175pt]{0.530pt}{0.350pt}}
\put(1024,658){\rule[-0.175pt]{0.530pt}{0.350pt}}
\put(1026,659){\rule[-0.175pt]{0.530pt}{0.350pt}}
\put(1028,660){\rule[-0.175pt]{0.530pt}{0.350pt}}
\put(1030,661){\rule[-0.175pt]{0.530pt}{0.350pt}}
\put(1032,662){\rule[-0.175pt]{0.482pt}{0.350pt}}
\put(1035,663){\rule[-0.175pt]{0.482pt}{0.350pt}}
\put(1037,664){\rule[-0.175pt]{0.482pt}{0.350pt}}
\put(1039,665){\rule[-0.175pt]{0.482pt}{0.350pt}}
\put(1041,666){\rule[-0.175pt]{0.482pt}{0.350pt}}
\put(1043,667){\rule[-0.175pt]{0.482pt}{0.350pt}}
\put(1045,668){\rule[-0.175pt]{0.482pt}{0.350pt}}
\put(1047,669){\rule[-0.175pt]{0.482pt}{0.350pt}}
\put(1049,670){\rule[-0.175pt]{0.482pt}{0.350pt}}
\put(1051,671){\rule[-0.175pt]{0.482pt}{0.350pt}}
\put(1053,672){\rule[-0.175pt]{0.482pt}{0.350pt}}
\put(1055,673){\rule[-0.175pt]{0.482pt}{0.350pt}}
\put(1057,674){\rule[-0.175pt]{0.482pt}{0.350pt}}
\put(1059,675){\rule[-0.175pt]{0.482pt}{0.350pt}}
\put(1061,676){\rule[-0.175pt]{0.482pt}{0.350pt}}
\put(1063,677){\rule[-0.175pt]{0.482pt}{0.350pt}}
\put(1065,678){\rule[-0.175pt]{0.482pt}{0.350pt}}
\put(1067,679){\rule[-0.175pt]{0.482pt}{0.350pt}}
\put(1069,680){\rule[-0.175pt]{0.482pt}{0.350pt}}
\put(1071,681){\rule[-0.175pt]{0.482pt}{0.350pt}}
\put(1073,682){\rule[-0.175pt]{0.482pt}{0.350pt}}
\put(1075,683){\rule[-0.175pt]{0.482pt}{0.350pt}}
\put(1077,684){\rule[-0.175pt]{0.482pt}{0.350pt}}
\put(1079,685){\rule[-0.175pt]{0.482pt}{0.350pt}}
\put(1081,686){\rule[-0.175pt]{0.482pt}{0.350pt}}
\put(1083,687){\rule[-0.175pt]{0.482pt}{0.350pt}}
\put(1085,688){\rule[-0.175pt]{0.482pt}{0.350pt}}
\put(1087,689){\rule[-0.175pt]{0.482pt}{0.350pt}}
\put(1089,690){\rule[-0.175pt]{0.482pt}{0.350pt}}
\put(1091,691){\rule[-0.175pt]{0.482pt}{0.350pt}}
\put(1093,692){\rule[-0.175pt]{0.413pt}{0.350pt}}
\put(1094,693){\rule[-0.175pt]{0.413pt}{0.350pt}}
\put(1096,694){\rule[-0.175pt]{0.413pt}{0.350pt}}
\put(1098,695){\rule[-0.175pt]{0.413pt}{0.350pt}}
\put(1099,696){\rule[-0.175pt]{0.413pt}{0.350pt}}
\put(1101,697){\rule[-0.175pt]{0.413pt}{0.350pt}}
\put(1103,698){\rule[-0.175pt]{0.413pt}{0.350pt}}
\put(1104,699){\rule[-0.175pt]{0.379pt}{0.350pt}}
\put(1106,700){\rule[-0.175pt]{0.379pt}{0.350pt}}
\put(1108,701){\rule[-0.175pt]{0.379pt}{0.350pt}}
\put(1109,702){\rule[-0.175pt]{0.379pt}{0.350pt}}
\put(1111,703){\rule[-0.175pt]{0.379pt}{0.350pt}}
\put(1112,704){\rule[-0.175pt]{0.379pt}{0.350pt}}
\put(1114,705){\rule[-0.175pt]{0.379pt}{0.350pt}}
\put(1115,706){\rule[-0.175pt]{0.361pt}{0.350pt}}
\put(1117,707){\rule[-0.175pt]{0.361pt}{0.350pt}}
\put(1119,708){\rule[-0.175pt]{0.361pt}{0.350pt}}
\put(1120,709){\rule[-0.175pt]{0.361pt}{0.350pt}}
\put(1122,710){\rule[-0.175pt]{0.361pt}{0.350pt}}
\put(1123,711){\rule[-0.175pt]{0.361pt}{0.350pt}}
\put(1125,712){\rule[-0.175pt]{0.361pt}{0.350pt}}
\put(1126,713){\rule[-0.175pt]{0.361pt}{0.350pt}}
\put(1128,714){\rule[-0.175pt]{0.361pt}{0.350pt}}
\put(1129,715){\rule[-0.175pt]{0.361pt}{0.350pt}}
\put(1131,716){\rule[-0.175pt]{0.361pt}{0.350pt}}
\put(1132,717){\rule[-0.175pt]{0.361pt}{0.350pt}}
\put(1134,718){\rule[-0.175pt]{0.361pt}{0.350pt}}
\put(1135,719){\rule[-0.175pt]{0.361pt}{0.350pt}}
\put(1137,720){\rule[-0.175pt]{0.361pt}{0.350pt}}
\put(1138,721){\rule[-0.175pt]{0.361pt}{0.350pt}}
\put(1140,722){\rule[-0.175pt]{0.361pt}{0.350pt}}
\put(1141,723){\rule[-0.175pt]{0.361pt}{0.350pt}}
\put(1143,724){\rule[-0.175pt]{0.361pt}{0.350pt}}
\put(1144,725){\rule[-0.175pt]{0.361pt}{0.350pt}}
\put(1146,726){\rule[-0.175pt]{0.361pt}{0.350pt}}
\put(1147,727){\rule[-0.175pt]{0.361pt}{0.350pt}}
\put(1149,728){\rule[-0.175pt]{0.361pt}{0.350pt}}
\put(1150,729){\rule[-0.175pt]{0.361pt}{0.350pt}}
\put(1152,730){\rule[-0.175pt]{0.361pt}{0.350pt}}
\put(1153,731){\rule[-0.175pt]{0.361pt}{0.350pt}}
\put(1155,732){\rule[-0.175pt]{0.361pt}{0.350pt}}
\put(1156,733){\rule[-0.175pt]{0.361pt}{0.350pt}}
\put(1158,734){\rule[-0.175pt]{0.361pt}{0.350pt}}
\put(1159,735){\rule[-0.175pt]{0.361pt}{0.350pt}}
\put(1161,736){\rule[-0.175pt]{0.361pt}{0.350pt}}
\put(1162,737){\rule[-0.175pt]{0.361pt}{0.350pt}}
\put(1164,738){\usebox{\plotpoint}}
\put(1165,739){\usebox{\plotpoint}}
\put(1166,740){\usebox{\plotpoint}}
\put(1168,741){\usebox{\plotpoint}}
\put(1169,742){\usebox{\plotpoint}}
\put(1170,743){\usebox{\plotpoint}}
\put(1172,744){\usebox{\plotpoint}}
\put(1173,745){\usebox{\plotpoint}}
\put(1174,746){\usebox{\plotpoint}}
\put(1176,747){\usebox{\plotpoint}}
\put(1177,748){\usebox{\plotpoint}}
\put(1178,749){\usebox{\plotpoint}}
\put(1179,750){\usebox{\plotpoint}}
\put(1180,751){\usebox{\plotpoint}}
\put(1182,752){\usebox{\plotpoint}}
\put(1183,753){\usebox{\plotpoint}}
\put(1184,754){\usebox{\plotpoint}}
\put(1185,755){\usebox{\plotpoint}}
\put(1186,756){\usebox{\plotpoint}}
\put(1188,757){\usebox{\plotpoint}}
\put(1189,758){\usebox{\plotpoint}}
\put(1191,759){\usebox{\plotpoint}}
\put(1192,760){\usebox{\plotpoint}}
\put(1193,761){\usebox{\plotpoint}}
\put(1195,762){\usebox{\plotpoint}}
\put(1196,763){\usebox{\plotpoint}}
\put(1197,764){\usebox{\plotpoint}}
\put(1199,765){\usebox{\plotpoint}}
\put(1200,766){\usebox{\plotpoint}}
\put(1201,767){\usebox{\plotpoint}}
\put(1203,768){\usebox{\plotpoint}}
\put(1204,769){\usebox{\plotpoint}}
\put(1205,770){\usebox{\plotpoint}}
\put(1207,771){\usebox{\plotpoint}}
\put(1208,772){\usebox{\plotpoint}}
\put(1209,773){\usebox{\plotpoint}}
\put(1211,774){\usebox{\plotpoint}}
\put(1212,775){\usebox{\plotpoint}}
\put(1213,776){\usebox{\plotpoint}}
\put(1215,777){\usebox{\plotpoint}}
\put(1216,778){\usebox{\plotpoint}}
\put(1217,779){\usebox{\plotpoint}}
\put(1219,780){\usebox{\plotpoint}}
\put(1220,781){\usebox{\plotpoint}}
\put(1221,782){\usebox{\plotpoint}}
\put(1223,783){\usebox{\plotpoint}}
\put(1224,784){\usebox{\plotpoint}}
\put(1225,785){\usebox{\plotpoint}}
\put(1226,786){\usebox{\plotpoint}}
\end{picture}

\end{center}
\caption{}\label{psi-pic}
\end{figure}

\medskip
\noindent
Following Rosenlicht~\cite{Rosenlicht83}, we put 
$$   \Psi\ =\ \Psi_K\ :=\ \{\gamma^\dagger:\  \gamma\in \Gamma^{\ne}\}.$$ 
Then
$\Psi < (\Gamma^{>})'$. In the rest of this subsection we assume that $K$ is an $H$-field. Then exactly one of the following holds:
\begin{description}
\item[Case 1]  $\Psi < \gamma < (\Gamma^{>})'$ for some (necessarily unique) $\gamma$;
\item[Case 2]  $\Psi$ has a largest element;
\item[Case 3]  $\sup \Psi$ does not exist; equivalently, $\Gamma=(\Gamma^{\ne})'$.
\end{description} 
If $K=C$ we are in Case~1, with $\gamma=0$; the Laurent series field 
$\R \((  x^{-1}  \)) $ falls under Case~2, and Liouville closed $H$-fields 
under Case~3. 
In Case~1 there are two Liouville closures of $K$; in Case~2 
there is only one, but Case~3 requires finer distinctions for a 
definite answer. 
We now explain this in more detail.

Suppose $K$ falls under Case~1.
Then the element $\gamma$ is called a {\bf gap}, and there are two  
ways to remove the gap: with $v(a)=\gamma$, we have an $H$-field extension
$K(y_1)$ with $y_1\prec 1$ and $y_1'=a$, and we also have an 
$H$-field extension $K(y_2)$ with $0\ne y_2\prec 1$ and $y_2^{\dagger}=a$. Both
of these extensions fall under Case~2, and they are incompatible in the sense 
that they cannot be embedded over $K$ into a common $H$-field extension
of $K$. (Any Liouville closed extension of $K$, however, contains either a
copy of $K(y_1)$ or a copy of $K(y_2)$.) Instead of ``$K$ falls under Case~1''
we say ``$K$ has a gap.'' 

Suppose $K$ falls under Case~2. Then so does every 
differential-algebraic $H$-field extension of $K$ that is finitely generated
over $K$ as a differential field. Thus it takes an infinite generation process
to construct the (unique) Liouville closure of $K$. 
An $H$-field falling under Case~3 is said to admit 
{\bf asymptotic integration}. This is because Case~3 is equivalent to 
having for each non-zero $a$ in the field an element $y$ in the field such
that $y'\sim a$.

\subsection{Asymptotic couples.}\label{sec:asymptotic couples} Let $K$ be an asymptotic field. 
The ordered group $\Gamma=v(K^\times)$ equipped with the function
$$\gamma \mapsto \gamma^\dagger\colon \Gamma^{\ne} \to \Gamma$$ is an
{\bf asymptotic couple} in the terminology of Rosenlicht \cite{Rosenlicht79, Rosenlicht80, Rosenlicht81}, who
proved the first non-trivial facts about them as structures in their own
right, independent of their origin in Hardy fields. Indeed, this function $\gamma \mapsto \gamma^\dagger$ 
has rather nice properties: it is a valuation on the 
abelian group $\Gamma$: for all $\alpha,\beta\in\Gamma$ (and $0^{\dagger}:= \infty > \Gamma$),
\begin{enumerate}
\item[(i)] $(\alpha+\beta)^\dagger \geq\min(\alpha^\dagger,\beta^\dagger)$, 
\item[(ii)] $(-\alpha)^\dagger=\alpha^\dagger$. 
\end{enumerate}
If $K$ is moreover an $H$-field, then this valuation is compatible with the 
group ordering in the sense that for all $\alpha,\beta\in \Gamma$, 
\begin{equation}\label{eq:H}\tag{H}
0 < \alpha \le \beta\ \Longrightarrow\ \alpha^\dagger \ge \beta^\dagger.
\end{equation} 
The trichotomy from the previous section
holds for all asymptotic couples satisfying \eqref{eq:H}; see \cite{AvdD1}.  
The asymptotic couple of $\T$ has a good model theory:
It allows elimination of quantifiers in its natural 
language augmented by a predicate for the subset $\Psi$ of $\Gamma$, and its
theory is axiomatized by adding to Rosenlicht's axioms\footnote{Formally, an asymptotic couple is an 
ordered abelian group $\Gamma$ equipped with a valuation 
$\psi\colon \Gamma^{\ne} \to \Gamma$ such that $\psi(\alpha) < \beta+\psi(\beta)$
for all $\alpha,\beta\in \Gamma^{>}$.} for asymptotic couples
the following requirements: 
\begin{enumerate}
\item[(i)] divisibility of the underlying abelian group;
\item[(ii)] compatibility with the ordering as in \eqref{eq:H} above;
\item[(iii)] $\Psi$ is downward closed, $0\in \Psi$,  and $\Psi$ has no maximum;
\item[(iv)] there is no gap.
\end{enumerate}
This result is in \cite{AvdD}, which proves also the weak o-minimality of 
the asymptotic couple of $\T$. We do not want to create the impression that the structure induced by $\T$ on its value group $\Gamma$ is just that of an asymptotic couple: $\Gamma$ is also a vector space over the constant field 
$\R$: $r\alpha=\beta$ for $r\in \R$ and $\alpha,\beta\in \Gamma^{\ne}$ whenever $ra^\dagger=b^\dagger$ and $a,b\in \T$, $va=\alpha, vb=\beta$. This vector space structure is also accounted for in \cite{AvdD}. Moreover, these facts about
$\T$ hold
for any Liouville closed $H$-field $K$ with small derivation (with $\R$ 
replaced by $C$).

\section{New  Results}\label{sec:new results}

\noindent
The above material raises some issues which turn out to be related.
First, no $H$-subfield $K$ of $\T$ with $\Gamma\ne \{0\}$ has a gap. Even
to construct a {\em Hardy field\/} with a gap and $\Gamma\ne \{0\}$ 
takes effort.
Nevertheless, the model theory of asymptotic couples strongly suggests 
that $H$-fields with a gap should play a key role, and so the question arises
how a given $H$-field can be extended to one with a gap.
The analogous issue for asymptotic couples is easy, but
we only managed to show rather recently that every $H$-field can be 
extended to one with a gap. This is discussed in more detail in Section~\ref{sec:important pc-sequence}.

Recall: a valued field is {\em maximal\/} if 
it has no proper immediate extension; this is equivalent to the more 
geometric notion of {\em spherically complete}. For example, Hahn fields 
are maximal. Decisive results in the model theory 
of maximal valued fields are due to
Ax~\&~Kochen~\cite{AK65} and Er{\v{s}}ov~\cite{Ersov}. Among other 
things they showed that {\em henselian\/} 
is the exact first-order counterpart of {\em maximal\/}, at least 
in equicharacteristic zero. 
In later extensions
(Scanlon's valued differential fields in~\cite{Scanlon} and the
valued difference fields from~\cite{Azgin-vdDries,BMS}), the natural models are 
still maximal. Here and below, ``maximal'' means ``maximal as a valued field''
even if the valued field in question has further structure like a derivation.

However, in our situation the expected natural models cannot be maximal:  
no maximal $H$-field can be Liouville closed, let
alone existentially closed. Maximal $H$-fields do nevertheless exist in 
abundance, and turn out to be a natural source for creating $H$-fields 
with a gap. It also remains true that immediate
extensions require close attention: $\T$ has
proper immediate $H$-field extensions that embed over $\T$ into an 
elementary extension of $\T$; see the proof of Proposition~\ref{nqel}. 
Thus we cannot bypass the immediate extensions of $\T$ 
in any model-theoretic account of $\T$ as we are aiming for.

\subsection{Immediate extensions of $H$-fields.}\label{immeh} 

\begin{theorem}\label{thimmax}
Every real closed $H$-field has an immediate maximal $H$-field extension.
\end{theorem}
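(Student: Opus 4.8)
\emph{Proof proposal.} The plan is to construct the extension by a transfinite recursion. At stage $\alpha$ we will have a real closed $H$-field $K_\alpha$ with $K\subseteq K_\alpha$ and $K_\alpha$ immediate over $K$; if $K_\alpha$ is already maximal we stop, and otherwise we enlarge it. This must terminate, since the immediate extensions of $K$ all have cardinality bounded by a cardinal depending only on $K$: each embeds over $K$ into a maximal immediate extension, and in residue characteristic $0$ such an extension is, by Kaplansky's theory, isomorphic to a Hahn field $C((t^\Gamma))$. So the real work is in the successor step: to show that a real closed $H$-field that is not maximal has a proper immediate $H$-field extension that can again be taken real closed.

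First I would exploit real-closedness to remove all algebraic complications. A real closed valued field $L$ is \emph{algebraically maximal}: a proper immediate algebraic extension of $L$ would be a proper finite extension, hence of the form $L(\sqrt{-1})$, and this is not immediate because the residue field of $L$, being real closed, acquires a square root of $-1$. By the Kaplansky theory of pc-sequences it follows that every pc-sequence in $L$ of algebraic type over $L$ has a pseudolimit in $L$; equivalently, every \emph{divergent} pc-sequence in $L$ is of transcendental type over $L$. So in the successor step I fix a divergent pc-sequence $(a_\rho)$ in $K_\alpha$ and know that it is of transcendental type.

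Next I would adjoin a pseudolimit. By classical valuation theory there is an immediate valued field extension $K_\alpha(a)$ with $a$ transcendental over $K_\alpha$ and $a$ a pseudolimit of $(a_\rho)$; the derivation has to be extended so that $a'$ is a pseudolimit of $(a_\rho')$, then $a''$ of $(a_\rho'')$, and so on. Each $(a_\rho^{(n)})$ is again a pc-sequence in $K_\alpha$, so at every level one either adds nothing new or performs a further immediate step, and the differential field $K_\alpha\langle a\rangle=K_\alpha(a,a',a'',\dots)$ is an increasing union of immediate valued field extensions of $K_\alpha$, hence itself immediate over $K_\alpha$. After checking that $K_\alpha\langle a\rangle$ is an $H$-field, I would pass to its real closure $K_{\alpha+1}$, which is an $H$-field by \cite{AvdD1}, is real closed, and is still immediate over $K$: the value group stays divisible, and the residue field, being already real closed, is not enlarged. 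At limit stages one takes unions, which preserve being a real closed immediate $H$-field extension of $K$.

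The hard part will be the extension of the derivation in the previous paragraph, together with the verification that the result is an $H$-field. The successive pseudolimits $a^{(n)}$ are pinned down only to a ball, and whenever some $a^{(n)}$ turns out to be algebraic over the field built so far the value of the derivation on it is forced, so one must check that this forced value is a legitimate pseudolimit of $(a_\rho^{(n-1)})$ and that the whole system of choices is mutually consistent; and one must see that the $H$-field axioms persist --- in particular (H2) and the l'H\^opital--Rosenlicht rule \eqref{eq:lH}, and the absence of new finite constants that (H1) requires. This is exactly the point at which the structure theory of asymptotic fields and the finer analysis of pc-sequences in $H$-fields do the real work, and it is where I would expect to spend most of the effort.
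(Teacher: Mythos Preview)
Your transfinite framework and the cardinality bound for termination are fine, and the paper's construction is also (implicitly) a transfinite one. But you have correctly located the entire difficulty in the successor step and then not actually addressed it. The sentence ``extend the derivation so that $a'$ is a pseudolimit of $(a_\rho')$, then $a''$ of $(a_\rho'')$, and so on'' is where the theorem lives, and your plan for it does not survive contact with the differential-algebraic case.

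Concretely: a divergent pc-sequence $(a_\rho)$ in a real closed $H$-field need not be of \emph{differential}-transcendental type. There may be a non-zero $P\in K\{Y\}$ of minimal complexity such that $(P(a_\rho))$ pseudoconverges to~$0$. In that situation one wants a pseudolimit $a$ with $P(a)=0$; then $a^{(r)}$ (with $r$ the order of $P$) is forced to be a specific algebraic function of $a,\dots,a^{(r-1)}$, and one must show that this forced value really is a pseudolimit of $(a_\rho^{(r)})$, that the resulting valued differential field is immediate, and that it (or its real closure) is an $H$-field with no new constants. None of this is automatic. Your iterated-pseudolimit scheme gives no control over $vP(y)$ for differential polynomials $P$ and elements $y$ near the prospective pseudolimit, and that is exactly the information needed to push the $H$-field axioms through.

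The paper attacks the successor step by a quite different route. It first makes a case distinction according to whether $K$ admits asymptotic integration. If not, compositional conjugation arranges $\sup\Psi=0$, and then $K$ is shown to be \emph{flexible}: for any $P\in K\{Y\}$ with $vP(0)>vP$ and any $\gamma\in\Gamma^{>}$, the set $\{vP(y):y\in K,\ |vy|<\gamma\}$ is infinite. If $K$ does admit asymptotic integration, one develops the \emph{Newton polynomial} $N_P\in C\{Y\}$ of each non-zero $P\in K\{Y\}$ (again via compositional conjugation, now by admissible $\phi$ with $v\phi$ large) and proves a modified flexibility statement under the hypothesis $N_P(0)=0$. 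In either case flexibility is what makes the construction of proper immediate $H$-field extensions go through, because it controls the valuations of differential polynomials evaluated near a prospective pseudolimit --- precisely the leverage your scheme lacks.

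So the gap is not a small one: your proposal reduces the theorem to its hard core and then gestures at it. To turn it into a proof you would need either the flexibility and Newton-polynomial machinery of Sections~\ref{immeh}--\ref{newtpo}, or an independent argument handling minimal differential polynomials of divergent pc-sequences, and you have supplied neither.
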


This was not even known when the value group is $\Q$. A difference with the 
situation for 
valued fields of equicharacteristic $0$ (without derivation) 
is the lack of uniqueness of the maximal immediate extension. 
(The proof of Proposition~\ref{nqel} shows such non-uniqueness 
in the case of $\T$.) 

\medskip\noindent
Here are some comments on our proof of 
Theorem~\ref{thimmax}. First, this involves a change of derivation as follows. 
Let $K$ be a differential field with derivation~$\der$, and 
let $\phi\in K^\times$. Then we define $K^\phi$ to be 
the differential field obtained from~$K$ by taking
$\phi^{-1}\der$ as its derivation instead of $\der$. Then
the constant field~$C$ of $K$ is also the constant field of $K^{\phi}$, and so
$C\{Y\}$ is a common differential subring of $K\{Y\}$ and $K^{\phi}\{Y\}$.
Given a differential polynomial $P\in K\{Y\}$ we let 
$P^{\phi}\in K^{\phi}\{Y\}$ be the
result of rewriting $P$ in terms of the derivation $\phi^{-1}\der$, so
$P^{\phi}(y)=P(y)$ for all $y\in K$. (For example, $Y'^{\phi}=\phi Y'$ in
$K^{\phi}\{Y\}$.) This change of derivation is called 
{\bf compositional conjugation}. A suitable choice of $\phi$ can often
drastically simplify things. Also, if $K$ is an $H$-field and $\phi>0$, then
$K^{\phi}$ is still an $H$-field, with $\Psi_{K^{\phi}}=\Psi_K-v\phi$.

Next, given any valued differential field $K$, we extend its
valuation $v$ to a valuation on the domain $K\{Y\}$ of 
differential polynomials by 
$$vP:= \min\{va:\text{$a\in K$ is a coefficient of $P$}\}.$$ 
\noindent
Let now $K$ be a real closed $H$-field with 
value group $\Gamma\ne \{0\}$, and suppose first that
$K$ does not admit asymptotic integration.
Then $\sup \Psi$ exists in $\Gamma$, and  
by compositional conjugation we can arrange that
$\sup \Psi = 0$. One can show that then $K$ is {\bf flexible}, by
which we mean that it has the following 
property: 
for any $P\in K\{Y\}$ with $vP(0) > vP$ and any
$\gamma\in \Gamma^{>}$, the set $\{vP(y):\ y\in K,\ |vy|<\gamma\}$ is infinite.
This property then plays a key role in constructing 
an immediate maximal $H$-field extension of $K$. 
(It is worth mentioning that the notion of flexibility makes 
sense for any valued differential field. There are indeed 
other kinds of flexible valued differential fields such as those
considered in \cite{Scanlon} where this property can 
be used for similar ends.)

The case that the real closed $H$-field $K$ does admit asymptotic integration
is harder and uses compositional conjugation in a
more delicate way. We say more on this in the next subsection.

\subsection{The Newton polynomial.}\label{newtpo}

In this subsection $K$ is a real closed $H$-field with asymptotic integration.
To simulate the favorable case $\sup \Psi = 0$
from the previous subsection, we use compositional conjugation by
$\phi$ with $v\phi < (\Gamma^{>})'$ as large as possible.
Call $\phi\in K$ {\bf admissible} if $v\phi < (\Gamma^{>})'$.

\begin{theorem} Let $P\in K\{Y\}$, $P\ne 0$.  Then there is a differential
polynomial $N_P\in C\{Y\}$, $N_P\ne 0$, such that for all admissible 
$\phi\in K$ with sufficiently large $v\phi$ we have $a\in K^\times$ and $
R\in K^{\phi}\{Y\}$ with
$$ P^\phi = a N_P + R\ \text{ in } K^{\phi}\{Y\}, \qquad vR > va.$$
\end{theorem}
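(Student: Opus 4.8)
The plan is to track the valuations of the coefficients of $P^\phi$ as $\phi$ runs through admissible elements with $v\phi$ increasing toward the cut $(\Gamma^{>})'$, and to show that the ``dominant part'' of $P^\phi$ stabilizes to a differential polynomial with constant coefficients. First I would reduce to the case of a homogeneous component: write $P = \sum_{\mathbf{d}} P_{\mathbf{d}}$, the decomposition of $P$ into isobaric (in the sense of weight) homogeneous pieces $P_{\mathbf{d}}$, and recall how compositional conjugation acts on a monomial $Y^{(i_1)}\cdots Y^{(i_r)}$. The key computation is that for such a monomial, $(Y^{(i)})^\phi = \phi\bigl(Y^{(i-1)}\bigr)'{}^\phi + (\text{lower order terms involving }\phi^\dagger,\phi^{\dagger\prime},\dots)$, and iterating, $(Y^{(i)})^\phi = \phi^i Y^{(i)} + (\text{terms of the form } c\, Y^{(j)} \text{ with } j<i \text{ and } c \text{ a differential polynomial in } \phi^\dagger)$. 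The upshot is that each coefficient of $P^\phi$ lies in the differential subring of $K^\phi$ generated over (the old coefficients of) $P$ by $\phi$ and the iterated logarithmic derivatives of $\phi$.

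Next I would exploit admissibility. Since $\phi$ is admissible, $v\phi < (\Gamma^{>})'$, hence $v(\phi^\dagger) = v(\phi') - v\phi > 0$ by the defining inequality for asymptotic couples, so $\phi^\dagger \prec 1$; and more is true as $v\phi$ grows: because $K$ admits asymptotic integration, $\Psi$ has no supremum, so we may take $v\phi$ beyond any prescribed element of $\Psi$, forcing $v(\phi^\dagger)$, and likewise $v(\phi^{\dagger\prime})$, $v\bigl((\phi^\dagger)^\dagger\bigr)$, etc., to be as large as we please. The point is that, after dividing $P^\phi$ by a suitable coefficient $a\in K^\times$ of largest absolute value (smallest valuation), the contributions coming from the ``lower order'' terms above — those carrying a factor of some iterated logarithmic derivative of $\phi$ — become negligible, i.e. have valuation $>0$ relative to $a$, once $v\phi$ is large enough. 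What survives modulo such negligible terms is, in $K^\phi\{Y\}$, the polynomial built from the ``leading'' monomials $\phi^{\|\mathbf{d}\|}\cdot(\text{monomial in the }Y^{(j)})$ with $\|\mathbf{d}\|$ the weight; dividing by $a$ normalizes the $\phi$-powers away and leaves a differential polynomial whose coefficients lie in $\mathcal{O}_{K^\phi}$ and in fact, after passing to residues, in $C$. One then checks these residues stabilize (are independent of $\phi$ for $v\phi$ large), and takes $N_P\in C\{Y\}$ to be that stable residual polynomial; it is nonzero because at least one leading monomial must dominate. Finally one sets $R := P^\phi - a N_P$ and verifies $vR > va$ directly from the above estimates.

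The main obstacle I expect is the bookkeeping needed to prove the stabilization and, especially, that $N_P\neq 0$: one must rule out the possibility that the various leading monomials, after normalization by $a$, cancel each other or all degenerate, and one must verify that the ``error'' terms really do all acquire strictly positive relative valuation simultaneously. This is where the hypothesis that $K$ admits asymptotic integration is essential — it guarantees enough room to push $v\phi$ up — and where one needs a uniform bound: only finitely many monomials occur in $P$, so finitely many iterated logarithmic derivatives of $\phi$ are involved, and hence a single choice of ``sufficiently large $v\phi$'' works for all of them at once. I would organize this by introducing the Newton weight / Newton degree of $P$ and arguing that the coefficient $a$ can be chosen to realize the minimum valuation among the finitely many relevant coefficients of $P^\phi$; the residue of $a^{-1}P^\phi$ in $C\{Y\}$ is then forced to be the claimed $N_P$, and nonvanishing follows because the minimum is attained. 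The rest ($vR>va$, the polynomial $R\in K^\phi\{Y\}$, independence of $N_P$ from $\phi$) is then a routine consequence of the valuation estimates assembled in the first two steps.
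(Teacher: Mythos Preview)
The paper does not prove this theorem. The paper is expository and Section~4 reports on new results whose detailed proofs are deferred to a work in preparation (see the Introduction: ``A full account is in preparation''); the theorem on Newton polynomials is simply stated. There is therefore no proof in the paper to compare your proposal against.

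On its own merits, your plan follows the natural line and the core computation is right: $(Y^{(i)})^\phi = \phi^i Y^{(i)} + (\text{lower-order terms with coefficients in the differential ring generated by }\phi^\dagger)$, and for admissible $\phi$ with $v\phi$ large the iterated logarithmic derivatives of $\phi$ (taken in $K^\phi$) all acquire positive valuation. Two places deserve more care than your sketch gives them. First, the dominant part of $P^\phi$ is selected by a Newton-polygon competition among the affine functions $v\phi \mapsto v(p_{\mathbf{i}}) + w(\mathbf{i})\,v\phi$; since ``$v\phi$ large'' here means $v\phi$ approaching the cut between $\Psi$ and $(\Gamma^{>})'$ rather than $v\phi\to\infty$, you must argue carefully that this competition is eventually decided (finiteness of the monomial set is what you need, as you note, but spell out why the winner does not keep changing as $v\phi$ varies within the admissible range). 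Second, the stabilization of the residues --- that the image of $a^{-1}P^\phi$ in $(\mathcal{O}/\smallo)\{Y\}\cong C\{Y\}$ is eventually independent of $\phi$ up to a factor in $C^\times$ --- is the genuine content of the theorem, and your sketch treats it as a routine check. It is not: you need to show that replacing $\phi$ by another admissible $\phi_1$ with $v\phi_1 \ge v\phi$ changes $a^{-1}P^\phi$ only by something in $\smallo\cdot C\{Y\}$ after a harmless rescaling, and this requires controlling the ratio $\phi_1/\phi$ and its logarithmic derivatives. Once that is in hand, $N_P\ne 0$ is immediate, since by construction at least one coefficient of $a^{-1}P^\phi$ is a unit.
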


We call $N_P$ the {\bf Newton polynomial} of $P$. As described here, $N_P$
is only determined up to multiplication by an element of $C^\times$, but the key
fact is that $N_P$ is independent of the admissible $\phi$ for high enough 
$v\phi$.  We now have 
a modified version of the flexibility property of the previous subsection:
given any non-zero $P\in K\{Y\}$ with $N_P(0)=0$ and any
$\gamma\in \Gamma^{>}$, the set $\{vP(y):\ y\in K,\ |vy|<\gamma\}$ is infinite.
This can then be used to prove Theorem~\ref{thimmax} for 
real closed $H$-fields with asymptotic integration. 

\subsection{Newtonian $H$-fields and differential-henselian asymptotic fields.}\label{difhen}
An important fact about $\T$ from \cite{vdH} is that
if the Newton polynomial of $P\in \T\{Y\}$ has degree $1$, 
then $P$ has a zero in the valuation ring. Let us
define an $H$-field $K$ to be {\bf newtonian} if it 
is real closed, admits 
asymptotic integration, and every non-zero $P\in K\{Y\}$ whose Newton 
polynomial has degree $1$ has a zero in the valuation ring. 
Thus $\T$ is newtonian. A more basic example of a 
newtonian $H$-field is $\T_{\text{log}}$. It is easy to see that if $K$ 
is newtonian, then every linear differential equation 
$a_0y + a_1y' + \cdots + a_ny^{(n)}=b$ with   
$a_0,\dots, a_n, b\in K$, $a_n\ne 0$,
has a solution in $K$. 

If $K$ is a 
newtonian $H$-field, then so is each compositional conjugate
$K^{\phi}$ with $\phi>0$, and certain 
coarsenings of such compositional conjugates of $K$ are 
{\em differential-henselian} in the following sense. 
Let $K$ be any valued differential field with small derivation, that is,
$\der\smallo \subseteq \smallo$. It is not hard to see that then 
$\der\mathcal O \subseteq \mathcal O$, and so the residue field 
$\k=\mathcal O/\smallo$ is a differential field.
In the spirit of \cite{Scanlon} we define
$K$ to be {\bf differential-henselian} if 
\begin{list}{*}{\setlength\leftmargin{3em}}
\item[(DH1)] every linear differential equation 
$a_0y + a_1y' + \cdots + a_ny^{(n)}=b$ with   
$a_0,\dots, a_n, b\in \k$, $a_n\ne 0$,
has a solution in $\k$;
\item[(DH2)] for every $P\in \mathcal O\{Y\}$ with $vP_0>0 $ and
$vP_1=0$, there is $y\in \smallo$ such that $P(y)=0$.
\end{list}
Here $P_d$ is the homogeneous part of degree $d$ of $P$, so 
$$P_0=P(0), \qquad P_1 = \sum_i \frac{\partial P}{\partial Y^{(i)}}(0)Y^{(i)}.$$
\noindent
We now have an analogue of the familiar lifting of residue fields
in henselian valued fields of equicharacteristic zero: 
if $K$ is differential-henselian, 
then every maximal differential subfield of $\mathcal{O}$ maps isomorphically
(as a differential field) onto $\k$ under the residue map. 

\medskip\noindent
If $K$ is an 
$H$-field with $\der\smallo \subseteq \smallo$, then the derivation on its 
residue field $\k$ is trivial, so
(DH1) fails. To make the notion of 
differential-henselian relevant for $H$-fields we need to consider coarsenings:
Suppose $K$ is a newtonian $H$-field and 
$\der\smallo \subseteq \smallo$. Then the value group $\Gamma=v(K^\times)$ has 
a distinguished non-trivial convex subgroup
$$\Delta\ :=\ \{\gamma\in \Gamma:\ \gamma^{\dagger}>0\},$$
and $K$ with the coarsened valuation 
$v_{\Delta}\colon K^\times \to \Gamma/\Delta$ is differential-henselian. 
Moreover, by passing to suitable compositional conjugates
of $K$, we can make this distinguished non-trivial convex subgroup $\Delta$ 
as small as we like, and in this way we can make the coarsened valuation 
approximate the original valuation as close as needed. We call
$K$ with $v_{\Delta}$ the {\bf flattening of $K$}.

\medskip\noindent
These coarsenings are asymptotic differential fields, as defined in Section~\ref{sec:H-fields}.
Let us consider more generally any asymptotic differential field  
$K$ with $\der\smallo \subseteq \smallo$. Then ``differential-henselian''
does have some further general consequences:

\begin{lemma}\label{lemlindh} If $K$ is differential-henselian and 
$a_0,\dots, a_n, b\in K$, $a_n\ne 0$, then
$a_0y + a_1y' + \cdots + a_ny^{(n)}=b$ for some $y\in K$.   
\end{lemma}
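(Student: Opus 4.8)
The plan is to bring the equation into exactly the form handled by axiom (DH2) --- an affine-linear differential polynomial over $\mathcal{O}$ with a unit among the coefficients of its linear part and an infinitesimal constant term --- and then read off a solution. Two trivial reductions first: if $b=0$ take $y=0$; and if the valuation of $K$ is trivial (so $\mathcal{O}=K=\k$) the assertion is just (DH1). So assume $b\ne 0$ and $\Gamma=v(K^\times)\ne\{0\}$, and write $A(Y):=a_0Y+a_1Y'+\cdots+a_nY^{(n)}\in K\{Y\}$, a homogeneous linear differential polynomial of order $n$ with $a_n\ne 0$; we must show $b\in A(K)$.

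The one real point is that $b$ may be too large relative to the coefficients of $A$: applied directly, (DH2) requires $\min_i v(a_i)<v(b)$ (minimum over $i$ with $a_i\ne 0$), which need not hold. To force it I would first rescale the unknown by a suitable factor $\m\in K^\times$. The substitution $Y=\m Z$ turns $A(Y)=b$ into $B(Z)=b$, where $B\in K\{Z\}$ is the linear differential polynomial $B(Z):=A(\m Z)$; by the Leibniz rule the coefficient of $Z^{(\ell)}$ in $B$ equals $\sum_{i\ge\ell}\binom{i}{\ell}a_i\m^{(i-\ell)}$, so $B$ has order $n$ with leading coefficient $a_n\m\ne 0$. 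As $\Gamma$ is a non-trivial ordered abelian group, it has no least element, so we may choose $\m$ with $v\m<v(b)-v(a_n)$; then the leading coefficient of $B$ has value $v(a_n\m)<v(b)$, and hence $\min_\ell v(b_\ell)<v(b)$, where $b_\ell$ is the coefficient of $Z^{(\ell)}$ in $B$ (minimum over $\ell$ with $b_\ell\ne 0$).

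Now fix $c\in K^\times$ with $vc=\min_\ell v(b_\ell)$ and put
$$ P(Z)\ :=\ c^{-1}B(Z)-c^{-1}b\ \in\ K\{Z\}. $$
Every coefficient $c^{-1}b_\ell$ of $P$ has value $\ge 0$, and $v(c^{-1}b)=v(b)-vc>0$; hence $P\in\mathcal{O}\{Z\}$, with $P_0=-c^{-1}b$ satisfying $vP_0>0$ and $P_1=\sum_\ell(c^{-1}b_\ell)Z^{(\ell)}$ satisfying $vP_1=\min_\ell v(c^{-1}b_\ell)=0$. Thus (DH2) provides $z\in\smallo$ with $P(z)=0$, i.e.\ $B(z)=b$; then $y:=\m z\in K$ satisfies $A(y)=A(\m z)=B(z)=b$, as desired.

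I do not expect a serious obstruction; the only step needing care is the valuation bookkeeping of the substitution $Y=\m Z$ --- all that is needed is $b_n=a_n\m\ne 0$ together with the single inequality $\min_\ell v(b_\ell)<v(b)$, and what the substitution does to the remaining coefficients is irrelevant. Note that (DH1) is invoked only in the degenerate trivially-valued case, and that smallness of the derivation enters only through the standing hypotheses that make $\mathcal{O}$, $\smallo$, $\k$ well behaved and (DH2) available, not through the computation itself. A route via (DH1) together with successive approximation would instead require $K$ to be spherically complete, which is not assumed here, so the direct appeal to (DH2) seems to be the right move.
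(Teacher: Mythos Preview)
The paper is a survey and does not include a proof of this lemma; it only states it and then remarks on a sharper version with $a_0,\dots,a_n,b\in\mathcal{O}$ and some $a_i\notin\smallo$. So there is nothing to compare against directly. That said, your argument is correct and is essentially the expected one: rescale the unknown to force $\min_\ell v(b_\ell)<v(b)$, normalize so that the resulting affine differential polynomial lies in $\mathcal{O}\{Z\}$ with $vP_0>0$ and $vP_1=0$, and invoke (DH2). The bookkeeping is right, including the Leibniz computation showing $b_n=a_n\m$, and your handling of the trivial cases is fine.

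Two minor remarks. First, the phrase ``it has no least element'' is true but slightly beside the point; what you actually use is that a non-trivial ordered abelian group is unbounded below, so some $\m$ with $v\m<v(b)-v(a_n)$ exists. Second, note that your proof does not use the asymptotic hypothesis on $K$ at all---only (DH2) and the existence of elements of arbitrarily small valuation---so it in fact establishes the lemma for any differential-henselian valued differential field with small derivation and non-trivial value group (with the trivially valued case handled separately via (DH1), as you do). This is consistent with how the paper situates the lemma, and worth being aware of.
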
 

This has a useful sharper version where we assume that 
$a_0,\dots, a_n, b\in \mathcal{O}$ and $a_i\notin \smallo$ for some $i$, 
with the solution $y$ also required to be in $\mathcal{O}$.  

\begin{prop}\label{maxdifhen}
If $K$ is maximal as a valued field, and its differential residue field
$\k$ satisfies \textup{(DH1)}, then $K$ is differential-henselian. 
\end{prop}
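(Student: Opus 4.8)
The plan is to verify the two defining conditions (DH1) and (DH2) for $K$ directly, using maximality (= spherical completeness) as the source of the limits we need. Condition (DH1) is a hypothesis, so the entire content is (DH2): given $P\in \mathcal O\{Y\}$ with $vP_0>0$ and $vP_1=0$, we must produce $y\in\smallo$ with $P(y)=0$. The standard route is a Newton-style successive approximation. First I would set up the linear differential operator that drives the iteration: write $L:=P_1$, viewed as the operator $L(z)=\sum_i c_i z^{(i)}$ with $c_i=\frac{\partial P}{\partial Y^{(i)}}(0)\in\mathcal O$; the assumption $vP_1=0$ means some $c_i\notin\smallo$, so the reduction $\bar L$ over the residue field $\k$ is a nonzero linear differential operator. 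By (DH1) applied to $\bar L$ (and its translates $\bar L$ with any right-hand side in $\k$), $\bar L:\k\to\k$ is surjective; I would record the sharper statement that there is a constant $\delta\in\Gamma$ (independent of the right-hand side) such that for every $b\in\mathcal O$ the equation $L(z)=b$ has a solution $z\in K$ with $vz\ge vb-\delta$ — this follows by lifting a residue-field solution and iterating on the error term, using smallness of the derivation to control how $L$ acts on elements of positive valuation. (This is essentially Lemma~\ref{lemlindh} in its sharper form, which the text already flags as available.)

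With that linear solver in hand, the Newton iteration goes as follows. Start with $y_0=0$, so $P(y_0)=P_0$ has positive valuation. Given $y_n\in\smallo$ with $\varepsilon_n:=P(y_n)$, expand $P(y_n+z)=P(y_n)+L_n(z)+(\text{higher-degree terms in }z)$ where $L_n$ is the degree-one part of $P$ around $y_n$; one checks that since $y_n\in\smallo$ and the derivation is small, $L_n$ has the same residue-field reduction as $L$, hence is still "invertible" with the same loss $\delta$. Choose $z_n$ with $L_n(z_n)=-\varepsilon_n$ and $vz_n\ge v\varepsilon_n-\delta$, and set $y_{n+1}=y_n+z_n$. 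The higher-degree terms in $z_n$ contribute something of valuation $\ge 2vz_n-O(1)$ roughly, so $v\varepsilon_{n+1}\ge v\varepsilon_n+(v\varepsilon_n-\delta-c)$ for a fixed constant $c$; once $v\varepsilon_n$ exceeds $\delta+c$ this forces $v\varepsilon_n\to\infty$ strictly, and correspondingly $vz_n\to\infty$, so $(y_n)$ is a pseudo-Cauchy sequence. Here is where maximality enters decisively: $K$, being spherically complete, contains a pseudo-limit $y$ of $(y_n)$, and since each $y_n\in\smallo$ and $vz_n\to\infty$ we get $y\in\smallo$; continuity of $P$ with respect to the valuation (polynomials are valuation-continuous) gives $P(y)=\lim P(y_n)=0$. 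One subtlety to handle carefully is that a pseudo-limit need not satisfy $P(y_n+(y-y_n))$ with the error estimate literally surviving to the limit, so I would instead argue that $v(y-y_n)\ge vz_n$ and $vP(y)\ge \min(vP(y_n), v(\text{increments}))\to\infty$, whence $P(y)=0$.

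The main obstacle I anticipate is bookkeeping the constants in the linear solver uniformly along the iteration — i.e. confirming that the "loss" $\delta$ and the compartmentalization of the higher-degree remainder do not degrade from step to step, which requires knowing that $L_n$ and $L$ have the same residue reduction and that $P$ has only finitely many terms so the remainder valuation can be bounded below by $2vz_n-C$ for a single constant $C$. A secondary point is that one should make sure the derivation being small is used exactly where claimed: it is what guarantees $\der\mathcal O\subseteq\mathcal O$ and $\der\smallo\subseteq\smallo$, so that all the manipulations stay inside $\mathcal O$ and the reductions mod $\smallo$ behave. Once the uniform linear estimate is pinned down, the rest is the usual Hensel-type fixed-point argument, and spherical completeness supplies the limit that an ordinary (non-maximal) valued differential field would lack.
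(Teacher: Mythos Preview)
Your overall strategy---a Newton-type iteration whose pseudo-Cauchy output is then completed by spherical completeness---is the right shape. But two of the estimates you lean on are not correct as stated, and supplying them is exactly where the real work lies. The paper does not write out a proof here, but it explicitly says (just below the statement of the Equalizer Theorem) that this theorem is the key ingredient; your sketch bypasses it, and that is the gap.

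First, the remainder bound ``higher-degree terms contribute valuation $\ge 2vz_n-O(1)$'' fails. Those terms are at least quadratic in $z_n,z_n',\dots,z_n^{(r)}$, and while small derivation guarantees $z_n^{(i)}\in\smallo$, it does \emph{not} give $v(z_n^{(i)})\ge v(z_n)$: in an asymptotic field $v(z')=v(z)+\gamma^\dagger$ for $\gamma=v(z)\ne 0$, and $\gamma^\dagger$ can be arbitrarily negative even with $\gamma>0$ (for $z=\ex^{-\ex^x}$ in $\T$ one has $v(z')-v(z)=v(\ex^x)<0$, and iterated exponentials make this gap unbounded). So no uniform $C$ gives $vR(z)\ge 2vz-C$, and the quadratic-convergence bookkeeping collapses.

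Second, your linear solver with ``uniform loss $\delta$'' is not established by lift-and-iterate; note also that invoking Lemma~\ref{lemlindh} is circular, since that lemma assumes $K$ is already differential-henselian. After one lift you face $L(w)=b_1$ with $vb_1>0$; to lift again you must scale so the target becomes a unit, but scaling by $g$ replaces $L$ by the twisted operator $w\mapsto L(gw)$, whose valuation is $v_{P_1}(vg)$, and for the residue equation to be nontrivial you need $v_{P_1}(vg)=vb_1$. Knowing such a $g$ exists at \emph{every} stage is precisely the assertion that $v_{P_1}\colon\Gamma\to\Gamma$ is surjective---the Equalizer Theorem. That theorem is what tells you, at each step, the correct valuation at which to work so that the linear part genuinely dominates the higher-degree parts; without it the iteration neither gets started (the bare hypothesis $vP_0>0$ is not enough to trigger $v\varepsilon_{n+1}>v\varepsilon_n$) nor continues.
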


While the AKE paradigm\footnote{``AKE'' stands for ``Ax-Kochen-Er\v{s}ov''.} 
does not apply directly to $H$-fields, it may well be relevant indirectly
by passing to coarsenings of compositional conjugates of $H$-fields.
Here we have of course in mind that  ``differential-henselian''
should take over the role of ``henselian'' in the AKE-theory.

It is worth mentioning that in dealing with a pc-sequence $(a_{\lambda})$ in
an $H$-field with asymptotic integration we can reduce to two very different
types of behavior: one kind of behavior is when $(a_{\lambda})$ 
is {\bf fluent}, that is, it remains a pc-sequence upon coarsening
the valuation by some non-trivial convex subgroup of the value group $\Gamma$, 
and the other type of behavior is when $(a_{\lambda})$ is {\bf jammed\/},
that is, for every  $\delta\in \Gamma^{>}$ there is
an index $\lambda_0$ such that
$|\gamma_{\mu}-\gamma_{\lambda}|<\delta$ for all $\mu > \lambda > \lambda_0$, where $\gamma_{\lambda}:= v(a_{\lambda+1}-a_{\lambda})$ for all $\lambda$.
In differential-henselian matters it is enough to deal with fluent pc-sequences.
Jammed pc-sequences are considered in 
Section~\ref{sec:important pc-sequence}.

\subsection{Consequences for existentially closed $H$-fields.}

Using the results above, some important facts about $\T$ can be shown to 
go through for existentially closed $H$-fields. Thus existentially 
closed $H$-fields are not only Liouville closed, but also 
newtonian. As to linear differential 
equations, let us go into a little more detail. 

\medskip\noindent
Let $K$ be a differential field, and consider a 
{\it linear differential operator\/}  
$$A=a_0 + a_1\der + \dots + a_n\der^n \qquad(a_0,\dots, a_n\in K)$$
over $K$; here $\der$ stands for the derivation operator on $K$. Then
$A$ defines a $C$-linear map $K \to K$. With composition as product operation,
these operators form a ring extension $K[\der]$ of $K$, with 
$\der  a=a\der+a'$ for $a\in K$.

\begin{theorem} If $K$ is an existentially closed $H$-field, $n\ge 1$, 
$a_n\ne 0$, then $A\colon K \to K$ is surjective, and $A$ is a product of
operators $a+b\der$ of order $1$ in $K[\imag][\der]$  \textup{(}and thus a product of
order $1$ and order $2$ operators in $K[\der]$\textup{)}. 
\end{theorem}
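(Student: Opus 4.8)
The plan is to get surjectivity for free and to obtain both factorizations by induction on $n=\operatorname{ord}A\ge 1$, peeling off a first-order right factor at each step. Surjectivity is immediate: an existentially closed $H$-field is newtonian, and over a newtonian $H$-field every equation $a_0y+a_1y'+\cdots+a_ny^{(n)}=b$ with $a_n\ne 0$ is solvable, so $A\colon K\to K$ is onto. For the inductive step (the case $n=1$ being trivial, as $A$ is then itself an operator $a+b\der$) I would use the Riccati correspondence: writing $A=\sum_{i=0}^{n}a_i\der^i$ and setting $R_0:=1$, $R_{k+1}:=R_k'+bR_k$, the operator $\der-b$ is a right divisor of $A$ in $K[\imag][\der]$ precisely when $b$ is a zero of
$$\operatorname{Ric}_A(b)\ :=\ \textstyle\sum_{i=0}^{n}a_i\,R_i(b)\ \in\ K[\imag]\{b\},$$
a differential polynomial of order $n-1$ and degree $n$. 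Granting that $\operatorname{Ric}_A$ has a zero $b\in K[\imag]$, right division in the Ore ring $K[\imag][\der]$ gives $A=Q\cdot(\der-b)$ with $Q$ of order $n-1$; the inductive hypothesis applied to $Q$ then shows $A$ to be a product of order-$1$ operators over $K[\imag]$.

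The crux will be the claim that \emph{for every $A\in K[\imag][\der]$ of positive order, $\operatorname{Ric}_A$ has a zero in $K[\imag]$} --- equivalently, that every such $A$ has a first-order right factor over $K[\imag]$. The argument I have in mind is the abstract counterpart, valid for $H$-fields that are newtonian and Liouville closed and share the remaining first-order properties of existentially closed $H$-fields, of van der Hoeven's splitting of linear operators over $\T$. After compositional conjugation by an admissible $\phi\in K^{>0}$ with $v\phi$ large enough, $\operatorname{Ric}_A^{\phi}$ acquires a nonzero Newton polynomial $N\in C_K[\imag]\{b\}$ (Section~\ref{newtpo}); since the constant field $C_K$ of an existentially closed $H$-field is real closed, $C_K[\imag]$ is algebraically closed, so $N$ has a zero there, and newtonianity (which passes to the algebraic extension $K[\imag]$) promotes such a zero to an honest zero of $\operatorname{Ric}_A^{\phi}$, hence of $\operatorname{Ric}_A$, in $K[\imag]$; the fluent/jammed dichotomy of Section~\ref{difhen} for the relevant pc-sequence organizes the case split. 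Alternatively, using only that $K$ is existentially closed: first pass to an $H$-field extension $L\supseteq K$ over which $A$ splits into first-order factors over $L[\imag]$, so that $\operatorname{Ric}_A$ has a zero $u+\imag v$ with $u,v\in L$; then $(u,v)$ satisfies over $K$ a finite system of algebraic differential equations, which by existential closedness already has a solution in $K$.

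Next I would deduce the factorization into order-$1$ and order-$2$ operators over $K[\der]$ from the existence, just established, of a first-order right factor of $A$ over $K[\imag]$ (the full $K[\imag]$-factorization is not needed for this). Let $A\in K[\der]$, $\operatorname{ord}A=n\ge 1$, and pick a right divisor $\der-b$ of $A$ in $K[\imag][\der]$. If $b\in K$, put $M:=\der-b\in K[\der]$, so $A=B\cdot M$ with $B\in K[\der]$ of order $n-1$. If $b\notin K$, then $\bar b\ne b$; complex conjugation is a ring automorphism of $K[\imag][\der]$ fixing $K[\der]$ pointwise and $\bar A=A$, so $\der-\bar b$ is also a right divisor of $A$, whence so is $M:=\operatorname{lclm}(\der-b,\,\der-\bar b)$ (the least common left multiple), which is monic of order $2$ with $\bar M=M$; thus $M\in K[\der]$ and $A=B\cdot M$ with $B\in K[\der]$ of order $n-2$. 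Either way $B$ has smaller order, and one recurses, leaving operators of order $\le 2$ over $K[\der]$ as they are.

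The main obstacle is the claim that $\operatorname{Ric}_A$ has a zero in $K[\imag]$: everything else --- surjectivity, the Riccati bookkeeping, the induction, and the descent from $K[\imag]$ to $K$ via conjugation and $\operatorname{lclm}$ --- is routine. That claim is precisely van der Hoeven's factorization of linear operators over $\T$ re-proved under weak first-order hypotheses, and it leans on the Newton-polynomial calculus of Section~\ref{newtpo}, on newtonianity descending to $K[\imag]$, and --- when the Newton polynomial of $\operatorname{Ric}_A$ fails to be of degree $1$ --- on the finer asymptotic structure (such as $\omega$-freeness) that existentially closed $H$-fields possess.
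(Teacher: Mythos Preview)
The paper does not actually prove this theorem: it is stated in the subsection ``Consequences for existentially closed $H$-fields'' of Section~\ref{sec:new results} as one of several results whose proofs are deferred to the full account in preparation (see the Introduction and the remark opening Section~\ref{sec:obstructions}). So there is no paper proof to compare against; I can only assess your outline on its own merits.

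Your structure is sound. Surjectivity from newtonianity is exactly what the paper asserts just before the theorem. The Riccati reduction and the induction on order are standard, and your descent from $K[\imag][\der]$ to $K[\der]$ via complex conjugation and $\operatorname{lclm}(\der-b,\der-\bar b)$ is the classical argument and is correct.

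You have also correctly isolated the one nontrivial step: producing a zero of $\operatorname{Ric}_A$ in $K[\imag]$. Two cautions about the two routes you sketch. For the Newton-polynomial route, note that the paper itself flags the passage of newtonianity from $K$ to $K[\imag]$ as a point not yet settled at the time of writing (see the paragraph after the Optimistic Conjecture); and when $N_{\operatorname{Ric}_A}$ has degree $>1$, appealing to ``finer asymptotic structure'' is where all the work lies---$\upo$-freeness controls the \emph{shape} of Newton polynomials but does not by itself manufacture zeros. For the existential-closedness route, be careful: you need an $H$-field extension $L$ of $K$ in which the real-and-imaginary-part system for $\operatorname{Ric}_A$ has a solution, i.e., in which $A$ already acquires a first-order right factor over $L[\imag]$. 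Citing the known factorization over $\T$ does not suffice unless you first embed $K$ into (an elementary extension of) $\T$ as an $H$-field, which is essentially the model-companion statement you are ultimately after. A cleaner way to run this route is to embed $K$ into a newtonian $\upo$-free $H$-field (which exists by the results of Section~\ref{sec:new results}) and invoke the factorization there---but then you are back to needing the first route for that extension.
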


\subsection{The Equalizer Theorem.} This is an important technical tool, 
needed, for example, in proving Proposition~\ref{maxdifhen}.

\medskip\noindent
Let $K$ be a valued differential field with small derivation and value group
$\Gamma$. Let $P= P(Y)\in K\{Y\},\ P\ne 0$. Then we have for $g\in K^\times$ 
the non-zero differential polynomial $P(gY)\in K\{Y\}$, and it turns out that
its valuation $vP(gY)$ depends only on $vg$ (not on $g$). 
Thus $P$ induces a function
$$v_P\,\colon \ \Gamma\to \Gamma, \quad v_P(\gamma)\ :=\ vP(gY) \text{ for $g\in K^\times$ with $vg=\gamma$.}$$
Moreover, if $P(0)=0$, this function is strictly increasing. The function $v_P$ constrains the behavior of $vP(y)$ as $y$ ranges over $K^\times$:

\begin{lemma} If the derivation on the residue field $\k$ is non-trivial, then $v_P(\gamma)=\min\{vP(y):\ y\in K^\times,\ vy=\gamma\}$
for each $\gamma\in \Gamma$.
\end{lemma}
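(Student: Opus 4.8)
The plan is to reduce, by a compositional change of variable, to a statement about the differential residue field $\k$. Fix $g\in K^\times$ with $vg=\gamma$ and set $Q:=P(gY)\in K\{Y\}$; this is nonzero and satisfies $vQ=v_P(\gamma)$ by the very definition of $v_P$, and moreover $P(y)=Q(y/g)$ for every $y\in K^\times$, with $y/g\in\mathcal{O}^\times$ precisely when $vy=\gamma$. Since the derivation is small, $\der\mathcal{O}\subseteq\mathcal{O}$, so $z^{(j)}\in\mathcal{O}$ whenever $z\in\mathcal{O}$, and hence $vR(z)\geq vR$ for any $R\in K\{Y\}$ and any $z\in\mathcal{O}$. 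Applying this to $R=Q$ and $z=y/g$ gives $vP(y)=vQ(y/g)\geq vQ=v_P(\gamma)$ for all $y$ with $vy=\gamma$, so $v_P(\gamma)\leq\min\{vP(y):y\in K^\times,\ vy=\gamma\}$. This half uses nothing about $\k$; the role of the hypothesis is to force the minimum to be attained.

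For the reverse inequality it suffices to produce one $z\in\mathcal{O}^\times$ with $vQ(z)=vQ$ (then $y:=gz$ does the job). Pick $c\in K^\times$ with $v(cQ)=0$; then $cQ\in\mathcal{O}\{Y\}$ and at least one of its coefficients is a unit, so the image $\overline{cQ}$ of $cQ$ under the residue map $\mathcal{O}\{Y\}\to\k\{Y\}$ (a differential ring homomorphism) is a nonzero differential polynomial over $\k$. I claim that a nonzero differential polynomial over a differential field whose derivation is nontrivial cannot vanish at every point of that field; granting this and applying it over $\k$ (where $\der$ is nontrivial by hypothesis) to $\overline{cQ}\cdot Y$, we obtain $\bar z\in\k^\times$ with $\overline{cQ}(\bar z)\neq 0$. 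Lifting $\bar z$ to $z\in\mathcal{O}^\times$ gives $\overline{(cQ)(z)}=\overline{cQ}(\bar z)\neq 0$, hence $v((cQ)(z))=0$ and $vQ(z)=vQ=v_P(\gamma)$, as needed.

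It remains to prove the claim. First, if the derivation of a differential field $F$ is nontrivial then $F$ has infinite dimension over its constant field $C$: otherwise $F$ would be algebraic over $C$, and a derivation of an algebraic extension of $C$ extending the trivial derivation of $C$ is itself trivial. Consequently no nonzero linear differential operator $L=\beta_r\der^r+\cdots+\beta_0$ over $F$ with $r\geq 1$ and $\beta_r\neq 0$ vanishes on all of $F$, since the solutions in $F$ of $L(y)=0$ form a $C$-subspace of dimension $\leq r$. Now induct on the order $r$ of the given nonzero $R\in F\{Y\}$: for $r=0$, $R$ is a nonzero polynomial over the infinite field $F$; for $r\geq 1$, if some nonzero $R$ of order $r$ vanished on $F$, choose one with $\deg_{Y^{(r)}}R$ minimal. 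Then $R_1:=\partial R/\partial Y^{(r)}\neq 0$ has either order $<r$ or the same order with strictly smaller $Y^{(r)}$-degree, so by the inductive hypothesis or by minimality there is $b\in F$ with $R_1(b)\neq 0$. For $\lambda$ ranging over the infinitely many rationals, the function $\lambda\mapsto R(b+\lambda u)$ is given by a polynomial in $\lambda$ over $F$ which vanishes identically (as $b+\lambda u\in F$ and rationals are constants), so its degree-one coefficient $\sum_j(\partial R/\partial Y^{(j)})(b)\,u^{(j)}$ vanishes for every $u\in F$; thus the linear operator $\sum_j(\partial R/\partial Y^{(j)})(b)\,\der^j$, which has order exactly $r$ because $R_1(b)\neq 0$, vanishes on all of $F$ — a contradiction.

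The routine points — that $v_P$ is well defined, that $P(y)=Q(y/g)$, that the residue map on $\mathcal{O}\{Y\}$ commutes with $\der$ — I would dispatch quickly or cite. The substantive step, and the one I expect to be the main obstacle, is the non-vanishing claim for differential polynomials over differential fields with nontrivial derivation, and within it the passage from a differential polynomial to a linear differential operator by differentiating at a well-chosen point and specializing the perturbation to rational multiples.
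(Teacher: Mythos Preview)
The paper does not actually prove this lemma; it is an expository survey and merely states the result before moving on to the deeper Equalizer Theorem. So there is no authorial proof to compare against.

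That said, your argument is correct and follows what is the natural line. The reduction via $Q=P(gY)$ to the case $\gamma=0$, the easy inequality $vQ(z)\ge vQ$ for $z\in\mathcal{O}$ from smallness of the derivation, and the passage to the residue field to find a unit witness are exactly the expected steps. Your device of applying the non-vanishing claim to $\overline{cQ}\cdot Y$ rather than to $\overline{cQ}$ itself is a clean way to guarantee $\bar z\ne 0$. The proof of the claim---that a nonzero differential polynomial over a differential field $F$ with nontrivial derivation cannot vanish on all of $F$---is standard and your version is fine: the infinite $C$-dimension of $F$, the Wronskian-type bound on the kernel of a linear operator, and the linearization at a point $b$ with $R_1(b)\ne 0$ via rational perturbations all work as written. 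One small stylistic point: in the induction you might state the induction hypothesis as ``no nonzero differential polynomial of order $<r$, and no nonzero polynomial of order $r$ with smaller $Y^{(r)}$-degree than $R$, vanishes on $F$'' so that both cases for $R_1$ are covered uniformly; but this is already implicit in your minimality choice.
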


The following ``equalizer'' theorem lies much deeper:

\begin{theorem} Let $K$ be an asymptotic differential field with 
small derivation and divisible value group $\Gamma$.
Let $P\in K\{Y\}$, $P\ne 0$, be homogeneous of degree $d>0$. Then
$v_P\colon \Gamma \to \Gamma$ is a bijection. If also
$Q\in K\{Y\}$, $Q\ne 0$, is homogeneous of degree $e\ne d$, then
there is a unique $\gamma\in \Gamma$ with $v_P(\gamma)=v_Q(\gamma)$.
\end{theorem}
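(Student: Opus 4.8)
The plan is to reduce the statement to quantitative properties of the single map $v_P$ obtained by \emph{multiplicative conjugation}, proving that $v_P$ is strictly increasing (hence injective, and giving uniqueness of equalizers) and surjective (giving existence of equalizers), with divisibility of $\Gamma$ entering decisively in the last step.

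\emph{The conjugation identity and two regimes.} From $(gY)^{(n)}=g\cdot(\der+g^\dagger)^{n}(Y)$ one gets, for homogeneous $P$ of degree $d$, that $P(gY)=g^{d}\,\tilde P_{g^\dagger}(Y)$, where $\tilde P_{b}\in K\{Y\}$ is obtained from $P$ by substituting $(\der+b)^{n}(Y)$ for each $Y^{(n)}$; its coefficients are universal polynomials with integer coefficients in $b,b',b'',\dots$ and the coefficients $a_{\i}$ of $P$. Hence $v_{P}(\gamma)=d\gamma+v(\tilde P_{b})$, where $b=g^\dagger$, $vg=\gamma$, and $v(\cdot)$ of a differential polynomial is the least valuation of its coefficients. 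If $b\preceq 1$ then, since the derivation is small, $\mathcal O$ is a differential subring, so every coefficient of $\tilde P_{b}$ is an $\mathcal O$-combination of the $a_{\i}$ and $v(\tilde P_{b})\ge v(P)$; conjugating back by $-b$ gives equality, so on this ``tame'' range $v_{P}(\gamma)=d\gamma+v(P)$. If $b\succ 1$, the asymptotic-couple inequalities give $(\der+b)^{n}(1)\asymp b^{n}$, so $v(\tilde P_{b})$ is the Newton-polygon minimum $\min_{\i}\{v(a_{\i})+w(\i)\,v(b)\}$ in the weights $w(\i)=\sum_{n}n\,i_{n}$, whence on this ``wild'' range $v_{P}(\gamma)=\min_{\i}\{d\gamma+v(a_{\i})+w(\i)\,\gamma^\dagger\}$.

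\emph{Monotonicity and uniqueness.} Each term $d\gamma+v(a_{\i})+w(\i)\gamma^\dagger$ is strictly increasing in $\gamma$, because across any interval the increment of $\gamma\mapsto\gamma^\dagger$ is negligible (or zero) next to that of $\gamma$ — an asymptotic-couple fact (direct for $H$-fields; the general asymptotic case is reduced to it via a suitable extension) — and a minimum of strictly increasing functions is strictly increasing; the tame formula is trivially increasing and glues continuously to the wild one at $\gamma^\dagger=0$. The same negligibility estimate shows that for $\gamma_{1}\ne\gamma_{2}$ the quantity $(v_{P}-v_{Q})(\gamma_{1})-(v_{P}-v_{Q})(\gamma_{2})$ has the archimedean size of $(d-e)(\gamma_{1}-\gamma_{2})\ne 0$, so $v_{P}-v_{Q}$ has at most one zero — the uniqueness half of the equalizer statement.

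\emph{Surjectivity, and the main obstacle.} This is the part that ``lies deeper'', and divisibility of $\Gamma$ is essential already for $P=aY^{d}$, where $v_{P}(\gamma)=d\gamma+va$. The plan is to reduce to the case that the derivation on the residue field is nontrivial, where the preceding Lemma identifies $v_{P}(\gamma)$ with $\min\{vP(y):vy=\gamma\}$ and preimages are produced by solving $vP(y)=\delta$ directly; when the derivation is small with trivial residue derivation, one coarsens the valuation by a convex subgroup $\Delta\ne\{0\}$ of $\Gamma$, chosen closed under the induced maps (so that $v_{P}$ descends to $\Gamma/\Delta$) and large enough that the coarsened residue derivation becomes nontrivial, solves modulo $\Delta$ and then inside the coarsened residue field, and reassembles the solution. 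For $d=1$ the solvability of $v_{P}(\gamma)=\delta$ reduces to asymptotic-couple facts of the kind established in \cite{AvdD}; higher $d$ is handled by induction on the degree and order of $P$, using the Newton-polygon description and divisibility to extract the element solving the relevant weighted equation. Once $v_{P},v_{Q}$ are bijections, the strictly monotone, two-sidedly unbounded function $v_{P}-v_{Q}$ attains the value $0$, and its unique zero is the desired equalizer. I expect the hard case to be asymptotic integration (no gap, $\sup\Psi$ absent): there is no convenient coarsening to retreat to, the residue-derivation Lemma is unavailable, and one must bound, uniformly in $\gamma$, how far the wild contribution $w_{\max}\gamma^\dagger$ can pull $v_{P}(\gamma)$ off the line $\gamma\mapsto d\gamma$ — which forces a careful use of the fine structure of the asymptotic couple (the behaviour of $\gamma\mapsto\gamma^\dagger$ on archimedean classes) together with the divisibility of $\Gamma$.
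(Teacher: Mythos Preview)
The paper is expository and does not give a proof of the Equalizer Theorem; it merely flags the result as ``lying much deeper'' and records that it is needed for Proposition~\ref{maxdifhen}. So there is no proof in the paper to compare your proposal against.

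As a standalone sketch, your overall architecture is the right one and matches what the authors carry out in their full account: analyze $v_P$ via multiplicative conjugation $P_{\times g}(Y)=P(gY)=g^{d}\tilde P_{g^\dagger}(Y)$, establish that $v_P$ is strictly increasing, and then prove surjectivity; the equalizer statement for $P,Q$ then follows from monotonicity and unboundedness of $v_P-v_Q$. Your identification of surjectivity as the genuinely hard step, requiring divisibility of $\Gamma$, is also correct.

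That said, two places in your sketch are not yet proofs. First, your ``wild range'' formula $v_P(\gamma)=\min_{\i}\{d\gamma+v(a_{\i})+w(\i)\gamma^\dagger\}$ is too clean: the coefficients of $\tilde P_{b}$ involve $b',b'',\dots$, and the reduction to $b^{w(\i)}$ as the dominant contribution needs the asymptotic-couple estimate $v(b^{(k)})-v(b)>0$ for $b\succ 1$, which in turn uses more than just ``small derivation'' (you need control of $\Psi$ relative to $0$, or an appeal to compositional conjugation to normalize). The monotonicity step likewise needs the precise contraction property $[\alpha^\dagger-\beta^\dagger]<[\alpha-\beta]$ for $\alpha\neq\beta$ in $\Gamma^{\neq}$ (archimedean-class comparison), not just the informal ``negligible'' you invoke. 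Second, and more seriously, your surjectivity paragraph is a plan rather than an argument: the coarsening you describe does not obviously reduce to a case covered by the preceding Lemma, and the ``induction on degree and order'' is not spelled out. You correctly anticipate that asymptotic integration is the delicate case, but you have not shown how to close it; this is exactly where the authors' full treatment does substantial work.
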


In combination with compositional conjugation and Newton polynomials,
the equalizer theorem plays a role in detecting the 
$\gamma\in \Gamma$ for which there can exist $y\in K^{\times}$ with 
$vy = \gamma$ and $P(y)=0$.

\subsection{Two important pseudo-cauchy sequences.} \label{sec:important pc-sequence} We consider here jammed pc-sequences.
Recall that in $\T$ we have the iterated logarithms  $\ell_n$ with
$$ \ell_0=x, \quad \ell_{n+1}=\log \ell_n,$$
and that this sequence is coinitial in $\T^{>\R}$. By a 
straightforward computation,
$$ \upl_n\ :=\ -\ell_n^{\dagger{}\dagger}\  =\  \frac{1}{\ell_0} + \frac{1}{\ell_0\ell_1} + \cdots + \frac{1}{\ell_0 \ell_1 \cdots \ell_n}.$$
Thus $(\upl_n)$ is a (jammed) pc-sequence in $\T$, but has no pseudolimit 
in $\T$. It does have a pseudolimit in a suitable immediate $H$-field 
extension, and such a limit can be thought of as 
$\sum_{n=0}^\infty  \frac{1}{\ell_0 \ell_1 \cdots \ell_n}$. 

\medskip\noindent
The fact that the
pc-sequence $(\upl_n)$ has no pseudolimit in $\T$ is 
related to a key 
elementary property of $\T$. To explain this we assume in the rest of this 
subsection:\quad  
{\em $K$ is a real closed $H$-field with asymptotic integration}.

\medskip\noindent
To mimick the above iterated logarithms, first take for any $f\succ 1$ in $K$
an $\Log f \succ 1$ in $K$ such that $(\Log f)'\asymp f^\dagger$.
(Think of $\Log f$ as a substitute for $\log f$.) Next, pick a sequence
of elements $\ell_\rho\succ 1$ in $K$, indexed by the ordinals 
$\rho$ less than some infinite limit ordinal: take any
$\ell_0\succ 1$ in $K$, and
set $\ell_{\rho+1}:=\Log(\ell_\rho)$; if $\lambda$ is an infinite limit ordinal
such that all $\ell_\rho$ with $\rho<\lambda$ have been chosen,
then take $\ell_\lambda\succ 1$ in $K$ such that 
$\ell_\rho\succ \ell_\lambda$ for
all $\rho<\lambda$, if there is such an $\ell_\lambda$.  This yields a
sequence $(\ell_\rho)$ with the following properties: \begin{enumerate}
\item[(i)] $\ell_\rho \succ \ell_{\rho'}$ whenever $\rho < \rho'$;
\item[(ii)] $(\ell_\rho)$ is coinitial in $K^{\succ 1}$, that is, 
for each $f\succ 1$ in $K$ there is an index $\rho$ with 
$f\succ \ell_\rho$.
\end{enumerate}
Now set $\upl_{\rho}:= -\ell_{\rho}^{\dagger{}\dagger}$.
One can show that this yields a jammed pc-sequence~$(\upl_{\rho})$ in $K$ and that the pseudolimits of
this pc-sequence in $H$-field extensions of~$K$ do not depend on 
the choice of the sequence $(\ell_\rho)$: different choices yield
``equivalent'' pc-sequences in the sense of \cite{BMS}.
Here is a useful fact about this pc-sequence:

\begin{theorem} If $\upl\in K$ is a pseudolimit of $(\upl_{\rho})$, 
then there is an $H$-field extension $K(\upg)$ such that 
$\upg^\dagger=-\upl$ and $K(\upg)$ has a gap $v(\upg)$. 
\end{theorem}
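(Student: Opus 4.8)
\emph{Strategy.} The plan is to produce $K(\upg)$ by adjoining $\upg$ \emph{transcendentally}, with its valuation prescribed to realize a definite cut of $\Gamma$, and then to recognize that valuation as a gap by invoking the trichotomy for $H$-fields (Case~1/Case~2/Case~3). First I would set up some bookkeeping. Writing $h_\rho:=\ell_\rho/\ell_\rho'$, one has $h_\rho^\dagger=-\ell_\rho^{\dagger\dagger}=\upl_\rho$, hence $(h_\rho^{-1})^\dagger=-\upl_\rho$ and $v(h_\rho^{-1})=(v\ell_\rho)^\dagger\in\Psi$. Since $(\ell_\rho)$ is coinitial in $K^{\succ 1}$, the values $v\ell_\rho$ are cofinal in $\Gamma^{<}$, so — using that $\gamma\mapsto\gamma^\dagger$ is decreasing on $\Gamma^{>}$ — the values $v(h_\rho^{-1})$ are cofinal in $\Psi$ from above. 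Because $K$ admits asymptotic integration, $\sup\Psi\notin\Gamma$; let $\sigma$ be the element realizing the corresponding cut, so $\Psi<\sigma<(\Gamma^{>})'$ and $\sigma\notin\Gamma$. Along the sequence one also checks that $\bigl(v(h_\rho^{-1})\bigr)^\dagger=v(\upl_\rho)$ is eventually constant equal to $v(\upl)$ (using that $\upl$ is a pseudolimit, so $v(\upl)=v(\upl_\rho)$ for large $\rho$, and that $\gamma\mapsto\gamma^\dagger$ is constant on archimedean classes), which yields the compatibility identity $\sigma^\dagger=v(\upl)$. Finally, $-\upl\notin(K^\times)^\dagger$: if $-\upl=(1/w)^\dagger$ with $w\in K^\times$, then $v(\upl-\upl_\rho)=v\bigl((w/h_\rho)^\dagger\bigr)=\bigl(v(w)-v(h_\rho)\bigr)^\dagger$, and since $v(w)-v(h_\rho)=v(w)+(v\ell_\rho)^\dagger$ increases monotonically toward $v(w)+\sigma\notin\Gamma$, this sequence of $\dagger$-values is eventually either decreasing (if $v(w)+\sigma>0$, by the $\Gamma^{>}$-behaviour) or eventually constant (if $v(w)+\sigma<0$, once $v(w)-v(h_\rho)$ stays in one archimedean class), contradicting the strict increase required of a pseudolimit.

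\emph{Construction of $K(\upg)$.} Take $\upg$ transcendental over $K$, set $v(\upg):=\sigma$ (so $\Gamma_{K(\upg)}=\Gamma\oplus\mathbb Z\sigma$, with $\sigma$ placed above $\Psi$ and below $(\Gamma^{>})'$), fix the sign of $\upg$ appropriately, and extend the derivation by $\upg^\dagger:=-\upl$, i.e.\ $\upg'=-\upl\,\upg$. I would then check that $K(\upg)$ so equipped is an $H$-field. Since $\sigma\notin\Gamma$ the residue field does not grow, and because every element of $\mathcal O_{K(\upg)}$ is congruent mod $\smallo_{K(\upg)}$ to an element of $\mathcal O$, hence to an element of $C$, axiom (H1) holds and $C_{K(\upg)}=C$ (this is where $-\upl\notin(K^\times)^\dagger$ is used: otherwise $\upg$ times a suitable element of $K$ would be a constant of nonzero valuation). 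Axiom (H2) is a valuation/sign computation — the sign of $\upl$ relative to the position of $\sigma$ is exactly such that $\upg$ (or $-\upg$) is an admissible datum — reducing to (H2) in $K$; and the extension of the ordering is forced by $v(\upg)=\sigma$. By construction $\upg^\dagger=-\upl$, and $v(\upg^\dagger)=v(-\upl)$ agrees with $\sigma^\dagger$ by the compatibility identity above.

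\emph{The gap.} Here the key point is $\Psi_{K(\upg)}=\Psi$: the new elements of $\Gamma_{K(\upg)}$ have the form $\gamma+n\sigma$ ($\gamma\in\Gamma$, $n\in\mathbb Z$), and computing $(\gamma+n\sigma)^\dagger$ — using that $\sigma$ lies in the top archimedean class of $\Psi$, on which $\gamma\mapsto\gamma^\dagger$ is constant, so in particular $\sigma^\dagger=v(\upl)\in\Psi$ — shows that these values already lie in $\Psi$. Thus $\Psi_{K(\upg)}=\Psi$ has no largest element (as $K$ admits asymptotic integration) but its supremum $\sigma=v(\upg)$ now lies in $\Gamma_{K(\upg)}$. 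Hence $K(\upg)$ falls neither under Case~2 nor under Case~3, so by the trichotomy it falls under Case~1, with unique gap $\gamma^{*}$; since $\Psi_{K(\upg)}<\gamma^{*}<(\Gamma_{K(\upg)}^{>})'$ forces $\sigma\le\gamma^{*}$, while $\Psi_{K(\upg)}<\sigma<\gamma^{*}\le(\Gamma_{K(\upg)}^{>})'$ would make $\sigma$ a second gap, uniqueness gives $\gamma^{*}=\sigma=v(\upg)$. This is the desired conclusion.

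\emph{Main obstacle.} The heart of the argument is the middle step: verifying that prescribing $v(\upg)=\sigma$ is consistent with $\upg^\dagger=-\upl$ and produces an $H$-field whose $\Psi$-set does not enlarge — equivalently, that adjoining this single ``antilogarithm'' changes neither the constant field nor $\Psi$. This hinges entirely on the precise location of $\sigma$ just above $\Psi$ together with the archimedean-class constancy of $\gamma\mapsto\gamma^\dagger$ near that cut, which is what the bookkeeping of the first step is designed to pin down; once these are in place, the gap statement drops out of the trichotomy.
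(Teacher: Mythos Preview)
The paper does not prove this theorem; Section~4 announces results whose proofs are deferred, and the relevant machinery is the ``gap creator'' theory of \cite[Section~12]{AvdD2} (invoked explicitly in the proof of Proposition~\ref{nqel}). Your overall strategy --- adjoin $\upg$ transcendentally with $v(\upg)$ realizing the cut $\sigma$ determined by $\Psi<\sigma<(\Gamma^{>})'$, set $\upg^\dagger=-\upl$, and verify that the $\Psi$-set does not grow --- is the correct one and matches that machinery.

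That said, two steps in your sketch do not go through as written.

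\emph{The argument for $-\upl\notin(K^\times)^\dagger$, Case~2.} You claim that once $v(w)-v(h_\rho)$ stays in one archimedean class, $v(\upl-\upl_\rho)=(v(w)-v(h_\rho))^\dagger$ becomes eventually constant. But $v(\upl-\upl_\rho)$ is \emph{strictly} increasing (that is what ``$\upl$ is a pseudolimit'' gives), and since $\gamma\mapsto\gamma^\dagger$ is constant on archimedean classes, strict increase of the $\dagger$-values forces the archimedean class of $v(w)-v(h_\rho)$ to change at every step. So the premise of your Case~2 argument is never met, and no contradiction is obtained. The actual contradiction in this case comes from showing that the widths $v(\upl-\upl_\rho)$ are cofinal in $\Psi$, whereas in Case~2 they would be bounded above by a fixed $\gamma_0^\dagger<\sigma$; establishing that cofinality is precisely the missing ingredient.

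\emph{The argument for $\Psi_{K(\upg)}=\Psi$.} For $n\ne 0$ one has $(\gamma+n\sigma)^\dagger=v(a^\dagger-n\upl)$ with $va=\gamma$. When $\gamma^\dagger=v(\upl)$ the leading terms can cancel, and you have not ruled out $v(a^\dagger-n\upl)>\sigma$. Your justification (``$\sigma$ lies in the top archimedean class of $\Psi$'') does not parse: $\sigma$ is strictly above every element of $\Psi$, and the archimedean class of $\sigma$ in $\Gamma\oplus\mathbb Z\sigma$ is not what controls $(\gamma+n\sigma)^\dagger$ here --- the value $v(a^\dagger-n\upl)$ lives in $\Gamma$ and must be located there directly.

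Both gaps reduce to the same missing lemma: no $b^\dagger$ with $b\in K^\times$ and $vb\ne 0$ is a pseudolimit of $(\upl_\rho)$. Once that is proved (and it is the real content behind \cite[Proposition~12.4]{AvdD2}), your construction and your invocation of the trichotomy finish the argument cleanly.
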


Every $H$-field has an extension to a real closed
$H$-field with asymptotic integration (for example, a Liouville closure). 
By Theorem~\ref{thimmax} 
we can further arrange that this
extension is maximal, so that all pc-sequences in it have a pseudolimit in it.
Thus the last theorem has the following consequence:

\begin{corollary}\label{gapcor}
Every $H$-field has an $H$-field extension with a gap.
\end{corollary}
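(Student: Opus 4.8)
The plan is to assemble the results of this section so that, starting from an arbitrary $H$-field $K_0$, we reach a field in which the pseudolimit $\upl$ of the preceding theorem actually lives, and then apply that theorem.

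First I would replace $K_0$ by a real closed $H$-field with asymptotic integration. A Liouville closure $K_1$ of $K_0$ will do: it is real closed by definition, and being Liouville closed it falls under Case~3, so it admits asymptotic integration. (It is immaterial that $K_0$ may have two Liouville closures; pick either one.)

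Next I would arrange maximality on top of these properties. By Theorem~\ref{thimmax}, $K_1$ has an immediate maximal $H$-field extension $K$. I would then check that $K$ is again real closed and admits asymptotic integration. Asymptotic integration depends only on the asymptotic couple (the condition $\Gamma = (\Gamma^{\ne})'$), which is untouched by an immediate extension, so $K$ has it. For real closedness: $K$ has the same (divisible) value group and the same (real closed) residue field as $K_1$, so the real closure of $K$ is an immediate extension of $K$; since $K$ is maximal, this forces $K$ to equal its own real closure, i.e. $K$ is real closed. Checking that maximality can coexist with ``real closed plus asymptotic integration'' is the only delicate point; the remaining steps are purely organizational.

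Finally I would run the construction of Section~\ref{sec:important pc-sequence} inside $K$: fix a sequence $(\ell_\rho)$ of elements $\succ 1$ in $K$ that is closed under $\Log$ and coinitial in $K^{\succ 1}$, form the jammed pc-sequence $\upl_\rho := -\ell_\rho^{\dagger{}\dagger}$, and use maximality of $K$ to obtain a pseudolimit $\upl \in K$ of $(\upl_\rho)$. The last theorem then yields an $H$-field extension $K(\upg)$ with $\upg^\dagger = -\upl$ having a gap $v(\upg)$. Since $K(\upg) \supseteq K \supseteq K_1 \supseteq K_0$, this is an $H$-field extension of $K_0$ with a gap, which is what was wanted.
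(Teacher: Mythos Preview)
Your proof is correct and follows essentially the same route as the paper's: pass to a Liouville closure to get a real closed $H$-field with asymptotic integration, invoke Theorem~\ref{thimmax} to get an immediate maximal $H$-field extension, and then apply the preceding theorem to a pseudolimit of $(\upl_\rho)$ that exists by maximality. You have in fact filled in two details the paper leaves implicit---that the immediate maximal extension inherits asymptotic integration (same asymptotic couple) and is itself real closed (its real closure is immediate over it, hence trivial by maximality).
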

 
If $K$ is Liouville closed, then $(\upl_{\rho})$ has no
pseudolimit in $K$. 

\begin{theorem}\label{thwam} The following conditions on $K$ are equivalent: \begin{enumerate}
\item[(1)] $K\models \forall a \exists b\ \big[ v(a-b^\dagger)\  \le vb\  < ({\Gamma^{>}})'\big]$;
\item[(2)] $(\upl_{\rho})$ has no pseudolimit in $K$.
\end{enumerate}
\end{theorem}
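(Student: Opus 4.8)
The plan is to reduce $(1)\Leftrightarrow(2)$ to a single pointwise statement about the pc-sequence and then read off the equivalence formally. Throughout write $\Psi=\Psi_K$ and recall that $\upl_\rho=-\ell_\rho^{\dagger\dagger}$, that $(\ell_\rho)$ is coinitial in $K^{\succ 1}$, and that, $K$ having asymptotic integration, $K$ has no gap: there is no $\gamma\in\Gamma$ with $\Psi<\gamma<(\Gamma^{>})'$. I would first record three facts. (i) $v(\ell_\rho^\dagger)=(v\ell_\rho)^\dagger\in\Psi<(\Gamma^{>})'$; moreover, since $\ell_{\rho+1}=\Log\ell_\rho$ and $(\Log\ell_\rho)'\asymp\ell_\rho^\dagger$ give $v(\ell_{\rho+1}^\dagger)=v(\ell_\rho^\dagger)+|v\ell_{\rho+1}|$, the sequence $\big(v(\ell_\rho^\dagger)\big)$ is strictly increasing, and by coinitiality of $(\ell_\rho)$ together with $\gamma\mapsto\gamma^\dagger$ being decreasing on $\Gamma^{>}$ it is cofinal in $\Psi$. (ii) For $b\in K^\times$, $vb<(\Gamma^{>})'$ forces $vb\le\psi$ for some $\psi\in\Psi$ (otherwise $vb$ would be a gap), hence $vb<v(\ell_\rho^\dagger)$ for all sufficiently large $\rho$. (iii) For each $\rho$, $b^\dagger+\upl_\rho=b^\dagger-\ell_\rho^{\dagger\dagger}=(b/\ell_\rho^\dagger)^\dagger$, so when $vb\neq v(\ell_\rho^\dagger)$ one has $v(b^\dagger+\upl_\rho)=\big(vb-v(\ell_\rho^\dagger)\big)^\dagger$; and if $vb<v(\ell_\rho^\dagger)$, putting $\sigma:=v(\ell_\rho^\dagger)-vb\in\Gamma^{>}$ and using $\sigma'\in(\Gamma^{>})'>\Psi\ni v(\ell_\rho^\dagger)$ gives $v(b^\dagger+\upl_\rho)=\sigma^\dagger=\sigma'-\sigma>v(\ell_\rho^\dagger)-\sigma=vb$. (Facts about abstract $H$-fields used here are $\Psi<(\Gamma^{>})'$, the no-gap property, (H1), and the monotonicity in \eqref{eq:H}.)

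The pointwise statement I would prove is: \emph{for $a\in K$, there is $b\in K^\times$ with $vb<(\Gamma^{>})'$ and $v(a-b^\dagger)\le vb$ if and only if $a$ is not a pseudolimit of $(-\upl_\rho)$ in $K$.} Granting this, the theorem is immediate: $(\upl_\rho)$ has a pseudolimit in $K$ iff $(-\upl_\rho)$ does, so (2) says that no $a\in K$ is a pseudolimit of $(-\upl_\rho)$, which by the pointwise statement is precisely (1). For the ``if'' direction of the pointwise statement: if $a$ is not a pseudolimit of the pc-sequence $(-\upl_\rho)$, then by the standard dichotomy $v(a+\upl_\rho)$ is eventually equal to some fixed $\beta$ lying strictly below an increment valuation $v(\upl_{\rho_0+1}-\upl_{\rho_0})\in\Psi$, so $\beta<\sup\Psi$; by cofinality in (i) choose $\rho$ with $v(\ell_\rho^\dagger)\ge\beta$ and $v(a+\upl_\rho)=\beta$, and set $b:=\ell_\rho^\dagger$, so $b^\dagger=-\upl_\rho$ and $v(a-b^\dagger)=v(a+\upl_\rho)=\beta\le v(\ell_\rho^\dagger)=vb<(\Gamma^{>})'$, as wanted.

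The ``only if'' direction is the crux, and I expect it to be where the real work sits. Arguing by contraposition, assume $\upl\in K$ is a pseudolimit of $(\upl_\rho)$ and set $a:=-\upl$; I must show $v(a-b^\dagger)>vb$ for \emph{every} $b\in K^\times$ with $vb<(\Gamma^{>})'$. Fix such a $b$ and, using (ii), pick $\rho$ with $vb<v(\ell_\rho^\dagger)$ and large enough that $v(a+\upl_\rho)=v(\upl_\rho-\upl)$ equals the increment valuation $v(\upl_{\rho+1}-\upl_\rho)$ (valid for all large $\rho$ since $\upl$ is a pseudolimit). Writing $a-b^\dagger=(a+\upl_\rho)-(b^\dagger+\upl_\rho)$: the first summand has valuation $v(\upl_{\rho+1}-\upl_\rho)=v(\ell_{\rho+1}^\dagger)>v(\ell_\rho^\dagger)>vb$ by (i), and the second has valuation $>vb$ by (iii); hence $v(a-b^\dagger)>vb$. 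The main obstacle is thus concentrated in the careful, abstract verification of (i)--(iii) (notably the repeated use of $\Psi<(\Gamma^{>})'$, the no-gap property, and (H1) to keep error terms of size $(\Gamma^{>})'$ or bigger) and in pinning down the exact form of the pc-sequence dichotomy in a valued field; once these are in place the argument is purely formal. An alternative for this direction is to invoke the preceding theorem, which furnishes $K(\upg)$ with $\upg^\dagger=-\upl$ and $v(\upg)$ a gap: then $a-b^\dagger=(\upg/b)^\dagger$ has valuation $\big(v(\upg)-vb\big)^\dagger$ computed in the asymptotic couple of $K(\upg)$, which one shows exceeds $vb$ using that $v(\upg)$ lies strictly between $\Psi_{K(\upg)}$ and $(\Gamma^{>}_{K(\upg)})'$ while $vb\le\psi<v(\upg)$ for some $\psi\in\Psi_K$.
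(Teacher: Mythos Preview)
The paper is expository and does not include a proof of this theorem; it is simply stated as one of the ``new results'' in Section~\ref{sec:new results}. So there is no paper proof to compare against, and I can only assess your argument on its own terms.

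Your argument is correct. The reduction to the pointwise statement is the right move, and facts (i)--(iii) are verified accurately: in particular, your computation $v(\upl_{\rho+1}-\upl_\rho)=v(\ell_{\rho+1}^\dagger)\in\Psi$ (which follows from (i) via $(b/\ell_\rho^\dagger)^\dagger$ with $b=\ell_{\rho+1}^\dagger$) is exactly what makes the ``if'' direction go through, and the decomposition $a-b^\dagger=(a+\upl_\rho)-(b^\dagger+\upl_\rho)$ together with (iii) handles the ``only if'' direction cleanly. The use of asymptotic integration in (ii)---that $vb<(\Gamma^{>})'$ forces $vb\in(\Gamma^{<})'$ and hence $vb<\psi$ for some $\psi\in\Psi$---is the only place the hypothesis really bites, and you invoke it correctly. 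Two small points worth tightening in a write-up: first, make explicit that the pc-sequence increments $v(\upl_{\rho+1}-\upl_\rho)$ are cofinal in $\Psi$ (you have this implicitly via (i), since they equal $v(\ell_{\rho+1}^\dagger)$); second, in the ``only if'' direction the equality $v(\upl-\upl_\rho)=v(\upl_{\rho+1}-\upl_\rho)$ holds for all sufficiently large $\rho$ once the increment valuations are strictly increasing, which is the standard pseudolimit fact you allude to---just state it. The alternative route via the gap in $K(\upg)$ also works but, as you note, needs an extra computation in the asymptotic couple of $K(\upg)$; your direct argument is cleaner and avoids the forward reference.
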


Since $\T$ satisfies (2), it also satisfies (1). 
Our discussion preceding Corollary~\ref{gapcor} made it clear that not all 
real closed $H$-fields with asymptotic integration satisfy (2). 
We call attention to the first-order nature of condition (1). 

\medskip\noindent
There is a related and even more important pc-sequence. To define it, set
$$\ome(z)\ :=\   -2z' - z^2 \qquad\text{ for $z\in K$.}$$ 
Then
in $\T$ we have
$$ \upo_n\ :=\ \ome(\upl_n)\ =\  \frac{1}{(\ell_0)^2}+\frac{1}{(\ell_0\ell_1)^2}+
\frac{1}{(\ell_0\ell_1\ell_2)^2} +\cdots+
\frac{1}{(\ell_0\ell_1\cdots\ell_n)^2},$$
so  $(\upo_n)$ is also a jammed
pc-sequence in $\T$ 
without any pseudolimit in $\T$. Likewise, for our real closed $H$-field $K$ 
with asymptotic integration, and setting $\upo_{\rho}:= \ome(\upl_{\rho})$,
the sequence $(\upo_{\rho})$ is a jammed pc-sequence.  
(If $(\upl_{\rho})$ pseudoconverges in $K$, then
so does $(\upo_{\rho})$, but \cite{dagap}
has a Liouville closed example where the converse fails.) 
Here is an analogue of Theorem~\ref{thwam}:

 \begin{theorem}\label{tham} The following conditions on $K$ are equivalent: \begin{enumerate}
\item[(1)] $K\models \forall a \exists b\  \big[ vb < ({\Gamma^{>}})' \text{ and } v(a +\ome(-b^\dagger))\  \le\  2vb  \big]$;
\item[(2)] $(\upo_{\rho})$ has no pseudolimit in $K$;
\item[(3)] $(\upo_{\rho})$ has no pseudolimit in any  
differentially algebraic $H$-field extension of $K$. $($Asymptotic differential transcendence of $(\upo_{\rho})$.$)$
\end{enumerate}
\end{theorem}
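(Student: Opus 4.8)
The implication $(3)\Rightarrow(2)$ is immediate, since $K$ is itself a differentially algebraic $H$-field extension of $K$. So I would reduce the theorem to two things: the first-order characterization $(1)\Leftrightarrow(2)$, and the transcendence statement $(2)\Rightarrow(3)$; then $(3)\Rightarrow(2)$ closes the cycle.

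For $(1)\Leftrightarrow(2)$ the plan is a valuation-theoretic analysis entirely parallel to the proof of Theorem~\ref{thwam}, built around the elements $\ell_\rho^{\dagger}$ as the canonical witnesses for the existential quantifier in~(1). Unwinding definitions,
$$ \ome\big(-(\ell_\rho^{\dagger})^{\dagger}\big)\ =\ \ome\big(-\ell_\rho^{\dagger{}\dagger}\big)\ =\ \ome(\upl_\rho)\ =\ \upo_\rho,\qquad v(\ell_\rho^{\dagger})\ =\ (v\ell_\rho)^{\dagger}\ \in\ \Psi\ <\ (\Gamma^{>})', $$
so taking $b:=\ell_\rho^{\dagger}$ makes $b$ admissible and turns the clause in~(1) for a given $a$ into $v(a+\upo_\rho)\le 2v(\ell_\rho^{\dagger})$. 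When verifying (as announced in~\S\ref{sec:important pc-sequence}) that $(\upo_\rho)$ is a jammed pc-sequence, I would also record its widths, $v(\upo_{\rho+1}-\upo_\rho)=2v(\ell_{\rho+1}^{\dagger})>2v(\ell_\rho^{\dagger})$, with no supremum in $K$. Now fix $a$: as for any pc-sequence, $v\big((-a)-\upo_\rho\big)$ is either eventually strictly increasing — making $-a$ a pseudolimit of $(\upo_\rho)$ — or eventually equal to a constant $\delta$ with $\delta< v(\upo_{\rho+1}-\upo_\rho)=2v(\ell_{\rho+1}^{\dagger})$ for all large $\rho$; in the latter case $b:=\ell_\rho^{\dagger}$ witnesses~(1) for large $\rho$, which gives $(2)\Rightarrow(1)$ with room to spare. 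For $(1)\Rightarrow(2)$ I would argue contrapositively: if $\upo\in K$ is a pseudolimit, then $a:=-\upo$ should defeat~(1), i.e.\ \emph{every} admissible $b$ must satisfy $v\big(\ome(-b^{\dagger})-\upo\big)>2vb$. Here one uses that $vb<(\Gamma^{>})'$ forces $vb\le v(\ell_\rho^{\dagger})$ for some $\rho$ (downward closure of $\Psi$ and cofinality of $(v\ell_\rho)$ in $\Gamma^{<}$), plus an estimate controlling $\ome(-b^{\dagger})-\ome(\upl_\rho)$ by the action of the derivation on $K$, after which the pseudolimit relation $v(\upo-\upo_\rho)=v(\upo_{\rho+1}-\upo_\rho)$ supplies the strict inequality. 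This half is routine but must be done carefully, exactly as in Theorem~\ref{thwam}.

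The substance is $(2)\Rightarrow(3)$, and here the plan has two stages. First, by the general theory of pc-sequences in valued differential fields, a pc-sequence of \emph{differential-transcendental type over $K$} — meaning no $P\in K\{Y\}\setminus K$ has $\big(P(\upo_\rho)\big)$ pseudoconverging to $0$ — can only acquire differentially transcendental pseudolimits, hence none lying in a differentially algebraic extension of $K$; so it suffices to show that~(2) forces $(\upo_\rho)$ to be of differential-transcendental type over $K$. Second, I would prove this contrapositively: assume $P\in K\{Y\}\setminus K$ has minimal Ritt rank among those with $\big(P(\upo_\rho)\big)$ pseudoconverging to $0$, and deduce that $(\upo_\rho)$ already pseudoconverges in $K$. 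This is where the machinery of \S\ref{newtpo} enters: conjugating compositionally by an admissible $\phi$ with $v\phi$ large, the Newton Polynomial Theorem gives $P^{\phi}=aN_P+R$ with $vR>va$ and $N_P\in C\{Y\}$ independent of $\phi$; after recentring $Y\mapsto Y+\upo_{\rho_0}$, the minimality of $P$ together with the Equalizer Theorem — which controls, degree by degree, the valuation functions $v_Q$ of the homogeneous parts — should pin the Newton degree of $P$ along $(\upo_\rho)$ down to $1$, reducing matters to solving a single linear differential equation over $K$ near the position of the pc-sequence. \emph{The main obstacle is precisely this last solvability}: $K$ is not assumed newtonian, so producing the solution inside $K$ will require the explicit asymptotics of $(\upo_\rho)=\big(\ome(\upl_\rho)\big)$ and quantitative properties of $\ome$ itself. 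Unlike $(\upl_\rho)$, which is handled through the extension $K(\upg)$ and the gap construction of~\S\ref{sec:important pc-sequence}, the $\ome$-sequence cannot be reduced to the $\upl$-picture — by the Liouville closed example of \cite{dagap}, pseudoconvergence of $(\upo_\rho)$ is strictly weaker than that of $(\upl_\rho)$ — so the $\ome$-specific estimate is unavoidable, and I expect it to be the heart of the proof.
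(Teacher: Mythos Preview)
The paper does not actually prove this theorem; it is a survey, and after stating the result it only remarks that ``the equivalence of (1) and (2) is relatively easy, but to show that (2) implies (3) is much harder,'' deferring details to the monograph then in preparation. Your proposal is therefore being measured against a one-sentence difficulty assessment rather than an argument, and on that score it matches: your treatment of $(1)\Leftrightarrow(2)$ via the canonical witnesses $b=\ell_\rho^{\dagger}$ and the width computation $v(\upo_{\rho+1}-\upo_\rho)=2v(\ell_{\rho+1}^{\dagger})$ is the natural route to what the paper calls ``relatively easy,'' and you correctly isolate $(2)\Rightarrow(3)$ as the substance.

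On $(2)\Rightarrow(3)$ there is one hint in the paper that you underweight. Immediately after the theorem the authors say that ``the equivalence of (2) and (3) is related to'' Theorem~\ref{coram}, the uniqueness statement for pseudolimits of $(\upl_\rho)$. You invoke the $\upl$-picture only to argue, via the example of \cite{dagap}, that the $\upo$-problem \emph{cannot} be reduced to it. That example does show that pseudoconvergence of $(\upo_\rho)$ is strictly weaker than that of $(\upl_\rho)$, but ``not reducible to'' is compatible with ``proved by way of.'' The paper's remark suggests that the intended argument passes through the $\upl$-theory --- plausibly by unwinding a hypothetical minimal $P$ with $P(\upo_\rho)\leadsto 0$ through the relation $\upo_\rho=\ome(\upl_\rho)$ and the gap-creation mechanism of \S\ref{sec:important pc-sequence} --- rather than by a direct solvability argument in $K$. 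Your own diagnosis that the obstacle is ``$K$ is not assumed newtonian'' is exactly right, and is a reason to suspect that the resolution comes not from solving a linear equation in $K$ but from the structural results about $(\upl_\rho)$ and its immediate extensions. Beyond that, the paper gives nothing further to compare against.
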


The equivalence of (1) and (2) is relatively easy, but to show that (2) implies~(3) is much harder. 
Since $\T$ satisfies (2), it also satisfies (1) and (3). 
The first-order nature of condition (1) will surely play a role in our quest
to characterize the existentially closed $H$-fields by first-order axioms.  
The equivalence of (2) and (3) is related to the following important fact: 

\begin{theorem}\label{coram} Suppose 
$(\upo_{\rho})$ has no 
pseudolimit in $K$. Then $(\upl_{\rho})$ has a
pseudolimit $\upl$ in an immediate $H$-field extension of $K$ 
such that for any pseudolimit $a$ of $(\upl_{\rho})$
in any $H$-field extension of $K$ there is a unique isomorphism 
$K\<\upl\> \to K\<a\>$ over $K$ of ordered valued differential fields sending~$\upl$ to $a$.
\end{theorem}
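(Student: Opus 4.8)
The plan is to reduce everything to the single assertion that, under the stated hypothesis, the pc-sequence $(\upl_\rho)$ is of \emph{differential-transcendental type} over $K$: for every nonzero $P\in K\{Y\}$ the valuations $v\bigl(P(\upl_\rho)\bigr)$ are eventually constant in $\rho$. Granting this, the conclusion follows from the Kaplansky-style theory of immediate extensions in the $H$-field setting. First note that $(\upl_\rho)$ has no pseudolimit in $K$, since pseudoconvergence of $(\upl_\rho)$ would force pseudoconvergence of $(\upo_\rho)$; so by Theorem~\ref{thimmax} (applicable as $K$ is real closed) we may take a maximal immediate $H$-field extension $K^\ast$ of $K$, pick a pseudolimit $\upl\in K^\ast$ of $(\upl_\rho)$, and set $K\<\upl\>\subseteq K^\ast$, a proper immediate $H$-field extension of $K$. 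Since $(\upl_\rho)$ is of differential-transcendental type, such a $\upl$ is differentially transcendental over $K$ (a pc-sequence with a differentially algebraic pseudolimit would be of differential-algebraic type), and then the valuation of $K\<\upl\>$ is forced: $v\bigl(P(\upl)\bigr)$ equals the eventual value of $v\bigl(P(\upl_\rho)\bigr)$ for each nonzero $P\in K\{Y\}$. For any pseudolimit $a$ of $(\upl_\rho)$ in any $H$-field extension of $K$ the same applies to $K\<a\>$, so the $K$-differential-algebra map $K\{\upl\}\to K\{a\}$ sending $\upl$ to $a$ is an isomorphism, extends to the fraction fields, and preserves the valuation; it is the unique $K$-isomorphism with $\upl\mapsto a$ because $\upl$ is differentially transcendental; and it is automatically order-preserving, since in an immediate $H$-field extension of $K$ every element is $\sim$ to an element of $K$, so signs are determined by the valuation together with the $K$-structure.

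It remains to prove that $(\upl_\rho)$ is of differential-transcendental type. Suppose not, and choose a nonzero $Q\in K\{Y\}$ of minimal complexity with $\bigl(Q(\upl_\rho)\bigr)$ pseudoconverging to $0$. By the immediate-extension theory for $H$-fields there is an immediate $H$-field extension $K\<\upl\>$ in which $\upl$ is a pseudolimit of $(\upl_\rho)$ and $Q(\upl)=0$; in particular $\upl$, and hence $\upo:=\ome(\upl)=-2\upl'-\upl^2$, is differentially algebraic over $K$. I claim that $(\upo_\rho)=\bigl(\ome(\upl_\rho)\bigr)$ pseudoconverges to $\upo$. Since $K$ satisfies condition~(2) of Theorem~\ref{tham} by hypothesis, it satisfies condition~(3), and $K\<\upl\>$ is a differentially algebraic $H$-field extension of $K$; so the claim gives the desired contradiction. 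To prove it, write $\delta_\rho:=\upl-\upl_\rho$; a direct expansion yields
\[
\ome(\upl_\rho)-\ome(\upl)\ =\ 2\delta_\rho'+2\upl\,\delta_\rho-\delta_\rho^2\ =\ 2\delta_\rho\bigl(\delta_\rho^\dagger+\upl\bigr)-\delta_\rho^2 .
\]
Because $\upl$ is a pseudolimit we have $\delta_\rho\sim \upl_{\rho+1}-\upl_\rho$; using $\upl_\rho=-\ell_\rho^{\dagger\dagger}$, the defining relations of the sequence $(\ell_\rho)$, and the l'H\^opital rule, one finds $\delta_\rho^\dagger\sim -\upl_{\rho+1}$, so that $\delta_\rho^\dagger+\upl\sim \upl-\upl_{\rho+1}=\delta_{\rho+1}\prec\delta_\rho$. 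Hence $v\bigl(2\delta_\rho(\delta_\rho^\dagger+\upl)\bigr)=v(\delta_\rho)+v(\delta_{\rho+1})>2v(\delta_\rho)=v(\delta_\rho^2)$, so $v\bigl(\ome(\upl_\rho)-\ome(\upl)\bigr)=2v(\delta_\rho)$, which is strictly increasing in $\rho$; therefore $\upo$ is a pseudolimit of $(\upo_\rho)$.

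The main obstacle I expect is this last computation: showing that $\ome$ transports the pc-sequence $(\upl_\rho)$ to one pseudoconverging to $\ome(\upl)$. The crucial point is the near-cancellation $\delta_\rho^\dagger+\upl\prec\delta_\rho$, which genuinely uses the fine asymptotics of the sequences $(\ell_\rho)$ and $(\upl_\rho)$ (via the relation $\upl_\rho=-\ell_\rho^{\dagger\dagger}$ and the l'H\^opital rule), not merely that $\upl_\rho$ pseudoconverges to $\upl$. A secondary difficulty, of a foundational nature, is assembling the $H$-field version of the classical immediate-extension machinery invoked above: the existence of a differentially algebraic pseudolimit realizing the minimal-complexity $Q$, the preservation of the $H$-field axioms under immediate extensions, and the forcing of the valuation (hence the uniqueness of $K\<\upl\>$) in the differentially transcendental case. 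Both are supplied by the earlier developments, so the proof of Theorem~\ref{coram} itself consists of this reduction together with the displayed estimate.
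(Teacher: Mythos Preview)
The paper is an expository survey and does not prove Theorem~\ref{coram}; the results of Section~\ref{sec:new results} are announced without proof, with details deferred to work in preparation. So there is no paper-proof to compare against directly. Your overall strategy---reduce to showing that $(\upl_\rho)$ is of differential-transcendental type, then invoke the Kaplansky-style machinery for immediate $H$-field extensions---is the natural one, and your displayed estimate $v\bigl(\ome(\upl_\rho)-\ome(\upl)\bigr)=2v(\delta_\rho)$ is plausibly the key computation (and indeed the paper notes parenthetically after Theorem~\ref{tham} that pseudoconvergence of $(\upl_\rho)$ forces pseudoconvergence of $(\upo_\rho)$).

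The genuine gap is a circularity. To obtain your contradiction you invoke the implication (2)~$\Rightarrow$~(3) of Theorem~\ref{tham}: the element $\upo=\ome(\upl)$ lies in the differentially algebraic extension $K\langle\upl\rangle$, violating~(3). But the paper explicitly flags (2)~$\Rightarrow$~(3) as ``much harder'' and says the equivalence of~(2) and~(3) ``is related to'' Theorem~\ref{coram}. In the authors' development Theorem~\ref{coram} is logically prior (or at least simultaneous), so you cannot consume~(3) as a black box here. Concretely: your argument places $\upo$ only in an \emph{immediate} extension of $K$, while the hypothesis of Theorem~\ref{coram} rules out pseudolimits of $(\upo_\rho)$ in $K$ itself; bridging that gap is exactly the content of (2)~$\Rightarrow$~(3). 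A repair would require either a direct passage from a minimal differential polynomial of $(\upl_\rho)$ to a pseudolimit of $(\upo_\rho)$ in $K$, or an independent proof of (2)~$\Rightarrow$~(3) not already presupposing that $(\upl_\rho)$ is of differential-transcendental type.

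A secondary point: the step $\delta_\rho^\dagger\sim -\upl_{\rho+1}$ is clean in $\T$ (where $\upl_{\rho+1}-\upl_\rho=(\ell_0\cdots\ell_{\rho+1})^{-1}$ exactly), but for a general real closed $K$ with asymptotic integration the $\ell_\rho$ are built from an approximate logarithm~$\Log$, so this asymptotic needs a separate argument. You acknowledge the difficulty, but the sketch as written does not supply it.
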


We define an $H$-field to be {\bf $\upo$-free} if it is real closed, 
admits asymptotic integration, and satisfies the equivalent conditions
of Theorem~\ref{tham}. Any real closed $H$-field that admits asymptotic 
integration and is a directed union of $H$-subfields $F$ for which
$\Psi_F$ has a largest element is $\upo$-free. The property of being 
$\upo$-free is first-order and invariant under compositional conjugation by
positive elements.

\subsection{Simple Newton polynomials.}\label{sec:simple newton pol}
As shown in \cite{vdH}, the Newton polynomials of differential polynomials over $\T$  have the very special form
$$(c_0 + c_1Y + \cdots + c_mY^m)\cdot (Y')^n  \qquad (c_0,\dots, c_m\in \R=C).$$
This fails for some other real closed $H$-fields with asymptotic integration: 

\begin{example*}
Consider the immediate $H$-field extension $K=\R\(( \mathfrak L \)) $ of 
$\mathbb T_{\log}$, where $\mathfrak L=\bigcup_{n=0}^\infty\mathfrak L_n$ 
(see the end of Section~\ref{subsec:transseries}). This $H$-field $K$ admits 
asymptotic integration, and is not $\upo$-free, since it 
contains a pseudolimit 
$\upo\ :=\ \sum_{n=0}^\infty \frac{1}{(\ell_0\ell_1\cdots\ell_n)^2}$ of the
pc-sequence $(\upo_n)$. 
We set
$$P := N - \upo \cdot (Y')^2 \in K\{Y\} \quad\text{where $N(Y) := 2Y'Y''' - 3(Y'')^2 \in \R\{Y\}$.}$$
A somewhat lengthy computation yields  
$N_P=N\notin \mathbb R[Y](Y')^{\N}$.
\end{example*}

It turns out that $\upo$-freeness is exactly what makes
Newton polynomials to have the above simple form:

\begin{theorem} Let $K$ be a real closed $H$-field with asymptotic integration.
Then $K$ is $\upo$-free if and only if the Newton polynomial of any non-zero 
differential polynomial $P\in K\{Y\}$ has the form
$$(c_0 + c_1Y + \dots + c_mY^m)\cdot (Y')^n  \qquad (c_0,\dots, c_m\in C).$$
\end{theorem}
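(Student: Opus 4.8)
\medskip\noindent
The plan is to prove the two implications separately; the forward direction ($\upo$-freeness $\Rightarrow$ simple Newton polynomials) is the substantial one. Throughout, recall from Section~\ref{newtpo} that $N_P$ may be computed as the dominant part of $P^{\phi}$ for \emph{any} admissible $\phi\in K$ with sufficiently large $v\phi$, and is independent of such $\phi$; so for a given $P$ it suffices to exhibit one admissible $\phi$, with $v\phi$ large, for which the dominant part of $P^{\phi}$ has the required shape. Compositional conjugation preserves homogeneity and degree, and the Newton polynomial of $P$ is assembled from those of the homogeneous parts of $P^{\phi}$ (the Equalizer Theorem governing how the scaling functions $v_{P_d}$ interact), so the first reduction is to $P$ homogeneous of degree $d>0$; then $N_P$ is homogeneous of the same degree and the target shape degenerates to a single monomial $c\,Y^{d-n}(Y')^{n}$ with $c\in C^{\times}$.

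For homogeneous $P$ I would induct on the order $r$ of $P$ (the largest $i$ with $Y^{(i)}$ occurring). If $r\le 1$, then $P^{\phi}$ again has order $\le 1$ for every $\phi$, so $N_P\in C[Y,Y']$; homogeneity together with the behaviour of the functions $v_{P_d}$ then forces $N_P$ to be a monomial in $Y$ and $Y'$ (this case uses asymptotic integration but not $\upo$-freeness). For $r\ge 2$ the aim is to find an admissible $\phi$ with large $v\phi$ such that the dominant part $Q$ of $P^{\phi}$ has order $<r$; then $N_P=N_Q$ with $Q$ homogeneous of degree $d$ and order $<r$, and the inductive hypothesis applies. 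To produce such $\phi$ one examines the top-order ($Y^{(r)}$-involving) part of $P^{\phi}$: its coefficients are polynomial in $\phi$ and its $K^{\phi}$-derivatives, hence in $\phi^{\dagger}$ and in $\ome(-\phi^{\dagger})$-type quantities. The model case is the Schwarzian combination $N(Y)=2Y'Y'''-3(Y'')^{2}=(Y')^{2}\,\ome(-(Y')^{\dagger})$, which under compositional conjugation transforms by the Schwarzian cocycle as
$$N(Y)^{\phi}\ =\ \phi^{4}N^{\circ}+\phi^{2}\,\ome(-\phi^{\dagger})\,(Y')^{2},$$
where $N^{\circ}$ denotes $2Y'Y'''-3(Y'')^{2}$ with its derivatives read in $K^{\phi}$. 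Reading off the element $a\in K$ whose poor approximability by $\ome(-\phi^{\dagger})$ is the only obstruction to lowering the order, condition~(1) of Theorem~\ref{tham} (with its parameter set to $-a$, after a preliminary compositional conjugation to make the resulting $v\phi$ large) furnishes $\phi$ with $v\phi<({\Gamma^{>}})'$ and $v\big(\ome(-\phi^{\dagger})-a\big)\le 2v\phi$, which is exactly what makes the order-$r$ contribution non-dominant; a refinement of this handles the boundary equality and the remaining order-$r$ monomials.

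The main obstacle, I expect, is precisely this order-reduction step for $r\ge 2$: isolating the correct ``Schwarzian-type invariant'' of the top-order part of a \emph{general} homogeneous differential polynomial (the example preceding the statement only exhibits the order-$3$, degree-$2$ instance), showing that $\upo$-freeness---whose defining inequality concerns a single application of the Riccati-type operator $\ome$---neutralises the iterated-$\ome$ expressions occurring for larger $r$, and managing the interplay between ``$v\phi$ sufficiently large'' and ``$\phi$ supplied by $\upo$-freeness'' together with the passage from non-strict to strict inequalities. For $\T$ this is the content of \cite{vdH}, where the explicit Hahn-series structure is available; here it must be carried out from the first-order hypothesis of $\upo$-freeness alone.

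For the converse I would argue by contrapositive. If $K$ is not $\upo$-free, then by Theorem~\ref{tham} the pc-sequence $(\upo_{\rho})$ has a pseudolimit $\upo\in K$, and I would imitate the example: put $P:=N(Y)-\upo\,(Y')^{2}$ with $N(Y)=2Y'Y'''-3(Y'')^{2}$, take the sequence $(\ell_{\rho})$ from Section~\ref{sec:important pc-sequence}, and conjugate by $\phi:=\ell_{\rho}^{\dagger}$ for large $\rho$ (this $\phi$ is admissible with large $v\phi$). By the Schwarzian cocycle above together with $\phi^{\dagger}=\ell_{\rho}^{\dagger{}\dagger}=-\upl_{\rho}$ and $\ome(-\phi^{\dagger})=\ome(\upl_{\rho})=\upo_{\rho}$ one gets
$$P^{\phi}\ =\ \phi^{4}N^{\circ}+\phi^{2}(\upo_{\rho}-\upo)\,(Y')^{2}$$
with derivatives read in $K^{\phi}$, where $N^{\circ}$ is as above. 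Since $\upo$ is a pseudolimit of $(\upo_{\rho})$, the pc-sequence structure gives $v(\upo_{\rho}-\upo)>2\,v(\ell_{\rho}^{\dagger})=2v\phi$ for all large $\rho$, so the term $\phi^{4}N^{\circ}$ is dominant and $N_P$ is a nonzero constant multiple of $N^{\circ}$, hence of $N$. As $N$ involves $Y''$ and $Y'''$, it is not of the form $(c_{0}+c_{1}Y+\dots+c_{m}Y^{m})(Y')^{n}$, so the right-hand condition of the theorem fails. This would complete the equivalence.
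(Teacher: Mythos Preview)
The paper does not prove this theorem. It is an expository announcement (the introduction says ``a full account is in preparation''), and the theorem is simply stated in Section~\ref{sec:simple newton pol} without proof. So for the forward direction there is nothing in the paper to compare your outline against. Your converse is precisely the paper's example for the specific field $\R((\mathfrak L))$, correctly abstracted to an arbitrary non-$\upo$-free $K$ via the sequence $(\ell_\rho)$ of Section~\ref{sec:important pc-sequence}; the Schwarzian transformation law you write down is the mechanism behind what the paper calls a ``somewhat lengthy computation yields $N_P=N$'', and your valuation estimate $v(\upo_\rho-\upo)>2v\phi$ is exactly what makes $\phi^4N^{\circ}$ dominate.

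On your forward-direction outline: besides the order-reduction step you already flag as the crux, the opening reduction to homogeneous $P$ is not justified by what you say. Even if every homogeneous component $P_d$ has Newton polynomial a monomial $c_dY^{d-n_d}(Y')^{n_d}$, the Newton polynomial of $P=\sum_d P_d$ is a $C$-linear combination of those $N_{P_d}$ over the degrees $d$ whose leading coefficient $a_d(\phi)$ has eventually minimal valuation, and nothing you have argued forces the surviving exponents $n_d$ to agree; a combination such as $Y^2+Y'$ is not of the form $p(Y)\cdot(Y')^n$. The Equalizer Theorem you invoke compares the functions $\gamma\mapsto v_{P_d}(\gamma)$ under multiplicative rescaling $Y\mapsto gY$, whereas what is needed here is control of the ratios $a_d(\phi)/a_e(\phi)$ as $v\phi$ grows under compositional conjugation; these are different limiting processes and one does not obviously govern the other. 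Either this step needs its own argument (showing, say, that at most one degree survives, or that the surviving $n_d$ must coincide), or the induction on order should be run on general $P$ rather than on its homogeneous parts.
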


This has a nice consequence for the 
behavior of a differential polynomial near the constant field:

\begin{corollary} Let $K$ be an $\upo$-free $H$-field and 
$P\in K\{Y\}$, $P\ne 0$. Then there
are $\alpha\in \Gamma$, $a\in K^{>C}$ and $m$, $n$ such that
$$ C_L < y < a\ \Longrightarrow\   v(P(y))\ =\ \alpha + mvy + nvy'$$
for all $y$ in all $H$-field extensions $L$ of $K$, where $C_L=\text{constant field of } L$. 
\end{corollary}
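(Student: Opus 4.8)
The plan is to read off the behaviour of $P$ near $C$ from the Newton polynomial of a single, carefully chosen compositional conjugate of $P$. By the preceding theorem, $\upo$-freeness of $K$ gives that $N_P = (c_0 + c_1 Y + \cdots + c_m Y^m)\,(Y')^n$ with $c_0,\dots,c_m \in C$; replacing $N_P$ by a scalar multiple we may assume $c_m = 1$, so that $m$ is the $Y$-degree of the polynomial factor and $n$ the exponent of $Y'$. Fix an admissible $\phi \in K$ with $v\phi$ large enough that the decomposition in the Newton-polynomial theorem holds: $P^\phi = b\,N_P + R$ in $K^{\phi}\{Y\}$ with $b \in K^\times$ and $vR > vb$. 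Put $\alpha := vb - n\,v\phi \in \Gamma$; these $\alpha$, $m$, $n$ are the quantities claimed, and the bound $a \in K^{>C}$ will be pinned down in the last step.

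The dominant term is computed directly, and already exhibits the required shape. Let $L$ be an $H$-field extension of $K$ and $y \in L$ with $y > C_L$; then $y$ lies outside the convex hull $\mathcal{O}_L$ of $C_L$, so $vy < 0$. Since $L^\phi$ carries $\phi^{-1}\der$ as its derivation, the indeterminate $Y'$ is evaluated at $\phi^{-1}y'$, and using $P^\phi(y) = P(y)$ (an identity valid in any differential field extension) we obtain
$$ P(y)\ =\ b\,\big(c_0 + c_1 y + \cdots + c_{m-1}y^{m-1} + y^m\big)\,(\phi^{-1}y')^n\ +\ R(y). $$
The valuation is trivial on $C_L$, and since $vy < 0$ the terms $c_i y^i$ have strictly decreasing valuations $i\,vy$, so $v\big(c_0 + \cdots + y^m\big) = m\,vy$. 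Hence the first term on the right has valuation $vb + m\,vy + n(vy' - v\phi) = \alpha + m\,vy + n\,vy'$. This depends on $y$ only through $vy$ and $vy'$, and on $K$ only through $\alpha$, $m$, $n$; so the uniformity over all $L$ will follow once the remainder is controlled.

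It remains to choose $a$ so that $v(R(y)) > \alpha + m\,vy + n\,vy'$ for every $y$ with $C_L < y < a$, and this is the technical core. Since $0 < y < a$ forces $vy \ge va$, the bound $a$ confines $vy$ to the interval $[va,\,0)$ of the value group of $L$, and the claim is that on this interval the coefficientwise inequality $vR > vb$ upgrades to the pointwise inequality $v(R(y)) > v\big(b\,N_P(y)\big)$, provided $va$ is taken close enough to $0$ (how close depending on $\phi$ and $R$). One proves this by splitting $R$ into its homogeneous parts $R_d$, bounding each $v(R_d(y))$ from below by means of the functions $v_{R_d}$ from the Equalizer subsection, and comparing with the value $vb + m\,vy + n(vy'-v\phi)$ of the dominant monomial; the decisive input is that for $vy$ in a sufficiently small negative interval the smallness of the derivation and the behaviour of $\Psi_L$ (in particular $(vy)^\dagger$ remaining large) prevent any monomial of $R$ from catching up after substitution, and also exclude cancellation within the polynomial factor. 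Feeding this back into the displayed identity gives $v(P(y)) = \alpha + m\,vy + n\,vy'$.

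Uniformity is then free of charge: $\phi$, $b$, $N_P$, and hence $\alpha \in \Gamma$, $m,n \in \N$, and the bound $a$, were all fixed inside $K$, while the computation above used only features available in any $H$-field extension $L$ of $K$ — triviality of the valuation on $C_L$, the convex-hull description of $\mathcal{O}_L$, and the effect of compositional conjugation on the derivation. The main obstacle, as indicated, is precisely that third step: establishing, uniformly in $L$ and in $y$ ranging over $(C_L,\,a)$, that the Newton polynomial of $P^\phi$ controls $v\big(P^\phi(y)\big)$ not merely on $\mathcal{O}_L$ but on the entire interval $(C_L,\,a)$, and getting the order of the two choices right ($v\phi$ large enough first, then $va$ close enough to $0$).
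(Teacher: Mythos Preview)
The paper is a survey and states this corollary without proof; your outline is precisely the intended derivation from the preceding theorem---read off $m$ and $n$ from the shape $N_P\in C[Y]\cdot(Y')^{\N}$, fix an admissible $\phi$ with $P^\phi=b\,N_P+R$ and $vR>vb$, set $\alpha=vb-n\,v\phi$, and then show the remainder is negligible for $C_L<y<a$. Your computation of the dominant valuation is correct, and you rightly isolate the control of $v(R(y))$ as the technical core; your sketch of that step (via the functions $v_{R_d}$ and a choice of $a$ with $(va)^\dagger$ large relative to $v\phi$) points in the right direction, and the paper itself supplies no further detail there.
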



We also have the following converse to 
a result from Section~\ref{difhen}:

\begin{corollary} Suppose the $H$-field $K$ is $\upo$-free, and there are
$K$-admissible $\phi>0$ with arbitrarily high $v\phi < (\Gamma^{>})'$ such that 
the flattening of $K^{\phi}$ is differential-henselian. Then 
$K$ is newtonian. 
\end{corollary}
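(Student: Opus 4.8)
The plan is to check the three defining properties of a newtonian $H$-field. Two of them — being real closed and admitting asymptotic integration — are built into $\upo$-freeness, so I would concentrate on the remaining one: that every non-zero $P\in K\{Y\}$ with $\deg N_P=1$ has a zero in $\mathcal{O}$. Since $K$ is $\upo$-free, the characterization of $\upo$-freeness by simple Newton polynomials (Section~\ref{sec:simple newton pol}) forces $N_P=(c_0+c_1Y+\dots+c_mY^m)(Y')^n$ with all $c_i\in C$; total degree $1$ then leaves only two possibilities, $N_P=c_1(Y-c)$ with $c\in C$ and $c_1\in C^\times$, or $N_P=c_0Y'$ with $c_0\in C^\times$.

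Given such a $P$, I would use the hypothesis to pick an admissible $\phi>0$ with $v\phi$ large enough that both the Newton-polynomial identity of Section~\ref{newtpo} holds — $P^\phi=aN_P+R$ in $K^\phi\{Y\}$ with $a\in K^\times$ and $vR>va$ — and the flattening of $K^\phi$, i.e. $K^\phi$ coarsened by $\Delta=\Delta_\phi:=\{\gamma\in\Gamma:\ \gamma^\dagger>0\text{ in }K^\phi\}$ as in Section~\ref{difhen}, is differential-henselian. Dividing by $a$ gives $P^\phi/a=N_P+\tilde R$ with $\tilde R\in\smallo\{Y\}$. When $N_P\sim Y-c$ I would apply the substitution $Y\mapsto Y+c$ (harmless, since $c\in C\subseteq\mathcal{O}$), which turns $N_P$ into the homogeneous polynomial $c_1Y$ and keeps the remainder in $\smallo\{Y\}$ (constants are killed by $\der$, so only the degree-zero slot is shifted). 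In both cases one is left with $Q\in\mathcal{O}\{Y\}\subseteq\mathcal{O}_\Delta\{Y\}$ of the form $Q=L+\tilde R'$, where $L\in C\{Y\}$ is homogeneous of degree $1$ with a coefficient in $C^\times$ (namely $L=c_1Y$ or $L=c_0Y'$) and $\tilde R'\in\smallo\{Y\}$; hence the degree-one part $Q_1=L+(\tilde R')_1$ has $v_\Delta Q_1=0$, and — once $v\phi$ has been chosen large enough that the coefficients of $\tilde R'$ lie in the smaller ideal $\smallo_\Delta$ rather than merely in $\smallo$ — the degree-zero part $Q_0=(\tilde R')_0$ satisfies $v_\Delta Q_0>0$. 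Then (DH2) for the differential-henselian $(K^\phi,v_\Delta)$ produces $y\in\smallo_\Delta$ with $Q(y)=0$.

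To conclude, I would observe that $\smallo_\Delta\subseteq\smallo\subseteq\mathcal{O}$ (because $0\in\Delta$ makes $v_\Delta x>0$ imply $vx>0$), so $y\in\mathcal{O}$; adding back $c$ when needed, this is a zero of $P^\phi$ in $\mathcal{O}$, hence a zero of $P$ in $\mathcal{O}$ since $P^\phi$ and $P$ define the same function on $K$. That makes $K$ newtonian.

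The hard part is the proviso in the second paragraph — that $v\phi$ can be chosen so that the coefficients of $\tilde R'$ land in $\smallo_\Delta$. The Newton-polynomial identity only controls $R$ for the \emph{fine} valuation ($vR>va$), whereas (DH2) needs the normalized remainder to be small for the \emph{coarse} valuation $v_\Delta$. Bridging this requires showing that $N_P$ stabilizes for all admissible $\phi$ with $v\phi$ sufficiently large, and that beyond stabilization $v(R/a)$ eventually exceeds the convex subgroup $\Delta_\phi$ — all while $\Delta_\phi$ stays small enough for the flattening hypothesis still to apply. This is where $\upo$-freeness earns its keep twice over: it fixes the shape of $N_P$ (so degree $1$ means $Y-c$ or $Y'$, which the homogenization step needs), and, through asymptotic integration, it forces $\bigcap_\phi\Delta_\phi=\{0\}$, i.e. $\bigcap_\phi\mathcal{O}_{\Delta_\phi}=\mathcal{O}$, so that the two competing largeness conditions on $v\phi$ can simultaneously be met. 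After that, differential-henselianity does the rest, and the inclusion $\smallo_\Delta\subseteq\smallo$ means no separate argument is needed to place the solution in $\mathcal{O}$.
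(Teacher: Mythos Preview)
The paper does not supply a proof of this corollary; it is one of several results announced in this survey with the details deferred to the full account. So there is no ``paper's own proof'' to compare against, and your outline is in fact the natural route suggested by the surrounding material: reduce to the shape $N_P\in\{c_1(Y-c),\,c_0Y'\}$ via $\upo$-freeness, normalize $P^\phi$ to $N_P+\tilde R$ with $\tilde R\in\smallo\{Y\}$, and feed the result into (DH2) for a suitable flattening. The case analysis, the substitution $Y\mapsto Y+c$, and the observation $\smallo_{\Delta}\subseteq\smallo\subseteq\mathcal{O}$ are all correct.

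Where your argument is incomplete is precisely the point you flag as the hard part, and the reasoning you offer there does not close the gap. You invoke $\bigcap_\phi\Delta_\phi=\{0\}$ to conclude that the two largeness conditions on $v\phi$ can be met simultaneously. That intersection fact would suffice if the coefficients of $\tilde R=R_\phi/a_\phi$ were \emph{fixed} elements of $\smallo$: then any finite set of positive valuations eventually escapes the shrinking $\Delta_\phi$. But $R_\phi$ and $a_\phi$ depend on $\phi$, and the Newton-polynomial theorem as stated only guarantees $v(R_\phi/a_\phi)>0$ for each $\phi$, with no control on how close to $0$ those valuations might drift as $\phi$ varies. To make your scheme work you need a uniformity statement---for instance, that once $v\phi$ exceeds the stabilization threshold there is a fixed $\gamma_0\in\Gamma^{>}$ with $v(R_\phi/a_\phi)\geq\gamma_0$ for all larger admissible $\phi$, or that $v(R_\phi/a_\phi)$ actually grows with $v\phi$. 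Establishing this requires tracking what further compositional conjugation does to the decomposition $P^{\phi_0}=a_0N_P+R_0$: one has $P^{\phi}=(P^{\phi_0})^{\psi}$ with $\psi=\phi/\phi_0$, and when $N_P$ involves $Y'$ the term $N_P^{\psi}$ is no longer $N_P$, so new remainder terms are generated. The required estimate is true, but it needs a finer analysis of compositional conjugation than the bare Newton-polynomial theorem provides; the paper's announcement hides exactly this work.
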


\subsection{Conjectural characterization of existentially closed $H$-fields.}
We can show that every existentially closed 
$H$-field with small derivation is 
$\upo$-free. We already mentioned earlier that
they are Liouville closed, and newtonian, 
and that their linear differential operators 
factor completely after adjoining $\imag=\sqrt{-1}$ to the field. 
Maybe this is the full story:


\medskip\noindent
\begin{optimisticconjecture}
An $H$-field $K$ with small derivation is existentially closed 
if and only if it satisfies the following
first-order conditions:   
\begin{enumerate}
\item[(i)]  $K$ is  Liouville closed;
\item[(ii)] every  $A\in K [\der], A\notin K$, is a product of operators 
of order  $1$ in $ K[\imag][\der]$;
\item[(iii)] $K$ is $\upo$-free;
\item[(iv)] $K$ is newtonian. 
\end{enumerate}
\end{optimisticconjecture}

This conjecture makes the $\T$-Conjecture more precise. It is probably not
optimal as a first-order characterization of  existentially closed $H$-fields.
For example, in the presence of (i), (iii), (iv) we can perhaps
restrict (ii) to $A$ of order $2$. Also, in some arguments
we need the ``newtonian'' property not just for $K$, but also for $K[\imag]$.
We expect the newtonian property of $K[\imag]$ to be a formal consequence of 
$K$ being newtonian,
but if we do not succeed in proving that, we are willing to strengthen (iv)
accordingly. 

It is also conceivable 
that the $H$-field $\T_{\text{log}}$ has a good model theory. It 
satisfies (ii), (iii), (iv), and has some other attractive properties. 
On the other hand, the $H$-field $\T_{\exp}$ of purely exponential 
transseries defines $\mathbb{Z}$; see \cite[Section~13]{AvdD2}.

\section{Quantifier-free Definability}\label{sec:qf definability}

\noindent
In Section~\ref{sec:T-conjecture} we considered three intrinsic model-theoretic statements about~$\T$:

\begin{enumerate} 

\item[(1)]  If $X\subseteq \T^n$ is definable, then $X\cap \R^n$ is semialgebraic.

\item[(2)]  $\T$ is {\it asymptotically o-minimal\/}: for each definable $X\subseteq \T$ there is a $b\in \T$ such that either $(b, +\infty)\subseteq X$ or 
$(b, +\infty)\subseteq \T\setminus X$.

\item[(3)]  $\T$ has NIP.
\end{enumerate}

\noindent
In this section we prove quantifier-free versions of these statements.
First we do this in the easy case when the language
is the natural language $\mathcal L$ of ordered valued differential 
rings. (``Easy'' means here that it follows with very little work 
from results in the literature.) Next we exhibit a basic obstruction\footnote{The term ``obstruction'' is often used to refer to a non-trivial (co)homology
class. Our use here is in the same spirit. In fact, the vanishing of a homology group leads to the
elimination of a quantifier since this vanishing 
means that the existential condition
on a chain $c$ to be a boundary is equivalent to the quantifier-free condition on $c$ that its boundary vanishes.}
showing that $\T$ does not eliminate quantifiers in $\mathcal L$. This obstruction can be lifted by  extending $\mathcal L$ to a language $\mathcal L^*$ which has a unary
function symbol naming a certain integration operator on $\T$. (This
operator is existentially definable in $\T$ using $\mathcal L$.) 
We then show that 
(1), (2), (3) also hold for quantifier-free definable 
relations on $\T$ when the latter is construed as an $\mathcal L^*$-structure.

Thus (1), (2), (3) would follow from the strong form of
the $\T$-Conjecture which says that $\T$
admits quantifier elimination in the language $\mathcal L^*$. This 
form of the  $\T$-Conjecture is unfortunately too strong: In Section~\ref{sec:obstructions} 
we discuss further obstacles, and speculate on how these might be dealt with.

\subsection{Quantifier-free definable sets in $\T$ using $\mathcal L$.} 

Recall:
$$\mathcal L\ =\ \{0,\ 1,\  +,\  -,\ {\cdot}\,\,,\ \der,\ {\le},\ {\preceq}\}.$$
In this subsection we view any $H$-field $K$ as an $\mathcal L$-structure in 
the natural way, and so ``quantifier-free definable'' means 
``definable in $K$ by a 
quantifier-free formula of the language $\mathcal L$ augmented by names for the
elements of $K$.'' The next three propositions contain the
quantifier-free versions of (1)--(3) above.

\begin{prop}\label{1} 
Let $K$ be a real closed $H$-field. If $X\subseteq K^n$ is quantifier-free definable, then 
its trace $X\cap C^n$ in the field $C$ of constants is semialgebraic.
\end{prop}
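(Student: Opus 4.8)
The plan is to restrict everything to the constant field, where all derivatives die, and then to reduce the whole question to one structural fact about finite-dimensional $C$-subspaces of $K$ that rests on axiom (H1). First note that $C$ is itself real closed: if $a\in K$ is algebraic over $C$, then differentiating its minimal polynomial over $C$ (which has constant coefficients) gives $a'=0$, so $C$ is relatively algebraically closed in the real closed field $K$, hence equal to its own real closure. Thus ``semialgebraic over $C$'' is an unambiguous notion, preserved under finite Boolean combinations and under preimages along polynomial maps defined over $C$, which is all we shall need about it.

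Now a quantifier-free $\mathcal{L}$-formula with parameters from $K$ is a Boolean combination of atomic formulas of the shapes $P(x)=0$, $P(x)\ge 0$, $P(x)\preceq Q(x)$ with $P,Q\in K\{X_1,\dots,X_n\}$. If $c\in C^n$, then $c_i^{(j)}=0$ for all $j\ge 1$, so $P(c)=P^{*}(c)$, where $P^{*}\in K[X_1,\dots,X_n]$ is obtained from $P$ by setting every proper-derivative variable to $0$. Let $W\subseteq K$ be the $C$-linear span of the finitely many coefficients appearing in the polynomials $P^{*},Q^{*}$ attached to the atomic subformulas of the given formula; this is a finite-dimensional $C$-subspace of $K$, and each map $c\mapsto P^{*}(c)$ takes values in $W$. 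Fixing a $C$-basis of $W$, each such map becomes an ordinary polynomial map $C^n\to W\cong C^{\dim_C W}$ with coefficients in $C$. Hence $X\cap C^n$ is a Boolean combination of preimages, under these polynomial-over-$C$ maps, of the subsets $\{w\in W:w=0\}$, $\{w\in W:w>0\}$ and $\{(w,w')\in W\times W:v(w)\ge v(w')\}$ of $W$ and $W\times W$; so it suffices to show that these three sets are semialgebraic over $C$.

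This is where (H1) enters. Since $\mathcal{O}=C+\smallo$, the residue field of $K$ is $C$, which is one-dimensional as a $C$-vector space. An elementary induction on $\dim_C W$ --- at the least valuation occurring in $W$, use the residue map into $C$ to subtract $C$-multiples of one basis vector from the others so as to strictly raise their valuations, which terminates because the convex $C$-subspaces $\{w\in W:v(w)\ge\gamma\}$ form a chain of length $\le\dim_C W$ --- yields a \emph{separating basis} $b_1,\dots,b_s$ of $W$: a $C$-basis with $v(b_1)<\dots<v(b_s)$ and $v\!\left(\sum_i\lambda_i b_i\right)=v(b_{i_0})$ for all $\lambda_i\in C$, where $i_0=\min\{i:\lambda_i\ne 0\}$. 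Writing $\sum_i\lambda_i b_i=\lambda_{i_0}b_{i_0}\bigl(1+\sum_{i>i_0}(\lambda_i/\lambda_{i_0})(b_i/b_{i_0})\bigr)$, with each term $(\lambda_i/\lambda_{i_0})(b_i/b_{i_0})\prec 1$, one reads off that the sign of $\sum_i\lambda_i b_i$ is $\operatorname{sign}(\lambda_{i_0})\cdot\operatorname{sign}(b_{i_0})$ and its valuation is $v(b_{i_0})$ --- both determined solely by which initial coordinates vanish and by the sign of the first nonzero one. Therefore, in separating-basis coordinates, $\{w=0\}$, $\{w>0\}$ and $\{v(w)\ge v(w')\}$ are finite unions of sets cut out by linear equations and strict linear inequalities over $C$, in particular semialgebraic over $C$. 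Composing with the polynomial maps above and taking the Boolean combination then completes the argument.

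The one genuinely substantial ingredient is the separating-basis statement: that a finite-dimensional $C$-subspace of $K$, with $C$ carrying the trivial valuation and realizing the entire residue field, decomposes as a direct sum of one-dimensional subspaces of pairwise distinct valuations. Everything else --- the $P\mapsto P^{*}$ reduction, the passage to $W$, and the real-algebra bookkeeping --- is routine, which is presumably why the proposition is ``easy''.
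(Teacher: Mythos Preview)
Your argument is correct and takes a genuinely different route from the paper. The paper reduces, as you do, each $P$ to its derivative-free part $p\in K[Y_1,\dots,Y_n]$ and rewrites $\preceq$ using the constant field, but then observes that this makes $X\cap C^n$ definable in the \emph{pair} $(K,C)$ consisting of the real closed field $K$ with the distinguished subfield $C$; since (H1) says precisely that this pair is tame in the sense of \cite{vdD}, the paper then invokes the black-box result \cite[Proposition~8.1]{vdD} that in a tame pair every subset of $C^n$ definable with parameters is already $C$-semialgebraic.

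Your approach replaces that model-theoretic citation with an elementary linear-algebra argument: the separating-basis construction for a finite-dimensional $C$-subspace $W\subseteq K$ (which works exactly because the residue field is $C$, i.e., (H1)) makes the restrictions of sign and valuation to $W$ visibly piecewise linear in the $C$-coordinates. This is more explicit and entirely self-contained, and it isolates (H1) as the single non-routine ingredient; the paper's route is shorter on the page but imports a result of comparable strength. Both arguments ultimately rest on the same structural fact, packaged differently.
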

\begin{proof} Let $P=P(Y_1,\dots, Y_n)\in K\{Y_1,\dots, Y_n\}$ be a 
differential polynomial. Removing from $P$ the terms involving any
$Y_i^{(r)}$ with $r\ge 1$ we obtain an ordinary polynomial 
$p\in K[Y_1,\dots, Y_n]$ such that for all $y_1,\dots, y_n\in C\subseteq K$,
$$P(y_1,\dots, y_n)\ =\ p(y_1,\dots, y_n).$$
Recall also that for all $f,g\in K$ we have 
$$f\preceq g\  \Longleftrightarrow\ \text{$\abs{f}\le c\abs{g}$ for some $c\in C^{>0}$.}$$
It follows that if $X\subseteq K^n$ is quantifier-free definable, then
$X\cap C^n$ is definable (with parameters) in the pair $(K, C)$ construed 
here as the real closed field~$K$ (forgetting its derivation and valuation),
with $C$ as a distinguished subset. This pair $(K, C)$ is a model of 
$\text{RCF}_{\text{tame}}$, as
defined in \cite{vdD}. By Proposition~8.1 of \cite{vdD} applied to 
$T=\text{RCF}$, a subset of $C^n$ which is definable (with parameters) 
in the pair $(K, C)$ is semialgebraic in the sense of $C$.
\end{proof}

We now turn to
quantifier-free asymptotic o-minimality. This follows easily from 
the {\em logarithmic decomposition\/} of a differential 
polynomial in \cite{vdH}, as we explain now. Let $K$ be a differential field. 
For $y\in K$, we set $y^{\< 0\>}:= y$, and
inductively, if $y^{\<n\>}\in K$ is defined and non-zero, 
$y^{\<n+1\>}:=  (y^{\<n\>})^{\dagger}$ (and otherwise $y^{\<n+1\>}$ is not defined).
Thus in the differential fraction field $K\langle Y \rangle$ of the 
differential polynomial ring $K\{Y\}$
each $Y^{\<n\>}$ is
defined, the elements $Y^{\<0\>}, Y^{\<1\>}, Y^{\<2\>},\dots$
are algebraically independent over $K$, and 
$$K\langle Y \rangle = K\big(Y^{\<n\>}:\ n=0,1,2,\dots\big).$$ 
If $y^{\<n\>}$ is defined and $\i=(i_0,\dots, i_n)\in \mathbb{N}^{1+n}$, 
we set $$  y^{\<\i\>}\ :=\ 
(y^{\<0\>})^{i_0}(y^{\<1\>})^{i_1}\cdots (y^{\<n\>})^{i_n}. $$
One can show that any $P\in K\{Y\}$ of order $\le r$ has a unique decomposition
\[\begin{array}{lll} P\ =\ &\sum_{\i}P_{\<\i\>}Y^{\<\i\>}&  
\text{{\bf (logarithmic decomposition)},}
\end{array}\]
with $\i$ ranging over $\N^{1+r}$, all $P_{\<\i\>}\in K$, and
$P_{\<\i\>}\ne 0$ for only finitely many $\i$.

Consider the case $y\in K:=\T$. Then $y^{\<1\>}=y^{\dagger}$ is defined 
for $y\ne 0$, and if $y>\exp(x^2)$, then $y^{\<1\>}> 2x$ and $y> (y^{\<1\>})^m$
for all $m$. By induction on $n$, if $y>\exp^{n+1}(x^2)$, 
with the exponent $n+1$ referring to compositional iteration, then
$y^{\<n+1\>}$ is defined, $y^{\<n\>}> \exp^{n}(x^2)$, and $y^{\<n+1\>} >(y^{\<n\>})^m$
for all $m$. 

Let a non-zero $P\in \T\{Y\}$ of order $\le r$ be given with
the logarithmic decomposition displayed before. Take $\j\in \N^{1+r}$
lexicographically maximal with
$P_{\<\j\>}\ne 0$.  It follows from the above that
we can take $b\in \T$ so large that if
$y>b$, then $y^{\<r\>}$ is defined and
$P(y) \sim\ P_{\<\j\>}y^{\<\j\>}$ (where $f\sim g$ means $f-g\prec g$). In particular, if $P_{\<\j\>}>0$,
then $P(y)>0$ for all $y>b$, and if $P_{\<\j\>}<0$,
then $P(y)<0$ for all $y>b$. By similar reasoning, given any non-zero 
$P, Q\in \T\{Y\}$, there is $b\in \T$ such that either 
$P(y) \preceq Q(y)$ for all $y>b$ in $\T$, or
$P(y) \succ Q(y)$ for all $y>b$ in $\T$. Thus:

\begin{prop}\label{2} 
If $X\subseteq \T$ is quantifier-free definable, then there 
is $b\in \T$ such that either $(b, +\infty)\subseteq X$, or  
$(b, +\infty)\subseteq \T\setminus X$.
\end{prop}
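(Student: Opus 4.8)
The plan is to reduce to atomic formulas. First note that, in the language $\mathcal L$ with names for the elements of $\T$, every term in the single variable $Y$ evaluates as a differential polynomial in $\T\{Y\}$, and every atomic formula reduces --- by transposing --- to one of $P(Y)=0$, $P(Y)\ge 0$, or $P(Y)\preceq Q(Y)$ with $P,Q\in\T\{Y\}$. So it suffices to show two things: (a) each such atomic formula defines a subset of $\T$ that either contains or is disjoint from some final segment $(b,+\infty)$; and (b) the family of subsets of $\T$ with this ``eventually all or eventually none'' property is closed under complement and finite union. Point (b) is immediate (for a finite union take the maximum of the thresholds), and a quantifier-free formula defining $X$ is a Boolean combination of finitely many atomic ones, so (a) and (b) together give the proposition.

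For $P(Y)=0$ and $P(Y)\ge 0$, assuming $P\ne 0$ (otherwise the claim is trivial), I would use the logarithmic decomposition $P=\sum_{\i}P_{\<\i\>}Y^{\<\i\>}$ for $P$ of order $\le r$ and let $\j\in\N^{1+r}$ be lexicographically largest with $P_{\<\j\>}\ne 0$. By the growth facts recorded just before the proposition --- for $y>\exp^{r+1}(x^2)$ (compositional iteration) the iterates $y^{\<0\>},\dots,y^{\<r\>}$ are defined and $y^{\<n\>}\succ(y^{\<n+1\>})^m$ for all $m$ --- there is $b\in\T$ with $P(y)\sim P_{\<\j\>}\,y^{\<\j\>}$ for all $y>b$, so $P(y)\ne 0$ and $\sgn P(y)=\sgn P_{\<\j\>}$ there. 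Hence $\{y:P(y)=0\}$ is disjoint from $(b,+\infty)$, and $\{y:P(y)\ge 0\}$ contains $(b,+\infty)$ if $P_{\<\j\>}>0$ and is disjoint from it if $P_{\<\j\>}<0$.

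For $P(Y)\preceq Q(Y)$ I would apply the same reduction to $P$ and to $Q$: if one of them is $0$, the condition reduces to a case above; otherwise, after enlarging $b$, we have $P(y)\sim c\,y^{\<\j\>}$ and $Q(y)\sim d\,y^{\<\j'\>}$ for $y>b$, with $c,d\in\T^{\times}$ and $\j,\j'$ the lexicographically largest multi-indices in the respective decompositions. Then for $y>b$ the relation $P(y)\preceq Q(y)$ holds precisely when $y^{\<\j\>}\preceq y^{\<\j'\>}$, that is, when the Laurent monomial $\prod_n(y^{\<n\>})^{j_n-j'_n}$ is $\preceq 1$; and by the hierarchy $y^{\<n\>}\succ(y^{\<n+1\>})^m$ this monomial is eventually $\prec 1$, $\asymp 1$, or $\succ 1$ according as $\j-\j'$, read in the first coordinate where it is nonzero, is negative, is identically zero (that is, $\j=\j'$), or is positive. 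So this atomic condition too is eventually decided.

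I do not expect a genuine obstacle: the inputs --- the logarithmic decomposition and the growth hierarchy --- are exactly what \cite{vdH} supplies, and the Boolean step is purely formal. The one point needing care, the ``hard part'' such as it is, is the eventual comparison of two logarithmic monomials $y^{\<\j\>}$ and $y^{\<\j'\>}$ as $y\to+\infty$: one must know that the values $v(y^{\<n\>})$ decrease steeply enough in $n$ that exponent tuples compare lexicographically, which is precisely the content of $y^{\<n\>}\succ(y^{\<n+1\>})^m$. Everything else is bookkeeping.
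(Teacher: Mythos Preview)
Your approach is essentially the paper's own: reduce to atomic formulas, use the logarithmic decomposition $P=\sum_{\i}P_{\<\i\>}Y^{\<\i\>}$, and exploit the hierarchy $y^{\<n\>}\succ (y^{\<n+1\>})^m$ to pick out the lexicographically dominant term. The paper handles the $\preceq$-case in one sentence (``by similar reasoning''), and you spell out that reasoning explicitly; the Boolean closure step (b) is left implicit in the paper but is, as you say, formal.

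One small imprecision to fix: in the $\preceq$-case you write that for $y>b$ the relation $P(y)\preceq Q(y)$ holds ``precisely when $y^{\<\j\>}\preceq y^{\<\j'\>}$'', dropping the coefficients $c=P_{\<\j\>}$ and $d=Q_{\<\j'\>}$. That is harmless when $\j\ne\j'$, since then $y^{\<\j\>}$ and $y^{\<\j'\>}$ lie in distinct archimedean classes for large $y$, so the fixed factors $c,d$ cannot affect the comparison. But when $\j=\j'$ you get $P(y)\asymp c\,y^{\<\j\>}$ and $Q(y)\asymp d\,y^{\<\j\>}$, so $P(y)\preceq Q(y)$ iff $c\preceq d$, which is a fixed condition on the coefficients rather than the tautology $y^{\<\j\>}\preceq y^{\<\j\>}$. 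Either way the atomic condition is eventually decided, so your conclusion stands; just phrase the case split so the coefficients are visible.
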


This proposition holds for any Liouville closed $H$-field $K$ instead of
$\T$: we can define on such $K$ a substitute for the exponential function
$\exp$ as used in the proof above, see \cite[Section~1.1]{AvdD2}.

\medskip\noindent
A relation $R\subseteq A\times B$ is said to be {\bf independent} if
for every $N\ge 1$ there are elements $a_1,\dots, a_N\in A$ and $b_I\in B$, for 
each $I\subseteq \{1,\dots, N\}$, such that
$$R(a_i, b_I)\ \Longleftrightarrow\ i\in I \qquad (\text{for }i=1,\dots,N,\ \text{ and all } I\subseteq \{1,\dots, N\}).$$ 
A (one-sorted) structure 
$\boldsymbol{M}=(M; \dots)$ is said to have {\bf NIP} (the {\bf Non-Independence Property})
if there is no independent definable relation $R\subseteq M^m\times M^n$.
This is a robust model-theoretic tameness condition on a structure. 
It was introduced early on 
by Shelah~\cite{S1}; there is also a substantial body of recent work
around this notion, see for example~\cite{HPP}.
Stable structures as well as o-minimal structures have NIP.

\begin{prop}\label{3}
Let $K$ be an $H$-field.
No quantifier-free definable relation $R\subseteq K^m\times K^n$ is
independent.
\end{prop}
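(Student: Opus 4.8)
The plan is to strip away the differential structure and the Boolean structure by soft moves, reducing the claim to a finite-VC-dimension bound for two explicit families of subsets of affine space over $K$.

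\emph{First reductions.} By the Sauer--Shelah lemma, a Boolean combination of finitely many relations, none of which is independent, is again not independent. A quantifier-free formula is a Boolean combination of atomic ones, and $P(x,y)=0$ is $P(x,y)\le 0\wedge -P(x,y)\le 0$, so it suffices to treat $R$ defined by a single $P(x,y)\le 0$ or a single $P(x,y)\preceq Q(x,y)$, where $P,Q$ are differential polynomials over $K$ of order $\le d$, say. Replacing each $X_i^{(j)}$ and $Y_k^{(l)}$ with $j,l\le d$ by a fresh variable turns such a formula into an atomic formula $\hat\varphi$ in the language $\{0,1,+,-,\cdot,\le,\preceq\}$ of ordered valued fields; it defines a relation $\hat R$ in the ordered-valued-field reduct $\bar K$ of $K$, and $R(a,b)$ holds iff $\hat R(\delta_m a,\delta_n b)$ does, where $\delta_m\colon K^m\to K^{m(d+1)}$ sends $a$ to the tuple of all $a_i^{(j)}$ and $\delta_n$ is analogous. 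As applying a fixed map coordinatewise sends a shattered configuration for $R$ to one for $\hat R$, it is enough that $\hat R$ be not independent. Finally, by \cite{AvdD1} the real closure $K^{\mathrm{rc}}$ of $K$ is again an $H$-field and $K\hookrightarrow K^{\mathrm{rc}}$ is an $\mathcal L$-embedding; since $\hat\varphi$ is quantifier-free, $\hat R$ is the restriction to $\bar K$ of the relation it defines over $K^{\mathrm{rc}}$, so we may assume $K$ is real closed.

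\emph{The two atomic cases.} Writing $p(x,y)=\sum_\alpha c_\alpha(y)\,x^\alpha$, for fixed $y=b$ the fiber $\{x:p(x,b)\le 0\}$, resp.\ $\{x:p(x,b)\preceq q(x,b)\}$, is the preimage under the fixed Veronese map $\Phi\colon K^m\to K^M$, $x\mapsto(x^\alpha)_\alpha$ ($M$ the number of $x$-monomials up to the degree of the formula), of the ``linear half-space'' $\{z\in K^M:\langle c,z\rangle\le 0\}$, resp.\ the ``valuative half-space'' $\{z\in K^M:\langle c,z\rangle\preceq\langle c',z\rangle\}$, for suitable $c,c'\in K^M$ depending on $b$. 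Preimages under $\Phi$, and restriction to a subfamily of the parameters, do not increase VC dimension, so it remains to bound the VC dimension of these two families over $K^M$. Linear half-spaces have VC dimension $\le M+1$ by Radon's theorem, which holds over any ordered field. For valuative half-spaces, note that $z\mapsto(\langle c,z\rangle,\langle c',z\rangle)$ is a linear map $K^M\to K^2$, so each such set is the preimage of $D\cap V$ for a linear subspace $V$ of $K^2$ and the single set $D:=\{(u,w)\in K^2:u\preceq w\}=\{(u,w):u\in w\mathcal O\}$. Since $\mathcal O$ is a convex subring it is a single arc of $\mathbb P^1(K)$, and $\mathrm{PGL}_2(K)$ carries arcs to arcs; hence every linear image of $D$ in $K^2$ is a Boolean combination of boundedly many linear half-planes, so every valuative half-space in $K^M$ is a Boolean combination of boundedly many linear half-spaces, and such families have finite VC dimension. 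Reassembling via Sauer--Shelah proves that no quantifier-free definable $R$ is independent. (Alternatively, one may simply cite that the theory of real closed valued fields is weakly o-minimal, hence NIP, which disposes of $\hat R$ at one go.)

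\emph{The main obstacle.} Every step but the treatment of $\preceq$ is bookkeeping; the real content is the finite VC dimension of the valuative-half-space families, equivalently the input that real closed valued fields have NIP. I expect the only genuine work to be a careful write-up of the $\mathbb P^1$-arc argument (or of why the resulting Boolean combinations of half-spaces have a number of terms bounded uniformly in $c,c'$), and I foresee no serious obstruction beyond that.
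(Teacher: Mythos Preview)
Your overall strategy works, but only through the parenthetical fallback to weak o-minimality of real closed valued fields. The direct $\mathbb P^1$-arc argument for the $\preceq$ case contains a genuine error: the claim that every valuative half-space in $K^M$ is a Boolean combination of boundedly many linear half-spaces is false. Already for $M=2$, the set $\{(z_1,z_2):z_1\preceq z_2\}$ meets the line $z_2=1$ in $\mathcal O\times\{1\}$, and when $\Gamma\ne\{0\}$ the convex subring $\mathcal O$ has neither supremum nor infimum in $K$, so it is not a finite Boolean combination of half-lines with endpoints in $K$ and hence not definable in the pure ordered-field language. That $\mathcal O$ corresponds to an arc in $\mathbb P^1(K)$ and that $\mathrm{PGL}_2(K)$ preserves arcs does not yield the half-space statement: an arc without endpoints lifts to a convex cone without boundary rays, and such cones are precisely the ones that are \emph{not} Boolean combinations of linear half-planes. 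So the obstruction you did not foresee is real, and you must invoke NIP for real closed valued fields as a black box.

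With that citation in place your proof is correct, and it differs from the paper's. The paper avoids your reductions entirely by quoting Guzy--Point~\cite{GP}: the $\mathcal L$-theory $\operatorname{OVDF}$ of ordered valued differential fields (with the single compatibility axiom $0\le x\le y\to x\preceq y$) has a model completion $\operatorname{OVDF}^c$, and $\operatorname{OVDF}^c$ has NIP; every $H$-field is a model of $\operatorname{OVDF}$, hence embeds into some model of $\operatorname{OVDF}^c$, and quantifier-free formulas transfer. Your route instead first strips out the derivation via the jet map $a\mapsto(a,a',\dots,a^{(d)})$, reducing to a quantifier-free ordered-valued-field formula, and then appeals to NIP for RCVF rather than for $\operatorname{OVDF}^c$. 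This trades a more specialized differential reference for a better-known valued-field one, at the cost of the extra reduction step.
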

\begin{proof} Let $\operatorname{OVDF}$ be the $\mathcal L$-theory of ordered, valued, 
differential fields where the only axiom relating the ordering, valuation, 
and derivation is 
$$\forall x \forall y\ (0\le x\le y\ \rightarrow\ x\preceq y).$$ 
Guzy and Point~\cite[Corollary~6.4]{GP} 
show that  $\operatorname{OVDF}$ has a model completion $\operatorname{OVDF}^c$,
and that $\operatorname{OVDF}^c$ has NIP. Now use an embedding of $K$ into some
model of $\operatorname{OVDF}^c$.
\end{proof}

\subsection{$\T$ does not admit quantifier elimination in $\mathcal L$.} Let 
$K$ be an $H$-field. Then we have the $\mathcal{O}$-submodule
$$ \I(K):=\ \{y\in K:\text{$y\preceq f'$ for some $f\in \smallo$}\}$$
of $K$, with $\der \smallo\subseteq \I(K)$. If the derivation $\der$ of $K$ is small, then $\I(K)$ is an ideal in $\mathcal{O}$. If $K$ is Liouville closed, then
$$  \der \smallo\ =\ \I(K)\ =\ 
\{y\in K:\text{$y \prec f^\dagger$ for all non-zero $f\in \smallo$}\},$$
so $\I(K)$ is existentially as well as universally definable in the 
$\mathcal L$-structure $K$.
Still considering $\T$ as an $\mathcal L$-structure, we have:

\begin{prop}\label{nqel} The subset $\I(\T)$ of $\T$ is not 
quantifier-free definable in $\T$.
\end{prop}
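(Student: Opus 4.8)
The plan is to show that $\I(\T)$, although existentially definable, is not cut out by any quantifier-free $\mathcal L$-formula with parameters from $\T$. The strategy is the classical one for refuting quantifier elimination: produce two elements $a,b$ in an elementary extension $\T^*$ of $\T$ that realize the same quantifier-free $\mathcal L$-type over $\T$, yet with $a\in \I(\T^*)$ and $b\notin\I(\T^*)$. Since $\I$ is defined uniformly across $H$-fields by the formula displayed just before the proposition, membership in $\I$ is preserved by $\mathcal L$-isomorphism; hence if $\I(\T)$ were quantifier-free definable, any quantifier-free-type-preserving map would have to respect it, and the pair $(a,b)$ gives the contradiction.

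The concrete source of such a pair is the jammed pc-sequence $(\upl_\rho)$ from Section~\ref{sec:important pc-sequence}, or rather the antiderivative-type element it controls. Here is the mechanism I would use. Recall from Theorem~\ref{thwam} that in $\T$ the pc-sequence $(\upl_\rho)$ has no pseudolimit, equivalently $\T\models \forall a\exists b\,[v(a-b^\dagger)\le vb<(\Gamma^{>})']$. Take $a$ to be a pseudolimit of $(\upl_\rho)$ in a proper immediate $H$-field extension $\T\langle a\rangle$ of $\T$; by the uniqueness statement in Theorem~\ref{coram} (applied with the role of $(\upo_\rho)$-freeness, which $\T$ enjoys), $\T\langle a\rangle$ embeds over $\T$ into an elementary extension $\T^*$ of $\T$, and the embedding is unique — this is exactly the non-uniqueness-of-maximal-immediate-extension phenomenon alluded to after Theorem~\ref{thimmax}. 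The point is that $\T$ has *two* incompatible immediate $H$-field extensions attaching different ``gap-creating'' data to this pc-sequence: following the discussion around Corollary~\ref{gapcor}, from such a pseudolimit $\upl$ one builds an extension $K(\upg)$ with $\upg^\dagger = -\upl$ and a gap $v(\upg)$. One of the two natural ways of ``removing the gap'' (cf.\ the Case~1 discussion in Section~\ref{sec:trichotomy}) produces an element lying in $\I$ of the ambient field, the other produces one lying outside $\I$; but the two removals are $\mathcal L$-indistinguishable at the level of quantifier-free data over $\T$ because all the relevant quantifier-free information — the field operations, the ordering, the valuation divisibility, and finitely many derivatives — is determined by the immediate-extension data, which is the same for both. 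So I would pin down $a\in\I(\T^*)$ and $b\notin\I(\T^*)$ both having the same quantifier-free $\mathcal L$-type over $\T$, realizing the two sides of this dichotomy.

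The main obstacle — and where the argument really has to earn its keep — is verifying that $a$ and $b$ do have the *same* quantifier-free $\mathcal L$-type over $\T$, i.e.\ that no atomic formula $P(a)\gtrless 0$ or $P(a)\preceq Q(a)$ (with $P,Q\in\T\{Y\}$) distinguishes them from their $b$-counterparts. For this one computes, for an arbitrary differential polynomial $P\in\T\{Y\}$, the value and sign of $P$ evaluated at a pseudolimit of $(\upl_\rho)$: the logarithmic-decomposition / Newton-polynomial machinery (Section~\ref{newtpo}, Section~\ref{sec:simple newton pol}) shows that near such an element $P$ behaves like a single dominant term $P_{\<\j\>}Y^{\<\j\>}$, and — crucially — $Y^{\dagger\dagger}$-type quantities built from the two candidate elements agree up to something of strictly larger valuation, so the dominant term, its sign, and its valuation are the same. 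This is where I expect a somewhat lengthy but routine computation, of the flavor of the \emph{Example} at the end of Section~\ref{sec:simple newton pol}. Once that coincidence of quantifier-free types is in hand, the contradiction with quantifier-free definability of $\I(\T)$ is immediate, since $\I$ is defined by the same $\mathcal L$-formula in $\T^*$ as in $\T$ and one element satisfies it while the other does not. As the excerpt notes, this very construction also exhibits the non-uniqueness of maximal immediate $H$-field extensions of $\T$, so the two payoffs come together.
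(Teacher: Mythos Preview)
Your high-level strategy is right --- build two elements over $\T$ with the same quantifier-free $\mathcal L$-type but different $\I$-membership, using the pc-sequence $(\upl_n)$ and the gap phenomenon --- and this is exactly the paper's strategy. But the execution you sketch has a real gap, and the paper's fix is cleaner than the computation you propose.

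The problem is your ``two ways of removing the gap'' framing. Those two removals (the $y_1$ with $y_1'=a$ versus the $y_2$ with $y_2^\dagger=a$ from the Case~1 discussion) live over the \emph{same} base $K(\upg)$ and produce \emph{non-isomorphic}, incompatible $H$-field extensions --- that is precisely the point of the trichotomy discussion. So you cannot argue they are ``$\mathcal L$-indistinguishable because the immediate-extension data is the same'': it isn't. Your fallback plan, verifying agreement of quantifier-free types by a direct Newton-polynomial/logarithmic-decomposition computation of $P(a)$ versus $P(b)$ for arbitrary $P\in\T\{Y\}$, is not clearly feasible and in any case unnecessary.

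The paper avoids all of this by producing an actual $\mathcal L$-isomorphism of substructures, so that same quantifier-free type is automatic. Work inside an $\aleph_1$-saturated elementary extension $K$ of $\T$ and pick $\ell\in K$ with $C<\ell<\ell_n$ for all $n$. The crucial observation is that \emph{both} $\upl:=-\ell^{\dagger\dagger}$ \emph{and} $a:=-(1/\ell)'^{\dagger}$ are pseudolimits of $(\upl_n)$ (a short computation gives $-(1/\ell_n)'^{\dagger}=\upl_n+1/(\ell_0\cdots\ell_n)$). Now Theorem~\ref{coram} hands you an $\mathcal L$-isomorphism $\T\langle\upl\rangle\to\T\langle a\rangle$ over $\T$ sending $\upl\mapsto a$ --- no termwise computation needed. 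One then extends this, using the gap-creation lemmas cited from \cite{AvdD1,AvdD2}, to an $\mathcal L$-isomorphism $\T\langle\upl,f\rangle\to\T\langle a,g\rangle$ sending $f:=(1/\ell)^\dagger$ to $g:=(1/\ell)'$. But $g\in\der\smallo_K\subseteq\I(K)$ trivially, while $v(f)\in\Psi_K$ forces $f\notin\I(K)$; this is the contradiction. So the missing idea in your sketch is: rather than comparing the two gap-removals over one base, compare the gap-creating elements sitting over \emph{two different} pseudolimits of the same sequence, and let Theorem~\ref{coram} do the heavy lifting.
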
 
\begin{proof}
Recall from Section~\ref{sec:important pc-sequence} the pc-sequence 
$(\upl_n)$ in $\T$:
$$\upl_n=- \ell_n^{\dagger{}\dagger}\  =\ -\left(\frac{1}{\ell_n}\right)^{\dagger{}\dagger}\ =\ \frac{1}{\ell_0} + \frac{1}{\ell_0\ell_1} + \cdots + \frac{1}{\ell_0 \ell_1 \cdots \ell_n}.$$
It has no pseudolimit in $\T$. Fix some $\aleph_1$-saturated elementary extension $K$ of $\T$ and take
$\ell\in K$ such that $\ell > C$ but $\ell< \ell_n$ for all $n$.
Then $\upl:=-\ell^{\dagger{}\dagger}=-(1/\ell)^{\dagger{}\dagger}$ 
is a pseudolimit of $(\upl_n)$. An easy computation gives
$$-(1/\ell_n)'^{\dagger}\ =\ \upl_n + (1/\ell_0\cdots \ell_n),$$
so $\big(-(1/\ell_n)'^{\dagger}\big)$ is a pc-sequence with 
the same pseudolimits in $K$ as $(\upl_n)$.
Now $a:= -(1/\ell)'^{\dagger}$ is a pseudolimit of
$\big(-(1/\ell_n)'^{\dagger}\big)$, so by Theorem~\ref{coram}, the  
$H$-subfields $\T\langle \upl \rangle$ and $\T\langle a \rangle$ of $K$
are immediate extensions of $\T$, and 
we have an isomorphism $\T\langle \upl \rangle \to \T\langle a \rangle$
over $\T$ that sends $\upl$ to $a$. 
The element $f=(1/\ell)^\dagger$ of $K$ satisfies
$f^\dagger = -\upl$ and $\phi'\prec f\prec \phi^\dagger$ for all $\phi\in\T^{\times}$ with $\phi\prec 1$, and the real closure
$\T\langle \upl \rangle^{\operatorname{rc}}$ of $\T\<\upl\>$ in $K$ is an immediate extension of $\T$.
Hence in the terminology of \cite[Section~12]{AvdD2} and using 
\cite[Proposition~12.4]{AvdD2},~$-\upl$ creates a gap over $\T\langle \upl \rangle^{\operatorname{rc}}$. 
Since $g=(1/\ell)'$ satisfies $g^\dagger=-a$, the above isomorphism $\T\<\upl\} \to \T\<a\>$ extends by \cite[Lemma~12.3]{AvdD2} and the uniqueness statement in
\cite[Lemma~5.3]{AvdD1} to an isomorphism
$$  \T\langle \upl, f \rangle \to \T\langle a, g \rangle$$
of $\mathcal L$-structures which sends $f$ to $g$. Now, if $\I(\T)$ 
were defined 
in $\T$ by a quantifier-free formula $\phi(y)$ 
in the language $\mathcal L$ augmented by names for the elements of $\T$, then we
would have $K \models \neg \phi(f)$ and $K\models \phi(g)$, and so
$\T\langle \upl, f\rangle \models \neg \phi(f)$ and 
$\T\langle a,g \rangle\models \phi(g)$, which violates the above
isomorphism between  $\T\langle \upl, f\rangle$ and  $\T\langle a,g \rangle$.
\end{proof}

For later use it is convenient to extend the language
$\mathcal L$ as follows. Note that~$\mathcal L$ has the language of ordered rings as a sublanguage. We consider $\R$ as a structure for the language of ordered rings in the usual way. A function $\R^n \to \R$ is said to be {\bf $\Q$-semialgebraic\/} if its graph is defined in the structure~$\R$ by a (quantifier-free) formula in the language
of ordered rings; we do not allow names for arbitrary real numbers
in the defining formula. We extend $\mathcal L$ to the language $\mathcal L'$ by adding for each $\Q$-semialgebraic function 
$f\colon \R^n \to \R$ an $n$-ary function symbol $f$. We construe any real closed valued differential field $K$ as an $\mathcal L'$-structure
by associating to any $\Q$-semialgebraic function $f\colon \R^n\to \R$ the function $K^n \to K$ whose graph is defined in $K$ by any formula
in the language of ordered rings that defines the graph of $f$ in $\R$.  

For example, the function $y\mapsto y^{-1}\colon \T \to \T$, with $0^{-1}:= 0$ by convention, is named by a function symbol of $\mathcal L'$, and so is, for each integer $d\ge 1$, the function
$y\mapsto y^{1/d}\colon \T \to \T$, taking the value $0$ for $y\le 0$ by convention. 

\begin{prop}\label{qfqf} If $X\subseteq \T^n$ is 
quantifier-free definable in $\T$ as $\mathcal L'$-structure, 
then $X$ is quantifier-free definable in
$\T$ as $\mathcal L$-structure.
\end{prop}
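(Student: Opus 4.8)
The plan is to combine two ingredients: quantifier elimination for the valued\hyp field reduct of $\T$ --- a real closed field carrying the convex valuation ring $\mathcal O_\T$ (the convex hull of $\R$), hence a $T$-convex structure for $T=\mathrm{RCF}$, to which the results of \cite{vdD} apply --- and the classical normal form for $\Q$-semialgebraic functions together with implicit differentiation, used to remove derivatives of algebraic quantities. First one reduces the statement. A quantifier-free $\mathcal L'$-formula is a Boolean combination of atomic formulas $\tau_1=\tau_2$, $\tau_1\le\tau_2$, $\tau_1\preceq\tau_2$ in $\mathcal L'$-terms $\tau_i$ (with names for elements of $\T$ allowed). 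Using the symbols $-$ and the inversion function $y\mapsto y^{-1}$ of $\mathcal L'$ (recall $0^{-1}:=0$), each such atomic formula is, over $\T$, equivalent to a Boolean combination of conditions ``$\tau=0$'', ``$\tau\ge 0$'', ``$\tau\preceq 1$'' for single $\mathcal L'$-terms $\tau$; for instance $\tau_1\preceq\tau_2$ becomes $(\tau_1=0\wedge\tau_2=0)\vee(\tau_2\ne 0\wedge\tau_1\cdot\tau_2^{-1}\preceq 1)$. So it suffices to show: for every $\mathcal L'$-term $\tau(\bar z)$ over $\T$, in any number of variables $\bar z$, the sets $\{\bar a:\tau^\T(\bar a)=0\}$, $\{\bar a:\tau^\T(\bar a)\ge 0\}$ and $\{\bar a:\tau^\T(\bar a)\preceq 1\}$ are quantifier-free definable in the $\mathcal L$-structure $\T$.

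I would prove this by induction on the number of occurrences of $\Q$-semialgebraic function symbols in $\tau$ (the number of variables is irrelevant to the induction). If there are none, $\tau$ is an $\mathcal L$-term and the three sets are atomic $\mathcal L$-formula sets. Otherwise, pick an innermost occurrence: a subterm $f(\sigma_1,\dots,\sigma_k)$ with $f$ naming a $\Q$-semialgebraic function $\R^k\to\R$ and each $\sigma_i$ an $\mathcal L$-term, i.e.\ a differential polynomial over $\T$ in $\bar z$. Write $\tau=\theta\bigl(\bar z,f(\sigma_1(\bar z),\dots,\sigma_k(\bar z))\bigr)$ where $\theta(\bar z,w)$ is obtained from $\tau$ by replacing that occurrence with a fresh variable $w$; then $\theta$ has one fewer $\Q$-semialgebraic occurrence, so the induction hypothesis yields quantifier-free $\mathcal L$-formulas $\Theta_=(\bar z,w)$, $\Theta_\ge(\bar z,w)$, $\Theta_\preceq(\bar z,w)$ defining $\{\theta^\T=0\}$, $\{\theta^\T\ge 0\}$, $\{\theta^\T\preceq 1\}$ in $\T$. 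Next, cell decomposition for $\Q$-semialgebraic functions together with quantifier elimination for real closed fields provides a finite partition of $\R^k$ into $\Q$-semialgebraic pieces $\beta_s$, nonzero polynomials $Q_s\in\Q[X_1,\dots,X_k,W]$, and quantifier-free $\Q$-formulas $\gamma_s(\bar x,w)$, such that on $\beta_s$ the value $f(\bar x)$ is the unique $w$ with $Q_s(\bar x,w)=0\wedge\gamma_s(\bar x,w)$ and, after refining the pieces, $\partial Q_s/\partial W(\bar x,f(\bar x))\ne 0$ there --- so $f$ is on each piece a simple-root branch of an algebraic function. All of this is first-order over the ordered ring $\R$, hence transfers to $\T$. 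Substituting $x_i:=\sigma_i(\bar z)$ turns the $\beta_s$ into quantifier-free $\mathcal L$-formulas $\chi_s(\bar z)$ partitioning $\T^{|\bar z|}$, turns $Q_s$ into a differential polynomial $\hat Q_s(\bar z,w)$ which is an ordinary polynomial of positive degree in $w$ (no derivatives of $w$), and turns the $\gamma_s$ into quantifier-free $\mathcal L$-formulas $\hat\gamma_s(\bar z,w)$; for $\bar a$ satisfying $\chi_s$, the argument $b_s(\bar a)$ fed to $\theta$ is the unique $w$ with $\hat Q_s(\bar a,w)=0\wedge\hat\gamma_s(\bar a,w)$, and $D_s(\bar a,b_s(\bar a))\ne 0$ where $D_s:=\partial\hat Q_s/\partial W$.

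The key step: since $b_s(\bar a)$ is a \emph{simple} root of $\hat Q_s(\bar a,\cdot)$, differentiating the identity $\hat Q_s(\bar z,b_s)\equiv 0$ repeatedly (the coefficients of $\hat Q_s$ are differential polynomials in $\bar z$, and the usual chain rule governs $Y^{(i)}\mapsto Y^{(i+1)}$) expresses each derivative $b_s^{(i)}$, $i\ge 1$, as a differential polynomial in $\bar z$ divided by a power of $D_s(\bar z,w)$, evaluated at $w=b_s$. Substituting these ratios for the derivatives $w^{(i)}$ occurring in $\Theta_=$ and clearing denominators (legitimate since $D_s\ne 0$ on the relevant set; for $\preceq$-atoms use $v(p/q^m)=vp-m\,vq$) yields a quantifier-free $\mathcal L$-formula $\widetilde\Theta_{=,s}(\bar z,w)$ in which $w$ occurs only algebraically --- a Boolean combination of conditions $P(\bar z,w)=0$, $P(\bar z,w)>0$, $P_1(\bar z,w)\preceq P_2(\bar z,w)$ with $P,P_i$ ordinary polynomials in $w$ whose coefficients are differential polynomials in $\bar z$ --- such that for $\bar a$ satisfying $\chi_s$ one has $\tau^\T(\bar a)=0\iff\theta^\T(\bar a,b_s(\bar a))=0\iff\widetilde\Theta_{=,s}(\bar a,b_s(\bar a))$. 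Using the characterization of $b_s(\bar a)$,
$$\{\bar a:\chi_s(\bar a)\wedge\tau^\T(\bar a)=0\}=\{\bar a:\chi_s(\bar a)\wedge\exists w\,(\hat Q_s(\bar a,w)=0\wedge\hat\gamma_s(\bar a,w)\wedge\widetilde\Theta_{=,s}(\bar a,w))\}.$$
In the formula after $\exists w$, the variable $w$ and the finitely many derivatives $z_j^{(i)}$ that occur appear only inside ring terms compared by $=$, $\le$, $\preceq$; replacing those $z_j^{(i)}$ by fresh variables makes it a quantifier-free formula in the language $\{0,1,+,-,\cdot,\le,\preceq\}$. The reduct of $\T$ to that language is a real closed field with the convex valuation ring $\mathcal O_\T$, hence is $T$-convex for $T=\mathrm{RCF}$ and its theory admits quantifier elimination by \cite{vdD}. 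Applying this eliminates $\exists w$ and gives a quantifier-free $\{0,1,+,-,\cdot,\le,\preceq\}$-formula in the fresh variables; substituting $\der^i z_j$ back for them yields a quantifier-free $\mathcal L$-formula $\psi_{=,s}(\bar z)$. Then $\{\bar a:\tau^\T(\bar a)=0\}=\bigcup_s\{\bar a:\chi_s(\bar a)\wedge\psi_{=,s}(\bar a)\}$ is quantifier-free $\mathcal L$-definable; the same argument run with $\Theta_\ge$ and with $\Theta_\preceq$ handles the other two sets, closing the induction and proving the proposition.

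\textbf{Main obstacle.} The delicate point --- and the reason a naive induction on terms fails --- is exactly this final elimination: by Proposition~\ref{nqel} the $\mathcal L$-theory of $\T$ does not admit quantifier elimination, so one cannot hope to push the existential quantifiers away by formal manipulation of $\mathcal L$-formulas alone. What rescues the argument is that the only existential quantifier ever produced ranges over a quantity that is \emph{algebraic over $\mathcal L$-terms}, so that, once implicit differentiation has purged all of its derivatives, the elimination takes place entirely inside the valued-field reduct of $\T$, where quantifier elimination \emph{is} available through $T$-convexity. The remaining technical care is in the semialgebraic normal form (refining the cell decomposition so that $f$ is a simple-root algebraic branch, which makes implicit differentiation legitimate) and in the bookkeeping of clearing denominators in the presence of the relation $\preceq$.
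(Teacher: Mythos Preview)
Your argument is correct but follows a genuinely different route from the paper. The paper proceeds model-theoretically: it isolates a general criterion (its Corollary~\ref{closureprop}) to the effect that if an $L^*$-theory $T^*$ has \emph{closures of $L$-substructures}, then every quantifier-free $L^*$-formula is $T^*$-equivalent to a quantifier-free $L$-formula; it then checks this for $L=\mathcal L$, $L^*=\mathcal L'$, $T^*=$ the theory of real closed valued differential fields, by observing that the $\mathcal L'$-closure of a common $\mathcal L$-substructure $D$ in two models is simply the real closure of the fraction field of $D$, which is canonical (the valuation and derivation extend uniquely). Your proof is instead an explicit syntactic elimination: pick an innermost $\Q$-semialgebraic symbol, present its value as a simple algebraic root via semialgebraic cell decomposition, purge the derivatives of that root by implicit differentiation, and then discharge the remaining purely algebraic existential using quantifier elimination for real closed valued fields. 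The paper in fact anticipates this, remarking that ``this fact can be proved by explicit means, but we prefer a model-theoretic argument that we can use also in later situations where explicit elimination would be very tedious''; indeed the closures-of-substructures method is reused verbatim for Proposition~\ref{qint}. What your approach buys is constructivity (one actually sees the resulting $\mathcal L$-formula) and a clear identification of the only external input needed, namely QE for RCVF; what the paper's approach buys is brevity and reusability. One small correction: the QE for real closed valued fields (equivalently, $T$-convexity for $T=\mathrm{RCF}$) is in \cite{vdDL}, not \cite{vdD}.
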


One can view this as a partial quantifier elimination: it is obvious
how to eliminate occurrences
of function symbols of $\mathcal L'\setminus \mathcal L$  
from a quantifier-free $\mathcal L'$-formula 
at the cost of introducing existentially quantified new variables, 
and Proposition~\ref{qfqf} says that we can eliminate those 
quantifiers again {\em without reintroducing these function symbols}. 
This fact can be proved by explicit
means, but we prefer a model-theoretic argument that we can use
also in later situations where explicit elimination would be very tedious.

To formulate this in sufficient generality, let $L$ be a sublanguage of
the (one-sorted) first-order language ${L}^*$, and assume that
$L$ has a constant symbol. Let ${\boldsymbol A}^*=(A;\dots)$ and ${\boldsymbol B}^*$
range over ${L}^*$-structures, and let ${\boldsymbol A}$ and ${\boldsymbol B}$
be their $L$-reducts. Let $T^*$ be an ${L}^*$-theory.   
Then we have the following criterion:

\begin{lemma} Let $\varphi^*(x)$ with $x=(x_1,\dots,x_n)$ be an 
${L}^*$-formula. Then $\varphi^*(x)$ is $T^*$-equivalent to some 
quantifier-free ${L}$-formula $\varphi(x)$ iff for all ${\boldsymbol A}^*,{\boldsymbol B}^*\models T^*$,
common $L$-sub\-struc\-tures ${\boldsymbol C}=(C;\dots)$ of ${\boldsymbol A}$ and
${\boldsymbol B}$, and~$c\in C^n$:
$${\boldsymbol A}^*\models \varphi^*(c) \quad\Longleftrightarrow\quad {\boldsymbol B}^*\models \varphi^*(c).$$
\end{lemma}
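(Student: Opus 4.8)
The plan is a routine separation argument via compactness. The left-to-right implication is immediate: if $\varphi^*(x)$ is $T^*$-equivalent to a quantifier-free $L$-formula $\varphi(x)$, then for any $\boldsymbol{A}^*,\boldsymbol{B}^*\models T^*$ with common $L$-substructure $\boldsymbol{C}$ and $c\in C^n$, the $L$-embeddings $\boldsymbol{C}\hookrightarrow\boldsymbol{A}$ and $\boldsymbol{C}\hookrightarrow\boldsymbol{B}$ both preserve and reflect the quantifier-free formula $\varphi$, so
$\boldsymbol{A}^*\models\varphi^*(c)\Leftrightarrow\boldsymbol{A}\models\varphi(c)\Leftrightarrow\boldsymbol{C}\models\varphi(c)\Leftrightarrow\boldsymbol{B}\models\varphi(c)\Leftrightarrow\boldsymbol{B}^*\models\varphi^*(c)$.
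All the work is in the converse.

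For the converse I would let $\Sigma(x)$ be the set of all quantifier-free $L$-formulas $\sigma(x)$ in the variables $x=(x_1,\dots,x_n)$ with $T^*\models\forall x\,(\varphi^*(x)\to\sigma(x))$, introduce a tuple $c=(c_1,\dots,c_n)$ of new constant symbols, and aim to prove
$$T^*\cup\Sigma(c)\ \models\ \varphi^*(c).$$
Granting this, compactness yields finitely many $\sigma_1,\dots,\sigma_k\in\Sigma$ with $T^*\cup\{\sigma_1(c),\dots,\sigma_k(c)\}\models\varphi^*(c)$; since the $c_i$ do not occur in $T^*$, it follows that $T^*\models\forall x\,(\varphi(x)\to\varphi^*(x))$ for the quantifier-free $L$-formula $\varphi(x):=\sigma_1(x)\wedge\cdots\wedge\sigma_k(x)$, while $T^*\models\forall x\,(\varphi^*(x)\to\varphi(x))$ by the definition of $\Sigma$. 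Hence $\varphi^*$ is $T^*$-equivalent to $\varphi$.

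To prove the displayed entailment I would argue by contradiction: suppose $\boldsymbol{A}^*\models T^*\cup\Sigma(c)\cup\{\neg\varphi^*(c)\}$. Let $\boldsymbol{C}$ be the $L$-substructure of $\boldsymbol{A}$ generated by $c$ — nonempty because $L$ has a constant symbol — and let $\operatorname{Diag}_L(\boldsymbol{C})$ be its quantifier-free $L$-diagram, written with parameters from $c$ after replacing each element of $C$ by an $L$-term in $c$ representing it. First I would check that $T^*\cup\{\varphi^*(c)\}\cup\operatorname{Diag}_L(\boldsymbol{C})$ is consistent: otherwise compactness produces a single quantifier-free $L$-formula $\delta(c)$ with $\boldsymbol{C}\models\delta(c)$ and $T^*\cup\{\varphi^*(c)\}\models\neg\delta(c)$, so $\neg\delta(x)\in\Sigma(x)$ and hence $\boldsymbol{A}^*\models\neg\delta(c)$, contradicting $\boldsymbol{A}\models\delta(c)$ (which holds since $\boldsymbol{C}\subseteq\boldsymbol{A}$ and $\delta$ is quantifier-free). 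So there is $\boldsymbol{B}^*\models T^*\cup\{\varphi^*(c)\}\cup\operatorname{Diag}_L(\boldsymbol{C})$; the diagram gives an $L$-embedding $\boldsymbol{C}\hookrightarrow\boldsymbol{B}$ fixing $c$, so, identifying $\boldsymbol{C}$ with its image, $\boldsymbol{C}$ is a common $L$-substructure of $\boldsymbol{A}$ and $\boldsymbol{B}$ with $c\in C^n$, $\boldsymbol{A}^*\models\neg\varphi^*(c)$, and $\boldsymbol{B}^*\models\varphi^*(c)$, which contradicts the hypothesis of the lemma.

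There is no genuine obstacle here; the only point requiring care is the bookkeeping between $L$ and $L^*$. One must ensure that the separating formula $\varphi$ is built from $L$-formulas (not $L^*$-formulas) and can be taken in the variables $x$ alone — which is exactly why $\Sigma$ is defined from quantifier-free $L$-formulas, and why the hypothesis that $L$ contains a constant symbol is invoked, namely to guarantee that the $L$-substructure of $\boldsymbol{A}$ generated by $c$ actually exists.
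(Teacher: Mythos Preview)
Your proof is correct and is exactly the ``standard model-theoretic compactness argument'' the paper alludes to; the paper does not spell out a proof of this lemma, merely remarking that the criterion is well-known and follows by compactness. Your separation argument via the set $\Sigma(x)$ of quantifier-free $L$-consequences of $\varphi^*$, together with the diagram trick to produce the second model $\boldsymbol{B}^*$, is the standard route and is carried out cleanly.
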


This criterion is well-known (at least for ${L}={L}^*$), and follows
by a standard model-theoretic compactness argument. Typically, 
the criterion gets used via its corollary below. To state that corollary, we
define $T^*$ to have
{\bf closures of $L$-substructures} if 
for all ${\boldsymbol A}^*,{\boldsymbol B}^*\models T^*$ with a common 
${L}$-substructure $\boldsymbol C=(C;\dots)$ of
${\boldsymbol A}$ and ${\boldsymbol B}$, there is a (necessarily unique) 
isomorphism from the $L^*$-substructure of ${\boldsymbol A}^*$ generated by
$C$ onto the $L^*$-substructure of ${\boldsymbol B}^*$ generated by
$C$ which is the identity on $C$. 

\begin{cor}\label{closureprop}
If $T^*$ has closures of $L$-substructures, then every
quan\-ti\-fier-free ${L}^*$-formula is $T^*$-equivalent to a quantifier-free
$L$-formula.
\end{cor}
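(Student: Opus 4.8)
The plan is to deduce the corollary directly from the preceding Lemma by verifying its criterion. Fix a quantifier-free $L^*$-formula $\varphi^*(x)$ with $x=(x_1,\dots,x_n)$. I must show that for all $\boldsymbol{A}^*,\boldsymbol{B}^*\models T^*$, every common $L$-substructure $\boldsymbol{C}=(C;\dots)$ of $\boldsymbol{A}$ and $\boldsymbol{B}$, and every $c\in C^n$, one has $\boldsymbol{A}^*\models\varphi^*(c)$ iff $\boldsymbol{B}^*\models\varphi^*(c)$; once this is checked, the Lemma yields a quantifier-free $L$-formula $\varphi(x)$ that is $T^*$-equivalent to $\varphi^*(x)$, as desired.

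First I would invoke the hypothesis that $T^*$ has closures of $L$-substructures. Applied to $\boldsymbol{A}^*$, $\boldsymbol{B}^*$ and their common $L$-substructure $\boldsymbol{C}$, it produces an isomorphism $\iota$ from the $L^*$-substructure $\langle C\rangle_{\boldsymbol{A}^*}$ of $\boldsymbol{A}^*$ generated by $C$ onto the $L^*$-substructure $\langle C\rangle_{\boldsymbol{B}^*}$ of $\boldsymbol{B}^*$ generated by $C$, with $\iota$ the identity on $C$. Since $c\in C^n$, the tuple $c$ lies in both generated substructures, and $\iota(c)=c$.

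Next I would use the standard fact that truth of quantifier-free formulas is absolute between a substructure and the ambient structure: because $c$ lies in $\langle C\rangle_{\boldsymbol{A}^*}\subseteq\boldsymbol{A}^*$, we get $\boldsymbol{A}^*\models\varphi^*(c)$ iff $\langle C\rangle_{\boldsymbol{A}^*}\models\varphi^*(c)$, and likewise $\boldsymbol{B}^*\models\varphi^*(c)$ iff $\langle C\rangle_{\boldsymbol{B}^*}\models\varphi^*(c)$. Since $\iota$ is an $L^*$-isomorphism with $\iota(c)=c$ and quantifier-free formulas are preserved under isomorphisms, $\langle C\rangle_{\boldsymbol{A}^*}\models\varphi^*(c)$ iff $\langle C\rangle_{\boldsymbol{B}^*}\models\varphi^*(c)$. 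Chaining these three equivalences gives $\boldsymbol{A}^*\models\varphi^*(c)$ iff $\boldsymbol{B}^*\models\varphi^*(c)$, which is exactly the condition required by the Lemma.

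There is essentially no obstacle here: the whole content sits in the Lemma and in the definition of ``closures of $L$-substructures.'' The only point requiring care is that $\varphi^*$ must be quantifier-free, since this is used both for absoluteness between the generated substructure and the ambient model and for invariance under the isomorphism $\iota$; for an arbitrary $L^*$-formula both steps would fail, which is precisely why the corollary is limited to quantifier-free $\varphi^*$.
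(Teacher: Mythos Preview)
Your proof is correct and is precisely the argument the paper has in mind: the corollary is stated immediately after the Lemma with no explicit proof, being left as the obvious application of the criterion together with the definition of ``closures of $L$-substructures.'' Your write-up supplies exactly that routine verification, including the key observation that quantifier-freeness of $\varphi^*$ is what makes both the absoluteness step and the isomorphism step work.
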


\begin{proof}[Proof of Proposition~\ref{qfqf}]
We are going to apply Corollary~\ref{closureprop} with 
\begin{align*}
L\ :&=\ \mathcal L, \qquad  L^* :=\ \mathcal L',\\
T^* :&= \text{ the $L^*$-theory of real closed valued differential fields}.
\end{align*}
Indeed, we show that $T^*$ has 
closures of $L$-substructures. Let $E,F\models T^*$ have a common 
$L$-substructure $D$. Thus $D$ is an ordered differential subring of both $E$ and $F$ such that
for all $f,g\in D$ we have $f\preceq_E g\Longleftrightarrow f\preceq_D g\Longleftrightarrow f\preceq_F g$, where $\preceq_D$, $\preceq_E$, $\preceq_F$ are the
interpretations of the symbol $\preceq$ of $L$ in $D$, $E$, $F$, respectively.
Let $K_E$ and $K_F$ be the fraction fields of the integral domain~$D$ in~$E$
and $F$ respectively. Then
$K_E$ is the underlying ring of an $L$-substructure of $E$, to be denoted also 
by $K_E$. Likewise,
$K_F$ denotes the corresponding $L$-substructure of $F$, and we have a unique
$L$-isomorphism $K_E \to K_F$ that is the identity on $D$.
Let $K_E^{\operatorname{rc}}$ and $K_F^{\operatorname{rc}}$ be the real closures of 
the ordered fields~$K_E$ and $K_F$ in $E$ and $F$, respectively. Then
$K_E^{\operatorname{rc}}$ is the underlying ring of an $L^*$-substructure of $E$, to be denoted also by $K_E^{\operatorname{rc}}$. Likewise,
$K_F^{\operatorname{rc}}$ denotes the corresponding $L^*$-substructure of $F$, 
and the above $L$-isomorphism $K_E \to K_F$ extends uniquely to
an $L^*$-isomorphism $K_E^{\operatorname{rc}} \to K_F^{\operatorname{rc}}$. It 
remains to note that~$K_E^{\operatorname{rc}}$ is the $L^*$-substructure of $E$
generated by $D$.
\end{proof}

Let $K$ be a real closed valued differential field. Then a set $X\subseteq K^n$ 
is said to be {\bf $\Q$-semialgebraic} if it is defined in $K$ by
some (quantifier-free) formula in the language of ordered rings, 
and a function $K^n \to K$ is said to be
{\bf $\Q$-semialgebraic} if its graph is.

\subsection{Adding a new primitive.} Let $\mathcal L'_{\I}$
be the language $\mathcal L'$ augmented by a unary
predicate symbol $\I$. We construe $\T$ as an $\mathcal L'_{\I}$-structure by 
interpreting~$\I$ as $\I(\T)$. In view of Proposition~\ref{nqel} and 
Lemma~\ref{qfqf} 
this genuinely changes what can be defined quantifier-free in $\T$.
Nevertheless, Propositions~\ref{1},~\ref{2},~\ref{3} (in the case $K=\T$) go through 
when ``quantifier-free'' is with respect to $\T$ as $\mathcal L'_{\I}$-structure. 
For ``quantifier-free NIP'' we can almost repeat the previous argument:

\begin{prop}\label{prop:NIP} No quantifier-free definable relation 
$R\subseteq \T^{m}\times \T^n$ on $\T$ as an $\mathcal L'_{\I}$-structure is independent.
\end{prop}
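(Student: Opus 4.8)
The plan is to adapt the proof of Proposition~\ref{3}: I will embed $\T$, regarded now as an $\mathcal L'_{\I}$-structure, into an NIP structure by an embedding that preserves all quantifier-free $\mathcal L'_{\I}$-formulas; then no quantifier-free definable relation on $\T$ can be independent. The only new point, compared with Proposition~\ref{3}, is to find a home for the predicate $\I$.

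Let $\operatorname{OVDF}^c$ be the model completion of $\operatorname{OVDF}$ from Guzy and Point~\cite{GP}; it has NIP, and its models are real closed fields, hence are naturally $\mathcal L'$-structures (the $\Q$-semialgebraic functions being $\emptyset$-definable in a real closed field via their quantifier-free defining formulas). Now $\T\models\operatorname{OVDF}$, since every $H$-field satisfies $0\le x\le y\to x\preceq y$, and by the proof of Proposition~\ref{nqel} there are an elementary extension $K_0\succ\T$ and an element $f\in K_0$ with $\phi'\prec f\prec\phi^\dagger$ for all $\phi\in\T^{\times}$ with $\phi\prec 1$. I would embed $K_0$ into a model $M\models\operatorname{OVDF}^c$; this is an $\mathcal L$-embedding, and automatically an $\mathcal L'$-embedding, since $\T$ and $M$ are real closed and the $\Q$-semialgebraic functions have quantifier-free graphs, so the composite gives an $\mathcal L'$-embedding $\T\hookrightarrow M$. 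Write $a^*\in M$ for the image of $f$; transferring quantifier-free formulas along the embedding, $\phi'\prec a^*\prec\phi^\dagger$ holds in $M$ for all $\phi\in\T^{\times}$ with $\phi\prec 1$.

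Interpret $\I$ in $M$ by $\I^M:=\{y\in M: y\preceq a^*\}$. The claim is that $\I^M\cap\T=\I(\T)$, which I would verify using $\I(\T)=\{y\in\T: y\preceq g'\text{ for some }g\in\smallo_\T\}$ together with the equality $\I(\T)=\{y\in\T: y\prec\phi^\dagger\text{ for all nonzero }\phi\in\smallo_\T\}$, valid because $\T$ is Liouville closed: if $y\in\T$ and $y\preceq a^*$, then $y\prec\phi^\dagger$ for every nonzero $\phi\prec 1$ in $\T$ (as $a^*\prec\phi^\dagger$), so $y\in\I(\T)$; conversely, if $y\in\I(\T)$, pick $g\in\smallo_\T$ with $y\preceq g'$, and then $a^*\succ g'$ forces $y\preceq a^*$. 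Hence $\T\hookrightarrow M$ is an embedding of $\mathcal L'_{\I}$-structures. Since $\I^M$ is $\mathcal L'$-definable in $M$ with the single parameter $a^*$, and the $\mathcal L$-reduct of $M$ has NIP, the $\mathcal L'_{\I}$-structure $M$ — a definitional expansion of its $\mathcal L$-reduct, further named by a parameter-definable predicate — still has NIP.

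To finish: if $R\subseteq\T^m\times\T^n$ is defined in $\T$ by a quantifier-free $\mathcal L'_{\I}$-formula $\varphi(\bar x,\bar y)$ with parameters from $\T$, then $\varphi$ defines a relation $R'$ on $M$, and, $\varphi$ being quantifier-free and $\T\hookrightarrow M$ an $\mathcal L'_{\I}$-embedding, any witnesses in $\T$ to the independence of $R$ are witnesses in $M$ to the independence of $R'$; as $R'$ is definable and $M$ is NIP, $R$ cannot be independent. The step I expect to be the real obstacle is the construction of $a^*$ with the correct position over the value group $\Gamma_\T$ — that is, realizing inside a model of $\operatorname{OVDF}^c$ the cut lying strictly above $\Psi_\T$ and strictly below $(\Gamma_\T^{>})'$ — which is what makes $\I(\T)$ the trace of an $\mathcal L'$-definable subset of $M$; this is precisely where the pc-sequence $(\upl_\rho)$ and Proposition~\ref{nqel} do the essential work, and it is what keeps the argument from being a verbatim repetition of the proof of Proposition~\ref{3}.
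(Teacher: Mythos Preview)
Your proof is correct and follows the same overall strategy as the paper: embed $\T$ into a model $M$ of $\operatorname{OVDF}^c$, arrange that $\I(\T)$ is the trace on $\T$ of a set definable in $M$ with a single parameter, and conclude via NIP for $M$.

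The difference lies only in how the cut point is produced. The paper's proof is two lines: since $\I(\T)$ is \emph{convex} in $\T$, one embeds $\T$ into a sufficiently saturated model $\boldsymbol{M}\models\operatorname{OVDF}^c$ and takes $a>0$ in $\boldsymbol{M}$ with $(-a,a)\cap\T=\I(\T)$; nothing more is needed. You instead route through the proof of Proposition~\ref{nqel}, first passing to an elementary extension $K_0\succ\T$ to pick up a gap-type element $f$ with $\phi'\prec f\prec\phi^\dagger$ for all nonzero $\phi\in\smallo_\T$, then embedding $K_0$ into $M$ and cutting with the valuation via $\{y:y\preceq a^*\}$. This works, and your verification that $\{y\in\T:y\preceq a^*\}=\I(\T)$ is clean, but it calls on machinery (the pc-sequence $(\upl_\rho)$, the construction in Proposition~\ref{nqel}) that the convexity observation renders unnecessary. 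Your remark that finding $a^*$ is ``the real obstacle'' overstates the difficulty: convexity plus saturation handles it immediately, without any appeal to~\ref{nqel}. What your route does buy is an explicit valuation-theoretic description of the cut ($\Psi_\T<v(a^*)<(\Gamma_\T^{>})'$), which is conceptually pleasant but not needed for the NIP conclusion.
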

\begin{proof} The set $\I(\T)$ is convex in $\T$. Embedding the $\mathcal L$-structure 
$\T$ in a sufficiently saturated model $\boldsymbol{M}$ of $\operatorname{OVDF}^c$, we
can take $a>0$ in  $\boldsymbol{M}$ such that $(-a,a)\cap \T=\I(\T)$, where the 
interval $(-a,a)$ is with respect to $\boldsymbol{M}$. Now use that $\boldsymbol{M}$
has NIP.
\end{proof}

Let $K$ be a real closed $H$-field, and $r\in \N$. For $y=(y_1,\dots, y_m)\in K^m$ we set $y':=(y_1',\dots, y_m')\in K^m$, and accordingly we define
$$\big(y, y',\dots, y^{(r)}\big):= \big(y_1,\dots, y_m, y_1',\dots, y_m',\dots, y_1^{(r)},\dots, y_m^{(r)}\big)\in K^{m(1+r)}.$$
A {\bf $\der$-covering} (of order $r$) of a function $g\colon K^m \to K$ consists of a finite covering $\Ca$ of $K^{m(1+r)}$ by $\Q$-semialgebraic sets and for each $S\in \Ca$ a $\Q$-semialgebraic function $g_S\colon K^{m(1+r)} \to K$ such that
$$g(y) = g_S\big(y, y',\dots, y^{(r)}\big)\ \text{  for all $y\in K^m$ with $\big(y, y',\dots, y^{(r)}\big)\in S$.}$$ 
For example, if $P\in \Q\{Y_1,\dots, Y_m\}$ is a differential polynomial of
order $\le r$, then the function $y \mapsto P(y)\colon K^m \to K$ has a $\der$-covering of order $r$ consisting just of a single
set, namely $K^{m(1+r)}$. It is easy to see that if $f\colon K^n \to K$ is 
$\Q$-semialgebraic and $g_1,\dots, g_n\colon K^m \to K$ have $\der$-coverings (of various orders), then
$f(g_1,\dots, g_n)\colon K^m \to K$ has a $\der$-covering. In particular, the sum $g_1+g_2$ of functions $g_1, g_2\colon K^m \to K$ with
$\der$-coverings has a $\der$-covering, and so does their product $g_1g_2$. 
Less obviously:

\begin{lemma} If $g\colon K^m \to K$ has a $\der$-covering, then so does the
function $$y \mapsto g(y)'\colon  K^m \to K.$$
\end{lemma}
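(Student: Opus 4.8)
The plan is to deduce the lemma from a \emph{chain rule for $\Q$-semialgebraic functions}, namely: if $K$ is a real closed $H$-field and $h\colon K^n\to K$ is $\Q$-semialgebraic, then there are a finite partition $K^n=P_1\sqcup\dots\sqcup P_N$ into $\Q$-semialgebraic sets and $\Q$-semialgebraic functions $h_{j,i}\colon K^n\to K$ ($1\le j\le N$, $1\le i\le n$) such that $\der\big(h(a)\big)=\sum_{i=1}^{n}h_{j,i}(a)\,a_i'$ for all $a=(a_1,\dots,a_n)\in P_j$.

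Granting this, the lemma follows quickly. Let $\Ca$ be a $\der$-covering of $g$ of order $r$, with functions $g_S\colon K^{m(1+r)}\to K$ for $S\in\Ca$. Apply the chain rule to each $g_S$, getting a $\Q$-semialgebraic partition $\{P_{S,j}\}_j$ of $K^{m(1+r)}$ and $\Q$-semialgebraic functions $(g_S)_{j,\mu}\colon K^{m(1+r)}\to K$, one for each coordinate $\mu$ of $K^{m(1+r)}$. The coordinates of $K^{m(1+r)}$ are the $Y_i^{(k)}$ with $k\le r$, and each $Y_i^{(k)}$ has a successor $Y_i^{(k+1)}$ among the coordinates of $K^{m(2+r)}$. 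I would then let the new covering, of order $r+1$, consist of the cylinders over the $\Q$-semialgebraic sets $S\cap P_{S,j}$ (these cover $K^{m(2+r)}$ because $\Ca$ covers $K^{m(1+r)}$ and each $\{P_{S,j}\}_j$ partitions it), with associated function on the cylinder over $S\cap P_{S,j}$ equal to $\sum_\mu (g_S)_{j,\mu}(w,\dots,w^{(r)})\cdot(\text{successor of the $\mu$-th coordinate})$, which is $\Q$-semialgebraic in $(w,\dots,w^{(r+1)})$. For $y\in K^m$ with $(y,\dots,y^{(r)})\in S\cap P_{S,j}$ the covering property gives $g(y)=g_S(y,\dots,y^{(r)})$, whence, by the chain rule, $g(y)'=\sum_\mu (g_S)_{j,\mu}(y,\dots,y^{(r)})\cdot y_i^{(k+1)}$, where $y_i^{(k)}$ denotes the $\mu$-th coordinate of $(y,\dots,y^{(r)})$; and this is exactly the value of the new function. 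So $y\mapsto g(y)'$ has a $\der$-covering of order $r+1$.

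To prove the chain rule I would induct on $n$; the point is that $h$ is \emph{generically an algebraic branch}, on which differentiation is purely formal. Since $K\models\mathrm{RCF}$ and $h$ is $\Q$-semialgebraic, there is a dense open $\Q$-semialgebraic $U\subseteq K^n$ and a polynomial $P\in\Q[X_1,\dots,X_n,Y]$ with $P(a,h(a))=0$ and $(\partial P/\partial Y)(a,h(a))\ne 0$ for all $a\in U$ (take $P$ square-free and discard its factor not involving $Y$; this is standard semialgebraic geometry). Applying $\der$ to the identity $P(a_1,\dots,a_n,h(a))=0$ in $K$ — legitimate because $P$ has $\Q$-coefficients and $\der$ is a derivation — gives $\sum_i(\partial P/\partial X_i)(a,h(a))\,a_i'+(\partial P/\partial Y)(a,h(a))\,\der(h(a))=0$, hence on $U$ we have $\der(h(a))=\sum_i h_i(a)\,a_i'$ with $h_i(a):=-(\partial P/\partial X_i)(a,h(a))/(\partial P/\partial Y)(a,h(a))$, a $\Q$-semialgebraic function of $a$. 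On $Z:=K^n\setminus U$, which has dimension $<n$, I would apply $\Q$-semialgebraic cell decomposition: $Z$ is a finite union of cells $D$, each of which, after a choice of $d<n$ of the coordinates, has the form $D=\{a:(a_{i_1},\dots,a_{i_d})\in V,\ a_j=f_j(a_{i_1},\dots,a_{i_d})\text{ for the remaining }j\}$ with $V\subseteq K^d$ open and the functions $f_j$ $\Q$-semialgebraic. Writing $\phi(s)$ for the point with $s$ in the slots $i_1,\dots,i_d$ and $f_j(s)$ elsewhere, we have for $a\in D$ and $s:=(a_{i_1},\dots,a_{i_d})$ that $h(a)=(h\circ\phi)(s)$, while $s_k'=a_{i_k}'$ literally, as the same element of $K$; so applying the inductive hypothesis to $h\circ\phi\colon K^d\to K$ (extended $\Q$-semialgebraically to $K^d$) and pulling its partition back along $a\mapsto(a_{i_1},\dots,a_{i_d})$, we obtain $\der(h(a))=\sum_{k=1}^{d}(h\circ\phi)_k(a_{i_1},\dots,a_{i_d})\,a_{i_k}'$ on finitely many $\Q$-semialgebraic pieces covering $D$. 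The common refinement of $U$ with these pieces over the cells of $Z$ is the required partition of $K^n$. The base case $n=0$ is trivial: a $\Q$-semialgebraic element of $K$ lies in $\operatorname{dcl}(\emptyset)$, hence in the constant field $C$, so its $\der$-image is $0$.

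The one genuine difficulty is the lower-dimensional locus $Z$ on which $h$ is not an algebraic branch: this is what forces the induction on the number of variables together with the cell-decomposition bookkeeping — writing each cell as a graph over a coordinate subspace, composing with that parametrization, and refining partitions. Everything else is formal; in particular the sole interaction with the derivation is the identity that $\der$ of a polynomial expression with $\Q$-coefficients is computed by the ordinary chain rule, so in fact no hypothesis on $K$ beyond being a real closed field of characteristic zero is used.
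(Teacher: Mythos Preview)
Your proof is correct and takes essentially the same approach as the paper: both reduce to a chain rule for $\Q$-semialgebraic functions, handling the generic open locus by implicit differentiation of an algebraic relation and the lower-dimensional remainder by parametrizing cells over a coordinate subspace of smaller dimension. The only difference is organizational---the paper cites the $\Q$-semialgebraic version of \cite[Lemma~4.4]{AvdD2} for the chain rule and performs a $C^1$-cell refinement up front, whereas you prove the chain rule from scratch by induction on the number of variables, making your argument more self-contained.
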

\begin{proof} Let $\Ca$ be a $\der$-covering of $g$ of order $r$ with, for each
set $S\in \Ca$, the witnessing function $g_S\colon K^{m(1+r)} \to K$. By further partitioning we can arrange that each set $S\in \Ca$ is a $\Q$-semialgebraic cell
which is of class $C^1$ in the sense that the standard projection map $p_S\colon S \to  p(S)$
onto an open cell $p(S)\subseteq K^d$, with $d=\dim S$, is not just a homeomorphism, but even a
diffeomorphism of class $C^1$ (in the sense of the real closed field $K$).
In addition we can arrange that for each witnessing map $g_S\colon K^{m(1+r)} \to K$
the restriction of $g_S$ to $S$ is of class $C^1$. Let us now focus on one particular $S\in \Ca$, and first consider the case that $S$ is open in $K^{m(1+r)}$. 
Then by the $\Q$-semialgebraic version of Lemma~4.4 in \cite{AvdD2}, and Remark (2) following its proof, we have $\Q$-semialgebraic functions
$h, h_1,\dots, h_{mr}\colon K^{m(1+r)} \to K$, such that for all
$\vec y = (y_{10},\dots, y_{m0},\dots, y_{1r},\dots,y_{mr})\in S$, 
$$ g_S(\vec y)'\  =\ h(\vec y) + \sum_{i=1}^m\sum_{j=0}^r h_{ij}(\vec y)y_{ij}'.$$
If $S$ is not open, of dimension $d$, this statement remains true,
as one can see by reducing to the case of the open cell $p(S)\subseteq K^d$
via the $C^1$-diffeomorphism $p_S\colon S \to p(S)$.
It follows easily that the function $y\mapsto g(y)'$ has a $\der$-covering of
order $r+1$, whose sets are the products $S\times K^m$ with $S\in \Ca$. 
\end{proof}

It follows that if $t(y_1,\dots, y_n)$ is an $\mathcal L'_{\I}$-term, then the function $$b\mapsto t(b)\colon K^n \to K$$ 
has a $\der$-covering. This is now used to prove:

\begin{prop}\label{indconst} If $X\subseteq \T^n$ is quantifier-free definable in $\T$ as an
$\mathcal L'_{\I}$-structure, then $X\cap \R^n$ is semialgebraic.
\end{prop}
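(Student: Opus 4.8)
The plan is to reduce Proposition~\ref{indconst} to Proposition~\ref{1}. The key observation is that the only genuinely new feature of $\mathcal L'_{\I}$ compared to $\mathcal L'$ (and hence, by Proposition~\ref{qfqf}, compared to $\mathcal L$) is the predicate $\I$, and on the constant field $\R$ this predicate is harmless: since $\I(\T)=\der\smallo$ consists of infinitesimal elements, we have $\I(\T)\cap\R=\{0\}$. So membership in $\I$ of a tuple of \emph{constants} can be tested semialgebraically. The work is to push this through the term structure, and for that we use the $\der$-covering machinery established in the lemma preceding the proposition.

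First I would spell out what a quantifier-free $\mathcal L'_{\I}$-formula $\varphi(y_1,\dots,y_n)$ looks like: a Boolean combination of atomic formulas, each of which is either of the form $t(y)=0$, $t(y)\leq 0$, $t(y)\preceq s(y)$, or $\I(t(y))$, where $t,s$ are $\mathcal L'_{\I}$-terms. By the lemma just proved, each such term $t$ induces a function $\T^n\to\T$ having a $\der$-covering of some order $r$; enlarging $r$ we may assume a single $r$ works for all terms occurring in $\varphi$, with covering $\Ca$ (a common refinement) of $\T^{n(1+r)}$ by $\Q$-semialgebraic sets and, for each $S\in\Ca$ and each term $t$, a $\Q$-semialgebraic witness $t_S\colon\T^{n(1+r)}\to\T$. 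Now restrict attention to $y\in\R^n$. Then $y'=\dots=y^{(r)}=0$, so $(y,y',\dots,y^{(r)})=(y,0,\dots,0)$, and this lies in exactly one block; more precisely, $\R^n$ is partitioned into finitely many $\Q$-semialgebraic pieces $\R^n\cap\pi^{-1}(S)$ where $\pi\colon\T^n\to\T^{n(1+r)}$, $\pi(y)=(y,0,\dots,0)$, and on each such piece $t(y)=t_S(y,0,\dots,0)$, a $\Q$-semialgebraic function of $y$ landing in $\R$ (since $\R$ is a subfield and $\Q$-semialgebraic functions restrict to maps $\R^k\to\R$). Thus on each piece, the atomic formulas $t(y)=0$, $t(y)\leq 0$, $t(y)\preceq s(y)$ become, for $y\in\R^n$: $t_S(y)=0$, $t_S(y)\leq 0$, and $|t_S(y)|\leq c|s_S(y)|$ for some $c\in\R^{>0}$ — the last being equivalent, for elements of $\R$, to $s_S(y)=0\Rightarrow t_S(y)=0$, hence semialgebraic. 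For the new kind of atom $\I(t(y))$ with $y\in\R^n$: $t(y)\in\R$, and $\I(\T)\cap\R=\{0\}$, so $\I(t(y))$ holds iff $t_S(y)=0$ — again semialgebraic.

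Assembling these, on each of the finitely many $\Q$-semialgebraic pieces of $\R^n$ the formula $\varphi(y)$ is equivalent to a semialgebraic condition on $y$, so $X\cap\R^n=\{y\in\R^n:\T\models\varphi(y)\}$ is a finite union of semialgebraic sets, hence semialgebraic. The one point requiring a little care — and the main obstacle such as it is — is the treatment of the valuation-divisibility atoms $t\preceq s$: one must check that on $\R$ the relation $f\preceq g$ really does collapse to the semialgebraic relation ``$g=0\Rightarrow f=0$'', which follows from $f\preceq g\Leftrightarrow(|f|\le c|g|\text{ for some }c\in C^{>0})$ together with the fact that $\R$ is the constant field and is archimedean over nothing else is going on here since two nonzero reals are always $\asymp$. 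A secondary subtlety is making sure a \emph{single} order $r$ and a \emph{single} covering $\Ca$ suffice simultaneously for all terms in $\varphi$; this is immediate by taking common refinements, since $\der$-coverings are closed under the operations (sum, product, composition with $\Q$-semialgebraic functions, and $y\mapsto g(y)'$) used to build terms, as noted in the discussion preceding the cited lemma. No deep input beyond Proposition~\ref{1}'s underlying idea (that $\I$ contributes nothing over $\R$) and the $\der$-covering lemma is needed.
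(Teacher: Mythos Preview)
Your argument has a genuine gap: you are tacitly assuming that the quantifier-free formula has no parameters from $\T$, but the paper's standing convention (stated just before Proposition~\ref{1}) is that ``quantifier-free definable'' allows names for arbitrary elements of $\T$. With parameters $a\in\T^m$ present, a term $t(a,y)$ evaluated at $y\in\R^n$ need not land in $\R$: the $\der$-covering gives $t(a,y)=t_S\big(a,a',\dots,a^{(r)},y,0,\dots,0\big)$, and the first $m(1+r)$ coordinates are arbitrary elements of $\T$. So your claim that ``$t_S(y,0,\dots,0)$ lands in $\R$'' is false in general, and hence your reductions of the atoms $\I(t(y))$ and $t(y)\preceq s(y)$ to semialgebraic conditions on $\R$ collapse. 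For a concrete witness, take the formula $\I(y_1 - \tfrac{1}{x})$: for $y_1\in\R$ the term $y_1-\tfrac{1}{x}$ is never real, so the equivalence ``$\I(t(y))\Leftrightarrow t(y)=0$'' is not available.

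The paper's proof meets this difficulty head-on: after using the $\der$-covering to strip out $\der$, it keeps the parameters $a\in\T^m$ explicit and observes that $b\mapsto t(a,b)$ is semialgebraic \emph{in the sense of $\T$} (not of $\R$). For the atoms $=0$, $>0$, $\preceq$ it then invokes the model theory of the tame pair $(\T,\R)$ as in the proof of Proposition~\ref{1}. For the atom $\I(t(a,b))$ it uses a separate device: pass to a real closed extension $K\supseteq\T$ containing a positive $c$ with $\I(\T)=\T\cap(-c,c)$, so that $\{b\in\R^n:\ t(a,b)\in\I(\T)\}$ becomes the trace on $\R^n$ of a $K$-semialgebraic set, which is known to be $\R$-semialgebraic. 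Your argument is salvageable for parameter-free formulas, but to cover the proposition as stated you need one of these two ingredients (tame pairs, or the convexity trick for $\I$) to handle $\T$-valued terms.
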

\begin{proof} By the remark preceding the proposition, and the arguments
in the proof of Proposition~\ref{1}, it suffices to show the following. Let
$s(x,y)$ and~$t(x,y)$ with $x=(x_1,\dots, x_m)$ and $y=(y_1,\dots,y_n)$ be $\mathcal L'_{\I}$-terms in which
the function symbol $\der$ does not occur, and let $a\in \T^m$. Then the sets 
\begin{align*}
&\big\{b\in \R^n:\ t(a,b)=0\big\}, &  &\big\{b\in \R^n:\ t(a,b)>0\big\}, \\ 
&\big\{b\in \R^n:\ s(a,b) \preceq t(a,b)\big\},&  &\big\{b\in \R^n:\ t(a,b)\in \I(\T)\big\}
\end{align*}
are semialgebraic subsets of $\R^n$.  Since 
the function $b\mapsto t(a,b)\colon \T^n \to \T$ is
semialgebraic in the sense of $\T$, this holds for the first three sets
by the argument 
at the end of the proof of Proposition~\ref{1}. For the last set, take 
some real closed 
field extension $K$ of $\T$ with a positive element $c$ such that
$\I(\T)=\T\cap (-c,c)$, where the interval $(-c,c)$ is in the sense of $K$.
Then 
$$\big\{b\in \R^n: t(a,b)\in \I(\T)\big\}=\big\{b\in \R^n: \abs{t(a,b)}< c\big\},$$ 
which is the trace in $\R$ of a semialgebraic subset of $K^n$. Such traces are known to be semialgebraic in the sense of $\R$.
\end{proof}

In proving next that $\T$ qua
$\mathcal L'_{\I}$-structure
is quantifier-free asymptotically o-minimal, we shall use the easily 
verified fact that if $K$ is an $H$-field that admits asymptotic 
integration, and $L$ is an $H$-field extension
of $K$, then $\I(L)\cap K=\I(K)$.

\begin{prop}\label{prop:2 analogue}
If $X\subseteq\T$ is quantifier-free definable in $\T$ as $\mathcal L'_{\I}$-structure, then for some $f\in\T$, either $(f,+\infty)\subseteq X$ or $(f,+\infty)\subseteq\T\setminus X$.
\end{prop}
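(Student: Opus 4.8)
The plan is to reduce to atomic formulas and then to eliminate the new predicate $\I$ by running the logarithmic decomposition through the $\der$-covering machinery. A quantifier-free $\mathcal L'_{\I}$-formula defining $X$ is a boolean combination of atomic formulas $\varphi_1(\bar c,y),\dots,\varphi_N(\bar c,y)$ with $\bar c$ the parameters from $\T$; since a boolean combination of eventually constant subsets of $\T$ is again eventually constant (take the largest of finitely many thresholds), it is enough to treat each $\varphi_j$. If $\varphi_j$ does not involve $\I$ it defines a quantifier-free $\mathcal L'$-definable set, hence by Proposition~\ref{qfqf} a quantifier-free $\mathcal L$-definable set, hence by Proposition~\ref{2} an eventually constant set. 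So assume $\varphi_j$ is $\I(t(\bar c,y))$ for an $\mathcal L'_{\I}$-term $t(\bar x,y)$, and put $Z:=\{y\in\T:t(\bar c,y)\in\I(\T)\}$. The term $t$ has a $\der$-covering of some order $r$, and each of its finitely many pieces pulls back under $y\mapsto(\bar c,y,\bar c',y',\dots,\bar c^{(r)},y^{(r)})$ to a quantifier-free $\mathcal L$-definable, hence eventually constant, subset of $\T$; so Proposition~\ref{2} lets us fix a piece and pass to a final segment $(c_1,+\infty)$ on which $t(\bar c,y)=G(y,y',\dots,y^{(r)})$ for a single semialgebraic $G\colon\T^{1+r}\to\T$ with parameters from $\T$.

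The heart of the argument is the claim $(\star)$: there are $b\ge c_1$ in $\T$, $\alpha\in\Gamma$ and $q_0,\dots,q_r\in\Q$ such that for all $y>b$ in $\T$ one has $t(\bar c,y)=0$ or $v\big(t(\bar c,y)\big)=\alpha+\sum_{i=0}^{r}q_i\,v\big(y^{\langle i\rangle}\big)$. To prove $(\star)$ I would use that a semialgebraic function, on each cell of a suitable $\Q$-semialgebraic cell decomposition of $\T^{1+r}$, is either identically zero or a simple branch of an algebraic function; passing (again by Proposition~\ref{2}) to the cell in which the jet $(y,y',\dots,y^{(r)})$ eventually lies, we either get $t(\bar c,y)=0$ for $y>c_1$, or a relation $\sum_j\tilde Q_j(y)\,t(\bar c,y)^j=0$ with each $\tilde Q_j\in\T\{Y\}$. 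By the logarithmic decomposition, exactly as in the proof of Proposition~\ref{2}, each $v(\tilde Q_j(y))$ is, past some $b_0\in\T$, either absent ($\tilde Q_j\equiv 0$) or a fixed affine $\Z$-linear function of $v(y^{\langle 0\rangle}),\dots,v(y^{\langle r\rangle})$; the Newton polygon of the one-variable polynomial $\sum_j\tilde Q_j(y)Z^j$ over the valued field $\T$ then confines $v(t(\bar c,y))$, for $y>b_0$, to one of finitely many expressions $v(d_\ell)+\sum_i q_i^{(\ell)}v(y^{\langle i\rangle})$ with $d_\ell\in\T^\times$ and $q_i^{(\ell)}\in\Q$. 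Each of the sets $\big\{y>b_0:t(\bar c,y)\asymp d_\ell\prod_i(y^{\langle i\rangle})^{q_i^{(\ell)}}\big\}$ is quantifier-free $\mathcal L'$-definable — the $y^{\langle i\rangle}$ and rational powers of positive elements are built from $\der$, division and $\Q$-semialgebraic functions — hence eventually constant; since finitely many of them cover $(b_0,+\infty)$, one contains a final segment, and that is $(\star)$.

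Granting $(\star)$, the proof ends with a valuation count against the shape of $\I(\T)$. As $\T$ is Liouville closed, $\I(\T)=\der\smallo=\{z\in\T:vz>\Psi\}$; moreover $1/x^2=(-1/x)'\in\der\smallo$, $v(1/x)=v(x^\dagger)\in\Psi$, and the triangle inequality $\psi(\alpha)<\beta+\psi(\beta)$ for asymptotic couples, applied with $\beta=v(1/x)$ and using $(1/x)^\dagger=-1/x$, gives $\Psi<v(1/x^2)$; so, writing $V:=v(\I(\T)\setminus\{0\})$, the set $V$ is upward closed, $\Psi$ is downward closed, $v(1/x^2)\in V$ and $v(1/x)\notin V$. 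If $t(\bar c,y)=0$ along a final segment, then $Z$ contains it (as $0\in\I(\T)$). Otherwise $v(t(\bar c,y))=\alpha+\sum_i q_i v(y^{\langle i\rangle})$ for $y>b$: when all $q_i=0$ this equals the constant $\alpha$, so $Z$ contains or misses a final segment according as $\alpha\in V$; when some $q_i\ne0$, let $i_0$ be least such, and note that for large $y$ one has $y^{\langle i_0\rangle}\succ(y^{\langle i\rangle})^m$ for all $i>i_0$ and all $m$, while $v(y^{\langle i_0\rangle})\to-\infty$ as $y\to+\infty$, so that $q_{i_0}v(y^{\langle i_0\rangle})$ dominates and $v(t(\bar c,y))$ eventually falls below $v(1/x)$, hence into $\Psi$, whence $t(\bar c,y)\notin\I(\T)$, if $q_{i_0}>0$, and eventually rises above $v(1/x^2)$, hence into $V$, whence $t(\bar c,y)\in\I(\T)$, if $q_{i_0}<0$. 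In every case $Z$ is eventually constant, and we are done.

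The step I expect to be the real obstacle is $(\star)$. It is the differential counterpart of the elementary fact that a semialgebraic function of large positive real variables has Puiseux-monomial asymptotics, but here the derivation is present and the jet $(y,y',\dots,y^{(r)})$ runs off to infinity along a tightly coupled curve, so the delicate point is obtaining the asserted behavior \emph{uniformly} for all large $y\in\T$: one must control cancellations and, along the curve $\{(y,y',\dots,y^{(r)})\}$, select the correct algebraic branch and Newton slope simultaneously for cofinally many $y$. The $\der$-covering reduces matters to a semialgebraic $G$, the logarithmic decomposition handles the ensuing differential polynomials, and Propositions~\ref{qfqf} and~\ref{2} are exactly what is used to make the branch-and-slope selection uniform in $y$.
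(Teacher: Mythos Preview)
Your proof is correct, but it takes a quite different route from the paper's.

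The paper argues model-theoretically: it passes to an $\aleph_1$-saturated elementary extension $K$ of $\T$, picks any $a,b\in K$ with $a,b>\T$, and shows that the obvious $\mathcal L$-isomorphism $\T\langle a\rangle^{\operatorname{rc}}\to\T\langle b\rangle^{\operatorname{rc}}$ (coming from Proposition~\ref{2}) also respects $\I$. The key observation is that $\T\langle a\rangle^{\operatorname{rc}}$ is an $H$-field admitting asymptotic integration (its value group is $v(\T^\times)\oplus\bigoplus_n\Q\, v(a^{\langle n\rangle})$ with $v(\T^\times)$ convex), so the general fact $\I(K)\cap E=\I(E)$ for such $E$ applies. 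This disposes of $\I$ in one stroke, without ever writing down a term.

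Your approach is syntactic and constructive: you unwind the $\der$-covering to reduce $\I(t(\bar c,y))$ to an algebraic relation in $t$ over $\T\{Y\}$, extract a Newton-polygon description of $v(t(\bar c,y))$ as an affine $\Q$-combination of the $v(y^{\langle i\rangle})$, and finish by a direct valuation argument using $\I(\T)=\{z:vz>\Psi\}$ and the fact that $v(y^{\langle i\rangle})\to -\infty$. The step~$(\star)$ that you flag as delicate does go through: the finitely many candidate slopes are quantifier-free $\mathcal L'$-definable conditions on~$y$, and Proposition~\ref{2} lets you select one uniformly, exactly as you say.

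What each buys: the paper's argument is short, avoids any term manipulation, and extends with little change when further convex predicates such as $\Upl$ and $\Upo$ are added (one only needs the analogue of ``$\I(K)\cap E=\I(E)$''). Your argument is more explicit---it actually exhibits the eventual valuation of $t(\bar c,y)$---but each new predicate would require redoing the endgame, and the Newton-polygon bookkeeping would grow. Both are valid; the paper's method is the more portable one.
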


\begin{proof}
Let $K$ be an elementary extension of $\T$ as $\mathcal L'_{\I}$-structure and $a,b\in K$, $a> \T$, $b>\T$. By familiar model-theoretic arguments it suffices to show that 
then there is an isomorphism of $\mathcal L$-structures
$\T\<a\>^{\operatorname{rc}}\to \T\<b\>^{\operatorname{rc}}$ over $\T$ that
sends $a$ to $b$, and maps $\I(K)\cap\T\<a\>^{\operatorname{rc}}$ onto $\I(K)\cap\T\<b\>^{\operatorname{rc}}$.
(Here $\operatorname{rc}$ refers to the real closure in $K$.) 
Proposition~\ref{2} gives an isomorphism of $\mathcal L$-structures $\T\<a\>\to \T\<b\>$ over $\T$ sending $a$ to $b$, and this isomorphism extends uniquely to an $\mathcal L$-isomorphism 
$\T\<a\>^{\operatorname{rc}}\to \T\<b\>^{\operatorname{rc}}$. 
The arguments preceding Proposition~\ref{2} show that for all $m,n\ge 1$,
$$v(a^{\<n-1\>})\ <\ mv(a^{\<n\>})\ <\ v(\T^\times),$$
so $a$ is differentially transcendental over $\T$, and the value group of $\T\<a\>^{\operatorname{rc}}$ is 
$$v(\T^\times)\oplus\bigoplus_n \Q v(a^{\<n\>})\qquad\text{(internal direct sum of $\Q$-subspaces)},$$
which contains $v(\T^\times)$  
as a convex subgroup. It also follows that $\T\<a\>^{\operatorname{rc}}$  
is an $H$-field, with the same constant field $\R$ as $\T$.
Therefore, $\T\<a\>^{\operatorname{rc}}$ admits asymptotic integration, so
$\I(K)\cap\T\<a\>^{\operatorname{rc}} = \I(\T\<a\>^{\operatorname{rc}})$. 
Likewise, $\I(K)\cap\T\<b\>^{\operatorname{rc}} = \I(\T\<b\>^{\operatorname{rc}})$, 
hence our isomorphism 
$\T\<a\>^{\operatorname{rc}}\to \T\<b\>^{\operatorname{rc}}$  maps $\I(K)\cap\T\<a\>^{\operatorname{rc}}$ onto $\I(K)\cap\T\<b\>^{\operatorname{rc}}$ as required.
\end{proof}

This result tells us how a quantifier-free definable $X\subseteq
\T$ behaves near~$+\infty$. Using
fractional linear transformations we get analogous 
behavior to the left as well as to the right of any point in $\T$.
In other words, the $\mathcal L'_{\I}$-structure~$\T$ is 
quantifier-free locally o-minimal. (In this connection we note that 
local o-minimality by itself does not imply NIP~\cite[Example~6.19]{Fornasiero}.)

\subsection{Expanding by small integration.} Next we show that  ``small
integration'' can be eliminated from quantifier-free formulas. This is
a further partial quantifier elimination in the style of 
Proposition~\ref{qfqf}.
  
Let $K$ be an $H$-field.  
We have $\der\smallo\subseteq \I(K)$, and we say that $K$ 
{\bf admits small integration} if $\der\smallo = \I(K)$. Liouville
closed $H$-fields admit small integration.
It follows from Section~3 and Proposition~4.3 in \cite{AvdD1} 
that $K$ has an immediate $H$-field extension $\csi(K)$ that is
henselian as a valued field and admits small integration, with the 
following universal property:  for any $H$-field extension~$L$ of $K$ 
that is henselian as a valued field and admits small integration 
there is a unique $K$-embedding of $\csi(K)$ into $L$. We call
$\csi(K)$ the {\bf closure of $K$ under small integration}.

\medskip\noindent
Let $K$ be an $H$-field admitting small integration. 
The derivation $\der$ is injective on $\smallo$, so we can define 
$\int\colon K \to K$ by 
$$\int a' = a\ \text{ for $a\in \smallo$,} \qquad \int b = 0\ \text{ for $b\notin \der\smallo$.}$$
Note that the standard part map
$\st\colon K \to K$ defined by 
$$\st(c+\epsilon)=c \text{ for $c\in C$, $\epsilon \prec 1$,}\qquad \st(a)=a \text{ for $a\succ 1$,}$$ 
can be expressed in terms of $\int$ by $ \st(a) = a-\int a'$. The reason for mentioning this fact is that such a standard part map is used to eliminate quantifiers in certain expansions of o-minimal fields; see \cite[(5.9)]{vdDL}.

\medskip\noindent
Real closed $H$-fields admitting small
integration are construed below as $\mathcal L^*$-structures 
where $\mathcal L^*$ is a language extending $\mathcal L'_{\I}$ 
by a new unary function symbol $\int$, to be 
interpreted as indicated above.

\medskip\noindent
Let $T^*$ be the $\mathcal L^*$-theory of real closed $H$-fields admitting small integration. Then we have the following elimination result:

\begin{prop}\label{qint}
$T^*$ has closures of $\mathcal L'_{\I}$-substructures. Thus
every quanti\-fier-free $\mathcal L^*$-formula
is $T^*$-equivalent to a quantifier-free 
$\mathcal L'_{\I}$-formula.
\end{prop}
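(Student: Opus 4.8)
The second assertion follows from the first by Corollary~\ref{closureprop} (applied with $L=\mathcal L'_{\I}$, $L^*=\mathcal L^*$ and the displayed theory $T^*$), so the plan is to prove that $T^*$ has closures of $\mathcal L'_{\I}$-substructures. Fix $E,F\models T^*$ — real closed $H$-fields admitting small integration — together with a common $\mathcal L'_{\I}$-substructure $D$. Being closed under all (total) $\Q$-semialgebraic functions, $D$ is in particular closed under the field operations, inversion, and extraction of real roots of polynomials over $\Q$, and it is closed under $\der$; hence $D$ is the underlying set of a real closed ordered differential subfield of $E$ (and of $F$), which we equip with the valuation divisibility inherited from $E$ (the same as from $F$) and with the distinguished subset $\I^D:=\I(E)\cap D=\I(F)\cap D$. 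Thus $D$ is a real closed pre-$H$-field, and the task is to construct an isomorphism over $D$ between the $\mathcal L^*$-substructure $\langle D\rangle_E$ of $E$ generated by $D$ and the $\mathcal L^*$-substructure $\langle D\rangle_F$ of $F$ generated by $D$.

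Since $\mathcal L^*$ adds to $\mathcal L'_{\I}$ only the unary symbol $\int$, and $\int z$ equals the unique $a\in\smallo$ with $a'=z$ when $z\in\der\smallo$ (and $\int z=0$ otherwise), while $\der\smallo_E=\I(E)$ because $E$ admits small integration, one sees that $\langle D\rangle_E$ is the union of the tower $D=D_0\subseteq D_1\subseteq\cdots$ in which $D_{n+1}$ is the real closure in $E$ of the differential subfield of $E$ generated by $D_n$ and $\{\int_E z:z\in D_n\}$ (each $\int_E z$ being $0$ or the unique element of $\smallo_E$ with derivative $z$). Here every real closed valued differential subfield of $E$ is automatically an $\mathcal L'$-substructure, since real closed fields are elementarily embedded in their real closed extensions; so $\langle D\rangle_E$ — being real closed, closed under $\der$ and closed under $\int_E$ — really is the $\mathcal L^*$-substructure generated by $D$, and in particular it is a real closed valued differential field admitting small integration. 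The same applies to $\langle D\rangle_F$.

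I would now build the isomorphism by a back-and-forth over $D$: an increasing chain of isomorphisms $\sigma_\lambda\colon A_\lambda\to B_\lambda$ over $D$, with $A_\lambda\subseteq\langle D\rangle_E$ and $B_\lambda\subseteq\langle D\rangle_F$ real closed valued differential subfields and $\sigma_\lambda$ an isomorphism of ordered valued differential fields carrying $\I(E)\cap A_\lambda$ onto $\I(F)\cap B_\lambda$, starting from $\sigma_0=\mathrm{id}_D$ and taking unions at limit stages. At a successor stage, driven alternately from the $E$-side and the $F$-side: if $A_\lambda\ne\langle D\rangle_E$ then $A_\lambda$ is not closed under $\int_E$, so there is $y\in A_\lambda$ with $y\ne0$, $y\in\I(E)$ and $a:=\int_E y\notin A_\lambda$; then $\sigma_\lambda(y)\in\I(F)\cap B_\lambda$, so $b:=\int_F\sigma_\lambda(y)\in\smallo_F$ with $b'=\sigma_\lambda(y)$, and $b\notin B_\lambda$. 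The element $a$ is transcendental over $A_\lambda$ (a ``real'' element over the real closed field $A_\lambda$) and its derivatives $y,y',\dots$ all lie in $A_\lambda$; the key point is that the extension $A_\lambda(a)\supseteq A_\lambda$, as an ordered valued differential field together with the trace $\I(E)\cap A_\lambda(a)$, is determined up to isomorphism over $A_\lambda$ by $A_\lambda$ and $\I(E)\cap A_\lambda$ alone: the valuation of $a$ is forced (it is the unique $\gamma>0$ with $\gamma'=v(y)$ in the ambient asymptotic couple, its position in $v(A_\lambda^\times)$ being the cut $\{v(d):d\in A_\lambda,\ v(d)>0,\ v(d')<v(y)\}$), the sign of $a$ as the small antiderivative of $y$ is forced, and the trace of $\I(E)$ then propagates. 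The parallel statements hold for $b$ over $B_\lambda$ in $F$, so $\sigma_\lambda$ extends uniquely to an isomorphism $A_\lambda(a)\to B_\lambda(b)$ sending $a$ to $b$, and this extends uniquely to an isomorphism between the real closures of $A_\lambda(a)$ in $E$ and of $B_\lambda(b)$ in $F$ (real closure of an ordered valued differential field being unique, with valuation and derivation extending uniquely, exactly as used in the proof of Proposition~\ref{qfqf}); let this be $\sigma_{\lambda+1}$. With standard bookkeeping the two unions exhaust $\langle D\rangle_E$ and $\langle D\rangle_F$, so $\bigcup_\lambda\sigma_\lambda$ is the desired isomorphism.

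The hard part is the rigidity statement invoked in the successor step — that adjoining the forced small antiderivative $a$ of $y$ produces an extension of ordered valued differential fields (with the $\I$-predicate) that is unique over $A_\lambda$. This is precisely the content that underpins the construction of the small-integration closure $\csi$ in \cite[Section~3 and Proposition~4.3]{AvdD1}; the work is to check that those arguments, stated there for $H$-fields, go through for the real closed pre-$H$-field $D$ and the fields $A_\lambda$ arising above. The difference that must be accounted for is that $D$ need not be an $H$-field — its valuation need not be the convex hull of its constant field, and $\I^D$ may properly contain the ``internal'' ideal $\{y\in D: y\preceq f'\text{ for some }f\in\smallo_D\}$ — so $\langle D\rangle_E$ need not be an immediate extension of $D$; the value group genuinely grows whenever some $y\in\I^D$ has no asymptotic antiderivative in $D$. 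One therefore has to re-run the relevant lemmas of \cite{AvdD1} while tracking the new (forced) valuations and signs and the way the $\I$-predicate is inherited by these extensions. (One could instead first pass to the $H$-field closure $H(D)$ computed inside $E$ — note that in fact $H(D)\subseteq\langle D\rangle_E$, since the requisite new constants appear as $a-\int_E a'$ — but this does not seem to lighten the bookkeeping.) Once this is done, the back-and-forth above goes through.
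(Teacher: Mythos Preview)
Your outline is a plausible strategy, but it is not a proof: the crucial step---that the trace of $\I$ ``propagates'' when you adjoin a small antiderivative and pass to the real closure---is asserted, not established. You write that the rigidity claim is ``precisely the content that underpins the construction of $\csi$'' in \cite{AvdD1}, but the universal property of $\csi$ there concerns the ordered valued differential field structure, not the externally imposed predicate $\I$. Concretely: when you adjoin $a=\int_E y$ with $v(y)\notin(\Gamma_{A_\lambda}^{>})'$, the value group of $A_\lambda\langle a\rangle^{\operatorname{rc}}$ strictly contains $\Gamma_{A_\lambda}$, and you must show that for the \emph{new} values $\delta$ the condition ``$\delta\in(\Gamma_E^{>})'$'' is already determined by the $\mathcal L'_{\I}$-structure on $A_\lambda$. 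Your cut description of $v(a)$ does not do this; nothing you have written rules out the possibility that $\I(E)$ and $\I(F)$ disagree on elements of the real closure whose valuation is new.

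The paper avoids your inductive back-and-forth entirely. It first passes from the substructure $D$ to its $H$-field closure $H(D)$, then to the real closure, then to $\csi$ of that, and uses the universal properties of these three constructions to get the embedding as an $\mathcal L'$-map in one stroke. The $\I$-predicate is then handled by a single dichotomy on $D$ itself: either there is $r\in\mathcal O_D\setminus C_D$ with $v(r')\notin(\Gamma_D^{>})'$, in which case \cite[Corollary~4.5(1)]{AvdD1} shows $\Psi_{H(D)}$ acquires a \emph{maximum} $\alpha^\dagger$, so $\I$ becomes intrinsically definable as $\{f:vf>\alpha^\dagger\}$ and agreement in $E$ and $F$ is automatic; or there is no such $r$, in which case \cite[Corollary~4.5(2)]{AvdD1} gives $\Gamma_{D^*}=\Gamma_D$, and then $\I\cap D^*$ is determined by $\I\cap D$ because membership in $\I$ depends only on valuation. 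This dichotomy is exactly the missing ingredient in your argument; once you see it, the layered universal-property construction is both shorter and more transparent than a step-by-step back-and-forth.
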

\begin{proof}
Let $K$ be a model of $T^*$ and let $E$ be an $\mathcal L'_{\I}$-substructure of $K$.
Then $E$ is a real closed pre-$H$-field, and
we may  consider the $H$-field closure $H(E)$ of $E$ as
an $H$-subfield of $K$, with real closure $H(E)^{\operatorname{rc}}$ in $K$. We let $E^*:=\csi\big(H(E)^{\operatorname{rc}}\big)$ be the closure under small integration of $H(E)^{\operatorname{rc}}$, viewed as an $H$-subfield of $K$. In fact, $E^*$ is real closed and closed under small integration, hence
an $\mathcal L^*$-substructure of $K$. 
Let $L$ be another model of $T^*$ containing $E$ as 
$\mathcal L'_{\I}$-substructure; we need to show that the 
natural inclusion $E\to L$ extends to an embedding of 
$\mathcal L^*$-structures $E^*\to L$.
By the universal properties of $H$-field closure, real closure, and 
closure under small integration, there is an embedding of 
$\mathcal L'$-structures $E^*\to L$ which extends the 
inclusion $E\to L$.
This embedding also preserves the interpretations of the symbol $\int$ in $K$ respectively $L$; so after identifying $E^*$ with its image under this embedding, it remains to show that $\I(K)\cap E^*=\I(L)\cap E^*$.
For this we distinguish two cases:

\subsubsection*{Case 1: there is $r\in \mathcal O_E\setminus C_E$ with $v(r')\notin (\Gamma_E^{>})'$.}
Take such $r$, and take $y\in H(E)$ with $y'=r'$ and $\alpha:= v(y)>0$. 
Then by \cite[Corollary~4.5,~(1)]{AvdD1},
$\Gamma_{H(E)} =\Gamma_E \oplus \mathbb Z\alpha$ with $0<n\alpha < \Gamma_E^{>}$
for all $n\ge 1$. Also, $\max \Psi_{H(E)}=\alpha^\dagger$.
It follows easily that 
$$\I(K)\cap H(E)\ =\ \big\{f\in H(E): vf > \alpha^\dagger\big\}\ =\ \I(L)\cap H(E).$$
This remains true when we replace $H(E)$ by $E^*$, since 
$$\Gamma_{E^*} =\ \text{divisible hull of $\Gamma_{H(E)}$}\ =\ \Gamma_E\oplus\Q\alpha,$$ 
and so $\max\Psi_{E^*}=\alpha^\dagger$.

\subsubsection*{Case 2: there is no such $r$.}
Then by  \cite[Corollary~4.5,~(2)]{AvdD1} we have $\Gamma_{H(E)}=\Gamma_E$, and hence
$\Gamma_{E^*}=\Gamma_E$. Now $\I(K)\cap E=\I(L)\cap E$ gives
$(\Gamma_K^{>})'\cap\Gamma_E = (\Gamma_L^{>})'\cap\Gamma_E$, so
	$ (\Gamma_K^{>})'\cap\Gamma_{E^*} = (\Gamma_L^{>})'\cap\Gamma_{E^*}$, and thus
$\I(K)\cap E^*=\I(L)\cap E^*$.
\end{proof}

We can do the same for {\em small exponentiation}: 
given $a\prec 1$ in $\T$, its exponential~$\ex^a$ is the unique
element $1+y$ with $y\prec 1$ in $\T$ such that $y'=(1+y)a'$. Thus
the bijection $a\mapsto \ex^ a\colon \smallo \to 1+\smallo$ is (existentially and
universally) definable in the $\mathcal{L}$-structure $\T$.
Arguments as in the proof of Proposition~\ref{qint} show that
expanding the $\mathcal L^*$-structure $\T$ 
by this operation (taking the value~$0$ on $\T\setminus \smallo$, by convention)
does not change what is quantifier-free definable.

\section{Further Obstructions to Quantifier Elimination}\label{sec:obstructions}

\noindent
The language $\mathcal{L}'_{\I}$ is 
rather strong as to 
what it can express quantifier-free about $\T$, as we have seen. However, 
$\T$ does not admit QE 
in this language. To discuss this, let $K$ be a Liouville closed 
$H$-field, and consider the
subset
$$ \Upl(K)\ :=\  -(K^{>C})^{\dagger\dagger}\ =\  \{-a^{\dagger{}\dagger}:\  
a\in K,\ a>C\}$$
of $K$. In $\T$ the sequence $(\upl_n)$ given by 
$$\upl_n\ :=\ -\ell_n^{\dagger{}\dagger}\ =\
\frac{1}{\ell_0}+\frac{1}{\ell_0\ell_1}+\cdots+
\frac{1}{\ell_0\ell_1\cdots\ell_n}$$ is cofinal in $\Uplambda(\T)$. 
As with most of this paper we omit proofs for what we claim below: 
these proofs are either straightforward, or very similar to proofs of 
analogous results in Section~\ref{sec:qf definability}, or would require many extra pages.    

\begin{lemma}
Let $K$ be a Liouville closed $H$-field. 
Then $\Upl(K)$ is closed downward: if
$f\in K$ and $f<g\in \Upl(K)$, then $f\in \Upl(K)$. For $f\in K^{\times}$ we have
$$  f\in \I(K)\ \Longleftrightarrow\ -f^\dagger \notin \Upl(K).$$
\end{lemma}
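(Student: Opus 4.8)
The plan is to prove the two assertions separately, both by reducing to facts about the asymptotic couple of $K$ that follow from $K$ being a Liouville closed $H$-field (so Case~3 in the trichotomy applies: $\Gamma = (\Gamma^{\ne})'$, $\Psi$ has no supremum, and $\Psi = (\Gamma^{>})^\dagger$ is downward closed).

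\medskip\noindent\textbf{Downward closure of $\Upl(K)$.} First I would observe that for $a \in K$ with $a > C$ we have $a' > 0$ and $a^\dagger = a'/a$, and $-a^{\dagger\dagger} = -(a^\dagger)^\dagger$. Since $K$ is Liouville closed, every $b \in K^\times$ is of the form $a^\dagger$ for some $a > C$: given $b$, solve $z^\dagger = b$ for some nonzero $z$, and replace $z$ by $\pm z$ and, if necessary, multiply by a suitable positive constant or pass to a large enough element to arrange $a := \text{(that element)} > C$; more precisely, if $z^\dagger = b$ with $z \neq 0$ then either $z > C$ or $-z > C$ or $z \asymp 1$, and in the last case one adjusts by noting $(zc)^\dagger = z^\dagger$ for $c \in C^\times$ while $(z f)^\dagger = z^\dagger + f^\dagger$, so replacing $z$ by $z\cdot u$ for suitable $u > C$ with $u^\dagger$ chosen small does not quite work — instead the cleanest route is: the map $a \mapsto a^\dagger \colon K^{>C} \to K$ is surjective because $K$ is Liouville closed and $K^{>C}$ generates $K^\times$ modulo constants. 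Granting this, $\Upl(K) = \{-b^\dagger : b \in K^\times\} = -\Psi$-like set lifted to $K$. Then downward closure of $\Upl(K)$ reduces to: if $f < -b^\dagger$ for some $b \in K^\times$, then $f = -c^\dagger$ for some $c \in K^\times$. This I would deduce from the fact that $\Psi_K$ is downward closed in $\Gamma$ (true in Case~3) together with the fact that $\gamma \mapsto \gamma^\dagger$ is surjective onto $\Psi_K \cup \{\text{smaller}\}$... actually the key point is finer: one works at the level of $K$, not just $\Gamma$. Given $f < -b^\dagger$, consider $f + b^\dagger < 0$; I want to write $f = -(bc)^\dagger = -b^\dagger - c^\dagger$, i.e. find $c \in K^\times$ with $c^\dagger = -(f + b^\dagger) > 0$. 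Since $K$ is Liouville closed, such $c$ exists (solve $c^\dagger = $ that positive element). Then $-(bc)^\dagger = f$ and $bc \in K^\times$, and one checks $bc$ (or a constant multiple / sign change) can be taken $> C$: since $c^\dagger > 0$ we may take $c > C$, and then $bc$... here one uses that for $c > C$ one has $c^\dagger \in \Psi_K^{>0}$-region, and $bc > C$ follows when $b \succeq 1$; the remaining bookkeeping about signs and archimedean classes is routine $H$-field manipulation.

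\medskip\noindent\textbf{The equivalence $f \in \I(K) \iff -f^\dagger \notin \Upl(K)$ for $f \in K^\times$.} Recall from the excerpt that, because $K$ is Liouville closed,
$$\I(K) = \der\smallo = \{y \in K : y \prec g^\dagger \text{ for all nonzero } g \in \smallo\}.$$
So for $f \in K^\times$: $f \in \I(K)$ iff $vf > g^\dagger$ for all $g \prec 1$ in $K^\times$, i.e. $vf > \Psi_K$ (since $\{v(g^\dagger) : g \prec 1\} = \{v(g^\dagger) : g \neq 0, g \not\asymp 1\} \cup \ldots$ and one checks this set is cofinal in $\Psi_K$, using symmetry $(-\gamma)^\dagger = \gamma^\dagger$ so that $g \succ 1$ contributes nothing new). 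Thus $f \in \I(K) \iff vf > \Psi_K \iff vf \notin \Psi_K$ and $vf$ is "above" $\Psi_K$ — but in Case~3, $\sup\Psi_K$ does not exist, so $vf > \Psi_K$ is genuinely possible only ... wait: in Case~3, $\Psi_K$ has no sup, so $vf > \Psi_K$ forces $vf$ to be an upper bound that is not attained; such elements do exist (they are exactly the $v(\der\smallo)$ elements, since $\der\smallo = \I(K)$ can be nonzero). On the other side: $-f^\dagger \in \Upl(K)$ iff $-f^\dagger = -a^{\dagger\dagger}$ for some $a > C$, and $-f^\dagger = -(a^\dagger)^\dagger$ means (up to the surjectivity discussion above) that $f^\dagger$ lies in the image, i.e. $v(f^\dagger)$... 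Here I would argue: $-f^\dagger \notin \Upl(K)$ iff $-f^\dagger$ is strictly greater than every element of $\Upl(K)$ (by downward closure, just proved), iff $-f^\dagger > -a^{\dagger\dagger}$ for all $a > C$. Since $\{-a^{\dagger\dagger} : a > C\}$ has the same cofinal behavior as $\{-v(g^\dagger) : g \prec 1\}$ reflected — more carefully, $v(a^{\dagger\dagger}) = (v(a^\dagger))^\dagger$ and as $a$ ranges over $K^{>C}$, $v(a^\dagger)$ ranges over $\Psi_K$ (which is cofinal-in-itself with no max), so $v(a^{\dagger\dagger})$ ranges over $\Psi_K^\dagger = \{\psi(\gamma) : \gamma \in \Psi_K^{\ne}\}$, another cofinal-in-$\Psi_K$ set... — the upshot is $-f^\dagger \notin \Upl(K)$ iff $v(f^\dagger) > \Psi_K$ iff $f^\dagger \in \I(K)$. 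Finally, relate $f \in \I(K)$ and $f^\dagger \in \I(K)$: for $f \in K^\times$, $vf > \Psi_K \iff v(f^\dagger) > \Psi_K$? This is the crux and is \emph{not} a triviality — it is essentially the statement that $\I(K)$ is closed under $\dagger$ and its inverse in the appropriate sense, and it is equivalent to the computation $\upl_n + (1/\ell_0\cdots\ell_n) = -(1/\ell_n)'^\dagger$ appearing in the proof of Proposition~\ref{nqel}: taking logarithmic derivatives shifts within the pc-sequence without changing pseudolimits, hence without changing the $\I$-class. I would make this precise via the asymptotic-couple identity: for $\gamma \in \Gamma^{\ne}$, $\gamma > \Psi_K \iff \gamma^\dagger > \Psi_K$, which holds because $\gamma^\dagger = \psi(\gamma) \in \Psi_K$ is \emph{false} when $\gamma > \Psi_K$...

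\medskip\noindent\textbf{Main obstacle.} The delicate point — and the one I expect to consume the real work — is the last equivalence $f \in \I(K) \iff f^\dagger \in \I(K)$ (equivalently, that passing to logarithmic derivative preserves "being above $\Psi_K$ in valuation"). Downward closure of $\Upl(K)$ and the reduction of $\I(K)$ to $\{f : vf > \Psi_K\}$ are comparatively routine $H$-field bookkeeping using Liouville closedness and the Case~3 trichotomy; but the interaction of $\der$ (and $\dagger$) with the specific cut $\Psi_K$ inside $\Gamma$ requires the asymptotic-couple estimate $\psi(\gamma) < \beta + \psi(\beta)$ and careful tracking of archimedean classes, and is exactly where the phenomenon behind the non-pseudolimit of $(\upl_\rho)$ (Theorem~\ref{thwam}) enters. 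I would prove it by a direct computation with $v$, $\psi$, and the derivation on $\Gamma$, citing the properties of the asymptotic couple of an $H$-field recorded in Section~\ref{sec:asymptotic couples}, together with Liouville closedness to guarantee the relevant elements exist in $K$ itself.
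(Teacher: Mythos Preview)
Your approach to the second equivalence contains a genuine error. You reduce $-f^\dagger \notin \Upl(K)$ to $f^\dagger \in \I(K)$, and then propose to show $f^\dagger \in \I(K) \iff f \in \I(K)$. Both of these equivalences are false. Take $K = \T$ and $f = x^{-2}$: then $f = (-x^{-1})'$ with $-x^{-1} \prec 1$, so $f \in \I(\T)$; but $f^\dagger = -2x^{-1}$, whose only antiderivatives are $-2\log x + c \succ 1$, so $f^\dagger \notin \I(\T)$. Thus $f \in \I(K) \not\Rightarrow f^\dagger \in \I(K)$. And for the same $f$, $-f^\dagger = 2x^{-1} > \upl_n$ for every $n$ (since $\upl_n \sim x^{-1}$), so $-f^\dagger \notin \Upl(\T)$ while $f^\dagger \notin \I(\T)$; hence the intermediate equivalence fails too. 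The problem is that $\Upl(K)$ is cut out by the \emph{ordering}, not by the valuation, and you have silently replaced the order condition ``$-f^\dagger$ lies above the cut determined by the $\upl_\rho$'' with a valuation condition ``$v(f^\dagger) > \Psi$''. These are not the same: $2x^{-1}$ and $x^{-1}$ have the same valuation but sit on opposite sides of the $\Upl$-cut.

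Your downward-closure argument is also not right as written: you claim every $b \in K^\times$ is $a^\dagger$ for some $a > C$, but for $a > C$ one has $a^\dagger > 0$, so no negative $b$ arises this way. More to the point, you then try to write $f = -(bc)^\dagger$, which is a first-logarithmic-derivative expression, whereas membership in $\Upl(K)$ requires writing $f = -a^{\dagger\dagger}$, a \emph{second} logarithmic derivative. The correct argument must stay at the $\dagger\dagger$ level and track the ordering carefully; the paper simply cites \cite{AvdD1,AvdD2}, where the needed facts about $(K^{>C})^\dagger$, its image under $\dagger$, and the relation to the cut $(\Gamma^{>})'$ are worked out. If you want to reconstruct the proof, the right invariant to track is not $v(f^\dagger)$ but rather where $-f^\dagger$ sits relative to the increasing family $-a^{\dagger\dagger}$ as $v(a)$ increases toward $0^-$; this is an order computation, and the link to $\I(K)$ goes through the characterization $\I(K) = \{y : y \prec g^\dagger \text{ for all nonzero } g \prec 1\}$ combined with the identity $-(1/\ell)'^\dagger = -\ell^{\dagger\dagger} + \ell^\dagger$ (cf.\ the computation in the proof of Proposition~\ref{nqel}), not through $f \mapsto f^\dagger$ preserving $\I(K)$.
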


This follows easily from results in \cite{AvdD1} and \cite{AvdD2}. 
In particular, $\I(\T)$ is quantifier-free definable from $\Upl(\T)$
in the $\mathcal L'$-structure $\T$.  
A refinement of the proof of Proposition~\ref{nqel} shows that we cannot
reverse here the roles of $\I(\T)$ and $\Upl(\T)$:

\begin{lemma} $\Upl(\T)$ is not quantifier-free definable in the 
$\mathcal{L}'_{\I}$-structure $\T$.
\end{lemma}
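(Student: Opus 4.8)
The plan is to follow the strategy of Proposition~\ref{nqel} but with an extra twist that breaks the symmetry between $\I(\T)$ and $\Upl(\T)$. Recall that in the proof of Proposition~\ref{nqel} we produced, inside an $\aleph_1$-saturated elementary extension $K$ of $\T$, two elements $\upl$ and $a$, both pseudolimits of (equivalent versions of) the pc-sequence $(\upl_n)$, together with elements $f=(1/\ell)^\dagger$ and $g=(1/\ell)'$ satisfying $f^\dagger=-\upl$ and $g^\dagger=-a$, and an $\mathcal L$-isomorphism $\T\<\upl,f\>\to\T\<a,g\>$ over $\T$ sending $\upl\mapsto a$, $f\mapsto g$; and we observed that $f\notin\I(\T)$ (indeed $\phi'\prec f\prec\phi^\dagger$ for all infinitesimal $\phi\in\T^\times$) while $g\in\I(\T)$ (as $g=(1/\ell)'$ with $1/\ell$ infinitesimal). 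The point now is that by the Lemma just above, $f\in\I(K)\iff -f^\dagger\notin\Upl(K)\iff\upl\notin\Upl(K)$, and similarly $g\in\I(K)\iff a\notin\Upl(K)$. So $f\notin\I(\T)$ together with the asymptotic-integration remark ($\I(K)\cap\T\<\upl,f\>^{\operatorname{rc}}=\I(\T\<\upl,f\>^{\operatorname{rc}})$, since this real closure is an immediate extension of $\T$) forces $\upl\notin\Upl(\T\<\upl,f\>^{\operatorname{rc}})$, while $g\in\I(\T)$ forces $a\in\Upl(\T\<a,g\>^{\operatorname{rc}})$.

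First I would set up exactly the same ambient data as in the proof of Proposition~\ref{nqel}: fix an $\aleph_1$-saturated elementary extension $K$ of $\T$, choose $\ell\in K$ with $C<\ell<\ell_n$ for all $n$, and form $\upl:=-\ell^{\dagger{}\dagger}$, $a:=-(1/\ell)'^{\dagger}$, $f:=(1/\ell)^{\dagger}$, $g:=(1/\ell)'$. The computation $-(1/\ell_n)'^{\dagger}=\upl_n+(1/\ell_0\cdots\ell_n)$ shows $(-(1/\ell_n)'^{\dagger})$ is a pc-sequence with the same pseudolimits in $K$ as $(\upl_n)$, so by Theorem~\ref{coram} both $\T\<\upl\>$ and $\T\<a\>$ are immediate extensions of $\T$ with a unique $\T$-isomorphism $\T\<\upl\>\to\T\<a\>$ sending $\upl\mapsto a$. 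As in Proposition~\ref{nqel}, using $f^\dagger=-\upl$, $g^\dagger=-a$, \cite[Lemma~12.3]{AvdD2} and the uniqueness in \cite[Lemma~5.3]{AvdD1}, this extends to an $\mathcal L$-isomorphism $\iota\colon\T\<\upl,f\>^{\operatorname{rc}}\to\T\<a,g\>^{\operatorname{rc}}$ over $\T$ sending $f\mapsto g$, with both sides immediate over $\T$.

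Next I would record the asymmetry. Since $\phi'\prec f\prec\phi^\dagger$ for all $\phi\prec1$ in $\T^\times$, we have $f\notin\der\smallo_\T$; and because $\T\<\upl,f\>^{\operatorname{rc}}$ is an immediate extension of $\T$ (hence admits asymptotic integration, with $\I$ behaving well under such extensions — the fact stated before Proposition~\ref{prop:2 analogue}), $f\notin\I(\T\<\upl,f\>^{\operatorname{rc}})$. By the displayed Lemma applied inside the Liouville closed $H$-field $\T\<\upl,f\>^{\operatorname{rc}}$, this means $-f^\dagger=\upl\in\Upl(\T\<\upl,f\>^{\operatorname{rc}})$. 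On the other side, $g=(1/\ell)'\in\der\smallo\subseteq\I(\T\<a,g\>^{\operatorname{rc}})$, whence $a=-g^\dagger\notin\Upl(\T\<a,g\>^{\operatorname{rc}})$. So $\iota$ is an $\mathcal L'_{\I}$-isomorphism (it automatically preserves $\I$, by the asymptotic-integration remark: on both sides $\I$ is the intrinsically defined $\I$ of the field) sending $\upl\in\Upl$ to $a\notin\Upl$. If $\Upl(\T)$ were defined in the $\mathcal L'_{\I}$-structure $\T$ by a quantifier-free formula $\chi(y)$ in $\mathcal L'_{\I}$ with names for elements of $\T$, then, using Proposition~\ref{qfqf} to rewrite the $\mathcal L'$-part in $\mathcal L$, $\chi$ would be a quantifier-free $\mathcal L'_{\I}$-formula whose truth at a point is preserved by $\mathcal L'_{\I}$-embeddings; so $K\models\chi(\upl)$, $K\models\neg\chi(a)$, hence $\T\<\upl,f\>^{\operatorname{rc}}\models\chi(\upl)$ and $\T\<a,g\>^{\operatorname{rc}}\models\neg\chi(a)$, contradicting that $\iota$ is an $\mathcal L'_{\I}$-isomorphism over $\T$ sending $\upl\mapsto a$.

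The main obstacle I anticipate is making precise the claim that $\iota$ preserves the interpretation of the predicate $\I$ — that is, that $\I(K)\cap\T\<\upl,f\>^{\operatorname{rc}}$ and $\I(K)\cap\T\<a,g\>^{\operatorname{rc}}$ are the \emph{intrinsic} ideals $\I$ of those two subfields, and that $\iota$, being a differential-valued isomorphism over $\T$ between two fields admitting asymptotic integration, carries one to the other. This is exactly the kind of computation behind Theorem~\ref{coram} and the remark before Proposition~\ref{prop:2 analogue}, and it is where one must be careful that adjoining $f$ (respectively $g$) does not enlarge the value group in a way that changes $\I$ — in fact $f\prec\phi^\dagger$ and $f\succ\phi'$ for all infinitesimal $\phi$ pins down $v(f)$ relative to $\Psi_\T$, and similarly for $g$, so the extensions remain immediate and $\I$ is controlled. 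The rest — transporting the quantifier-free-definability argument through Proposition~\ref{qfqf} — is routine and parallels Proposition~\ref{nqel} verbatim.
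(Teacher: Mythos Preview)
Your overall strategy is the right one and matches what the paper has in mind: reuse the setup from Proposition~\ref{nqel}, check that $\upl\in\Upl(K)$ while $a\notin\Upl(K)$ (which you correctly deduce from the preceding Lemma via $f\notin\I(K)$ and $g\in\I(K)$), and conclude by exhibiting an $\mathcal L'_{\I}$-isomorphism over $\T$ that sends $\upl$ to $a$. But the proof as written contains a genuine internal contradiction, and it comes precisely from including $f$ and $g$ in the substructures.

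You assert both that $f\notin\I(\T\<\upl,f\>^{\operatorname{rc}})$ and $g\in\I(\T\<a,g\>^{\operatorname{rc}})$, and that $\iota$ preserves $\I$. But $\iota(f)=g$, so these two claims are incompatible: that $\iota$ fails to preserve $\I$ is exactly the content of Proposition~\ref{nqel}. The underlying mistake is the claim that $\T\<\upl,f\>^{\operatorname{rc}}$ is an immediate extension of $\T$. It is not: the whole point of the gap-creation argument in the proof of Proposition~\ref{nqel} is that $v(f)$ lies strictly between $\Psi_\T$ and $(\Gamma_\T^{>})'$, hence $v(f)\notin\Gamma_\T$. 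So the ``asymptotic-integration remark'' does not apply to $\T\<\upl,f\>^{\operatorname{rc}}$, and your justification for $\iota$ being an $\mathcal L'_{\I}$-isomorphism collapses.

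The fix is to drop $f$ and $g$ from the substructures and work instead with the smaller isomorphism $\T\<\upl\>^{\operatorname{rc}}\to\T\<a\>^{\operatorname{rc}}$ sending $\upl\mapsto a$, which is already supplied by Theorem~\ref{coram}. These two fields \emph{are} immediate extensions of $\T$ (this is stated in the proof of Proposition~\ref{nqel}), so the remark before Proposition~\ref{prop:2 analogue} gives $\I(K)\cap\T\<\upl\>^{\operatorname{rc}}=\I(\T\<\upl\>^{\operatorname{rc}})$ and likewise on the other side; hence the isomorphism is an $\mathcal L'_{\I}$-isomorphism. The elements $f$ and $g$ still do their job in the ambient $K$: they witness $\upl\in\Upl(K)$ and $a\notin\Upl(K)$ via the preceding Lemma, but they need not (and must not) be adjoined. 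With this correction your argument goes through and is the intended ``refinement'' of Proposition~\ref{nqel}.
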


Let $\mathcal L'_{\Upl}$ be the language
$\mathcal L'$ augmented by a unary predicate symbol $\Upl$, 
to be interpreted in $\T$ as $\Upl(\T)$. What we proved in Section~\ref{sec:qf definability} for $\T$ as
$\mathcal L'_{\I}$-structure goes through for $\T$ as
$\mathcal L'_{\Upl}$-structure. However, we run into a new
obstruction involving the function $\ome(z)=-z^2-2z'$. To explain this, we
first summarize some basic facts about this function $\ome$ on $\T$. (See Figure~\ref{fig:omega} for a sketch of $\ome$.)

\begin{lemma} The restriction of $\ome\colon \T \to \T$ to $\Upl(\T)$ is 
strictly increasing and has the intermediate value property. Also, 
$\ome(\T)=\ome(\Upl(\T))$, and thus the sequence $(\upo_n)$ with 
$\upo_n:=\ome(\upl_n)$ is strictly increasing and 
cofinal in $\ome(\T)$.
\end{lemma}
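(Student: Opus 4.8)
The plan is to transport the whole statement to the second‑order linear equation $4y''+fy=0$ via the Riccati substitution $z=2y^{\dagger}$, leaning on two features of $\T$: it is Liouville closed, and it satisfies the intermediate value theorem for differential polynomials quoted above. First I would set up the \emph{Riccati correspondence}: for $y\in\T^{\times}$ and $z:=2y^{\dagger}$ one computes $4y''=(2z'+z^{2})y$, so $y$ is a zero of $4Y''+fY$ precisely when $f=-2z'-z^{2}=\ome(z)$; and since $\T$ is Liouville closed, each $z\in\T$ equals $2y^{\dagger}$ for some $y\in\T^{\times}$, unique up to a factor from $\R^{\times}$. Hence $\ome(\T)$ is exactly the set of $f\in\T$ for which $4Y''+fY$ has a nonzero zero in $\T$. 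The second ingredient is the membership criterion
$$z\in\Upl(\T)\ \iff\ \int y^{-2}\succ 1\qquad(2y^{\dagger}=z),$$
where ``$\int y^{-2}\succ 1$'' means that some, equivalently every, antiderivative in $\T$ of $y^{-2}$ is infinite. For ``$\Rightarrow$'', writing $z=-a^{\dagger\dagger}$ with $a>\R$ (so $a^{\dagger}>0$ by (H2)) gives $y^{\dagger}=\bigl((a^{\dagger})^{-1/2}\bigr)^{\dagger}$, hence $y\asymp(a^{\dagger})^{-1/2}$, hence $y^{-2}\asymp a^{\dagger}=(\log a)'$, so $\int y^{-2}\asymp\log a\succ 1$. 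For ``$\Leftarrow$'', an infinite antiderivative $L$ of $y^{-2}$ is increasing and therefore $>\R$ by (H2), so $a:=\ex^{L}>\R$ satisfies $a^{\dagger}=y^{-2}$ and $a^{\dagger\dagger}=-2y^{\dagger}=-z$.

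From here, $\ome(\T)=\ome(\Upl(\T))$ and the injectivity of $\ome$ on $\Upl(\T)$ fall out of the classical Wronskian identity. Given $f\in\ome(\T)$, take a nonzero zero $y_{1}\in\T$ of $4Y''+fY$ and an antiderivative $u_{0}$ of $y_{1}^{-2}$; then $y_{2}:=y_{1}u_{0}$ is again a zero (reduction of order) and $\int y_{2}^{-2}=-u_{0}^{-1}+\text{const}$. If $\int y_{1}^{-2}\succ 1$, then $2y_{1}^{\dagger}\in\Upl(\T)$ with $\ome(2y_{1}^{\dagger})=f$; otherwise some antiderivative of $y_{1}^{-2}$ is infinitesimal and nonzero (antiderivatives differ by real constants), and taking $u_{0}$ to be that one gives $\int y_{2}^{-2}\succ 1$, so $2y_{2}^{\dagger}\in\Upl(\T)$ with $\ome(2y_{2}^{\dagger})=f$. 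Either way $f\in\ome(\Upl(\T))$, and the reverse inclusion is trivial. For injectivity, suppose $z\ne z'$ lie in $\Upl(\T)$ with $\ome(z)=\ome(z')=f$; choose $p,q\in\T^{\times}$ with $2p^{\dagger}=z$, $2q^{\dagger}=z'$. These are $\R$‑linearly independent zeros of $4Y''+fY$ (independent since $z\ne z'$), so $W:=pq'-qp'$ is a nonzero constant (its derivative vanishes because $4p''+fp=4q''+fq=0$), and $\int q^{-2}=-W^{-1}(p/q)+\text{const}$, $\int p^{-2}=W^{-1}(q/p)+\text{const}$. Then $\int q^{-2}\succ 1$ forces $p/q\succ 1$ while $\int p^{-2}\succ 1$ forces $q/p\succ 1$ --- a contradiction. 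So $\ome$ restricts to a bijection $\Upl(\T)\to\ome(\T)$.

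It then remains to see that $\ome$ on $\Upl(\T)$ is increasing with the intermediate value property, and to harvest the consequence for $(\upo_{n})$. For the intermediate value property I would apply the intermediate value theorem for $\T$ to the differential polynomial $P(Y):=\ome(Y)-c\in\T\{Y\}$: for $z_{1}<z_{2}$ in $\Upl(\T)$ with $c$ strictly between $\ome(z_{1})$ and $\ome(z_{2})$, $P$ changes sign between $z_{1}$ and $z_{2}$, so there is $g$ with $z_{1}<g<z_{2}$ and $\ome(g)=c$, and $g\in\Upl(\T)$ because $\Upl(\T)$ is downward closed. Strict monotonicity of $\ome$ on the convex set $\Upl(\T)$ is then formal: a local extremum would, via the intermediate value property, yield two distinct points of $\Upl(\T)$ with a common $\ome$‑value, contradicting injectivity. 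Evaluating at $0<\upl_{0}$ in $\Upl(\T)$, where $\ome(0)=0<\ell_{0}^{-2}=\ome(\upl_{0})=\upo_{0}$, shows the monotonicity is increasing. Finally $\upo_{n+1}-\upo_{n}=(\ell_{0}\cdots\ell_{n+1})^{-2}>0$, so $(\upo_{n})$ is strictly increasing; and since $(\upl_{n})$ is cofinal in $\Upl(\T)$ and $\ome\colon\Upl(\T)\to\ome(\T)$ is increasing and onto, $\upo_{n}=\ome(\upl_{n})$ is cofinal in $\ome(\T)$.

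The only genuinely deep input is the intermediate value theorem for $\T$; everything else is formal work with the Riccati substitution, reduction of order, and the Wronskian, together with (H1), (H2) and the downward closedness and cofinality facts for $\Upl(\T)$ recorded above. I expect the one delicate point to be the $\Upl(\T)$‑criterion and the bookkeeping around $\int y^{-2}$: the constant of integration decides whether that antiderivative is infinite, infinitesimal, or finite, and it is precisely this trichotomy that encodes membership in $\Upl(\T)$, so each step has to be run with care about which antiderivative is meant.
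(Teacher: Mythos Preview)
Your argument is correct. The paper itself does not prove this lemma: at the start of Section~6 it explicitly says ``we omit proofs for what we claim below.'' So there is no ``paper's own proof'' to compare against, but your approach is clearly the intended one, since it rests precisely on the Riccati correspondence between $\ome(z)=f$ and $4y''+fy=0$ that underlies Theorem~\ref{thm:2ndorder}, together with the Intermediate Value Theorem for differential polynomials quoted in Section~\ref{sec:T-conjecture} and the downward-closedness and cofinality facts for $\Upl(\T)$ recorded just before the lemma.

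The individual steps all check out: the membership criterion $z\in\Upl(\T)\Leftrightarrow\int y^{-2}\succ 1$ is established cleanly via (H1), (H2) and the exponential on $\T$; the reduction-of-order construction $y_2=y_1u_0$ together with the observation that in the ``finite'' case one can choose the constant of integration so that $u_0\prec 1$ and $u_0\neq 0$ gives $\ome(\T)=\ome(\Upl(\T))$; and the Wronskian argument for injectivity is standard and correct. The passage from injectivity plus the intermediate value property on the convex set $\Upl(\T)$ to strict monotonicity is the usual order-theoretic fact (any three points on which the middle value is not between the outer two would, via IVP, produce a repeated value), and your check at $0<\upl_0$ pins down the direction.

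One purely cosmetic point: in your injectivity paragraph you write ``suppose $z\ne z'$ lie in $\Upl(\T)$,'' where $z'$ is meant to be a second element, not the derivative of $z$. Given that $\ome(z)=-2z'-z^2$ appears in the same breath, this notation is likely to trip a reader; renaming the second element (say $\tilde z$ or $w$) would avoid the clash.
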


\begin{figure}
\begin{tikzpicture}[scale=0.74]
\draw (4.5,0) -- (8.5,0);
\draw (9.5,0) -- (12,0); 
\draw[densely dotted] (0,2) -- (12,2);
\draw[densely dotted] (-5,2) -- (-3.75,2);
\draw[densely dotted] (4,7) -- (4,0);
\draw[densely dotted] (4,-1) -- (4,-5);
\draw[densely dotted] (9,7) -- (9,0);
\draw[densely dotted] (9,-1) -- (9,-5);
\draw[densely dotted] (-4,7) -- (-4,0);
\draw[densely dotted] (-4,-1) -- (-4,-5);

\draw (0,1.5) -- (0,-3);
\draw (0,-4.5) -- (0,-5);
\draw[densely dotted] (0,1.5) -- (0,1.75);
\draw[densely dotted] (0,2.25) -- (0,2.5);

\draw[densely dotted] (3.5,0) -- (3.75,0);
\draw[densely dotted] (4.25,0) -- (4.5,0);
\draw[dotted] (3.75,0) -- (4.25,0);
\draw (4.0,-0.5) node {\mbox{\fontsize{9}{0}\selectfont $\frac{1}{\ell_0\ell_1\cdots}$}};
\fill[white] (4,0) circle (0.2em);
\draw (4,0) circle (0.2em);

\draw (-5,0) -- (-4.5,0);
\draw (-3.5,0) -- (3.5,0);
\draw (-4.0,-0.5) node {\mbox{\fontsize{9}{0}\selectfont $-\frac{1}{\ell_0\ell_1\cdots}$}};
\draw[densely dotted] (-3.5,0) -- (-3.75,0);
\draw[densely dotted] (-4.25,0) -- (-4.5,0);
\draw[dotted] (-3.75,0) -- (-4.25,0);
\fill[white] (-4,0) circle (0.2em);
\draw (-4,0) circle (0.2em);

\draw[densely dotted] (8.5,0) -- (8.75,0);
\draw[densely dotted] (9.25,0) -- (9.5,0);
\draw[dotted] (8.75,0) -- (9.25,0);
\draw (9.0,-0.5) node {\mbox{\fontsize{9}{0}\selectfont $\frac{1}{\ell_0}+\frac{1}{\ell_0\ell_1}+\cdots$}};
\fill[white] (9,0) circle (0.2em);
\draw (9,0) circle (0.2em);

\draw[dotted] (0,2.25) -- (0,1.75);
\draw (-2,2) node {\mbox{\fontsize{9}{0}\selectfont $\frac{1}{\ell_0^2}+\frac{1}{(\ell_0\ell_1)^2}+\cdots$}};
\draw (0,2.5) -- (0,7);
\fill[white] (0,2) circle (0.2em);
\draw (0,2) circle (0.2em);

\draw (8,0.5) node {$\Uplambda(\mathbb T)$};
\draw[-latex, gray, thick] (7.3,0.55) -- (6,0.55);

\draw [decoration={brace,amplitude=0.7em,mirror},decorate,thick,gray] (-3.75,-3.3) -- (3.75,-3.3);
\draw (0,-4) node {$\I(\mathbb T)$};

\draw (0.5,6) node {$\mathbb T$};
\draw (11,0.5) node {$\mathbb T$};
\draw[-latex, gray, thick] (11.5,0.5) -- (12,0.5);
\draw[-latex, gray, thick] (0.5,6.5) -- (0.5,7);

\draw (0.5,-0.5) node {$0$};


\draw[densely dotted, domain=3.5:4.5] plot (\x, { ( -0.025*(\x-9) * (\x-9) + 2 ) });
\draw[densely dotted, domain=8.5:9] plot (\x, {  ( -0.025*(\x-9) * (\x-9) + 2 ) });
\draw[line width=0.1em, domain=-3.5:3.5] plot (\x, {  ( -0.025*(\x-9) * (\x-9) + 2 ) });
\draw[densely dotted, domain=-4.5:-3.5] plot (\x, {  ( -0.025*(\x-9) * (\x-9) + 2 ) });
\draw[line width=0.1em, domain=-5:-4.5] plot (\x, {  ( -0.025*(\x-9) * (\x-9) + 2 ) });
\draw[line width=0.1em, domain=4.5:8.5] plot (\x, {  ( -0.025*(\x-9) * (\x-9) + 2 ) });
\draw (-2.5,-2) node {$\ome$};
\fill[white] (9,2) circle (0.4em);
\draw (9,2) circle (0.4em);

\end{tikzpicture}
\caption{}\label{fig:omega}
\end{figure}

We need the following strengthening of Theorem~\ref{coram}, where 
the $\upo_{\rho}$ are as in that theorem:

\begin{theorem}\label{coramrho} Suppose the $H$-field $K$ is $\upo$-free. Then 
the sequence $(\upo_{\rho})$ has a
pseudolimit $\upo$ in an immediate $H$-field extension $K\<\upo\>$ 
of $K$ such that for any pseudolimit $a$ of 
$(\upo_{\rho})$ in any $H$-field 
extension of $K$ there is a unique isomorphism 
$K\<\upo\> \to K\<a\>$ over $K$ of ordered valued differential fields 
sending~$\upo$ to~$a$.
\end{theorem}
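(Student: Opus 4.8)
The plan is to realize the pc-sequence $(\upo_{\rho})$ inside a maximal immediate extension, to invoke $\upo$-freeness in order to see that \emph{every} pseudolimit of $(\upo_{\rho})$ in \emph{any} $H$-field extension of $K$ is $\der$-transcendental over $K$, and then to build the isomorphism by the usual Kaplansky-style argument for pc-sequences of $\der$-transcendental type, the only real work being to check that this purely differential-algebraic isomorphism respects the valuation and the ordering. For existence: since $K$ is real closed, Theorem~\ref{thimmax} provides an immediate maximal $H$-field extension $\hat K$ of $K$, and being maximal, $\hat K$ realizes every pc-sequence over $K$, so $(\upo_{\rho})$ has a pseudolimit $\upo\in\hat K$. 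I would let $K\<\upo\>$ be the differential subfield of $\hat K$ generated by $\upo$ over $K$. As an intermediate field of the immediate extension $\hat K/K$ it is itself immediate over $K$, hence again an $H$-field: it has the same constant field $C$ and value group $\Gamma$ as $K$, axiom~(H2) and \eqref{eq:lH} are inherited from $\hat K$, and~(H1) holds because each element of $\mathcal O_{K\<\upo\>}$ is congruent modulo $\smallo$ to an element of $\mathcal O_K=C+\smallo$. (Alternatively, $\upo$ can be produced inside the extension $K\<\upl\>$ of Theorem~\ref{coram} as $\upo:=\ome(\upl)$, using that $\ome$ is a differential polynomial and that $\ome$ carries the pc-sequence $(\upl_\rho)$ with pseudolimit $\upl$ to $(\upo_\rho)$ with pseudolimit $\ome(\upl)$.)

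Next I would use $\upo$-freeness. By condition~(3) of Theorem~\ref{tham}, $(\upo_{\rho})$ has no pseudolimit in any differentially algebraic $H$-field extension of $K$; equivalently, by the standard calculus of pc-sequences in asymptotic fields, $(\upo_{\rho})$ is of $\der$-transcendental type over $K$, meaning that for every nonzero $P\in K\{Y\}$ the sequence $\big(v(P(\upo_{\rho}))\big)_{\rho}$ is eventually constant with finite value. (Indeed, the only alternative afforded by the pc-calculus is that $(P(\upo_{\rho}))$ pseudoconverges to $0$ for some minimal such $P$; realizing that pc-sequence in an immediate $H$-field extension would then produce a differentially algebraic $H$-field extension of $K$ containing a pseudolimit of $(\upo_{\rho})$, contradicting $\upo$-freeness.) Applying this to the extension $K\<\upo\>$ itself shows that $\upo$ is $\der$-transcendental over $K$, so $K\<\upo\>=K\big(\upo,\upo',\upo'',\dots\big)$ with the $\upo^{(i)}$ algebraically independent over $K$; and a routine computation shows $v(P(\upo))$ equals the eventual value of $v(P(\upo_{\rho}))$ for every nonzero $P\in K\{Y\}$.

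For uniqueness, let $L$ be any $H$-field extension of $K$ and $a\in L$ a pseudolimit of $(\upo_{\rho})$. The argument above, applied to the $H$-subfield $K\<a\>$ of $L$, shows that $a$ is $\der$-transcendental over $K$ and that $v(P(a))$ is again the eventual value of $v(P(\upo_{\rho}))$, so $v(P(a))=v(P(\upo))$ for every nonzero $P\in K\{Y\}$. Hence the unique differential-field isomorphism $K\<\upo\>\to K\<a\>$ over $K$ with $\upo\mapsto a$ (it exists because both generators are $\der$-transcendental over $K$, and it is forced by sending $\upo^{(i)}\mapsto a^{(i)}$) is automatically an isomorphism of valued fields. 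It is moreover an isomorphism of ordered fields, since in an immediate extension of an $H$-field the ordering is inherited from the base: every nonzero $f\in K\<\upo\>$ (resp.\ in $K\<a\>$) is $\sim$ an element of $K$, whose sign is fixed by the isomorphism. This yields the asserted universal property.

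The main obstacle is really the differential transcendence of $(\upo_{\rho})$, that is, condition~(3) of Theorem~\ref{tham} already built into the definition of $\upo$-freeness; without it a pseudolimit $a$ of $(\upo_{\rho})$ in some $H$-field extension could be $\der$-algebraic over $K$ and the valuation-matching step would fail (and indeed, the $\R\(( \mathfrak L \)) $ example shows this is not a vacuous worry). Granting that, the remaining points — subextensions of immediate $H$-field extensions are again immediate $H$-fields, and the eventual constancy of $(v(P(\upo_\rho)))_\rho$ propagates $v\circ P$ across any realization — are routine.
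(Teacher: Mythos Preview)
The paper does not include a proof of this theorem: Section~\ref{sec:obstructions} opens by announcing that ``we omit proofs for what we claim below,'' and Theorem~\ref{coramrho} is one of those unproved statements. So there is nothing to compare against directly, and your outline has to be judged on its own merits.

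Your strategy is the right one and is essentially what the authors have in mind (and later carry out in their book): use $\upo$-freeness via condition~(3) of Theorem~\ref{tham} to force $(\upo_\rho)$ to be of $\der$-transcendental type over $K$, then invoke the differential Kaplansky machinery to get existence in an immediate $H$-field extension and the universal mapping property for pseudolimits. Your handling of the ordering is correct: once you know $K\langle a\rangle$ is immediate over $K$ (which does follow from $\der$-transcendental type, since every $v(P(a))$ lies in $\Gamma_K$ and every unit is $\sim$ an element of $K$), the sign of a nonzero element is inherited from its $\sim$-class representative in $K$.

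The one place where your sketch leans on something nontrivial without quite saying so is the implication ``not of $\der$-transcendental type $\Rightarrow$ some differentially algebraic \emph{$H$-field} extension of $K$ contains a pseudolimit.'' The differential Kaplansky lemma produces an immediate \emph{asymptotic field} extension $K\langle b\rangle$ with $P(b)=0$ for the minimal $P$; that this extension carries a compatible ordering making it an $H$-field (so that condition~(3) actually bites) is a genuine, if by now standard, piece of the authors' theory and not a formality. Your parenthetical ``realizing that pc-sequence in an immediate $H$-field extension'' glosses over exactly this point. It does not invalidate the argument, but it is where the work is hidden; you are right to flag in your last paragraph that the real content lives in condition~(3), though the passage from~(3) to $\der$-transcendental type is itself part of that content.
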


Using also a result from \cite{dagap}, this theorem has the following consequence:

\begin{cor}\label{prop:rho}
$\ome(\T)$ is not quantifier-free definable in the 
$\mathcal L'_{\Upl}$-struc\-ture $\T$.
\end{cor}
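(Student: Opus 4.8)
The plan is to imitate the proof of Proposition~\ref{nqel} and of the non-definability of $\Upl(\T)$, but with the jammed pc-sequence $(\upo_n)$ replacing $(\upl_n)$, Theorem~\ref{coramrho} replacing Theorem~\ref{coram}, and the Liouville closed example of \cite{dagap} supplying a new asymmetry. Suppose, toward a contradiction, that $\ome(\T)$ is defined in the $\mathcal L'_{\Upl}$-structure $\T$ by a quantifier-free $\mathcal L'_{\Upl}$-formula $\phi$ (with parameters in $\T$). Since $\ome(\T)=\ome(\Upl(\T))$ by the lemma on $\ome$, the set $\ome(\T)$ is also defined in $\T$ by the existential formula $\psi(y)$ expressing $\exists z\,(\Upl(z)\wedge y=-2\der z-z^{2})$. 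Working over $\T$ with constants for its elements, and letting $T^{*}$ be the (complete) $\mathcal L'_{\Upl}$-theory of $\T$ in this expanded language, the sentence $\forall y\,(\psi(y)\leftrightarrow\phi(y))$ lies in $T^{*}$, so $\psi$ is $T^{*}$-equivalent to the quantifier-free formula $\phi$. By the criterion preceding Corollary~\ref{closureprop} (with $L=L^{*}=\mathcal L'_{\Upl}$) it therefore suffices to produce two models $\boldsymbol A,\boldsymbol B$ of $T^{*}$, a common $\mathcal L'_{\Upl}$-substructure $\boldsymbol C$, and $c\in\boldsymbol C$ with $\boldsymbol A\models\psi(c)$ but $\boldsymbol B\models\neg\psi(c)$. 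Note that $\ome(\cdot)=\ome(\Upl(\cdot))$ holds in every model of $T^{*}$, so $\psi$, hence $\phi$, defines $\ome$ in both $\boldsymbol A$ and $\boldsymbol B$; quantifier-freeness of $\phi$ and $c\in\boldsymbol C$ then force $\boldsymbol A\models\phi(c)\Leftrightarrow\boldsymbol B\models\phi(c)$, and this is the contradiction to be reached.

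For $\boldsymbol A$ I would take a sufficiently saturated elementary extension of $\T$, choose $\ell\in\boldsymbol A$ with $C<\ell<\ell_n$ for all $n$, and put $\upl:=-\ell^{\dagger\dagger}$ and $c:=\ome(\upl)$. Then $\upl\in\Upl(\boldsymbol A)$, so $c\in\ome(\Upl(\boldsymbol A))=\ome(\boldsymbol A)$, i.e.\ $\boldsymbol A\models\psi(c)$; moreover $c$ is a pseudolimit of $(\upo_n)$ because $\upl$ is a pseudolimit of $(\upl_n)$ and $\ome$ is continuous along pc-sequences. Let $\boldsymbol C:=\T\<c\>^{\operatorname{rc}}$, which by Theorem~\ref{coramrho} is an immediate real closed $H$-field extension of $\T$ (immediate since $\T$ already has divisible value group); I view it as an $\mathcal L'_{\Upl}$-substructure of $\boldsymbol A$ via the trace of $\Upl(\boldsymbol A)$. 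For $\boldsymbol B$ I would use the result of \cite{dagap}: there is an $H$-field extension of $\T$, obtained by a Liouville-closure-type construction, in which $(\upo_n)$ pseudoconverges but $(\upl_n)$ does not; inside a saturated elementary extension $\boldsymbol B$ of $\T$ one realizes a pseudolimit $d$ of $(\upo_n)$ with $d\notin\ome(\boldsymbol B)$, for if $d=\ome(z)$ with $z\in\Upl(\boldsymbol B)$ then, since $\ome$ is strictly increasing with the intermediate value property on $\Upl$, the preimage $z$ would be a pseudolimit of $(\upl_n)$ in $\boldsymbol B$, contrary to the construction. By Theorem~\ref{coramrho} there is a unique isomorphism of ordered valued differential fields $\T\<c\>\to\T\<d\>$ over $\T$ sending $c$ to $d$; write $\sigma$ for its extension to the real closures, and use $\sigma$ to identify $\boldsymbol C=\T\<c\>^{\operatorname{rc}}$ with an $\mathcal L'$-substructure of $\boldsymbol B$, with $c\mapsto d\notin\ome(\boldsymbol B)$, so that $\boldsymbol B\models\neg\psi(c)$.

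The main obstacle is to make this identification an isomorphism of $\mathcal L'_{\Upl}$-substructures, i.e.\ to check $\sigma\big(\Upl(\boldsymbol A)\cap\T\<c\>^{\operatorname{rc}}\big)=\Upl(\boldsymbol B)\cap\T\<d\>^{\operatorname{rc}}$; this is exactly the extra bookkeeping that upgrades Proposition~\ref{nqel} to the non-definability of $\Upl(\T)$, now carried out on the $(\upl_n)$- and $(\upo_n)$-levels simultaneously. Since $\Upl(\boldsymbol A)$ and $\Upl(\boldsymbol B)$ are downward closed, since $\T\<c\>^{\operatorname{rc}}$ and $\T\<d\>^{\operatorname{rc}}$ are immediate extensions of $\T$ adjoining a pseudolimit of the same jammed pc-sequence (hence with value group and residue field those of $\T$), and since $c,d\asymp 1/\ell_0^{2}\prec\upl_n$ both lie in $\Upl$, one shows the two traces are cut out by the same data over $\T$; the relevant input is the $\Upl$-analogue of ``$\I(N)\cap M=\I(M)$ for $H$-field extensions $N$ of an $M$ admitting asymptotic integration''. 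A secondary and more routine obstacle is to verify that the \cite{dagap} configuration can be placed over $\T$ and transported into the saturated $\boldsymbol B$, using that the failure of $(\upl_n)$ to pseudoconverge persists through the Liouville-closure steps and under passing to the saturated model. Granting these, the data $\boldsymbol A,\boldsymbol B,\boldsymbol C,c$ above yield the contradiction recorded at the end of the first paragraph, and hence $\ome(\T)$ is not quantifier-free definable in the $\mathcal L'_{\Upl}$-structure $\T$.
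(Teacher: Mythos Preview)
Your overall strategy---produce two pseudolimits of $(\upo_n)$ that are $\mathcal L'_{\Upl}$-isomorphic over $\T$ via Theorem~\ref{coramrho}, one lying in $\ome(\cdot)$ and one not, with the asymmetry coming from \cite{dagap}---matches the ingredients the paper invokes. But your execution contains a genuine gap in the construction of $\boldsymbol B$ and $d$.

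You take $\boldsymbol B$ to be a saturated elementary extension of $\T$ and claim that the \cite{dagap} configuration yields a pseudolimit $d$ of $(\upo_n)$ in $\boldsymbol B$ with $d\notin\ome(\boldsymbol B)$, reasoning that if $d=\ome(z)$ with $z\in\Upl(\boldsymbol B)$ then $z$ would be a pseudolimit of $(\upl_n)$, ``contrary to the construction.'' This step fails: the ``construction'' in which $(\upl_n)$ does not pseudoconverge is the Liouville closed $H$-field of \cite{dagap}, which is \emph{not} $\upo$-free (since its own $(\upo_\rho)$ pseudoconverges) and hence is \emph{not} a model of the theory $T^*$ of $\T$. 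By contrast, your $\boldsymbol B$ \emph{is} a model of $T^*$, and in any saturated $\boldsymbol B\succ\T$ the countable pc-sequence $(\upl_n)$ certainly does pseudoconverge---indeed you already used this in building $\boldsymbol A$. So the existence of a preimage $z$ of $d$ under $\ome$ that pseudolimits $(\upl_n)$ is no contradiction at all inside $\boldsymbol B$. What you actually need is a pseudolimit $d$ of $(\upo_n)$ lying \emph{above} all of $\ome(\boldsymbol B)$ (equivalently, above the entire $\boldsymbol B$-indexed sequence $(\upo_\rho^{\boldsymbol B})$), and that is not furnished by merely ``transporting'' a pseudolimit from the \cite{dagap} example into $\boldsymbol B$; the type of such a $d$ over $\T$ must be shown to be consistent with $T^*$, and this is exactly where the relevant result of \cite{dagap} must do real work---it is not the ``routine'' step you label it. Your verification that the isomorphism respects $\Upl$ is also only sketched; but the error above is the more serious one, since without a correct $d$ there is nothing to compare.
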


The next candidate of a language in which $\mathbb T$ might eliminate
quantifiers is
the extension $\mathcal L'_{\Upl,\Upo}$ of 
$\mathcal L'_{\Upl}$ by a unary predicate symbol 
$\Upo$, interpreted in $\mathbb T$ by~$\ome(\mathbb T)$.
At this stage we do not know of any obstruction to this possibility. 

Propositions~\ref{prop:NIP}, ~\ref{indconst}, and ~\ref{prop:2 analogue} remain true with 
$\mathcal L'_{\Upl,\Upo}$ replacing $\mathcal L'_{\I}$: 
the proofs of ~\ref{prop:NIP} and  ~\ref{indconst} go through because 
$\Upl(\T)$ and $\ome(\T)$ are convex subsets of $\T$, while the proof of ~\ref{prop:2 analogue} needs some further elaboration. 

We conclude that if
the theory of $\T$ as an $\mathcal L'_{\Upl,\Upo}$-structure 
admits elimination of quantifiers, then the $\T$-Conjecture from 
Section~\ref{sec:T-conjecture} holds, and 
$\T$ has properties (1), (2), (3) stated at the beginning of 
Section~\ref{sec:qf definability}.

\bigskip\noindent
[While this paper was under review, the statement after 
Corollary~\ref{prop:rho} that  ``we do not know of any obstruction to this possibility''  became obsolete. We now believe that the ``correct'' new primitive
that will enable us to get quantifier elimination for $\T$ is the
function $\T \to \T$ that maps each $g\in \ome(\T)$ to the unique $f\in \Lambda(\T)$
with $\ome(f)=g$, and sends each $g\in \T\setminus \ome(\T)$ to $0$; so
this function is the obvious partial inverse to $\ome$.]

\bibliographystyle{jflnat}

\end{document}